\newdimen\CdotAxis
\newcommand*{\CdotAux}[3]{%
  {%
    \settoheight\CdotAxis{$#2\vcenter{}$}%
    \sbox0{%
      \raisebox\CdotAxis{%
        \scalebox{#1}{%
          \raisebox{-1.1pt}{%
            $\mathsurround=0pt #2#3$%
          }%
        }%
      }%
    }%
    % Remove depth that arises from scaling.
    \dp0=0pt %
    % Decrease scaled height.
    \sbox2{$#2\bullet$}%
    \ifdim\ht2<\ht0 %
      \ht0=\ht2 %
    \fi
    % Use the same width as the original \cdot.
    \sbox2{$\mathsurround=0pt #2#3$}%
    \hbox to \wd2{\hss\usebox{0}\hss}%
  }%
}
\newcommand{\IL}                      {{\mathrm{IL}}}
\newcommand{\LC}     {{\mathsf{G}}}
\newcommand{\proj}                     { {\mathsf{\uppi}} }
\newcommand{\pair}[2]{\langle #1,#2\rangle}
\newcommand{\nf}{\mathsf{NF}}
\newcommand{\Ecrom}[3]{#2 \parallel_{#1} #3}
\newcommand{\sq}[1]  {{\boldsymbol{#1}}}
\newcommand{\ehi}{\color{red}}
\newcommand{\IMPL}{\rightarrow}
\newcommand{\ET}{\wedge}
\newcommand{\VEL}{\vee}
\newcommand{\impl}{\rightarrow}
\newcommand{\et}{\wedge}
\newcommand{\vel}{\vee}
\newcommand{\fal}{\bot}
\newcommand{\ver}{\top}
\newcommand{\non}{\neg}
\newcommand{\ax}{\mathbf{Ax}}
\newcommand{\exm}{\mathrm{EM}}
\newcommand{\lamg}{\lam _{\mathrm{G}}}
\newcommand{\lama}{{\lambda _{\ax} }}
\newcommand{\lamem}{{\lambda _{\exm} }}
\newcommand{\inappendix}[1]{}
\newcommand{\lamBW}{\lam_{\mathrm{C_k}}} 
\newcommand{\lamBR}{\lam _{\mathrm{\exm_n}}}
\newcommand{\lam}{\lambda}
\newcommand{\lan}{\langle}
\newcommand{\ran}{\rangle}
\newcommand{\efq}[2]{#2 \, \mathsf{efq}_{#1 }}
\newcommand{\exfalso}{\mathsf{efq}}
\newcommand{\p}{\parallel}
\newcommand{\chosen}[1]{\underline{ #1 }}
\newcommand{\send}[1]{\overline{#1}\,}
\newcommand{\inj}{\upiota}
\newcommand{\termt}{\mathrm{tt}}
\newcommand{\pp}[2] {  {}_{#1} \hspace{-1pt} ( #2 ) }
\newcommand{\mapstopar}{\rightrightarrows}
\newcommand{\NJ}{\mathbf{NJ}}
\newcommand{\NAx}{\mathbf{NAx}}
\theoremstyle{plain}% default
\newtheorem{theorem}{Theorem}[section]
\newtheorem*{theorem*}{Theorem}
\newtheorem{proposition}[theorem]{Proposition}
\theoremstyle{definition}
\newtheorem{definition}{Definition}[section]
\newtheorem{example}{Example}[section]
\theoremstyle{remark}
\newtheorem{remark}{Remark}[section]
\def\NJ{{\bf NJ}}
\newcommand{\seq}{\Rightarrow}
\def\hh{\ |\ } % hypersequent symbol
\def\Deduce#1{\hbox{$\hphantom{#1}$\kern\inferLabelSkip\DeduceSym\kern\inferLab
elSkip$#1$}}
\newcommand{\logic}[1]{\ensuremath{\mathbf #1}} 
\def\LC{\logic{G}}
\begin{document}

%% Title information
\title[Disjunctive Axioms and Concurrent $\lambda$-Calculi: a Curry-Howard Approach]{Disjunctive Axioms and Concurrent $\lambda$-Calculi:\\ a Curry-Howard Approach}
%% [Short Title] is optional;
                                        %% when present, will be used in
                                        %% header instead of Full Title.
\titlenote{Supported by FWF: grants Y544-N2 and W1255-N23.}             %% \titlenote is optional;
                                        %% can be repeated if necessary;
              %                           %% contents suppressed with 'anonymous'
% \subtitle{Subtitle}                     %% \subtitle is optional
% \subtitlenote{with subtitle note}       %% \subtitlenote is optional;
%                                         %% can be repeated if necessary;
%                                         %% contents suppressed with 'anonymous'

%% Author information
%% Contents and number of authors suppressed with 'anonymous'.
%% Each author should be introduced by \author, followed by
%% \authornote (optional), \orcid (optional), \affiliation, and
%% \email.
%% An author may have multiple affiliations and/or emails; repeat the
%% appropriate command.
%% Many elements are not rendered, but should be provided for metadata
%% extraction tools.

\author{Federico Aschieri}
% \authornote{with author1 note}          %% \authornote is optional;
                                        %% can be repeated if necessary
% \orcid{nnnn-nnnn-nnnn-nnnn}             %% \orcid is optional
\affiliation{              %% \department is recommended
  % \department{Theory and Logic Group}
\institution{TU Wien, Vienna (Austria)}            %% \institution is required
%  \city{Vienna}
%  \country{Austria}                    %% \country is recommended
}
%\email{federico.aschieri@tuwien.ac.at}          %% \email is recommended

%% Author with single affiliation.
\author{Agata Ciabattoni}
% \authornote{with author1 note}          %% \authornote is optional;
                                        %% can be repeated if necessary
% \orcid{nnnn-nnnn-nnnn-nnnn}             %% \orcid is optional
\affiliation{              %% \department is recommended
  % \department{Theory and Logic Group}
\institution{TU Wien, Vienna (Austria)}            %% \institution is required
%  \city{Vienna}
%  \country{Austria}                    %% \country is recommended
}
%\email{agata@logic.at}          %% \email is recommended

%% Author with single affiliation.
\author{Francesco A.\ Genco}
% \authornote{with author1 note}          %% \authornote is optional;
%                                         %% can be repeated if necessary
% \orcid{nnnn-nnnn-nnnn-nnnn}             %% \orcid is optional
\affiliation{              %% \department is recommended
  % \department{Theory and Logic Group}
\institution{TU Wien, Vienna (Austria)}            %% \institution is required
%  \city{Vienna}
%  \country{Austria}                    %% \country is recommended
}
\begin{abstract}
We add to intuitionistic logic 
infinitely many 
classical disjunctive
tautologies and use the Curry--Howard correspondence to obtain typed
concurrent $\lambda$-calculi; each of them features a specific
communication mechanism, including broadcasting and cyclic message-exchange, 
and enhanced expressive power w.r.t.\  the $\lambda$-calculus.
Moreover they all implement forms of code mobility.
Our results provide a first concurrent computational interpretation
for many  
propositional intermediate logics, classical logic included.
\end{abstract}

 \begin{CCSXML}
<ccs2012>
<concept>
<concept_id>10003752.10003790.10003792</concept_id>
<concept_desc>Theory of computation~Proof theory</concept_desc>
<concept_significance>300</concept_significance>
</concept>
<concept>
<concept_id>10003752.10003790.10011740</concept_id>
<concept_desc>Theory of computation~Type theory</concept_desc>
<concept_significance>300</concept_significance>
</concept>
</ccs2012>
\end{CCSXML}

% \begin{CCSXML}
% <ccs2012>
% <concept>
% <concept_id>10011007.10011006.10011008</concept_id>
% <concept_desc>Software and its engineering~General programming languages</concept_desc>
% <concept_significance>500</concept_significance>
% </concept>
% <concept>
% <concept_id>10003456.10003457.10003521.10003525</concept_id>
% <concept_desc>Social and professional topics~History of programming languages</concept_desc>
% <concept_significance>300</concept_significance>
% </concept>
% </ccs2012>
% \end{CCSXML}

% \ccsdesc[500]{Software and its engineering~General programming languages}
% \ccsdesc[300]{Social and professional topics~History of programming languages}
% %% End of generated code

%% Keywords
%% comma separated list
\keywords{Natural deduction, Curry--Howard, lambda calculus, intermediate logics,
  hypersequents}  %% \keywords are mandatory in final camera-ready submission

%% \maketitle
%% Note: \maketitle command must come after title commands, author
%% commands, abstract environment, Computing Classification System
%% environment and commands, and keywords command.
\maketitle

\section{Introduction}
\label{Introduction}

%Church's $\lambda$-calculus is quite remarkable: 
%it can be defined in three lines, yet it gives rise to a powerful computational theory, and has been discovered three times, giving rise to a beautiful connection between three different fields. 
Although definable in three lines, the $\lambda$-calculus provides an elegant, yet powerful 
computational theory.
Formally introduced by Church, it has been discovered three times,
giving rise to a beautiful connection between three different fields.
%Although definable in three lines,  $\lambda$-terms are the subjects of a powerful and elegant computational theory. Although formally introduced by Church,
%$\lambda$-calculus has been discovered three times, giving rise to a beautiful  connection between three different fields.
 In mathematics, it was informally used as a simple syntax 
for function definition and application. In computer science, it was introduced as a functional
programming language. In logic, it unveiled the computational side of natural deduction: simply
typed $\lambda$-terms are isomorphic to natural deduction proofs and evaluation of programs corresponds to normalization of 
such proofs. This latter connection is known as Curry--Howard correspondence~\cite{Howard}.
%
%The (simply typed) $\lambda$-calculus is the heart of functional programming languages.
%Its canonicality is confirmed by the one-to-one correspondence between
%its terms and formulas of intuitionistic logic IL, and 
%between its 
%programs and intuitionistic proofs. In this paradigm, known as
%Curry--Howard isomorphism \cite{Howard}, 
%evaluation of terms can be seen as   
%normalization of natural deduction derivations.
%Useful consequences of this latter connection -- known as Curry--Howard correspondence~\cite{Howard} --
% are the termination of 
%(well-typed) functional programs and the possibility of writing provably correct programs,
%see, e.g.,~\cite{Wadler}.
%
\vspace{-0.1cm}
\subsection*{Extending the $\lambda$-calculus}
The only means of computation in the $\lambda$-calculus is the $\beta$-reduction
rule which models function application.
This suffices for Turing-completeness, but renders the system sometimes inefficient and not expressive
enough. For instance, Scott's parallel OR cannot be implemented. 
%For example, the predecessor function for Church numerals is inefficient and  Scott's parallel OR cannot be implemented.
In response, many extensions of the $\lambda$-calculus have been introduced. % in the literature.
A well-know extension with control operators is the $\lambda \mu$-calculus~\cite{Parigot}. It arises by interpreting via the Curry--Howard correspondence
Pierce's axiom $((A \to B) \to A) \to A$ of classical logic, first used in 
\cite{Griffin} to provide a type for the call/cc operator of Scheme and in
the $\lambda$C-calculus~\cite{Felleisen}.
Particularly challenging is to extend the $\lambda$-calculus 
with mechanisms for message exchanges; the latter are at
the core of the $\pi$-calculus~\cite{Milner,sangiorgiwalker2003}
-- the most widespread formalism for modeling concurrent systems.
%which are notoriously difficult to analyze and to get right.
Concurrent extensions of the $\lambda$-calculus have been defined, 
for instance, 
by adding  communication mechanisms inspired by process calculi
(e.g.~\cite{Boudol89}) or by programming languages (e.g.~\cite{NSS2006}).
Logic--based extensions are mainly rooted in 
variants and extensions of linear logic.
%which relates to session types~\cite{Honda,CairesPfenning}. 
%the best known typing discipline for the $\pi$-calculus
Indeed the functional languages in~\cite{Wadler2012,TCP2013} use 
the Curry-Howard interpretation of linear logic as
session-typed processes to extend the $\lambda$-calculus (linear,
for~\cite{Wadler2012}) with primitives for $\pi$-calculus-based
session-typed communication~\cite{CairesPfenning}. 
Enhancing the base logics with suitable axioms allow
%The addition of suitable axioms to the base logics allow
%Useful rules/axioms to extend the base logics
additional features and programming language constructs 
to be captured, see, e.g.,~\cite{CairesPerez}. 
%How to choose/find them? 
A challenge is to find such axioms.
%more general forms of concurrency to be captured, see, e.g.,~\cite{CairesPerez}.
%Extending the base logics with 
%new rules/axioms (e.g. modalities~\cite{CairesPerez})
%allows more general forms of concurrency to be captured.
%; here the correspondence is between
%logic formulas  and session types, sequent calculus
%derivations and processes, and between cut-elimination as
%communication in the $\pi$-calculus~\cite{Milner,sangiorgiwalker2003}. }
%
%{\em One} such axiom was investigated in \cite{ACGlics2017}, where
{\em One} of them was investigated in \cite{ACGlics2017}, where
a concurrent reading
%The computational content 
of $(G)=(A \to B) \vee (B \to A)$ was proposed.
There, by lifting the original Curry-Howard correspondence in \cite{Howard}  to
G\"odel logic (intuitionistic logic IL extended with $(G)$) a new typed concurrent $\lambda$-calculus
was introduced.
$\lamg$, as it is called, turns out to be an extension of the simply typed $\lambda$-calculus
with a parallel operator connecting two processes and a mechanism for symmetric message-exchange. This mechanism is directly
extracted from the normalization proof for the natural deduction system for G\"odel logic, similarly to how the $\beta$-reduction arises from normalization of intuitionistic derivations.
As a result each construct in $\lamg$ has a counterpart in the logic, making programs isomorphic to logical proofs,
as opposed to the calculi in \cite{Wadler2012,TCP2013}. % each construct in $\lamg$ has a counterpart in the logic.
\vspace{-0,1cm}
\subsection*{Disjunctive Tautologies, Hypersequents and Concurrency}

In this paper we identify infinitely many axioms, each leading
%In this paper we identify a large class of axioms each leading
to a different communication mechanism and use the Curry--Howard correspondence
to obtain new typed concurrent $\lambda$-calculi.
Our axioms belong to the classical tautologies 
interpreted in \cite{DanosKrivine} as synchronization schemes --
 (G) being {\em one} of them.
%The choice of these axioms has been inspired by \cite{DanosKrivine},
%that interprets infinitely many classical tautologies --(G) being only {\em one} of them -- 
%as synchronization schemes.
%Different axioms might lead to different communication mechanism, and actually
%Many other logic-based communication mechanisms could be possible, as 
%It arises indeed from the linearity axiom, which is
%Actually, the linearity axiom is 
%(G) is only {\em one} of the infinitely many classical tautologies %``excluded middle-like''
%interpreted in \cite{DanosKrivine} as synchronization schemes.
%further examples include $(\exm)$ $A \vee \neg A$, $(C_n)$ $(A_1 \to A_2) \vee \dots \vee (A_n \to A_1)$,
%and the axioms $(G_n)$ characterizing $n+1$-valued G\"odel logic, for $n \geq 1$.
%$(A_1 \to A_2) \vee \dots \vee (A_{n-1} \to A_n) \vee \neg A_n$, and the axioms $(bc_k)$ . 
For these formulas, \cite{DanosKrivine} provides
realizers in  a concurrent extension of the $\lambda$C-calculus~\cite{Felleisen}, but 
the question of developing concurrent calculi based on them 
and Curry--Howard isomorphisms for the corresponding intermediate\footnote{These are logics
intermediate between intuitionistic and classical logic.} logics was left open.
%without however developing any concurrent calculus (or Curry--Howard isomorphism for the corresponding intermediate logics); 
%further examples include (\exm) $A \vee \neg A$, $(A_1 \to A_2) \vee \dots \vee (A_n \to A_1)$,
%and the axioms $(G_n)$ characterizing $n+1$-valued G\"odel logic, for $n \geq 1$.
%%$(A_1 \to A_2) \vee \dots \vee (A_{n-1} \to A_n) \vee \neg A_n$, and the axioms $(bc_k)$ . 
A further and independent hint on the connection between these formulas and concurrency
comes from proof theory: they 
all belong to the class ${\mathcal P}_{2}$ of the classification in~\cite{CGT08}
and can therefore be
%(algorithmically\footnote{Implementation at https://www.logic.at/tinc/webaxiomcalc/.} \cite{CGT08})
transformed into equivalent structural rules in the {\em hypersequent calculus}.
Avron -- who introduced this framework -- suggested 
in~\cite{Avron91} that IL extended with axioms equivalent to such rules
{\em could serve as bases for concurrent $\lambda$-calculi}.
Hypersequents are indeed nothing but parallel sequents whose interaction is governed by special rules
that allow the sequents to ``communicate'', e.g.,~\cite{Avron96}.
Different rules enable different types of communications between sequents.

Despite their built in ``concurrency'' and various attempts (e.g.~\cite{BP2015,Hirai}),
% However due to their complicated structure, 
hypersequent rules do not seem to be good bases for a Curry--Howard interpretation, 
%lend themeselves to a direct Curry--Howard interpretation, 
and Avron's claim has remained unsolved for more than 25 years.
%Well designed natural deduction inferences can indeed be
What we need are instead %By contrast, 
%(well designed) natural deduction inferences can be
natural deduction inferences that, if well designed, can be
interpreted as program instructions, and in particular as typed
$\lambda$-terms. Normalization~\cite{Prawitz}, which corresponds to
the execution of the resulting programs, can then be used to obtain
analytic proofs, i.e. proofs containing only formulas that are subformulas
of hypotheses or conclusion.
Attempts to define such natural deduction inferences capturing
some of the considered axioms are, e.g., in~\cite{L1982,BP2015};
nevertheless, in these texts normalization is missing or the 
introduced rules ``mirror'' the hypersequent structure, thus hindering
their computational reading. 
%
%
%For the {\em particular case} of G\"odel logic such calculus was introduced
%in \cite{ACGlics2017} by extending the natural deduction calculus $\NJ$ for intuitionistic
%logic with the rule in \cite{L1982} equivalent to the provability of $(A \to B) \vee (B \to A)$.
%The reduction rules needed to obtain analytic proofs are inspired by cut-elimination in
%the hypersequent calculus for G\"odel logic and led to the introduction, via the Curry--Howard correspondence, of 
%$\lamg$ -- {\em a} concurrent $\lambda$ calculus. 
For the {\em particular case} of G\"odel logic  
a well-designed calculus was introduced in~\cite{ACGlics2017} by extending the natural deduction system $\NJ$ for IL 
with an inference rule simulating the hypersequent rule for $(G)$.
%shows normalization for the calculus defined by 
%extending the natural deduction system $\NJ$ for intuitionistic
%logic with a higher-level rule \`{a} la Schroeder-Heister~\cite{schroederh2014}
%that simulates the hypersequent rule for $(A \to B) \vee (B \to A)$.
The reduction rules required to obtain analytic proofs
%are inspired by cut-elimination in hypersequent calculi, and 
led to the definition of a {\em specific} typed concurrent
functional language.
% the $\lamG$-calculus.
%Although more powerful than the simply typed $\lambda$-calculus, 
%$\lamg$ cannot implement more sophisti
%the communication mechanism in
%$\lamg$ does
%cannot implement useful communication mechanisms involving mor 
% {\em a} new typed concurrent $\lambda$-calculus via the Curry--Howard correspondence.
%Although enhancing the expressive power of $\lamg$ w.r.t. the simply typed $\lambda$-calculus,
%this communication mechanism cannot be used, for example,
%to implement  multiparties communication or broadcasting.

%
%Such natural deduction calculus was introduced
%in \cite{ACGlics2017}  for {\em one} of the considered logics
%-- G\"odel logic -- and used there to define a {\em specific}
%concurrent $\lambda$ calculus via the Curry--Howard correspondence.
%\subsection*{$\lamL$: Curry--Howard Interpretation of $L$}
\vspace{-0,1cm}
\subsection*{Curry--Howard correspondence: the concurrent calculi $\lama$}
%\subsection*{This paper}
In this paper we exploit the intuitions in \cite{Avron91,DanosKrivine} to define
infinitely-many logic-based typed concurrent $\lambda$-calculi.
The calculi are extracted -- using the Curry--Howard correspondence -- from 
%We add to intuitionistic logic classical tautologies  of the form
$\IL$ extended with  classical tautologies  of the form
$\bigvee ( A_i \to B_i)  $
where $A_i$ is a conjunction of propositional
variables (possibly $\ver$)  and  $B_i$  is a propositional variable or $\fal$, % or
%where $A^i_j$ (resp. $B_i$) is a propositional variable or 
%$\top$ (resp. $\bot$) and for all $i,j$ all $A^i_j$ are distinct}, 
% $\top$ (resp. $\bot$) 
and the conjuncts in all $A_i$'s are distinct. 
%using the Curry--Howard correspondence.
%and use the Curry--Howard correspondence to extract typed concurrent $\lambda$-calculi.
Examples of such formulas include $(\exm)$, $A \vee \neg A$, $(C_n)$ $(A_1 \to A_2) \vee \dots \vee (A_n \to A_1)$,
% (in particular $(C_2)$ is $(G)$),
and $(G_n)$ $(A_1 \to A_2) \vee \dots \vee (A_{n-1} \to A_n) \vee \neg A_n$, for $n \geq 2$;
their addition to $\IL$ leads to classical logic, cyclic $\IL$~\cite{L1982} (G\"odel logic, for $n=2$), and $n$-valued G\"odel logic, respectively.
Let $Ax$ be any finite set of such axioms. 
The corresponding calculus $\lama$ arises from $\IL + \ax$ as the simply typed 
$\lambda$-calculus arises from $\IL$.
Examples of our $\lambda$-calculi $\lama$ are: % $\lamg$~\cite{ACGlics2017} but also:
\begin{description}
\item[$\lamem$:] the simplest message-passing mechanism \`a la
$\pi$-calculus % (see Section~\ref{sec:cl}).
\item[$\lamBW$:] cyclic communication among $k$ processes.
\end{description}
Notice that $\lamem$ provides a new computational interpretation of classical logic, obtained 
using $A \vee \neg A$ in the Curry--Howard correspondence. It was indeed a long-standing open problem to provide an analytic natural deduction based on $(EM)$ and enjoying a significant computation interpretation: see \cite{deGrooteex} for an attempt.
%; by contrast, Parigot's $\lambda \mu$-calculus \cite{Parigot} employs $((A \to B) \to A) \to A$ (Pierce's law),
%which provides a type for the call/cc operator of Scheme~\cite{Griffin}.
%\begin{description}
%\item[$\lamBW$:] cyclic communication among $k$ processes.
%\end{description}

Furthermore, different syntactic forms of the same axiom lead to different $\lambda$-calculi, e.g.
for $(\exm)_{n} \; A \vee \dots \vee A \vee \neg A$ we have
\begin{description}
\item[$\lamBR$:] broadcast messages to $n$ processes.
\end{description}
%a large family of
%classical tautologies and use the Curry--Howard correspondence
%to obtain typed concurrent $\lambda$-calculi. 
%We consider formulas of the form $\bigvee (\bigwedge A_i \to B_i)$  
%where $A_l, B_m$ are propositional variables or $\top, \bot$;
%examples include $(\exm)$ $A \vee \neg A$, $(C_n)$ $(A_1 \to A_2) \vee \dots \vee (A_n \to A_1)$,
%and the axioms $(G_n)$ characterizing $n+1$-valued G\"odel logic, for $n \geq 1$.
The natural deduction calculus used as type system for
$\lama$ is
%The key tool for its introduction is the calculus 
defined 
%We introduce natural deduction calculi and a Curry--Howard correspondence for 
%a large class $\cal L$ of propositional intermediate logics. 
%Our calculi are obtained in a modular way by extending $\NJ$ with 
%in a modular manner 
by extending $\NJ$ with the natural deduction version~\cite{CG2018} of the hypersequent rules 
for $\ax$.
The decorated  version of these rules lead to $\lambda$-calculi with parallel operators
that bind two or more processes, according to the shape of the rules.
For example the decorated version of the rules for $(EM)$ and $(C_3)$ are:
\[\small \infer{s \parallel _a t : F}{\infer*{t :
      F}{\infer{a^{\non A} \, u : \fal }{ u : A }}&\infer*{s : F}{[a^{A} : A]}}
\;\;\;
      \infer{ \pp{a}{ \, r \p s \p t } : F }{
       \infer*{r:F}{\infer{a^{A\impl B} \, u:B}{u:A}} &
      \infer*{s:F}{\infer{a^{B \impl C} \, v: C}{v:B}} &
      \infer*{t:F}{\infer{a^{C \impl A} \, w:A}{w:C}}}\]
The variable $a$ represents a private communication channel that behaves similarly
to the $\pi$-calculus restriction
operator $\nu$~\cite{Milner, sangiorgiwalker2003}. Our natural deduction calculi are defined in a modular way and 
the rules added to $\NJ$ are higher-level~\cite{schroederh2014}, as they also discharge rule
applications rather than only formulae. 
We provide a normalization proof that uniformly applies to all the associated $\lambda$-calculi. 
Our reduction rules and normalization strategy mark a radical departure from~\cite{ACGlics2017}.  
%Not only  are they inspired by logic, but also by programming. 
They are driven not only by logic, but also by programming.
Indeed, while in \cite{ACGlics2017}  processes can only communicate
when the type of their channels violates the subformula property,  our
new reductions are logic independent and are activated when messages
are values (cf.\ Def.~\ref{def:value}). 
As a result, programs are easier to write and and their evaluation exhausts all \emph{active sessions}.
%does not stop too early.
% As by product, our  normalization proof  is suitable for wide generalizations; last, our permutations between communications do not generate silly loops and thus do not undermine the possibility that our set of reductions is strongly normalizing.
The normalization proof employs sophisticated termination arguments and
works for the full logic language, $\vee$ included. Since it does not exploit properties specific to G\"odel logic,
it can be used in a general setting. 
%
%We shall employ a sophisticated and general termination arguments that does exploit properties specific to G\"odel logic, 
%is suitable to wider generalizations %not are type-independent
%
%and apply to the full logic language, $\vee$ included.
%with a new strategy and 
%addition of novel techniques 
%{\color{blue} Inspired by~\cite{ACGlics2017}, our new proof employs novel techniques 
%more sophisticated termination arguments that are type-independent and apply to the full logic language ($\vee$ included). % and to all our rules. 
%{\color{blue} The proof employs new
%ideas and sophisticated termination arguments}. 
Its instantiation leads to computational reductions that specify the mechanism for message-exchange in the
corresponding typed %and untyped
%\footnote{In contrast with the proof for $\lamg$ in~\cite{ACGlics2017}
%our normalization proof here is type-independent.} 
$\lambda$-calculus. 

For any logic $\IL + \textit{Ax}$ and $\lama$ we show the perfect match between computation steps and proof reductions
(Subject Reduction),  a terminating reduction strategy for $\lama$ (Normalization) and 
%, as a corollary, 
the Subformula Property.
Our logic-based calculi are more expressive than the $\lambda$-calculus, and
their typed versions are non-deterministic\footnote{To enforce
non-determinism in typing systems for the $\pi$-calculus, {\em extensions} of linear logic
are needed \cite{CairesPerez}.}.
% In conrast with \cite{CairesPfenning} our calculi 
% allows cyclic interconnected processes, while still ensuring deadlock-freedom. 
Moreover they can implement multiparty communication (in contrast with $\lamg$~\cite{ACGlics2017}, cf. Sec.~\ref{sec:broad})
and, although deadlock-free, communication between cyclic interconnected processes  
(in contrast with~\cite{CairesPfenning}, cf.\ Ex.~\ref{Milnerscheduler}).
% or multiparty (in contrast with $\lamg$~\cite{ACGlics2017}, cf. Sec.~\ref{sec:broad}).
Furthermore the computational reductions associated to {\em each} axiom enjoy a natural interpretation in terms of
higher-order process passing and
%, a feature which is not directly rendered through communication by reference~\cite{perez2015}. 
 implement forms of code mobility~\cite{codemobil}; the latter can be used to improve 
efficiency of programs: open processes can be sent first and
new communication channels taking care of their closures can be created afterwards (Ex.~\ref{ex:optimize}). 
%The expressive power of our calculi is demostrated with various examples.
 
%\begin{itemize}
%\item Two research lines:
%\item Many process calculi have emerged. Types Importance of types, linear logic for typing pi calculus
%\item Concurrent lambda-calculi, higher-order communicating processes -- various attempts:
%\end{itemize}
%\item Our lambda calculi. We do not invent them, they arise from Curry-Howard from suitable logics.
%\item Various parallel lambda calculi have been introduced in the literature
%(e.g., Boudol's $??$-calculus,  $??$-calculus with futures, Hirai and Matteo);
%but none has the distinction of arising from Curry-Howard.
%\item natural non-determinism (this constrast with the natural
%way of typing languages which results in
%determinism and confluence, e.g. $\pi$-calculus)
%and deadlock-free
%\item  increased expressive power w.r.t. $\lambda$-calculus
%\item  tamed expressive power (hierarchy?)
%\item Case Studies: classical logic and BWk
%\end{itemize}

The paper is organized as follows: Sec.~\ref{sec:nd} defines the class of considered logics and
their natural deduction systems. In Sec.~\ref{sec:cl} we use the
typed concurrent calculus $\lamem$ for classical logic as a case study to explain the ideas behind the costruction
of our calculi, their reductions and the normalization proof. Examples of programs, including
broadcasting, are also given. Sec.~\ref{sec:ax} deals with the general case: the definition of
the typed concurrent calculi $\lama$ parametrized on the set of axioms
$\ax$. A sophisticated % technically involved
normalization proof working for all
of them is presented in Sec.~\ref{sec:norm}.

%*****************************************************************************

%\bigwedge A_i \to B_1 \vee \dots \vee \bigwedge A^k_i \to B_k \vee 
%\bigwedge A^{k+1}_i \to \bot \vee \dots \vee \bigwedge A^{n}_i \to \bot
%($A_i, B_i$ are propositional variables or $\top, \bot$)
%The axioms that we consider in this paper are the disjunctive tautologies of \cite{DanosKrivine}, i.e.
%formulas ($A^l_j, B_p$ are propositional variables or $\top, \bot$)
%($A_l$ (resp. $B_m$) is a propositional variable or $\top$ (resp. $\bot$)), 

\section{Disjunctive Axioms and Natural Deduction}
\label{sec:nd}

We introduce natural deductions for infinitely many 
% a large family of 
intermediate logics obtained by extending
$\IL$ by any set of Hilbert axioms % of the form
\begin{equation}
\label{ouraxiom}
\bigvee_{i=1, \dots, k} (\bigwedge_{j=1, \dots, n_k} A_{i}^{j}) \to B_i 
\end{equation}where $A^j_i$ and $B_i$, for $i=1, \dots, k$ is a
propositional variable or $\top$ or $\bot$, with the additional
restriction\footnote{This
restriction enables a simpler notion of analiticity without ruling out
interesting computational mechanisms.}  that all $A^i_j$'s are
distinct and each $A^{i}_{j}\neq \top$ is equal to some $B_{l}$.
% We introduce natural deduction calculi for 
% %infinitely many 
% % a large family of 
% the intermediate logics obtained by extending
% IL by any set of Hilbert axioms of the form 
% \begin{equation}
% \label{ouraxiom}
% \bigvee (\bigwedge_{j=1, \dots, n_j} A^i_j \to B_i)  
% \end{equation}where $A^i_j$ (resp. $B_i$) is a propositional variable or
% $\top$ ($\bot$) and
% with the additional restriction\footnote{This restriction enables a simpler notion of 
% analiticity without ruling out interesting
% axioms/computational mechanisms.} that all $A^i_j$'s are distinct.
% % Their addition to intuitionistic logic leads to a large family of
% % intermediate logics whose
Modular analytic calculi for them are
algorithmically
%\footnote{Implementation at https://www.logic.at/tinc/webaxiomcalc/.} 
\cite{CGT08} defined (https://www.logic.at/tinc/webaxiomcalc/) in 
the hypersequent framework, whose basic objects are disjunctions of
sequents which ``work in parallel'', see, e.g.,~\cite{Avron91,Avron96}.

Neither Hilbert nor hypersequent calculi are good bases for a Curry--Howard correspondence, 
for different reasons; Hilbert calculi are not analytic, and whereas
the hypersequent structure
%which combines sequents with an additional symbol for a meta-level disjunction
enables analytic proofs it hinders a computational reading of its rules. 
Our natural deduction calculi are defined by suitably
reformulating the corresponding hypersequent calculi, which we describe below.
% we present natural deduction calculi for the logics of interests.
A uniform normalization proof, which works for all of them, 
will be carried out in Section~~\ref{sec:norm} in the $\lambda$-calculus setting.  
\begin{definition}
\textbf{Hypersequents} are multiset of sequents, written as
$  \Gamma_1 \seq \Pi_1 \hh \dots \hh \Gamma_n \seq \Pi_n$
where, for all $i = 1, \dots n,$ $\Gamma_i \seq \Pi_i$ is an ordinary
sequent, called component.
\end{definition}
The symbol ``$|$'' is a meta-level disjunction; this is reflected by the
presence in hypersequent calculi of the external structural rules $(em)$ and $(ec)$ of
weakening and contraction, operating on whole sequents, rather than on formulas.

Let $\ax$ be any finite subset of axioms of the form above. 
A cut-free hypersequent calculus for 
$IL + \ax$ is defined by 
adding structural rules equivalent to the provability of the axioms in $\ax$
to the base calculus, i.e. the hypersequent version of the {\em LJ} sequent calculus for $\IL$ with (ew) and (ec).
These additional rules allow the ``exchange of information'' between different components. Below we present the $(ec)$ rule and 
the rule $(cl)$ equivalent to $(\exm) \; A \vee \neg A$: 
$$\infer[(ec)]{G \hh \Gamma \seq \Pi}
{G \hh \Gamma \seq \Pi \hh \Gamma \seq \Pi}
% for example
%($i=1,2$)
\quad 
\quad 
\infer[(cl)]{G \hh \Gamma \seq \hh  \Gamma' \seq \Pi}
{G \hh \Gamma, \Gamma' \seq \Pi}
$$
%\quad
%\infer{G\hh \Gamma_1, \Gamma'_1 \seq \Pi_ 1\hh \Gamma_2,
%\Gamma'_2 \seq \Pi_2 \hh \Gamma_3, \Gamma'_3 \seq \Pi_ 3}{G \hh \Gamma_i, \Gamma'_{i+1} \seq \Pi_i 
% & G \hh \Gamma_3, \Gamma'_1 \seq \Pi_3}
%$$
%are the rules equivalent to the axioms $(\exm) \; A \vee \neg A$ and $(C_3)$ $(A_1 \to A_2) \vee (A_2 \to A_3) \vee (A_3 \to A_1)$,
%respectively. 
In general hypersequent rules equivalent to axiom (\ref{ouraxiom}) 
are~\cite{CGT08}
$$\infer{G\hh \Gamma^1_1, \dots , \Gamma^1_{n_1}, \Gamma_1\seq \Pi_1 \hh \dots \hh 
\Gamma^1_k, \dots , \Gamma^k_{n_k}, \Gamma_k \seq \Pi_k}{G \hh
\Gamma_i, \Gamma^j_{p} \seq \Pi_i}$$
for $i, j=1, \dots,k$, $p \in \{1, \dots n_j\}$ and possibly 
$\{\Gamma_i, \Pi_i \} = \emptyset$.
Their reformulation in \cite{CG2018} as \emph{higher-level rules} 
\`{a} la~\cite{schroederh2014} (see rule~\ref{ourN} in Section~\ref{sec:ax})  
% Sec.~\ref{sec:ax})
is the key for the definition of the natural deduction systems
$\NAx$. % that will be used in Section~\ref{sec:ax} as type systems for $\lama$:
%\begin{equation}
%\label{ourN}
%\vcenter{\infer{F}{\infer*{F}{\infer{B_1}{A^1_1 \dots A^1_{n_1}}}& \dots &
%\infer*{F}{\infer{B_m}{A^m_1 \dots A^m_{n_m}}}}}
%\end{equation}
%where $\{A^i_j \mid 1 \leq i\leq m \text{ and } 1 \leq j \leq n_i\} =
%\{B _i \mid 1 \leq i\leq m\} $; $A^i_j \neq A^k_l $ {\ehi for different indices}; $B_j$ for $1 \leq j \leq m$ may be $\fal$; and if
%$n_j =0 $, then $A^j_1 \dots A^j_{n_j} = \ver$.
Some of these rules were considered in~\cite{CG2018}, \cite{deGrooteex,L1982}
but without any normalization proof, which was provided in~\cite{ACGlics2017} for 
a {\em particular instance}: the rule for 
$(A \to B) \vee (B \to A)$. Our proof for $\NAx$
% in Sec~\ref{sec:norm} 
is much more general and will treat G\"odel logic as a very particular case, also delivering a more expressive set of reductions.

\section{Case study: the $\lamem$-calculus}
\label{sec:cl}
% We describe our typed concurrent $\lambda$-calculus for Classical Logic. Actually, a calculus for intuitionistic logic with the axiom $\non A \lor \top\IMPL A$ can  be automatically extracted from the family of calculi that we introduce in Section~\ref{sec:ax}. However, the corresponding reduction rules can be significantly simplified and we present here directly the result of the simplification. Since Normalization and Subject Reduction are already proved in Section \ref{sec:norm} for the whole family  by a very general argument, we shall not bother the reader with the simplified proofs: we just remark that the adaption is straightforward.
% % We describe our typed concurrent $\lambda$-calculus for Classical
% % Logic. Actually, a calculus for intuitionistic logic with the axiom
% % $\non A \lor \top\IMPL A$ can be automatically extracted from the
% % family of calculi that we introduce in Section~\ref{sec:ax}. However,
% % the corresponding reduction rules can be significantly simplified and
% % we present here directly the result of the simplification. Since
% % normalization is already proved in Section \ref{sec:norm} for the
% % whole family by a very general argument, we shall not bother the
% % reader with the simplified proof and we just remark that the adaption
% % is straightforward.
% % =======
We describe our typed concurrent $\lambda$-calculus $\lamem$  for Classical
Logic. Actually, a calculus for $\IL$ with axiom
$\non A \lor \top\IMPL A$ can  be automatically extracted from the
family of calculi $\lama$ that we introduce in Section~\ref{sec:ax}. However,
the corresponding reduction rules can be significantly simplified and
we present here directly the result of the simplification. Since
Normalization and Subject Reduction are already proved in Section
\ref{sec:norm} for the whole family  by a very general argument, we
shall not bother the reader with the simplified proof: we just remark
that the adaption is straightforward.

$\lamem$ extends the standard Curry--Howard correspondence~\cite{Wadler}
for $\NJ$ with a parallel operator that interprets the natural deduction
rule for $A \vee \neg A$. 
We describe $\lamem$-terms and their computational behavior and show examples of programs.

The table below defines a type assignment for $\lamem$-terms, called \textbf{proof
terms} and denoted by $t, u, v \dots$, which is isomorphic to $\NJ + \exm$ (see Sec.~\ref{Introduction}).
The typing rules for axioms, implication, conjunction, disjunction and ex-falso-quodlibet are  those
for the simply typed $\lambda$-calculus, while parallelism is introduced by the rule for axiom $\exm$.
The contraction rule is useful, although redundant, and is  
the analogous of the  external contraction rule
$(ec)$ of  hypersequent calculi, see Sec.~\ref{sec:nd}.
% , but it can be omitted.
% \smallskip
% \hrule
\begin{table}[h]
\hrule 
  \centering
  \begin{footnotesize}
  \begin{center}
    $\begin{array}{c} x^A: A
     \end{array} \text{ for $x$ intuitionistic variable}\quad $
$\qquad \vcenter{\infer[\text{contraction}]{ t_1 \p t_2 : A}{ t_1 : A
& t_2 : A }}$

\medskip

     $\vcenter{\infer{ \langle u,t\rangle: A \wedge B}{u:A & t:B}}
     \qquad\qquad \vcenter{\infer{u\,\pi_0: A}{u: A\wedge B}} \qquad \qquad
     \vcenter{\infer{u\,\pi_1: B}{u: A\wedge B}}$

     $ \vcenter{\infer{\lambda x^{A} u: A\rightarrow
         B}{\infer*{u:B}{[x^{A}: A]}}} \quad \vcenter{\infer{tu:B}{ t:
         A\IMPL B & u:A}} \quad $
     $\vcenter{\infer{ \efq{P}{u}: P}{ u: \bot}}\quad \text{with $P$
       atomic, $P \neq \bot$.}$

     $\infer{\inj_{0}(u): A\vee B }{u: A} \quad $
     $\infer{\inj_{1}(u): A\vee B}{u: B}\quad $
     $\infer{u\, [x^{A}.w_{1}, y^{B}.w_{2}]: C}{ u: A\lor B &
       \infer*{w_{1}: C}{[x^{A}: A]} & \infer*{w_{2}: C}{[y^{B}: B]}}$
    
     \medskip
      
$ a^A :A
\qquad $$\vcenter{\infer{a^{\non A} t :
    \fal}{t:A}}\qquad $$\vcenter{\infer[(\exm )]{u \parallel_{a} v:
B}{\infer*{u:B}{[a^{\scriptscriptstyle \non A}: \non
A]}&\infer*{v:B}{[a^{A}: A]}}}  $

\end{center}
where all the occurrences of $a$ in $u$ and $v$ are respectively of the form $a^{\non A}$ and $a^{A}$.
   \end{footnotesize}
\smallskip
\hrule 
\medskip
\caption{Type assignments for $\lamem$ terms.}\label{tab:type_em}
\vspace{-25pt}
\end{table}
%
% \hrule
% \smallskip
% \smallskip
% \hrule
% {\footnotesize
% \begin{description}
% \item[Axioms]\hspace{10 pt}
%  $\begin{array}{c}    x^A: A\qquad 
% \end{array}\ \ \ \ $
%
% \item[Conjunction] 
% \hspace{30 pt}
% $\vcenter{\infer{  \langle u,t\rangle: A \wedge B}{u:A & t:B}} \qquad  \vcenter{\infer{u\,\pi_0: A}{u: A\wedge B}} \qquad \vcenter{\infer{u\,\pi_1: B}{u: A\wedge B}}$
% \\\\
%
%
% \item[Implication] 
% \hspace{40 pt}
% $
% \vcenter{\infer{\lambda x^{A} u: A\rightarrow B}{\infer*{u:B}{[x^{A}: A]}}}
% \qquad 
% \vcenter{\infer{tu:B}{ t: A\IMPL B & u:A}}
% $
% \\
% \item[Ex Falso Quodlibet] 
% \hspace{30 pt}$\vcenter{\infer{  \efq{P}{u}:
% P}{ u: \bot}}\qquad \text{with $P$ atomic, $P \neq \bot$.}$
% \\
%
% \item[Disjunction] \hspace{30pt}
% $\begin{array}{c} u: A\\ \hline  \inj_{0}(u): A\vee B \end{array}\ \ \ \ $ $\begin{array}{c}   u: B\\ \hline \inj_{1}(u): A\vee B \end{array}$\\
% \AxiomC{$u: A\lor B$}
% \AxiomC{$[x^{A}: A]$}
% \noLine
% \UnaryInfC{$\vdots$}
% \noLine
% \UnaryInfC{$w_{1}: C$}
% \AxiomC{$[y^{B}: B]$}
% \noLine
% \UnaryInfC{$\vdots$}
% \noLine
% \UnaryInfC{$w_{2}: C$}
% \TrinaryInfC{$u\, [x^{A}.w_{1}, y^{B}.w_{2}]: C$}
% \hspace{15pt}\DisplayProof
% \item[Excluded Middle ($\exm$)]\hspace{60 pt}
% $\vcenter{ \infer{u \parallel_{a} v:
% C}{\infer*{u:C}{[a^{\scriptscriptstyle \non A}: \non A]}&\infer*{v:C}{[a^{A}: 
% A]}} }$
% \\
% \item[Contraction] 
% \hspace{40pt}$\infer{  (t_1 \p t_2)  :
% A}{ t_1 : A &  t_2 : A }$
% \end{description}}
% \hrule
% \smallskip
%
%
% Before introducing the computational reductions w
We fix notation, definitions and terminology that will be used
throughout the paper. 
 
Proof terms may contain \emph{intuitionistic} variables $x_{0}^{A}, x_{1}^{A}, x_{2}^{A},
\ldots$ of type $A$ for every formula $A$; these variables are 
denoted as $x^{A},$ $ y^{A}, $ $ z^{A} , \dots$. Proof terms also contain \emph{channel} variables $a_{0}^{A}, a_{1}^{A}, a_{2}^{A},
\ldots$ of type $A$ for every formula $A$ ; these variables will be denoted as $ a^{A}, b^{A}, c^{A},\dots$ and represent a \emph{private} communication channel between the parallel processes. Whenever the type is not relevant, it will be dropped and we shall simply write $x, y, z, \ldots, a,
b, \dots$. Notice that, according to the typing rules, channels variables cannot occur alone and thus are not  typed terms, unlike intuitionistic variables. As convention variables $a^{\non A}$ will be denote as $\overline{a}$. %For clarity, the variables introduced by the $(\exm)$ rule will be
%often denoted with letters $a, b, c, \ldots$, but they are not in a
%syntactic category apart. 
%A variable $x^{A}$ that occurs in a term of the form $\lambda x^{A} u$ is called \textbf{$\lambda$-variable} and a variable $a$ that occurs in a term $u\parallel_{a} v$ is called \textbf{communication variable} and represents a \emph{private} communication channel between the parallel processes $u$ and $v$. 

The free and bound variables of a proof term are defined as usual and
for the new term $\Ecrom{a}{u}{v}$, all the free occurrences of $a$ in
$u$ and $v$ are bound in $\Ecrom{a}{u}{v}$. In the following we
assume the standard renaming rules and alpha equivalences that are
used to avoid capture of variables in the reduction rules.

\textbf{Notation}. The connective $\rightarrow$  and $\et $ associate to the right and 
 % \comment{so that 
%  \begin{gather*}
%    A_{1}\rightarrow A_{2}\rightarrow\ldots \rightarrow A_{n}
% \\
%    =
% \\
%    A_{1}\rightarrow (A_{2}\rightarrow (\ldots (A_{n-1}\rightarrow
%    A_{n})\ldots))
%  \end{gather*}}
by $\langle t_{1}, t_{2}, \ldots, t_{n}\rangle $ we
denote the term
 $\langle t_{1}, \langle t_{2}, \ldots \langle t_{n-1},
t_{n}\rangle\ldots \rangle\rangle$ (which is $\termt : \ver $ if $n=0$)
and by $\proj_{i}$, for $i=0,
\ldots, n$, the sequence $\pi_{1}\ldots \pi_{1}
\pi_{0}$ selecting the $(i+1)$th element of the sequence. 
% For every
% formula sequence $A_{1} , \dots ,A_{n}$ 
The expression $A_{1} \ET
\dots \ET A_{n}$ denotes
%   $( A_{1} \ET ( A_{2} \ET \dots ( A_{n-1} \ET
% A_{n})\dots ))$ or
$\top$ if $n=0$ and thus it is the empty conjunction.

Often, if $\Gamma= x_{1}: A_{1}, \ldots, x_{n}: A_{n}$ and all free
variables of a proof term $t: A$ are in $x_{1}, \ldots, x_{n}$, we
 write $\Gamma\vdash t: A$. From the logical point of view, $t$
represents a natural deduction of $A$ from the hypotheses $A_{1},
\ldots, A_{n}$. We shall write $\text{EM}\vdash t: A$ whenever $\vdash
t: A$, and the notation means provability of $A$ in propositional
classical logic. If the symbol $\parallel$ does not occur in it, then
$t$ is a \textbf{simply typed $\lambda$-term} representing an
intuitionistic deduction.

%We define as usual the notion of context $\mathcal{C}[\ ]$ as the part of a proof term that surrounds a hole, represented by some fixed variable. In the expression $\mathcal{C}[u]$ we denote a particular occurrence of a subterm $u$ in the whole term $\mathcal{C}[u]$. We shall just need those particularly simple contexts which happen to be simply typed $\lambda$-terms. 

\begin{definition}[Simple Contexts]\label{defi-simplec}
A \textbf{simple context} $\mathcal{C}[\ ]$ is a simply typed
$\lambda$-term with some fixed variable $[]$ occurring exactly
once. For any proof term $u$ of the same type of $[]$,
$\mathcal{C}[u]$ denotes the term obtained replacing $[]$ with $u$ in
$\mathcal{C}[\ ]$, \emph{without renaming of bound variables}.
\end{definition}

 \begin{figure*}[!htb]
\footnotesize{
\hrule
\smallskip
\begin{flushleft}
  \textbf{Intuitionistic Reductions} \hspace{20pt}
  $(\lambda x^{\scriptscriptstyle A}\, u)t\mapsto
  u[t/x^{\scriptscriptstyle A}] \qquad
  \pair{u_0}{u_1}\,\pi_{i}\mapsto u_i, \mbox{ for $i=0,1$} \qquad 
  $ $ \inj _i (t) [x_0.u_0,x_1.u_1] \mapsto u_i [t/x_i]$
\end{flushleft}

\smallskip
 
\begin{flushleft}
  \textbf{Disjunction Permutations} \hspace{70pt} $t [x_0.u_0,x_1.u_1]
\xi \mapsto t [x_0.u_0\xi,x_1.u_1\xi] \qquad \text{ if }\xi\text{ is a
one-element stack}$
  \end{flushleft}

\begin{flushleft}
  \textbf{Parallel Operator Permutations} \hspace{10pt} $w(\Ecrom{a}{u}{v}) \mapsto
\Ecrom{a}{wu}{wv} \qquad $ $(\Ecrom{a}{u}{v}) \xi \mapsto \Ecrom{a}{u
  \xi}{v\xi} \qquad \text{ if $\xi$ is a one-element stack and $a$
  does not occur free in $w$}$
\end{flushleft} 

% $(\Ecrom{a}{u}{v})\,\pi_{i} \mapsto \Ecrom{a}{u\,\pi_{i}}{v\,\pi_{i}}$

% $\efq{P}{\Ecrom{a}{w_{1}}{w_{2}}} \mapsto
% \Ecrom{a}{\efq{P}{w_{1}}}{\efq{P}{w_{2}}}\quad$

$\lambda x^{\scriptscriptstyle A}\,(\Ecrom{a}{u}{v}) \mapsto
\Ecrom{a}{\lambda x^{\scriptscriptstyle A}\,u}{\lambda
x^{\scriptscriptstyle A}\, v} \quad $ $ \inj _i ( u \p _a v ) \mapsto
\inj _i (u) \p _a \inj _i(v)\quad $ $\langle u \parallel_{a} v,\,
w\rangle \mapsto \langle u, w\rangle \parallel_{a} \langle v, w\rangle
\quad $ $\langle w, \,u \parallel_{a} v\rangle \mapsto \langle w,
u\rangle \parallel_{a} \langle w, v\rangle \; \text{ if $a$ does not
occur free in $w$}$

$(u\parallel_{a} v)\parallel_{b} w \mapsto (u\parallel_{b}
w)\parallel_{a} (v\parallel_{b} w) \qquad$
$w \parallel_{b} (u\parallel_{a} v) \mapsto (w\parallel_{b}
u)\parallel_{a} (w\parallel_{b} v) \quad \mbox{ if $u, v, w$ do not contain active sessions } $

\medskip 

  \textbf{Communication Reductions}

\begin{flushleft} \textbf{Activation Reduction} \hspace{100pt} $
u \parallel_{a} v \; \mapsto \; u [b/a] \parallel_{b} v [b/a]$

where $a$ is not active, $b$ is a fresh \emph{active} variable, and
there is some occurrence of $a$ in $u$ of the form $a
w$ for a value $w$.
\end{flushleft}

\smallskip 

\begin{flushleft}
  \textbf{Basic Cross Reductions} \hspace{130pt}
  $\mathcal{C}[\send{a}\, u]\parallel_{a} \mathcal{D} \ \mapsto \
  \mathcal{D}[ u /a ]   $ 

where $\send{a} : \non A, a : A $,  $\mathcal{C}[a\, u], \mathcal{D}$ are simply typed
$\lambda$-terms and $\mathcal{C}, \mathcal{D}$ simple contexts; the
sequence of free variables of $u$ is empty;
the displayed occurrence of $\send{a}$ is rightmost; $b$ is fresh
\end{flushleft}
\smallskip

\begin{flushleft}
  \textbf{Cross Reductions} \hspace{70pt}
  $u\parallel_{a}v \mapsto u, \mbox{ if $a$ does not occur in $u$}
  \quad \mbox{and} \quad u\parallel_{a}v \mapsto v, \mbox{ if $a$ does
    not occur in $v$}$
\end{flushleft}

  \smallskip 

$\mathcal{C}[\send{a}\, u]\parallel_{a} \mathcal{D} \ \mapsto \
    ( \mathcal{C}[\send{b} \, \lan \sq{y} \ran ] \parallel_{a} \mathcal{D} )
    \, \parallel_{b}\, \mathcal{D}[ u^{b / \sq{y}  } /a ]$
  \begin{flushleft}
    where  $\send{a} : \non A, a : A$, $\mathcal{C}[a\, u], \mathcal{D}$ are simply typed
    $\lambda$-terms and $\mathcal{C}, \mathcal{D}$ simple contexts;
    $\sq{y}$ is the (non-empty) sequence of the free variables of $u$
    which are bound in $\mathcal{C}[a\,u]$; $B$ is the conjunction of
    the types of the variables in $\sq{y}$; the displayed occurrence of $\send{a}$ is rightmost; $b$ is
    fresh and $\send{b} : \non B, b : B$
  \end{flushleft}

}
\hrule
\caption{Reduction Rules for $\lambda_{em}$}\label{fig:red_em}
\vspace{-10pt}
\end{figure*}

 \begin{definition}[Multiple Substitution]\label{defi-multsubst}
Let $u$ be a proof term, $\sq{x}=x_{0}^{A_{0}}, \ldots, x_{n}^{A_{n}}$ a sequence of  variables and $v: A_{0}\land \ldots \land A_{n}$. The substitution 
$u^{v/ \sq x}:=u[v\,\proj_{0}/x_{0}^{A_{0}} \ldots \,v\,\proj_{n}/x_{n}^{A_{n}}]$
replaces each variable $x_{i}^{A_{i}}$ of
any term $u$ with the $i$th projection of $v$.
\end{definition}

We define below the notion of stack, corresponding to Krivine's stack~\cite{Krivine} and known as \emph{continuation} because it embodies a
series of tasks that wait to be carried out. A stack represents, from
the logical perspective, a series of elimination rules; from the
$\lambda$-calculus perspective, a series of either operations
or arguments.

\begin{definition}[Stack]\label{definition-stack}
A \textbf{stack} is a, possibly empty, sequence \mbox{$\sigma =
\sigma_{1}\sigma_{2}\ldots \sigma_{n} $} such that for every $ 1\leq
i\leq n$, exactly one of the following holds: either $\sigma_{i}=t$,
with $t$ proof term or $\sigma_{i}=\pi_{j}$ with $j\in\{0,1\}$, or
$\sigma_{i}=[x_{1}.u_{1}, x_{2}.v_{2}]$ or $\exfalso_{P}$ for some
atom $P$.  We will denote the \emph{empty sequence} with $\epsilon$
and with $\xi, \xi', \ldots$ the stacks of length $1$. If $t$ is a
proof term, $t\, \sigma$ denotes the term $(((t\,
\sigma_{1})\,\sigma_{2})\ldots \sigma_{n})$.
 \end{definition}

  \begin{definition}[Case-free] A stack $\sigma$ is \textbf{case-free}
if it does not contain any sub-stack of length $1$ of the form
$[z_1.w_1, z_2.w_2]$.
  \end{definition}

%\begin{proposition}[Shape of Normal Forms]
%  \label{prop:shape_norm}
%  Every normal simply typed $\lambda$-term has one of the following forms: 
%\begin{itemize}
%\item $\lambda x . t$
%\item $\pair{u}{v}$
%\item $\inj_{i}(t)$ for $i \in \{0,1\}$
%\item $x \, \sigma$ with $x$ variable and $\sigma$ stack
%\item {\ehi  $x \, \sigma \, (u)\exfalso_P$ with $x$, variable $\sigma$
%    stack and $t$ term}
%\item {\ehi  $x \, \sigma \, t [x.u,y.v]$ with $x$ variable, $\sigma$
%    stack and $t,u,v$ terms}
%\end{itemize}
%\end{proposition}
%\begin{proof}
%  Standard.
%\end{proof}

%{\ehi ***OLD OR NEW?***
%\begin{definition}
%For any term $t:\fal$ the term $[t]_A$ is recursively defined as
%follows 
%
%If $C=\fal$,
%we take $t: \fal$. If $C \neq \fal$ but still atomic, we take $(t)\exfalso_{C}$.  We
%reason then by cases on the form of $C$.
%\begin{itemize}
%\item $C = C_1 \et C_2$. By induction hypothesis there are terms
%$t_1:C_1$ and $t_2:C_2$ only containing free
%variables, channels and redexes occurring in $(t)\exfalso_{P}$. We
%take the term $\pair{t_1}{t_2} : C_1 \et C_2$.
%\item $C = C_1 \impl C_2$. By induction hypothesis there is a term
%$t_2:B_2$ only containing free
%variables, channels and redexes occurring in $(t)\exfalso_{P}$. We take the
%term $\lam x^{C_1} \, t_2 : C_1 \IMPL C_2$.
%\end{itemize}
%\end{definition}
%}
%

\begin{definition}[Parallel Form~\cite{ACGlics2017}]\label{definition-parallel-form}
A term $t$ is a \textbf{parallel form}
  whenever, removing the parentheses, it can be written as
$$t = t_{1}\parallel_{a_{1}} t_{2}\parallel_{a_{2}}\ldots \parallel_{a_{n}} t_{n+1}$$
where each $t_{i}$, for $1\leq i\leq n+1$, is a simply typed $\lambda$-term.
  \end{definition}

\subsection{Reduction Rules and Normalization}

To present our reduction rules, we first need to define the notion of
value, which in our framework will represent a message which is ready
to be transmitted and can generate new computations when received. We
consider $a\sigma$ a value as well, when $a$ is active, since we know
that some process will eventually replace that channel endpoint with a
value.
  
%The exception is  $t\, \exfalso_P$, which  represents an error message

\begin{definition}[Value]\label{def:value}
A \textbf{value} is a term of the form $ \langle t_1, \ldots , t_n
\rangle $, where for some $1 \leq i \leq n $, $t_i =
\lambda x \, s$, $t_i = \inj _i (s)$, $t_i = t\, \exfalso_P $, $t_{i}=t
[x.u,y.v]$ or $t_{i}=a\sigma$ for an
active channel $a$.  
\end{definition}

Communication between two processes will take place only when the corresponding channel is active.

\begin{definition}[Active Channels and Active Sessions]
We assume that the set of channel variables is partitioned into two disjoint classes: \textbf{active channels} and \textbf{inactive channels}.
A term $u \parallel_a v$ is called an \textbf{active session}, whenever $a$ is
active.
\end{definition}

A channel can be activated whenever it is applied to at least one value, thus signaling the need for communication.

\begin{definition}[Activable Channel]
  We say that $a$ is an
 \textbf{ activable channel in $u$} is there is an occurrence $av$ in $u$ for
  some value $v$.
\end{definition}

The reduction rules of $\lamem$
% and describe the intuition behind them from both
% the proof theoretic and the computational point of view.
%The rules of the calculi $\lamAx$, described in the next section, 
%are a generalization of them 
are in  Figure~\ref{fig:red_em}. They consist of the familiar reductions
for simply typed $\lambda$-calculus, instances of $\vel$
permutations adapted to the $\p$ operator, together with new communication reductions. 

\noindent \textbf{Activation reductions} $\quad   u \parallel_{a} v \; \mapsto \; u
[b/a] \parallel_{b} v [b/a]$ \\can be fired whenever some occurrence of
$a$ in $u$ or $v$ is applied to a value: the channel $a$ is just
renamed to an active channel $b$. This operation has the effect of
making the reduct an active session, that will produce a sequence of
message exchanges, ending only when all channel occurrences of $b$
will have transmitted their messages, regardless they are values or
not.

\noindent \textbf{Basic Cross reductions}  $\quad \mathcal{C}[\send{a}\, u]\parallel_{a} \mathcal{D} \ \mapsto \
  \mathcal{D}[ u /a ]   $ \\ can be fired whenever the channel $a$ is active and $u$ is a closed $\lambda$-term. In this case $u: A$ represents executable code or data and can replace directly all occurrences of the endpoint channel $a: A$.  

\textbf{Cross Reductions}\\
Surprisingly, logical types enable us to send open $\lambda$-terms as
messages and to fill their free variables later, when they will be
instantiated. Subject reduction will nevertheless be guaranteed. 
To see how it is possible, suppose that a channel $a$ is
used to send an arbitrary sub-process $u$ from a process $s$ to a
process $t$:
\begin{center}
\includegraphics[width=0.25\textwidth]{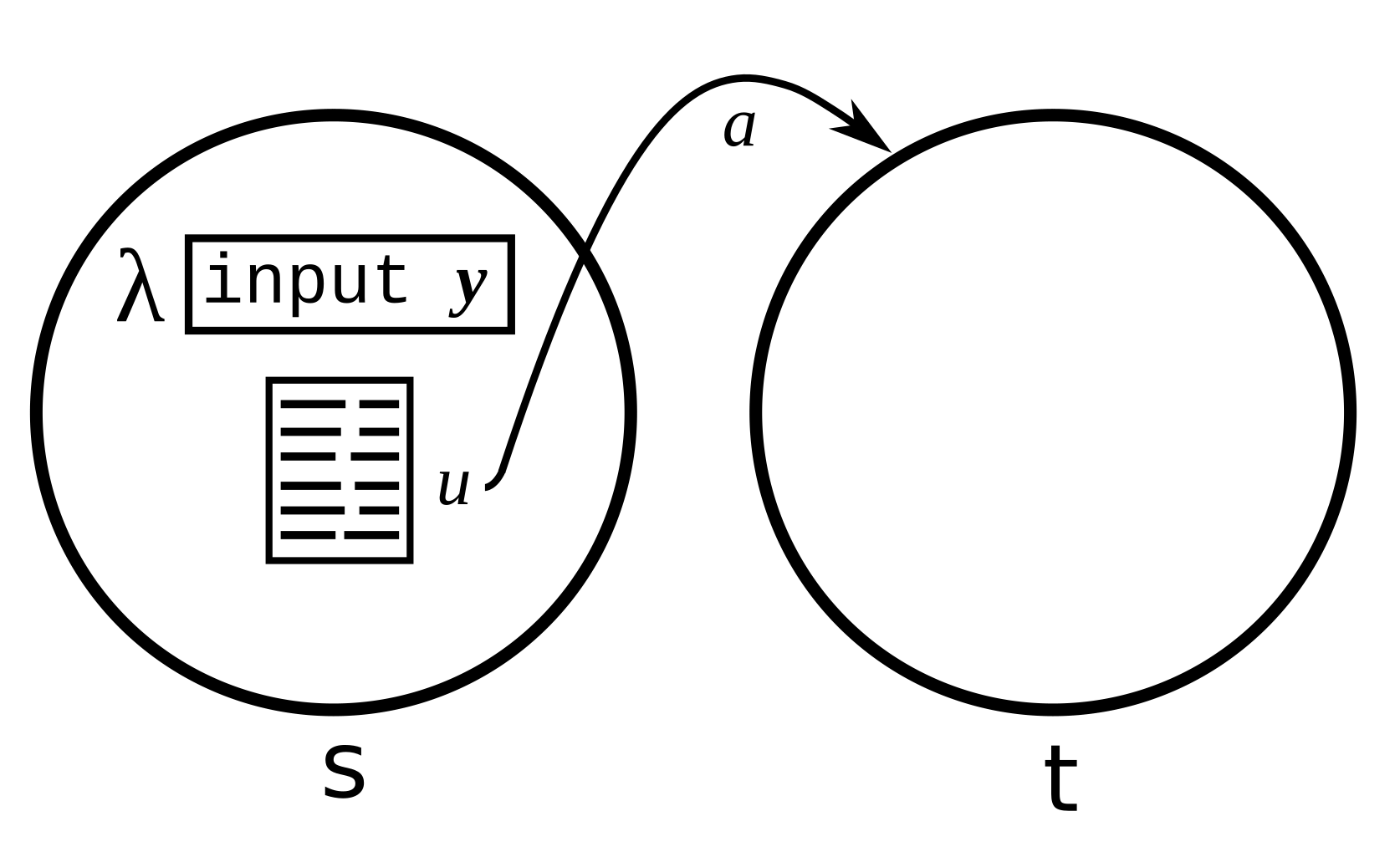}
\end{center}
Since $u$ is not necessarily closed, it might depend
on its environment $s$ for some of its inputs -- $\sq{y}$ in the
example is bound by a $\lambda$ outside $u$ -- but this is solved during the full cross reduction by a
fresh channel $b$ which redirects $\sq{y}$ to the new location of $u$: 
\begin{center}
\includegraphics[width=0.40\textwidth]{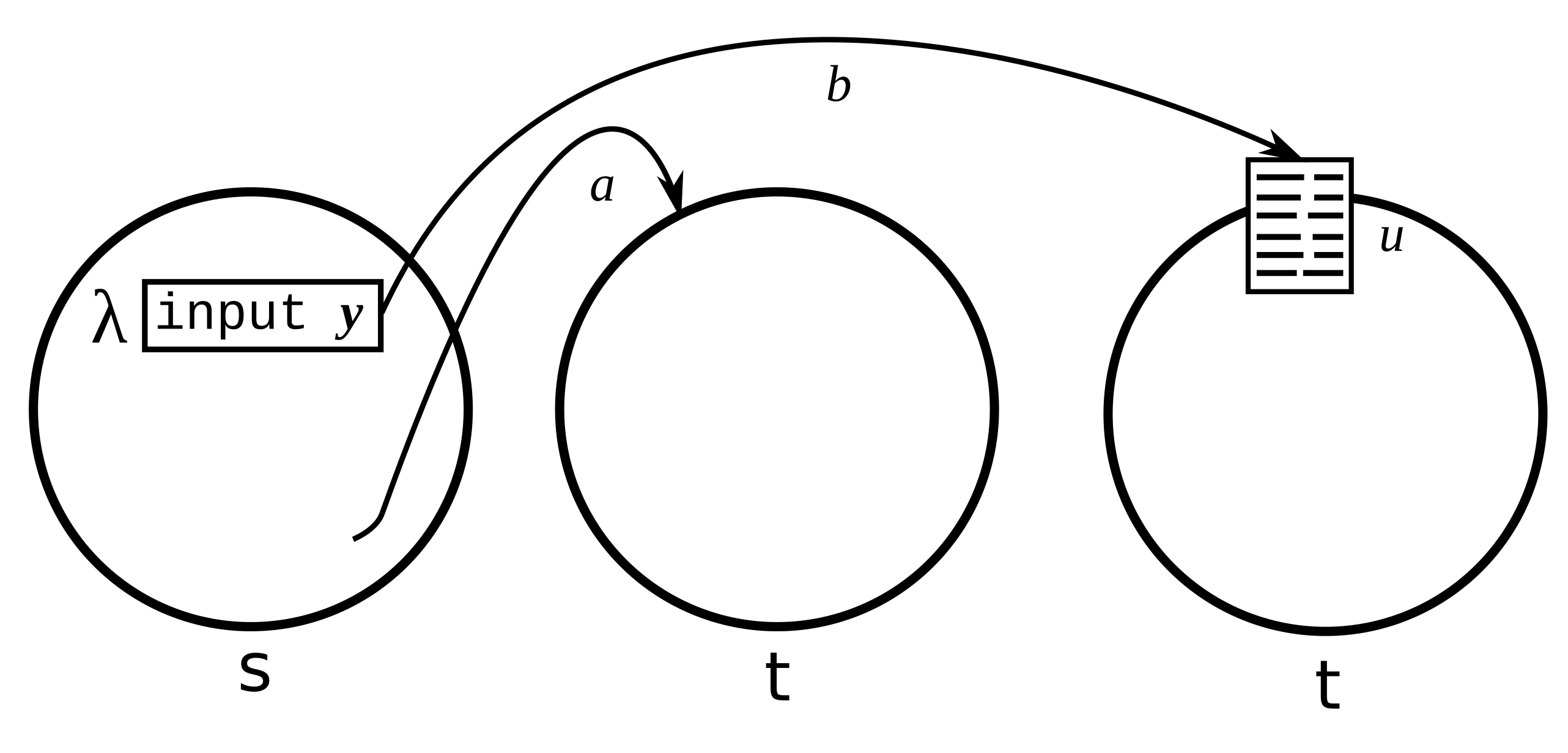}
\end{center} The old channel $a$ is kept for future messages 
$s$ might send to $t$. Technically, the cross reduction has this shape:

$$\mathcal{C}[\send{a}\, u]\parallel_{a} \mathcal{D} \ \mapsto \
    ( \mathcal{C}[\send{b} \, \lan \sq{y} \ran ] \parallel_{a} \mathcal{D} )
    \, \parallel_{b}\, \mathcal{D}[ u^{b / \sq{y}  } /a ]$$
 We see that, on one hand, the open term $u$ is replaced in
$\mathcal{C}$ by the new channel $\send{b}$ applied to the sequence
$\sq{y}$ of the free variables of $u$; on the other hand, $u$ is sent
to $\mathcal{D}$ as $u^{b / \sq{y}}$, so its free variables are
removed and replaced by the endpoint channel $b$ that will receive
their future instantiation. In Example \ref{ex:optimize} we exemplify
how to use cross reduction for program optimization.
 
 \noindent\textbf{Communication Permutations} $\; $ The only permutations for $\p$ that are not $\vel$-permutation-like
are $(u\parallel_{a} v)\parallel_{b} w \mapsto (u\parallel_{b}
w)\parallel_{a} (v\parallel_{b} w) $ and $w \parallel_{b}
(u\parallel_{a} v) \mapsto (w\parallel_{b} u)\parallel_{a}
(w\parallel_{b} v)$. These kind of permutations are between parallel operators themselves
and address the \emph{scope extrusion} issue of private channels
\cite{Milner}. A parallel operator is allowed to commute with another
one only when strictly necessary, that is, if there is not already an
active session inside $u, v, w$ that can be first reduced.

%Even though this reduction is not fully general from a
%proof-theoretical perspective, it corresponds to an essential form of
%communication of the closed term $u$ through the channel $a$ from
%the process $\mathcal{C}$ to the process $\mathcal{D} $: 
%\[\textbf{Basic Cross Reduction} \qquad\quad \mathcal{C}[\send{a}\, u]\p_{a}
%\mathcal{D} \; \mapsto \; \mathcal{D}[ u /a ]\]The
%communicated message $u$, according to the reduction definition, must
%be a value. From an intuitive viewpoint, this condition guarantees
%that only terms with computational content are sent through the
%channels.

The idea behind the normalization (see Section~\ref{sec:norm} for the proof) is to
apply
%  normalization procedure for $\lamem$ is described in detail and
% with motivations in Section~\ref{sec:norm}. It consists in applying 
to
any term $t$ the following recursive procedure:
 \begin{enumerate}
 \item \emph{Parallel normal form production}. We transform $t$ into a
term $u$ in parallel normal form, using permutations
(Prop.~\ref{proposition-parallelform}).
 
 \item \emph{Intuitionistic Phase}. As long as $u$ contains intuitionistic redexes, we apply intuitionistic reductions.
 
\item \emph{Activation Phase}. As long as $u$ contains activation redexes, we apply activation reductions.

\item \emph{Communication Phase}. As long as $u$ contains active
  sessions, we select the uppermost, permute it upward if necessary or
  apply directly a cross reduction. % directly if necessary
  Go to step 2.
 
\end{enumerate}
 
  Since we are dealing with a Curry-Howard isomorphism, every reduction rule of $\lamem$ corresponds to a reduction for the natural deduction calculus $\NJ + (\exm)$.

Basic reductions correspond to the logical reductions:

\[ \vcenter{\infer{F}{\infer*{F}{\infer{
\fal}{\deduce{A}{\delta}}}&&\infer*{F}{[A]}} }\quad \mapsto
\quad  \vcenter{ \infer*{F}{\deduce{A}{\delta}} }\] 
where no assumption in $\delta$ is discharged below $\bot$ and above $F$. When this is the
case, intuitively, the displayed instance of $(\exm )$ is hiding some
redex that should be reduced. The reduction precisely expose this
potential redex and we are  thus able to reduce it.

%Consider an occurrence of the $(\exm )$ rule
%\[\eta \; = \; \vcenter{\infer{\mathcal{C}[\send{a}\, u]\p_{a} \mathcal{D} :
%F}{\infer*{\mathcal{C}[\send{a}\, u]: F}{\infer{\send{a} u :
%\fal}{\deduce{u:A}{\delta}}}&&\infer*{\mathcal{D}:F}{[a: A]}} }\quad \mapsto
%\quad \eta ' \; = \; \vcenter{ \infer*{\mathcal{D}[
%u /a ]:F}{u: A} }\] here we focus on a particular occurrence of the
%channel $\send{a}$. In general, $u: A$ might be introduced by $\delta$
%and eliminated in the right branch of the derivation. When this is the
%case, intuitively, the displayed instance of $(\exm )$ is hiding some
%redex that should be reduced. The reduction precisely expose this
%potential redex and we are  thus able to reduce it.

%This an easy solution which, nonetheless, only
%applies when $\delta$ has no free assumptions which are discharged
%inside $\eta$ -- or, equivalently, when $u$ is a
%closed $\lam$-term. 

Cross reductions correspond to the logical reductions:

\[ \vcenter{\infer{F}{\infer*{F}{\infer{
\fal}{\deduce{A}{\deduce{\delta}{\Gamma}}}}&&\infer*{F}{[A]}} }\quad \mapsto \vcenter{\infer[*]{
F}{ \infer{F}{\infer*{
F}{\infer[*]{ \fal}{\infer=[\et i]{\bigwedge \Gamma }{ \Gamma}
}}&&\infer*{F}{[A]}} &&\infer*{F}{\deduce{A}{\deduce {\delta}{\infer=[\et
e]{\Gamma}{[ \bigwedge \Gamma]^* }}}}}}\] Here we use $\delta$ to
prove all assumptions $A$, as before, but now we also need to
discharge the assumptions $\Gamma $ open in $\delta$ and discharged below $\bot$ and above $F$ in
the rightmost branch. This is done by a new $( \exm )$ rule application
$*$ to the conjunction $\bigwedge\Gamma $ of such
assumptions. Accordingly, we use the inference
$\vcenter{\infer{ \fal}{
\bigwedge \Gamma}}$ to replace one occurrence of
$\vcenter{\infer{ \fal}{A}}$ in the leftmost
branch. In order to justify the other occurrences of the latter rule,
we need the $( \exm ) $ application joining the leftmost branch and the
central branch, which is just a duplicate.

\subsection{Computing with $\lamem$}
We present few examples of the computational capabilities of $\lamem$. 
%Such examples show the expressive power of the systems in two distinct ways: with respect to simply typed $\lam$-calculus and with respect to formalisms featuring only data communications.
%
%All the examples employ the normalization algorithm in Definition~\ref{defi-mastredstrategy}; to limit its non-determinism, when we have
%to reduce $u\parallel_{a}v$ because $a$ does not occur neither in $u$
%nor in $v$, we always use the reduction $u\parallel_{a}v \mapsto
%u$. Henceforth we use the type $\mathsf{Bool}$ for the Boolean values.

\begin{example}[\textbf{$\lamem$ vs the 
simply typed $\lambda$-calculus}]  
As is well known there
is no $\lambda$-term $\mathsf{O}: \mathsf{Bool} \IMPL \mathsf{Bool}
\IMPL\mathsf{Bool} $ such that $\mathsf{O}\mathsf{F}\mathsf{F} \mapsto \mathsf{F}$, $\mathsf{O}u\mathsf{T} \mapsto \mathsf{T}$,
$\mathsf{O}\mathsf{T}u \mapsto \mathsf{T}$,   
where $u$ is, e.g., a variable.  $\mathsf{O}$ can instead be defined in
Boudol's calculus~\cite{Boudol89} and in $\lamg$~\cite{ACGlics2017}. Assuming to add the boolean type in our calculus, the $\lam _{em}$ term for such parallel OR is
%(as usual the term ``$\mathsf{if} \, u \, \mathsf{then} \, s \,
%\mathsf{else} \, t$'' reduces to $s$ if
%$u = \mathsf{T}$, and to $t$ if $u = \mathsf{F}$):
\[\mathsf{O} :=  \lambda x^{\mathsf{Bool}} \, \lambda
    y^{\mathsf{Bool}} (P_1 \parallel_a P_2) \]
where ($\send{a}: \mathsf{Bool}\IMPL \bot$, $a: \mathsf{Bool}$ and the construct ``$\mathsf{if} \, u \, \mathsf{then} \, s \,
\mathsf{else} \, t$'' is as usual)
\[P_1 = \mathsf{if} \, x \, \mathsf{then} \, \mathsf{T} \, \mathsf{else}
\,   \efq{\mathsf{Bool}}{\send{a}  \mathsf{F}} \quad \; \mbox{and} \; \quad 
P_2 = \mathsf{if} \, y \,
\mathsf{then} \, \mathsf{T} \, \mathsf{else} \, a
\]
%Now 
\begin{small}\begin{align*}
\mbox{Now} \quad \mathsf{O}\, u \,\mathsf{T}     &\mapsto^{*}  
(\mathsf{if} \, u \, \mathsf{then} \, \mathsf{T} \, \mathsf{else}
\,  \efq{\mathsf{Bool}}{\send{a}  \mathsf{F}})
    \parallel_{a} (\mathsf{if} \, \mathsf{T} \,
\mathsf{then} \, \mathsf{T} \, \mathsf{else} \, a)\\
&\mapsto^{*} (\mathsf{if} \, u \, \mathsf{then} \, \mathsf{T} \, \mathsf{else}
\, \efq{\mathsf{Bool}}{\send{a}  \mathsf{F}})  \parallel_{a} \mathsf{T} \,  \mapsto \,
    \mathsf{T}
  \end{align*}
\end{small} And symmetrically $\mathsf{O}\,  \mathsf{T}\, u \,
\mapsto^{*} \, \mathsf{T} $. %, while $\mathsf{O} \, \mathsf{F} \,\mathsf{F}  \mapsto^{*} F$.
On the other hand,\begin{small}
  \begin{align*}
\mathsf{O} \, \mathsf{F} \,\mathsf{F}  \mapsto^{*} &
(\mathsf{if} \, \mathsf{F} \, \mathsf{then} \, \mathsf{F} \, \mathsf{else}
\,  \efq{\mathsf{Bool}}{\send{a}  \mathsf{F}})    \parallel_{a} (\mathsf{if} \, \mathsf{F} \,
\mathsf{then} \, \mathsf{F} \, \mathsf{else} \, a) \mapsto^{*}    \efq{\mathsf{Bool}}{\send{a}  \mathsf{F}}
    \parallel_{a} a
\; \mapsto \;
     \mathsf{F}
  \end{align*}\end{small}
\end{example}

\begin{example}[\textbf{Cross reductions for program efficiency}]
\label{ex:optimize} 
Consider the parallel processes: $M \parallel_{d} (
P \parallel_{a} Q)$. The process $P$ contains a channel $a$ to send
the message $\lan s , y \ran$ to $Q$. But the variable $y$ stands for a
missing part of the message which needs to be replaced by a term that
$M$ has to compute and send to $P$. Hence the whole interaction needs to 
wait for $M$. The cross reduction
handles precisely this kind of missing arguments. It
enables $P$ to send immediately the message through the channel $a$
and establishes a new communication channel on
the fly which redirects the missing term, when ready, to the new location of the message inside $Q$.
As a concrete example assume that\begin{small}
$$    M \mapsto^{*}\; {\mathcal M} \, [\send{d} \, (\lambda x \, xt )] \quad
    P  := \quad   {\mathcal P}\, [d (\lam y \, \send{a} \lan s ,
        y \ran )]
          \quad 
    Q  : = \quad  {\mathcal Q}\, [a\pi_0]$$
\end{small}where $s$ and $t$ are closed terms, and the contexts
${\mathcal M}, {\mathcal P}$ and ${\mathcal Q}$ do not contain
other instances of the channels $d$ and $a$. Without a special mechanism for
sending open terms, $P$ must wait for $M$ to normalize. Afterwards $M$
passes $\lambda x \, xt$ through $d$ to $P$ by the following
computation:
  \begin{align*} M \parallel_{d} ( P \parallel_{a} Q) \,  \mapsto^{*}
 \, 
(M \parallel_{d} P) \parallel_{a} Q \, \mapsto^{*} ({\mathcal M} \, [\send{d} \, (\lambda x \, xt )] \parallel_{d} P) \parallel_{a} Q \\ 
\mapsto {\mathcal P}\, [( \lambda x \, xt) (\lam y \, \send{a} \lan s ,
        y \ran )] \p_a
    Q 
\mapsto {\mathcal P}\, [(\lam y \, \send{a} \lan s ,
        y \ran ) t] \p_a
     Q \,  \mapsto \\
{\mathcal P}\, [\send{a} \lan s ,
        t \ran ] \p_a
    {\mathcal Q}\, [a\pi_0] \, \mapsto \, {\mathcal Q}\, [\lan s,
                                 t\ran \pi_0] \, \mapsto  {\mathcal Q}\, [s] 
  \end{align*}
But it is clear that $Q$ does not need $t$ at all. Even though $Q$
waited so long for the pair $\lan s, t\ran $, it only keeps
the term $s$. 

Our normalization algorithm allows instead  $P$ to directly send
$\lambda y \, \send{a} \lan s, y\ran  $ to $Q$ by executing a full cross reduction:
\begin{align*}
M \p_d (   {\mathcal
  P}\, [d (\lam y \, \send{a} \lan s , y\ran )] \p_a  {\mathcal Q}\,
  [a\pi _0])\;  \mapsto^{*}\\ 
    M \p_d ( (   {\mathcal P}\, [d (\lam y \, \lan s , \send{b} y\ran
  )] \p_a Q) \p _b   {\mathcal Q}\, [\lan s , b \ran \pi _0] )
\;  \mapsto^{*}\\
M \p_d ( (   {\mathcal P}\, [d (\lam y \, \lan s , \send{b} y\ran
  )] \p_a Q) \p _b  {\mathcal Q}\, [\lan s , b \ran \pi _0] )
\end{align*}
where the communication $b$ handles the redirection of the data $y$ in
case it is available later. But in our case $Q$
already contains all it needs to terminate its computation, indeed 
\begin{align*}
\mapsto M \p_d ( (   {\mathcal P}\, [d (\lam y \, \lan s , \send{b} y\ran
  )] \p_a Q) \p _b   {\mathcal Q}\, [s]) \mapsto^*  {\mathcal Q}\, [s]
\end{align*}since ${\mathcal Q}\, [s]$ does not contain communications
anymore. Notice that the time-consuming normalization of the term $M$ does not
need to be finished at this point.
%
%More in general, the freedom of communication ensured by the
%full cross reduction mechanism enables us to really and fully exploit
%parallelism: a process can compute part of a message and exactly at
%the same time another part of the same message can be communicated and
%used by another process.
\end{example}

\subsection{Close relatives of the calculus}\label{sec:broad}

As often happens with computational interpretations of logics --
compare, for example~\cite{Griffin} and~\cite{Parigot} -- logical
equivalence of formulae here does not play a decisive r\^ole.  When
extracting communications mechanisms from disjunctive tautologies,
differences in the proof-theoretical representation of the axiom can
lead to essential differences in the computational behavior of the
resulting communication mechanisms. A particular striking case is the
class of formulae with the following shape: $\non A \vel A \vel \dots
\vel A$. Indeed, even though all these formulae are equivalent in a
very strong sense to $(\exm)$, it is very interesting to consider
their computational counterparts as distinct objects. Its typing rule
is:
% The typing rule  is:
\[\infer{\pp{a}{u \p v_1\p \dots \p v_m} :
F}{\infer*{u:F}{ [\send{a} : \non A]} & \infer*{v_1 : F}{[a: A]} & \dots &
\infer*{v_n : F}{[a: A]} }\]
%Notice that in this notation the $\p_x$ operator is now a unary binder for communication variables. We will use parentheses in the obvious way in order to unambiguously separate instances of the term.
The associated  basic cross reduction leads to a broadcasting system:
  \[\pp{a}{\mathcal{C}[\send{a}\, u]\parallel \mathcal{D}_1 \dots \parallel \mathcal{D}_n} \;  \mapsto \;  \ \mathcal{D}_1 [ u /a ] \p \dots \parallel \mathcal{D}_n[ u /a ]\] where $\send{a} : \non A, a : A $,  $\mathcal{C}[a\, u]$ are simply typed
$\lambda$-terms, the
sequence of free variables of $u$ is empty,
$a$ does not occur in $u$, and $\p$ is the operator typed by the contraction rule. The reduction implements a
communication schema in which the process corresponding to $\non A $
broadcasts the same message to the processes corresponding the different
instances of $A$.

%\begin{example}[\textbf{Broadcasting to two processes}] 
%\label{ex:broad}
%
%Consider, to fix ideas, the communication mechanism extracted from $(\exm)_2 \;\non A \vel A \vel A $. It implements a
%communication schema in which the process corresponding to $\non A $
%broadcasts the same message to the processes corresponding to both
%instances of $A$. Indeed let
%
% \[t \quad :=
%\quad \p_{a_1} {\mathcal C}[ a_1^{\non A}\, m ] \p_{a_2} u \p_{a_3}
%v \] where $u$ and $v$ contain several occurrences respectively of
%$a_2^{A}$ and $a_3^{A}$, and $m$ is a closed term. The term $t$
%represents a communication system in which the process ${\mathcal C}[
%a_1^{\non A}\, m ]$ broadcasts the same message $m$ to both terms $u$
%and $v$: 
%\[t\; : =\; \p_{a_1} {\mathcal C}[ a_1^{\non A}\, m ] \p_{a_2} u \p_{a_3}
%v \;\;  \mapsto \; \; u[m / a_2 ] \p v[m/ a_3]\]
%Notice that the
%operator $\p$, which is typed by the contraction rule, is a simple
%parallel operator that does not bind any communication variable. The
%terms $u[m / a_2 ] $ and $ v[m/ a_3]$, after the reduction, simply run
%in parallel without communicating  between themselves.
%%
%%In general, using communication systems based on axioms of the form
%%$\non A \vel A \vel \dots \vel A$ we obtain communication mechanisms
%%that implement broadcasting for larger sets of receiving processes:
%%\[ \p_{a_1} {\mathcal C}[ a_1^{\non A}\, m ] \p_{a_2} u_2 \dots
%%\p_{a_n} u_n \; \mapsto \; u_2[m / a_2 ] \p \dots \p u_n [m / a_n ] \]
%\end{example}

\section{General case: the $\lama$ calculi}
\label{sec:ax}

We introduce the $\lama$ calculi that solve the equation
$$\infer{\NJ}{\mbox{simply typed } \lambda\mbox{-calculus}} \quad = \quad  
\infer{\NAx}{?}$$
% They extend the
%standard $\NJ$ Curry--Howard correspondence in a similar way to
%$\lamem$, but are defined on the disjunctive axioms
%(\ref{ouraxiom}). % (see Section~\ref{sec:nd}).
%tautologies $\ax = \bigvee
%(\bigwedge A_i \to B_j) $ where $A_i$ (resp. $B_j$) is a propositional
%variable or $\top$ (resp. $\bot$) and all $A_i$'s are distinct. 
To simplify notation (see remark \ref{rem:cond}), we shall only consider  $$\ax = (A_1  \impl B_1)\vel \dots \vel (A_m \impl B_m)$$ where
 no $A_i$ is repeated and for every  $A_{i}\neq \top$, $A_{i}=B_{j}$ for some $j$. %$(\ax)$ used as typing rule of $\lama$ is
%\begin{equation}
%\label{ourN}
%\vcenter{\infer{F}{\infer*{F}{\infer{ B_1}{A_1}}&
%\dots & \infer*{ F}{\infer{B_m}{ A_m}}}}
%\end{equation}
%
%Figure~\ref{fig:red} below shows the reductions for $\lama$-terms. 

The set of type assignment rules for $\lama$ terms are obtained
replacing the last three rules in Table~\ref{tab:type_em}
by
\smallskip
\hrule 
\begin{footnotesize}
  \begin{equation}\label{ourN}
\vcenter{\infer{a^{A \impl B } \, u :
         B}{ u : A}}\qquad \vcenter{\infer[(\ax )]{\pp{a}{u_{1} \p\dots\p u_{m}}:
         B}{\infer*{u_{1}:B}{[a^{\scriptscriptstyle A_{1}\IMPL B_{1}}:
           A_{1}\IMPL B_{1}]}&\dots &\infer*{u_{m}:B}{[a^{A_{m}\IMPL
             B_{m}}: A_{m}\IMPL B_{m}]}}}
   \end{equation}
\end{footnotesize}
all occurrences of $a$ in $u_{i}$ for $1\leq i\leq m$ are of
the form $a^{A_{i}\IMPL B_{i}}$.
\smallskip
\hrule 

%     $\begin{array}{c} x^A: A
%      \end{array} \text{ for $x$ intuitionistic variable}\quad   $ 
%       $\vcenter{ \infer[\text{for $a$ channel variable}]{a^{A \impl B } \, u :
%          B}{ u : A}}$
% \medskip

%      $\vcenter{\infer{ \langle u,t\rangle: A \wedge B}{u:A & t:B}}
%      \qquad\qquad \vcenter{\infer{u\,\pi_0: A}{u: A\wedge B}} \qquad \qquad
%      \vcenter{\infer{u\,\pi_1: B}{u: A\wedge B}}$

%      $ \vcenter{\infer{\lambda x^{A} u: A\rightarrow
%          B}{\infer*{u:B}{[x^{A}: A]}}} \quad \vcenter{\infer{tu:B}{ t:
%          A\IMPL B & u:A}} \quad $
%      $\vcenter{\infer{ \efq{P}{u}: P}{ u: \bot}}\quad \text{with $P$
%        atomic, $P \neq \bot$.}$

%      $\infer{\inj_{0}(u): A\vee B }{u: A} \quad $
%      $\infer{\inj_{1}(u): A\vee B}{u: B}\quad $
%      $\infer{u\, [x^{A}.w_{1}, y^{B}.w_{2}]: C}{ u: A\lor B &
%        \infer*{w_{1}: C}{[x^{A}: A]} & \infer*{w_{2}: C}{[y^{B}: B]}}$
    
%      \medskip

     % $\vcenter{ \infer[\ax]{\pp{a}{u_{1} \p\dots\p u_{n}}:
     %     B}{\infer*{u_{1}:B}{[a^{\scriptscriptstyle A_{1}\IMPL B_{1}}: A_{1}\IMPL B_{1}]}&\dots &\infer*{u_{n}:B}{[a^{A_{n}\IMPL B_{n}}: A_{n}\IMPL B_{n}]}}} $
     %     $\mbox{where all the occurrences of $a$ in $u_{i}$ for $1\leq i\leq n$ are of the form $a^{A_{i}\IMPL B_{i}}$.}$

     % $\qquad \vcenter{\infer[\text{contraction}]{ t_1 \p t_2 : A}{ t_1 : A & t_2 :
     %     A }}$
% \end{center}

  %  \end{footnotesize}
% \hrule
% \smallskip

\begin{figure*}[!htb]
\hrule
\begin{footnotesize}

\smallskip
%\begin{flushleft}
%\textbf{Axiom} \hspace{150pt} 
% $\ax=(F_{1}\rightarrow G_{1})\lor \dots \lor (F_{k}\rightarrow G_{k}) $
%\end{flushleft}
\begin{flushleft}
  \textbf{Intuitionistic Reductions} \hspace{20pt}
  $(\lambda x^{\scriptscriptstyle A}\, u)t\mapsto
  u[t/x^{\scriptscriptstyle A}] \qquad
  \pair{u_0}{u_1}\,\pi_{i}\mapsto u_i, \mbox{ for $i=0,1$} \qquad 
  $ $ \inj _i (t) [x_0.u_0,x_1.u_1] \mapsto u_i [t/x_i]$
\end{flushleft}

\smallskip
 
\begin{flushleft}
  \textbf{Disjunction Permutations} \hspace{70pt} $t [x_0.u_0,x_1.u_1]
\xi \mapsto t [x_0.u_0\xi,x_1.u_1\xi] \text{ if }\xi\text{ is a
one-element stack}$
  \end{flushleft}

\begin{flushleft}
  \textbf{Parallel Operator Permutations }  $\pp{a}{u_{1}\p\dots
    \p u_{m}}\, \xi \mapsto  \pp{a} {u_{1} \xi\p \dots\p
     u_{m} \xi },  \text{ if  $\xi$ is a
one-element stack and  $a$ does
    not occur free in $\xi$}$ 
\end{flushleft}
 
 $w \pp{a}{u_{1}\p \dots \p u_{m}} \mapsto
 \pp{a}{ w u_{1}\p\dots \p w u_{m}},  \mbox{ if $a$ does
    not occur free in $w$} $

%  $({\p_{a_{i_{1}}} u_{1}\dots \p_{a_{i_m}} u_{m}}) \mathsf{efq}_{P}\,
%  \mapsto \p_{a_{i_{1}}}{u_{1}\mathsf{efq}_{P}\dots \p_{a_{i_m}}
%    u_{m}\mathsf{efq}_{P}} \qquad\qquad$
% $({\p_{a_{i_{1}}} u_{1}\dots \p_{a_{i_m}} u_{m}})\, \proj_{j} \mapsto
% \p_{a_{i_{1}}}{u_{1}\proj_{j}\dots \p_{a_{i_m}} u_{m}\proj_{j}} $

 $\lambda x^{\scriptscriptstyle A}\,\pp{a}{u_{1}\p\dots \p u_{m}} \mapsto
\pp{a}{\lambda x^{\scriptscriptstyle A} u_{1}\p\dots \p \lambda x^{\scriptscriptstyle A} u_{m}}$

$\inj_{i}(\,\pp{a}{u_{1}\p\dots \p u_{m}}) \mapsto
\pp{a}{\inj_{i}(u_{1})\p\dots \p \inj_{i}(u_{m})}$

$\langle \pp{a}{ u_{1}\p \dots\p u_{m}},\, w \rangle \mapsto \pp{a}{
  \langle u_{1}, w\rangle\p \dots\p \lan u_{m}, w\ran }, \mbox{ if $a$ does
    not occur free in $w$}$

$\langle w, \, \pp{a}{u_{1}\ \dots\p u_{m}}\rangle
\mapsto \pp{a}{ \lan w, u_{1}\ran\p \dots\p  \lan w, u_{m} \ran } ,\mbox{ if $a$ does
    not occur free in $w$}$

 $ \pp{a}{ u_1\p \dots \p \pp{b}{w_1\p  \dots \p w_n}  \dots \p
u_m} \; \mapsto \;   \pp{b}{ \pp{a}{ u_1\p \dots \p w_1\dots \p u_m}
\dots \p  \pp{a}{ u_1\p \dots \p w_n \dots \p u_m}}  $

\mbox{ if $u_1 , \dots , u_m$ and $\pp{b}{w_1\p  \dots \p w_n}$ do not contain active sessions}

% \begin{flushleft}
%   \textbf{Permutation Reductions} \hspace{50pt}
%   $(\Ecrom{a}{u}{v}) w \mapsto \Ecrom{a}{uw}{vw},\mbox{ if $a$ does
%     not occur free in $w$} $
% \end{flushleft}

% $w(\Ecrom{a}{u}{v}) \mapsto \Ecrom{a}{wu}{wv},\mbox{ if $a$ does not
% occur free in $w$}$

% $\efq{P}{\Ecrom{a}{w_{1}}{w_{2}}} \mapsto
% \Ecrom{a}{\efq{P}{w_{1}}}{\efq{P}{w_{2}}}$

% $(\Ecrom{a}{u}{v})\,\pi_{i} \mapsto \Ecrom{a}{u\,\pi_{i}}{v\,\pi_{i}}$

% $\lambda x^{\scriptscriptstyle A}\,(\Ecrom{a}{u}{v}) \mapsto
% \Ecrom{a}{\lambda x^{\scriptscriptstyle A}\,u}{\lambda
% x^{\scriptscriptstyle A}\, v} $

% $\langle u \parallel_{a} v,\, w\rangle \mapsto \langle u,
% w\rangle \parallel_{a} \langle v, w\rangle, \mbox{ if $a$ does not
% occur free in $w$}$

% $\langle w, \,u \parallel_{a} v\rangle \mapsto \langle w,
% u\rangle \parallel_{a} \langle w, v\rangle, \mbox{ if $a$ does not
% occur free in $w$}$

%  $  \pp{b}(u_1 \p \dots \p \pp{a}(w_1 \p \dots \p w_n) \p \dots \p
% u_m) \; \mapsto \;  \pp{a}(\pp{b}(u_1 \p \dots \p  w_1 \p \dots \p u_m ) \p
% \dots \p \pp{b}(u_1 \p \dots \p w_n  \p \dots \p u_m ))  ,$

% \mbox{ if the communication complexity of $b$ is greater than $0$}

% $w \parallel_{b} (u\parallel_{a} v) \mapsto (w\parallel_{b} u)\parallel_{a} (w\parallel_{b} v),\mbox{ if the communication complexity of $b$ is greater than  $0$}
% $
\medskip

\textbf{Communication Reductions}

\begin{flushleft} \textbf{Activation Reduction} \hspace{75pt} 
$ \pp{a}{ u_{1}\p \dots \p u_m} \;
\mapsto \; \pp{a}{ u_1 [b / a]\p \dots
\p u_{m} [b/a]}$

where $a$ is not active, $b$ is a fresh \emph{active} variable, and
there is some occurrence of $a$ in $u$ or in $v$  of the form $a
w$, for a value $w$.
\end{flushleft}

\smallskip

  \begin{flushleft}
    \textbf{Basic Cross Reductions }  $\pp{a} { {\mathcal C}_1 \p \dots {\mathcal C}_i[a^{F _i \impl
G_i } \, t]  \p \dots \p {\mathcal C}_j[a^{F _j \impl G_j } \, u] \p
\dots \p {\mathcal C}_m} \; \mapsto \; \pp{a} {{\mathcal C}_1 \p \dots
\p {\mathcal C}_i[a^{F _i \impl G_j } \, t] \p \dots {\mathcal
C}_j[t] \p \dots \p {\mathcal C}_m }\qquad$  for $F_i = G_j$
\end{flushleft}
$\pp{a} {{\mathcal C}_1 \p \dots \p {\mathcal C}_i[a^{F _i \impl G_i }
\, u ] \p \dots {\mathcal C}_j[a^{F _j \impl G_j } \, t]  \p
\dots \p {\mathcal C}_m} \; \mapsto \; \pp{a} {{\mathcal C}_1 \p \dots
 {\mathcal C}_i[t]  \p \dots \p {\mathcal C}_j[a^{F _j \impl
G_j } \, t] \p \dots \p {\mathcal C}_m} \qquad$ for $F_j = G_i$
\begin{flushleft}
where $a$ is active, the displayed occurrence of $a$ rightmost in the simply typed $\lambda$-terms ${\mathcal C}_i[a^{F _i \impl G_i }t]$ and ${\mathcal C}_j[a^{F _j \impl G_j}u]$,  $1 \leq i <j\leq m$, $\mathcal{C}_{i}, {\mathcal
  C}_j$ are simple contexts and $t$ is closed.
   \end{flushleft}

\smallskip

\begin{flushleft} \textbf{Cross Reductions} \hspace{30pt}  
$\pp{a}{ u_1 \p \dots \p u_m } \mapsto u_{j_1} \p \dots \p u_{j_n} $,
for $1 \leq j_1 <  \dots < j_n \leq m $, if $a$ does not occur in
$u_{j_1} , \dots , u_{j_n}$
\end{flushleft}

  \smallskip

$ \pp{a}{ {\mathcal C}_1 [a^{F_1 \impl G_1 } \,t_1] \p \dots \p
{\mathcal C}_m[a^{F_m \impl G_m } \,t_m]} \; \mapsto \; \pp{b}{ s_1 \p
\dots \p s_m } $
\begin{flushleft} where  $a$ is active, ${\mathcal C}_j [a^{ F_j \impl G_j } \,t_j] $ for $1 \leq j \leq
m $ are simply typed $\lambda$-terms; the displayed $a^{ F_j \impl G_j } $ is
rightmost in each of them;
$b$ is fresh; for $1 \leq i \leq m $, we define, if $G_i \neq \fal$,
\[ s_{i} \; = \; 
\pp{a} { {\mathcal C}_1 [a ^{F_1 \impl G_1}\, t_1] \dots \p  {\mathcal C }
_i [ t_j^{b_i \lan \sq{y}_i \ran  /
\sq{y}_j} ] \p \dots \p {\mathcal C}_m[a^{F_m \impl G_m} \,t_m]}
\quad   \text{ and if } \; G_i = \fal \quad  s_{i} \; = \; 
\pp{a}{ {\mathcal C}_1 [a^{G_1 \impl G_1}\, t_1] \dots \p {\mathcal
    C}_i [b_i \lan \sq{y}_i \ran ] \p \dots \p 
{\mathcal C}_m [a^{ F_m \impl G_m }\,t_m]}\]
where $F_j = G_i$; $\sq{y}_z$ for $1 \leq z \leq m$ is the sequence of the free
variables of $t_z$ bound in $\mathcal{C}_z[a^{F_z \impl G_z}
\, t_z]$; $b_i=b^{B_{i}\IMPL B_{j}}$, where $B_{z}$ for $1 \leq z \leq
m$ is the type of $\lan \sq{y}_z \ran$.
\end{flushleft}

% \textbf{\ehi ***OTHER VERSION***}

%   ${\ehi \pp{a}{{\mathcal C}_1 [at_1] \p \dots \p {\mathcal
% C}_m[at_m]) \; \mapsto \; \pp{b}{ \pp{a}{{\mathcal C}_1 [t_i] \p
% \dots \p {\mathcal C}_m[at_m]) \p \dots \p \pp{a}{{\mathcal C}_1
% [at_1] \p \dots \p {\mathcal C}_m[t_j]) ) } \ $
%   \begin{flushleft}
%     where $\mathcal{C}[a\, u], \mathcal{D}$ are normal simply typed
%     $\lambda$-terms and $\mathcal{C}, \mathcal{D}$ simple contexts;
%     $\sq{y}$ is the (non-empty) sequence of the free variables of $u$
%     which are bound in $\mathcal{C}[a\,u]$; $B$ is the conjunction of
%     the types of the variables in $\sq{y}$; $a$ is rightmost; $b$ is
%     fresh; $t_i , t_j$ have the same type as respectively  $at_1 ,
%     at_m$ 
% \end{flushleft}\smallskip

\end{footnotesize}
\hrule
\caption{Reduction Rules for $\lama$ with 
$\ax=(F_{1}\rightarrow G_{1})\lor \dots \lor (F_{k}\rightarrow G_{k})
$}\label{fig:red}
\vspace{-10pt}
\end{figure*}

\subsection{Reduction rules of $\lama$}

Fig.~\ref{fig:red} below shows the reductions for $\lama$-terms.
The permutations rules are simple generalisations of those of
$\lamem$ which only 
 % . We only need, indeed, to 
adapt the latter to $n$-ary
parallelism operators. On the other hand, $\lama$ cross reductions
are more complicated than those of % much simpler than
$\lamem$.
% ones.
% This is due to the fact that 
The formula $A$ in $A \vel \non A$ corresponds indeed to a process that can only
receive communications from other processes.
%  and can directly use the
% incoming message to feed all its communication channel endpoints at
% once. The process can do so because the received messages are handled
% just as inputs and only take the place of empty parametric
% variables. 
The $\lama$ cross reductions cannot rely in all cases on these
features and hence require a more general formulation.

\noindent\textbf{Basic cross reductions} $\;$ They implement a simple  communication of closed programs $t$
\begin{align*}
&\pp{a} { {\mathcal C}_1 \p \dots {\mathcal C}_i[a^{F _i \impl
G_i } \, t]  \p \dots \p {\mathcal C}_j[a^{F _j \impl G_j } \, u] \p
\dots \p {\mathcal C}_m} \; \mapsto \\ 
& \pp{a} {{\mathcal C}_1 \p \dots
\p {\mathcal C}_i[a^{F _i \impl G_j } \, t] \p \dots {\mathcal
C}_j[t] \p \dots \p {\mathcal C}_m }
\end{align*}
While the sender ${\mathcal C}_i [a_i t ]$ is
unchanged, ${\mathcal C}_j [a_j\, u] $ receives the message and
becomes ${\mathcal C}_j [t]$. Here, unlike in $\lamem$ cross
reductions, only one receiving channel can be used for each
communication. Indeed, if some occurrences of $a_{i}$ are nested, a
communication using all of them would break subject reduction.

\noindent \textbf{Cross Reductions} $\; $ Basic cross reductions allow
non deterministic closed message passing. As in classical logic, cross
reductions  implement % represent a
communication with an
additional mechanism
% of terms
% that features a mechanism
for handling migrations of open processes.
 % open subterms that migrate to another process.
But here each cross reduction application combines at once all
possible message exchanges 
% put in one reduction all message exchange
implemented by 
% determined by 
basic cross reductions, since one cannot
know in advance which process will become closed and actually be sent.
%  change
% computational environment in the process.

For a proof-theoretic view, consider 
% We discuss now the proof-theoretical aspects of the full cross
% reduction. Consider 
the application of $(\ax )$ (below left), in which all 
 $\Gamma_i $ for $1 \leq i \leq m$ are discharged between $B_i$ and
$F$. It reduces by full cross reduction to the derivation below right (we explicitly mark with the same label the rule applications
belonging to the same higher-level rule):
  \[\vcenter{\infer[*]{F}{ \infer*{F}{\infer[*]{B_1}{\deduce{A_1}{\deduce{\alpha
              _1}{\Gamma_1}}}} & \dots
      \infer*{F}{\infer[*]{B_m}{\deduce{A_m}{\deduce{\alpha
              _m}{\Gamma_m}}}}}} \qquad  \mapsto \qquad   \vcenter{\infer[**]{F}{\delta _1 & \dots &
\delta _m}}\]
% rule in which
% each $A_i$ is obtained by a derivation $\alpha_i$ from assumption
% Such
% subderivation reduces to
%
%Consider an $\ax$ rule application
% 
% \[\vcenter{\infer[*]{\p_{a_1} {\mathcal C} _1 \dots \p_{a_m} {\mathcal C} _m : F}{\infer*{{\mathcal C} _1 : F}{\infer[*]{a_1 \lan u^1_1 , \dots ,
%u^1_{n_1} \ran : B_1}{\deduce{u ^1_1 :A^1_1}{\vspace{3pt}
%\deduce{\alpha^1_1}{\sq{y}^1_{1} : \Gamma^1_{1} }} \dots
%\deduce{u^1_{n_1} :A^1_{n_1}}{\deduce{\vspace{3pt}
%\alpha^1_{n_1}}{\sq{y}^1_{n_1} : \Gamma^1_{n_1} }}}}& \dots &
%\infer*{{\mathcal C} _m :
%F}{\infer[*]{a_m \lan u^m_1 , \dots , u^m_{n_m} \ran :
%B_m}{\deduce{u^m_1: A^m_1}{\deduce{\vspace{3pt}
%\alpha^m_1}{\sq{y}^m_1:\Gamma^m_1 }} \dots \deduce{u^m_{n_m}:
%A^m_{n_m}}{\deduce{\vspace{3pt} \alpha^m_{n_m}}{\sq{y}^m_{n_m}:
%\Gamma^m_{n_m} }}}}}}\]
% %\[ \text{ that reduces to } \qquad 
% \[\vcenter{\infer[**]{F}{\delta _1 & \dots &
% \delta _m}} \]
such that for $ 1 \leq i \leq m$ the derivation  $ \delta _i$ is 
\begin{footnotesize}
  \[ \infer[*]{F}{\deduce{F} {\vspace{3pt}\dots }& \dots
&\infer*{F}{\deduce{B_i}{\infer=[\et
e]{\vspace{3pt} \alpha_j}{\infer[**]{ \bigwedge\Gamma
_j}{ \infer=[\et i]{ \bigwedge\Gamma _i}{\Gamma
_i }}} }}& \dots & \deduce{ F}
{\vspace{3pt}\dots }}\]
\end{footnotesize}in which $\alpha_j $ is the derivation of the
premiss $A_j = B_i$ associated by $\ax$ to $B_i$ and a double
inference line denotes a derivation of its conclusion using the named
rule possibly many times.

% As explained for the basic cross reduction, also here not all terms of
% the form $a_i \lan u^i_1 , \dots , u^i_{n_i} \ran$ can be involved in
% the same communication and hence replaced at the same time.

\begin{theorem}[Subject Reduction]\label{subjectred}
If $t : A$ and $t \mapsto u$, then $u : A$ and all the free variables of $u$ appear among those of $t$.
\end{theorem}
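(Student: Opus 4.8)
The plan is to argue by induction on the typing derivation of $t : A$, i.e.\ on the structure of $t$, performing a case distinction on the reduction rule applied and on whether the contracted redex occurs at the root of $t$ or strictly inside it. The engine of the whole argument is the usual \emph{substitution lemma} --- if $\Gamma, x^{B} : B \vdash u : C$ and $\Gamma \vdash s : B$, then $\Gamma \vdash u[s/x^{B}] : C$ (after renaming bound variables to avoid capture), and the free variables of $u[s/x^{B}]$ lie among those of $u$ and $s$ --- together with its variant for the multiple substitution of Definition~\ref{defi-multsubst}: if $\Gamma, \sq{x} \vdash u : C$ and $\Gamma \vdash v : A_{0} \wedge \dots \wedge A_{n}$, then $\Gamma \vdash u^{v/\sq{x}} : C$. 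Both are proved by a routine induction on $u$ and immediately yield the free-variable containment for every reduction whose effect is a substitution.

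When the contracted redex is a proper subterm $r$ of $t$, reducing $r \mapsto r'$, the claim is immediate from the induction hypothesis: each typing rule (Table~\ref{tab:type_em}, with its last block replaced by~\eqref{ourN}) uses its immediate subterms only through their end types and through the side conditions on channel occurrences. Since $r$ and $r'$ carry the same type and the free variables of $r'$ --- hence the types of its free channel occurrences, fixed by the ambient context --- are among those of $r$, plugging $r'$ back in yields a derivation of the same end type with no new free variables. The only constraint to monitor is that the occurrence discipline of~\eqref{ourN}, requiring every free occurrence of a branch channel $a$ in $u_{i}$ to carry the type $A_{i} \IMPL B_{i}$, is not violated by the inner step; this holds because no reduction alters the type of an occurrence of a channel bound outside the redex it contracts.

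It remains to treat the root cases, one per rule. The \textbf{intuitionistic reductions} are $\beta$, projection and case reduction and follow directly from the substitution lemma. The \textbf{disjunction and parallel-operator permutations} only reorder eliminations or commute the parallel operator past a surrounding construct, so the underlying derivation is merely rearranged while every type is preserved; the displayed side conditions (typically ``$a$ does not occur free in $w$'') are exactly what guarantees that the occurrence discipline of~\eqref{ourN} survives the permutation and that no variable of $w$ is captured. The \textbf{activation reduction} renames the inactive bound channel $a$ to a fresh active channel of the same type, leaving both the end type and the free variables untouched, and the garbage-collecting \textbf{cross reduction} $\pp{a}{u_{1} \p \dots \p u_{m}} \mapsto u_{j_{1}} \p \dots \p u_{j_{n}}$ preserves the type because each retained branch already derives the common end type and the contraction operator $\p$ combines terms of equal type.

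The substantive work is in the remaining communication reductions. In a \textbf{basic cross reduction} the sender occurrence $a^{F_{i} \IMPL G_{i}}\,t$ forces $t : F_{i}$, and since $t$ is closed with $F_{i} = G_{j}$ we get $t : G_{j}$; thus $t$ legitimately replaces the receiver application $a^{F_{j} \IMPL G_{j}}\,u : G_{j}$ inside the simple context $\mathcal{C}_{j}$, and closedness of $t$ forbids the escape of any free variable. The \textbf{full cross reduction} is the main obstacle, since it at once rearranges the $a$-parallel structure, spawns a new parallel operator over a fresh channel $b$, and performs the projection substitutions $t_{j}^{\,b_{i}\lan\sq{y}_{i}\ran/\sq{y}_{j}}$ redirecting the free-but-bound variables $\sq{y}_{j}$ of the migrating term through $b$. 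The delicate point is to check that the new channel typing $b_{i} = b^{B_{i} \IMPL B_{j}}$, with $B_{z}$ the type of $\lan\sq{y}_{z}\ran$, forms a genuine instance of the axiom $\ax$ so that the spawned operator is typed by~\eqref{ourN}, and that both regimes $G_{i} \neq \fal$ and $G_{i} = \fal$ keep the end type fixed. This is exactly what the derivation transformation producing the $\delta_{i}$ displayed above certifies: the $\wedge$-introduction assembling $\bigwedge \Gamma_{i}$ and the $\wedge$-eliminations distributing $\bigwedge\Gamma_{j}$ perform the bookkeeping that reinstates every hypothesis and every higher-level rule application discharged by the original $(\ax)$ instance, so that the reduct again derives the common end type. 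Free-variable containment follows because each $\sq{y}$ is not discarded but rerouted through the freshly bound $b$, which is itself bound by the new parallel operator; hence no variable free in the reduct escapes being free in the redex.
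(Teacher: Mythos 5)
Your proof is correct and takes essentially the same route as the paper's: both dispose of the intuitionistic, permutation, activation and basic cross-reduction cases as routine, and concentrate on the full cross reduction, where type preservation is secured by the multiple substitution of Definition~\ref{defi-multsubst} applied to $t_j^{b^{B_i\IMPL B_j}\lan\sq{y}_i\ran/\sq{y}_j}$ and free-variable containment by the observation that the rerouted variables $\sq{y}_i$ remain bound in each $\mathcal{C}_i$ while $b$ is bound by the new parallel operator. The only difference is one of explicitness: your scaffolding (the congruence induction and the check that $b^{B_i\IMPL B_j}$ genuinely instantiates rule~\eqref{ourN}, which works because $F_j=G_i$ mirrors the axiom's identification $A_j=B_i$) spells out details the paper leaves implicit.
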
 
\begin{proof} 
It is enough to prove the theorem for the cross reductions: if $ t : A$
and $t \mapsto u$, then $u : A$. The proof that the intuitionistic
reductions and the permutation rules preserve the type is completely
standard. Basic cross reductions require straightforward considerations as well.
Suppose then that
\[ \pp{a} {{\mathcal C}_1 [a^{A_{1}\rightarrow B_{1}}\,t_1] \p \dots \p {\mathcal
C}_m[a^{A_{m}\rightarrow B_{m}}\,t_m]} \; \mapsto \; \pp{b} { s_1 \p \dots \p  s_m  } \]
where $s_i$ for $1 \leq i \leq m $ is
\begin{footnotesize}
  \[\pp{a} {{\mathcal C}_1 [a^{A _1 \impl B_1 }\, t_1] \p \dots \p
    {\mathcal C } _i[ t_j^{b^{B_i \impl B_j} \lan \sq{y}_i \ran /
      \sq{y}_j} ] \p \dots \p {\mathcal C}_m[a^{A _m \impl B_m
    }\,t_m]} \]
\end{footnotesize}Since $ \langle \sq{y}_i \rangle: B_i $, $b^{B_i
\impl B_j}$ are correct terms, and hence $ t_j^{b^{B_i \impl B_j} \lan
\sq{y}_i \ran / \sq{y}_j} $, by Definition ~\ref{defi-multsubst}, are
correct as well. The assumptions are that $t_j^{b^{B_i \impl B_j} \lan
\sq{y}_i\ran / \sq{y}_j}$ has the same type as $a^{A _i \impl B_i }\,
t_i$; $\sq{y}_i $ for $1 \leq i \leq m$ is the sequence of the free
variables of $t_i$ which are bound in
$\mathcal{C}_i[a^{A_{i}\rightarrow B_{i}}\, t_i]$; $B_i$ for $1 \leq i
\leq m$ is the conjunction of the types of the variables in
$\sq{y}_i$; $a$ is rightmost in each ${\mathcal C}_i [a^{A _i \impl
B_i }\,t_i] $; and $b$ is fresh. Hence, by construction all the
variables $\sq{y}_i$ are bound in each ${\mathcal C } _i[ t_j^{b^{B_i
\impl B_j} \lan \sq{y}_i \ran / \sq{y}_j} ]$. Hence, no new free
variable is created.
\end{proof}
%\begin{proposition}[Shape of Normal Forms]
%  \label{prop:shape_norm}
%  Every normal simply typed $\lambda$-term has one of the following forms: 
%\begin{itemize}
%\item $\lambda x . t$
%\item $\pair{u}{v}$
%\item $\inj_{i}(t)$ for $i \in \{0,1\}$
%\item $x \, \sigma$ with $x$ variable and $\sigma$ stack
%% \item   $x \, \sigma \, (u)\exfalso_P$ with $x$, variable $\sigma$
%%     stack and $t$ term
%% \item   $x \, \sigma \, t [x.u,y.v]$ with $x$ variable, $\sigma$
%%     stack and $t,u,v$ terms
%\end{itemize}
%\end{proposition}
%\begin{proof}
%  Standard.
%\end{proof}
%We define now the notion of \emph{strong subformula}, 
%which is essential for defining the reduction rules of the
%$\lama$-calculus and for proving Normalization. The technical
%motivations will become clear in Sections~\ref{section-subformula}
%and~\ref{sec:norm}, but the intuition is that the new types created
%by cross reductions must be 
%always strong subformulas of already existing types.

\begin{example}[G\"odel Logic]
A particular instance of $\lama$ is defined by $\ax = (A
\impl B ) \vel (B\impl A)$. The resulting type assignment rules and
reductions are those of $\lamg$~\cite{ACGlics2017} but for the
activation conditions of these, which are no more based on the types.
\end{example}

\noindent For  the Subformula Property, we shall need the following definition. 
\begin{definition}[Prime Formulas and Factors \cite{Krivine1}]
  A formula is said to be \textbf{prime} if it is not a
  conjunction. Every formula is 
  a conjunction of prime
  formulas, called \textbf{prime factors}. 
\end{definition}

 \begin{definition}[Normal Forms and Normalizable Terms]\mbox{}
 \begin{itemize}
\item  A \textbf{redex} is a term $u$ such that $u\mapsto v$ for some $v$ and basic reduction of Figure \ref{fig:red}. A term $t$ is called a \textbf{normal form} or, simply, \textbf{normal}, if there is no $t'$ such that $t\mapsto t'$. We define $\nf$ to be the set of normal $\lama$-terms. 
\item  
A sequence, finite or infinite, of proof terms
$u_1,u_2,\ldots,u_n,\ldots$ is said to be a reduction of $t$, if
$t=u_1$, and for all  $i$, $u_i \mapsto
u_{i+1}$.
 A proof term $u$ of $\lama$ is  \textbf{normalizable} if there is a finite reduction of $u$ whose last term is a normal form. 
\end{itemize}
\end{definition}

\begin{definition}[Parallel Form]\label{definition-parallel-form} A
term $t$ is a \textbf{parallel form} whenever, removing the
parentheses, it can be written as
$$t = t_{1}\parallel  t_{2}\parallel \ldots \parallel  t_{n+1}$$
where each $t_{i}$, for $1\leq i\leq n+1$, is a simply typed
$\lambda$-term.
\end{definition}

\begin{definition}[Active Channels and Active Sessions] We assume that
the set of channel variables is partitioned into two disjoint classes:
\textbf{active channels} and \textbf{inactive channels}.  A term
$\pp{a}{u_1 \p \dots \p u_m }$ is called an \textbf{active session},
whenever $a$ is active.
\end{definition}

\subsection{Computing with $\lama$}

To limit the non-determinism of  $\lama$  basic cross reductions,
%
%  the term sending the
% message is chosen non-deterministically, 
% for the implementation of
% procedures, we fix a convention on the order of reductions. Now on 
we impose that 
% use a  version of them in
% which 
only an underlined term $\chosen{{\mathcal C}_i [a^{F _i \impl
G_i} \, t ]}$ (respectively $\chosen{{\mathcal C}_j [a^{F _j \impl
G_j} \, t ]}$ ) can send a message. The underlining is
moved to the receiving term $\chosen{{\mathcal C}_j[t]}$ (respectively
 $\chosen{{\mathcal C}_i[t]}$).

% \begin{remark}
% Since in a basic cross reduction the term sending the message is
% chosen non-deterministically, for the implementation of procedures, we fix
% a convention on the order of reductions. Now on we employ the
% following determined version of the basic cross reduction for $\lama$:
% \begin{align*}
% &\pp{a} { {\mathcal C}_1  \p \dots \chosen{{\mathcal C}_i[a^{F _i
%   \impl G_i } \, t] }  \p \dots \p {\mathcal C}_j[a^{F _j \impl G_j }
%   \, u]  \p \dots \p  {\mathcal C}_m}  \; \mapsto \\ 
% & \pp{a} {{\mathcal C}_1  \p \dots \p {\mathcal C}_i[a^{F _i \impl G_j
%   } \, t]   \p \dots \chosen{{\mathcal C}_j[t]}  \p \dots \p  {\mathcal
% C}_m }\quad \text{ and } \\
% &\pp{a} {{\mathcal C}_1  \p \dots \p {\mathcal C}_i[a^{F _i
%   \impl G_i } \, u ]   \p \dots \chosen{{\mathcal C}_j[a^{F _j \impl
%   G_j } \, t] } \p \dots \p  {\mathcal C}_m} \; \mapsto \\ 
% & \pp{a} {{\mathcal C}_1  \p \dots  \chosen{ {\mathcal C}_i[t] }  \p \dots \p {\mathcal C}_j[a^{F _j \impl G_j } \, t]  \p \dots \p  {\mathcal
% C}_m} 
% \end{align*}
% where $1\leq i < j \leq m $, $\mathcal{C}_i , \mathcal{C}_j$ are
% simple contexts and $t$ is closed. By $\chosen{{\mathcal D}}$ we mean that
% ${\mathcal D}$ is the process selected for communication.
% \end{remark}

%\subsection{Computing with $\lama$}
\begin{example}[\textbf{$\lambda_{C_3}$ and Cyclic scheduling}]
\label{Milnerscheduler}
%We show now how the mechanisms extracted from a simple cyclic axiom
%$(A \impl B ) \vel (B \impl C) \vel (C \impl A) $ can be used to
%implement a program scheduling three processes in a cycle, similarly
%to the cyclic scheduler presented in~\cite{Milner_com}. The same can
%be done for any fixed number of processes, using different axioms of a
%similar form.
%\begin{example}[\textbf{Cyclic scheduling}]
%\label{Milnerscheduler}

The basic cross reductions for the cyclic axiom $C_3 \; = \; (A \impl
B) \vel( B \impl C) \vel (C \impl A)$ are  the following:
\begin{align*} &
\pp{a} { \chosen{ {\mathcal C}_1 [a_1 s]} \p
{\mathcal C}_2 [a_2 t] \p {\mathcal C}_3} \mapsto \pp{a}{ {\mathcal
                 C}_1 [a_1 s] \p \chosen{ {\mathcal C}_2 [s]
} \p {\mathcal C}_3}\\ 
& \pp{a} {  {\mathcal C}_1
\p \chosen{ {\mathcal C}_2 [a_2 s] } \p_{a_3} {\mathcal C}_3[a_3 t]
} \mapsto \pp{a}{  {\mathcal C}_1  \p {\mathcal C}_2
[a_2 s] \p \chosen{ {\mathcal C}_3[t]} }\\ 
&  \pp{a}{ {\mathcal C}_1 [a_{1} t] \p {\mathcal C}_2 \p \chosen{ {\mathcal
C}_3 [a_3 s ]}} \mapsto \pp{a}{ \chosen{ {\mathcal C}_1 [s]}
\p {\mathcal C}_2  \p {\mathcal C}_3[a_3 s]}
\end{align*}
We use them to implement a cyclic scheduler along the lines of that
in~\cite{Milner_com}.  Such scheduler is designed to ensure that a
certain group of processes performs the assigned tasks in cyclic
order. Each process can perform its present task in parallel with each
other, and each of them can stop its present round of computation at
any moment, but no process should start its $n$th round of computation
before its predecessor has started the respective $n$th round of
computation.

Consider the processes
$\chosen{{\mathcal C}[a_1 ( r (a_1\, r') ) ]}$, ${\mathcal D}[a_2\, ( s (
a_2\, s' )) ] $ and \\${\mathcal E}[a_3\, ( t (a_3\, t')) ] $ where $a_1 : A
\impl B$, $a_2:  C \impl A$ and $a_3:  B \impl C$. We implement the scheduling
program as the following $\lam _{C_3}$ term
\[\pp{a}{\chosen{ {\mathcal C}[a_1\, ( r  (a_1\, r' )) ]} \p
    {\mathcal D}[a_2\, (  s (a_2\, s' )) ] \p  {\mathcal E}[a_3\, (  t (a_3\, t' )) ]}  \] When $r'$ is over with its computation, according to the
strategy in Def.~\ref{defi-mastredstrategy}, this term reduces to \[\pp{a} {  {\mathcal C}[a_1\,  (  r ( a_1\, r' )) ] \p \chosen{
      {\mathcal D}[a_2\,  (  s  r' ) ] } \p {\mathcal
    E}[a_3\,  (  t ( a_3\, t' )) ]}\]
then, after the evaluation of $s$, to
\[ \pp{a}{ {\mathcal C}[a_1\,  (  r ( a_1\, r' )) ] \p {\mathcal
  D}[a_2\,  (  s  r' ) ]  \p \chosen{ {\mathcal E}[a_3\,  (  t  (  s  r' ) ) ]}}
\]
and after the evaluation of $t$ to
\[ \pp{a}{ \chosen{  {\mathcal C}[a_1\,  (  r  (  t  (  s 
      r' ) ) ) ] } \p {\mathcal
    D}[a_2\, (   s  r' )  ] \p {\mathcal E}[a_3\,  (  t  (s r')
  ) ]}
\] and so on until all arguments of channels $a_i$ for $i \in \{1,2,3\}$ have normalized and have been communicated.

% Notice that, just like in Milner's scheduler, while each process can
% stop a round of computation at any moment, no process can start its
% $n$th computation round before the process preceding it did.
\end{example}

\section{The Normalization Theorem}\label{sec:norm}

Our goal is to prove the Normalization Theorem for $\lama$:
every proof term of $\lama$ reduces in a finite number of steps to a
normal form.  By Subject Reduction, this implies that the corresponding natural deduction proofs do
normalize. We shall define a reduction strategy for terms of $\lama$:
a recipe for selecting, in any given term, the subterm to which apply
one of our basic reductions. 

The idea behind our normalization strategy is quite intuitive. We start from any term and reduce it in parallel normal form, thanks to Proposition \ref{proposition-normpar}. Then we  cyclically interleave three reduction phases. First, an \emph{intuitionistic phase}, where we reduce all intuitionistic redexes. Second, an \emph{activation phase}, where we activate all sessions that can be activated. Third, a \emph{communication phase}, where we allow the active sessions to exchange messages as long as they need and we enable the receiving process to extract information from the messages. Technically, we perform all cross reductions combined with the generated structural intuitionistic redexes, which we consider to be projections and case permutations.

Proving termination of this strategy is by no means easy, as we have to rule out two possible infinite loops.

\begin{enumerate}
\item Intuitionistic reductions can generate new activable sessions that want to transmit messages, while message exchanges can generate new intuitionistic reductions.

\item During the communication phase, new sessions may be generated after each cross reduction and old sessions may be duplicated after each session permutation. The trouble is that each of these sessions may need to send new messages, for instance forwarding some message received from some other active session. So the count of active sessions might increase forever and the communication phase never terminate.
\end{enumerate}

We break the first loop by focusing on the complexity of the exchanged
messages. Since messages are \emph{values}, we shall define a notion
of \emph{value complexity} (Definition \ref{def:fut_comp}), which will
simultaneously ensure that: (i) after firing a non-structural
intuitionistic redex, the new active sessions can ask to transmit only
new messages of smaller value complexity than the complexity of the
fired redex; (ii) after transmitting a message, all the new generated
intuitionistic reductions have at most the value complexity of the
message.  Proposition \ref{lem:replace}) will settle the matter, but
in turn requires a series of preparatory lemmas. Namely, we shall
study how arbitrary substitutions affect the value complexity of a
term in Lemma \ref{lem:change_of_value} and Lemma \ref{lem:replace};
then we shall determine how case reductions impact value complexity in
Lemma \ref{lem:in_case} and Lemma \ref{lem:eliminate_the_case}.

We break the second loop by showing in the crucial
Lemma~\ref{lem:freeze} that message passing, during the communication
phase, cannot produce new active sessions. Intuitively, the new
generated channels and the old duplicated ones are ``frozen'' and only
intuitionistic reductions can activate them, thus with values of
smaller complexity than that of the fired redex.

For clarity, we define here the recursive normalization algorithm that
represents the constructive content of this section's proofs, which
are used to prove the Normalization Theorem.  Essentially, our master
reduction strategy will use in the activation phase the basic
reduction relation $\succ$ defined below, whose goal is to permute an
uppermost active session $\pp{a}{u_1 \p \dots \p u_m}$ until all $u_i$
for $1 \leq i \leq m$ are simply typed $\lambda$-terms and finally
apply the cross reductions followed by projections and case
permutations.

% $$(u_{1}\parallel_{b} u_{2})\parallel_{a} v \mapsto
% (u_{1}\parallel_{a} v)\parallel_{b} (u_{2}\parallel_{a}
% v)$$ $$u \parallel_{a} (v_{1}\parallel_{b} v_{2}) \mapsto
% (u\parallel_{a} v_{1})\parallel_{b} (u\parallel_{a} v_{2})$$

\begin{definition}[Side Reduction Strategy]\label{defi-redstrategy}
Let $t$ be a term and $\pp{a}{u_1 \p \dots \p u_m }$ be an active session
occurring in $t$ such that no active session occurs
in $u$ or $v$. We write\[t\succ
t'\]whenever $t'$ has been obtained from $t$ by applying to
$u\parallel_{a} v$:
\begin{enumerate}
\item a permutation reduction
  \begin{align*}
& \pp{a}{ u_1\p \dots \p \pp{b}{w_1\p \dots \p w_n} \dots \p u_m} \;
\mapsto \\ 
& \pp{b}{ \pp{a}{ u_1\p \dots \p w_1\dots \p u_m}\p \dots \p
\pp{a}{ u_1\p \dots \p w_n\dots \p u_m}}
  \end{align*}
 if  $u_i=\pp{b}{w_1\p \dots \p w_n} $ for some $1 \leq i \leq m$ ;
\item a cross reduction, if $u_1 , \dots , u_m$ are intuitionistic
  terms, immediately followed by the projections and case permutations inside the newly generated simply typed
$\lambda$-terms;
\item a cross reduction $\pp{a}{ u_1 \p  \dots \p u_m } \mapsto u_{j_1} \p \dots \p u_{j_n} $,
for $1 \leq j_1 <  \dots < j_n \leq m $, if $a$ does not occur in
$u_{j_1} , \dots , u_{j_n}$
\end{enumerate}
\end{definition}

% $u \parallel_{a} v \mapsto u$, if $a$ does not occur in $u$ and
% $u \parallel_{a} v \mapsto v$, if $a$ does not occur
% in $v$.

\begin{definition}[Master Reduction Strategy]\label{defi-mastredstrategy}
Let $t$ be any term which is not in normal form. We transform it into a term $u$ in parallel form, then we execute the following three-step recursive procedure.
\begin{enumerate}
\item \emph{Intuitionistic Phase}. As long as $u$ contains intuitionistic redexes, we apply intuitionistic reductions.
\item \emph{Activation Phase}. As long as $u$ contains activation redexes, we apply activation reductions.
\item \emph{Communication Phase}. As long as $u$ contains active sessions, we apply the Side Reduction
  Strategy (Definition~\ref{defi-redstrategy}) to $u$, then we go to step $1$.\end{enumerate}
\end{definition}

We start be defining the value complexity of messages. Intuitively, it
is a measure of the complexity of the redexes that a message can generate
% how much complex redexes a message can generate, 
after being transmitted. On one hand, it is defined as usual for proper
values, like $s = \lam x u$, $s=\inj_i(u)$, as the complexity of their
types. On the other hand, pairs $\langle u , v \rangle$ and case
distinctions $t[x.u,y.v]$ represent sequences of values, hence we
pick recursively the maximum among the value complexities of $u$ and
$v$. This is a crucial point. If we chose the types as value
complexities also for pairs and case distinctions, then our
%  global normalization 
argument would completely break down when new channels
are generated during cross reductions: their type can be much bigger
than the starting channel and any shade of a decreasing complexity
measure would disappear.

\begin{definition}[Value Complexity]\label{def:fut_comp} \label{def:value_compl}
For any simply typed $\lambda$-term  $s:T$, the value complexity of $s$ is defined as the first case that applies among the following:
\begin{itemize}
\item if $s = \lam x u$, $s=\inj_i
(u)$, then the value complexity of $s$ is the
complexity of its type $T$;
\item if $s= \langle u , v \rangle $, then the value complexity of $s$ is the
maximum among the value complexities of $u$ and $v$.
\item if $s=t[x.u,y.v]\sigma$ where $\sigma $ is case-free,
then the value complexity of $s$ is the maximum among the value
complexities of $u\sigma$ and $v\sigma$;
\item otherwise, the value
complexity of $s$ is $0$.
\end{itemize}
\end{definition}

Recall that values are defined (Def.~\ref{def:value}) as anything that
either can generate an intuitionistic redex when plugged into another
term or that can be transformed into something with that capability,
like an active channel acting as an endpoint of a transmission.

% \begin{definition}[Value]\label{def:value}
% A \textbf{value} is a term of the form $ \langle t_1, \ldots , t_n
% \rangle $, for some $1 \leq i \leq n $, $t_i =
% \lambda x \, s$, $t_i = \inj _i (s)$, $t_i = t\, \exfalso_P $, $t_{i}=t
% [x.u,y.v]$ or $t_{i}=a\sigma$ for an
% active channel $a$.  
% \end{definition}

The value complexity of a term, as expected, is alway at most the complexity of its type. 

\begin{proposition}\label{prop:two!}
Let $u: T$ be any simply typed $\lambda$-term. Then the value complexity of  $u$ is at most the complexity of $T$.
\end{proposition}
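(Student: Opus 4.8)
The plan is to prove the bound by induction following the recursive structure of Definition~\ref{def:value_compl}, i.e.\ by well-founded induction on a size measure of the term that strictly decreases in every recursive call. First I would note that the four clauses are mutually exclusive and exhaustive (``the first case that applies''), so it suffices to verify the inequality clause by clause, invoking the induction hypothesis on the strictly smaller terms occurring in the recursive clauses.

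The two non-recursive cases are immediate. If $s=\lambda x\,u$ or $s=\inj_i(u)$, then by definition the value complexity of $s$ equals the complexity of its type $T$, so the bound holds with equality; and if none of the three structural clauses applies, the value complexity is $0$, which is trivially at most the complexity of $T$. For the conjunction case $s=\langle u,v\rangle : A\wedge B$, the induction hypothesis gives that the value complexity of $u$ is at most the complexity of $A$ and that of $v$ at most the complexity of $B$. Since the value complexity of $s$ is the maximum of these two and the complexity of $A\wedge B$ dominates both the complexity of $A$ and of $B$, the bound follows.

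The case-distinction clause $s=t[x.u,y.v]\sigma$ with $\sigma$ case-free is where the only real bookkeeping lies, and it is a typing observation: if $s:T$, then both branches $u\sigma$ and $v\sigma$ have type exactly $T$. Indeed $t:A\vee B$, the body $t[x.u,y.v]$ has some common branch type $C$, and appending the stack $\sigma$ sends $C$ to $T$ identically on each branch, so $u\sigma:T$ and $v\sigma:T$. Hence the induction hypothesis applies to $u\sigma$ and $v\sigma$, each bounding its value complexity by the complexity of $T$, and their maximum---which by definition is the value complexity of $s$---is again at most the complexity of $T$.

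The one point requiring care is the well-foundedness of the induction in this last clause, since the recursive calls are on $u\sigma$ and $v\sigma$ rather than on syntactic subterms of $s$. I would discharge this with a size measure under which $t[x.u,y.v]\sigma$ strictly exceeds both $u\sigma$ and $v\sigma$ (the former additionally contains the scrutinee $t$, the other branch, and the case construct itself), which simultaneously guarantees that the recursive definition terminates and that the induction is legitimate. I do not expect a genuine obstacle: the statement is essentially a sanity lemma confirming that the ``max over branches'' definition never overshoots the type complexity, and the only thing to get right is the typing of $u\sigma$ and $v\sigma$ in the third clause.
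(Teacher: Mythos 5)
Your proposal is correct and follows essentially the same route as the paper: a structural induction on the term (implicitly on size) that checks the four clauses of Definition~\ref{def:value_compl} one by one, with the non-recursive clauses immediate, the pair clause using the induction hypothesis on the components, and the case clause using that both branches (with the stack appended) have type exactly $T$. The only difference is that you make explicit the size measure legitimizing the induction hypothesis on $u\sigma$ and $v\sigma$, a point the paper's proof leaves implicit by treating only the stack-free case $v_0[z_1.v_1,z_2.v_2]$.
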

\begin{proof}
Induction on the shape of $u$. See Appendix.
\inappendix{
By induction on $u$. There are several cases, according to the shape of $u$.
\begin{itemize}
\item If $u$ is of the form $ \lam x w$, $\inj_i (w)$, then the value complexity of $u$ is indeed the complexity of $T$. 

\item If $u$ is of the form $ \langle v_1 , v_2 \rangle $ then, by
induction hypothesis, the value complexities of $v_{1}$ and $v_{2}$
are at most the complexity of their respective types $T_{1}$ and $T_{2}$, and hence at
most the complexity of $T=T_{1}\et T_{2}$, so we are done.

\item If $u$ is of the form $v_0[z_1. v_{1},z_2.  v_{2}]$ then, by
induction hypothesis, the value complexities of $v_{1}$ and $v_{2}$
are at most the complexity of $T$, so we are done.
\item In all other cases, the value complexity of $u$ is
$0$, which is trivially the thesis. 
\end{itemize}
}
\end{proof}

The complexity of an intuitionistic redex $t \xi$ is defined as the value complexity of $t$.

\begin{definition}[Complexity of the Intuitionistic
Redexes]\label{def:int_comp} Let $r$ be an intuitionistic
redex. The complexity of $r$ is defined as follows:
\begin{itemize}
\item If $r=(\lambda x u)t$, then the complexity of $r$ is the
type of $\lambda x u$.

  \item If $r=\inj_{i}(t)[x.u, y.v]$, then the complexity of $r$ is the
type of $\inj_{i}(t)$.

\item if $r= \lan u,v \ran \pi_i$ then the complexity of $r$ is the
  value complexity of $\lan u,v \ran$.

\item if $r= t [x.u,y.v] \xi $, then the complexity of $r$ is the
  value complexity of $t [x.u,y.v] $. 
\end{itemize}
\end{definition}

The value complexity is used to define the complexity of communication
redexes.  Intuitively, it is the value complexity of the heaviest message ready to be transmitted.

\begin{definition}[Complexity of the Communication Redexes]\label{def:com_comp}
Let $u\parallel_{a} v: A$ a term.
Assume that  $a^{B\rightarrow C}$ occurs in $u$ and $a^{C\rightarrow B}$ in $v$. 
\begin{itemize}
\item The pair $B, C$ is called the \textbf{communication kind}  of $a$. 
\item The \textbf{complexity of a channel occurrence}  $a \, \langle
  t_1, \ldots , t_n \rangle$ is the value complexity of $\langle t_1, \ldots , t_n \rangle$ (see Definition~\ref{def:value_compl}).
\item  The \textbf{complexity of a communication redex} $u\parallel_{a} v$ is the maximum among the complexities of the occurrences of $a$ in $u$ and $v$.

\item The \textbf{complexity of a permutation redex} \\$\pp{a}{ u_1\p
\dots \p \pp{b}{w_1\p \dots \p w_n} \dots \p u_m} $ is $0$.
\end{itemize}
\end{definition}

As our normalization strategy suggests, application and injection redexes should be treated differently from the others, because generate the real computations.

\begin{definition}\label{def:redex_groups}
  We distinguish two groups of redexes:
  \begin{enumerate}
  \item \label{group_one} Group 1: Application and injection redexes.
  \item \label{group_two} Group 2: Communication redexes, projection redexes and
    case permutation redexes.
  \end{enumerate}
\end{definition}

The first step of the normalization proof consists in showing that any
term can be reduced to a parallel form.

\begin{proposition}[Parallel Form]\label{proposition-normpar}
Let $t: A$ be any term. Then $t\mapsto^{*} t'$, where $t'$ is a parallel form. 
\end{proposition}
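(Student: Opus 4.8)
The plan is to reduce $t$ to a parallel form by repeatedly using the \textbf{Parallel Operator Permutations} of Figure~\ref{fig:red} to move every occurrence of $\pp{a}{\cdot}$ outward, past the term constructor immediately enclosing it, until no non-$\parallel$ constructor sits above any parallel operator. At that point the $\parallel$-operators form a tree whose leaves are $\parallel$-free, i.e.\ simply typed $\lambda$-terms, which is exactly a parallel form. Throughout I use that $\mapsto$ is closed under arbitrary term contexts, so that a reduction established on a subterm lifts to the whole term. I would organize the argument as a structural induction on $t$, isolating the combinatorial content in a Distribution Lemma.

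Distribution Lemma: if $C[\cdot]$ is a one-hole constructor (such as $\lambda x.[\cdot]$, $\inj_i([\cdot])$, a channel application $a\,[\cdot]$, or $[\cdot]\,\xi$ with $\xi$ a one-element stack) and $p$ is a parallel form, then $C[p]\mapsto^{*}q$ with $q$ a parallel form; and if $C[\cdot_1,\cdot_2]$ is a two-hole constructor (application $[\cdot_1]\,[\cdot_2]$ or pairing $\langle[\cdot_1],[\cdot_2]\rangle$) and $p_1,p_2$ are parallel forms, then $C[p_1,p_2]\mapsto^{*}q$ with $q$ a parallel form. I would prove the unary case by induction on the size of the $\parallel$-tree of $p$: if $p$ is a simply typed leaf then $C[p]$ is already simply typed; otherwise $p=\pp{a}{p_1\p\dots\p p_k}$ and the matching permutation (for instance $\lambda x.\pp{a}{p_1\p\dots\p p_k}\mapsto\pp{a}{\lambda x.p_1\p\dots\p\lambda x.p_k}$, which is sound even when the $p_i$ use $x$, since $\lambda x$ is copied into each branch) pushes $C$ below the top operator, whereupon the inner hypothesis applies to each $C[p_i]$. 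The binary case I would prove by a lexicographic induction on the two tree-sizes: if the head of $p_1$ is $\pp{a}{\cdot}$, rename $a$ fresh and apply the argument-permutation to get $\pp{a}{C[p_1^1,p_2]\p\dots\p C[p_1^k,p_2]}$, then recurse (the first tree strictly shrinks); if $p_1$ is a leaf but the head of $p_2$ is $\pp{b}{\cdot}$, apply the function-permutation and recurse on the second tree; if both are leaves, $C[p_1,p_2]$ is simply typed.

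The proposition then follows by structural induction on $t$. Variables and channel-headed atoms are already simply typed, hence parallel forms. If $t=C[u]$ or $t=C[u_1,u_2]$ for a non-$\parallel$ constructor $C$, the induction hypothesis yields $u\mapsto^{*}p$ (resp.\ $u_i\mapsto^{*}p_i$) in parallel form; by contextuality $t\mapsto^{*}C[p]$ (resp.\ $C[p_1,p_2]$), and the Distribution Lemma concludes. Finally, if $t=\pp{a}{u_1\p\dots\p u_m}$, the hypothesis puts each $u_i$ in parallel form $p_i$, and $\pp{a}{p_1\p\dots\p p_m}$ is itself a $\parallel$-tree with simply typed leaves, i.e.\ already a parallel form. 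In particular no flattening of nested operators is required, so the $\parallel$--$\parallel$ permutation — whose side condition forbids active sessions — is never invoked here.

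The main obstacle is termination of the outward pushing, because the binary permutations duplicate material: distributing over $\langle\pp{a}{u_1\p\dots\p u_m},w\rangle$ copies $w$ into every branch, and $w$ may itself contain parallel operators. Consequently the obvious global measures — the number of parallel operators, or the multiset of their depths beneath non-$\parallel$ constructors — do \emph{not} decrease, since duplication multiplies same-depth operators. The Distribution Lemma is designed to sidestep exactly this: its nested induction always distributes a single constructor over \emph{one already fully-lifted} parallel form while carrying the \emph{other} argument along unchanged, so duplication only ever recopies finite, already-processed parallel forms, and only finitely often. The remaining point to check is merely that a permutation is available for every constructor that can lie above a parallel operator, which is guaranteed by the $\vee$-permutations adapted to $\p$ listed in Figure~\ref{fig:red}.
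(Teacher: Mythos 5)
Your proposal is correct and takes essentially the same route as the paper's own proof: a structural induction on $t$ in which each constructor is pushed through the already-reduced parallel subterms via the Parallel Operator Permutations, your Distribution Lemma merely making explicit --- through the nested lexicographic induction on the sizes of the $\parallel$-trees, which correctly defuses the duplication worry --- the termination argument that the paper compresses into the phrase ``applying several permutations''. The one caveat is your closing claim that a permutation exists for \emph{every} constructor that can sit above a parallel operator: none of the listed rules extracts a $\parallel$ from inside a case branch $u[x.w_1,y.w_2]$ (only from the subject $u$, via the one-element stack $[x.w_1,y.w_2]$), but this lacuna is inherited from the paper itself, whose case analysis likewise omits case distinctions.
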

\begin{proof}
Similar to~\cite{ACGlics2017}.  See Appendix. 
\inappendix{
By induction on $t$. As a shortcut, if a term $u$ reduces to a
term $u'$ that can be denoted as $u''$ omitting parentheses, we
write $u \mapstopar^{*} u''$. 
%We reason by cases on
% , according to 
%the shape of $t$.
\begin{itemize}
\item  $t$ is a variable $x$. Trivial. 
\item $t=\lambda x\, u$. By induction hypothesis,  
$u\mapstopar^{*} u_{1}\parallel u_{2}\parallel \ldots \parallel u_{n+1}$
and each term $u_{i}$, for $1\leq i\leq n+1$, is a simply typed
$\lambda$-term.
Applying several  
% $n$  times the 
permutations we obtain \[t\mapstopar^{*} \lambda x\, u_{1}\parallel \lambda x\, u_{2}\parallel
  \ldots \parallel \lambda x\, u_{n+1}\]
which is the thesis.

% By induction hypothesis, 
% $u$ reduces to a
%   term $u'$ in parallel form.
% Permuting the $\lam x$ inside the occurrences of $\p$ in $u'$, we
% obtain a term in parallel form,

\item $t=u\, v$. 
By induction hypothesis, 
\[u\mapstopar^{*} u_{1}\parallel u_{2}\parallel \ldots \parallel u_{n+1}\]
\[v\mapstopar^{*} v_{1}\parallel v_{2}\parallel \ldots \parallel v_{m+1}\]
and each term $u_{i}$ and $v_{i}$, for $1\leq i\leq n+1, m+1$, is a
simply typed $\lambda$-term. Applying several
%  $n+m$ times the 
permutations we obtain
\[
\begin{aligned}
t &\mapstopar^{*} (u_{1}\parallel u_{2}\parallel \ldots \parallel  u_{n+1})\, v \\
&\mapstopar^{*}  u_{1}\, v \parallel  u_{2}\, v \parallel
\ldots \parallel  u_{n+1}\, v\\
&\mapstopar^{*} u_{1}\, v_{1} \parallel u_{1}\, v_{2}\parallel
\ldots \parallel u_{1}\, v_{m+1} \parallel \ldots
\\
& \qquad  \, \ldots \parallel u_{n+1}\, v_{1} \parallel  u_{n+1}\,
v_{2} \parallel \ldots
 \parallel  u_{n+1}\, v_{m+1}
\end{aligned}
\]

\item $t=\langle u, v\rangle$. By induction hypothesis, 
$$u\mapstopar^{*} u_{1}\parallel  u_{2}\parallel \ldots \parallel u_{n+1}$$
$$v\mapstopar^{*} v_{1}\parallel v_{2}\parallel \ldots \parallel v_{m+1}$$
and each term $u_{i}$ and $v_{i}$, for $1\leq i\leq n+1, m+1$, is a
simply typed $\lambda$-term. Applying several 
 % $n+m$ times the 
 permutations we
obtain
  \[
    \begin{aligned}
      t &\mapstopar^{*}\langle u_{1}\parallel u_{2}\parallel
      \ldots \parallel  u_{n+1},\, v \rangle\\
      &\mapstopar^{*} \langle u_{1}, v\rangle \parallel \langle u_{2}, v
      \rangle\parallel \ldots \parallel \langle u_{n+1}, v\rangle\\
      &\mapstopar^{*} \langle u_{1}, v_{1}\rangle \parallel
      \langle u_{1}, v_{2}
      \rangle\parallel \ldots \parallel \langle u_{1},
      v_{m+1}\rangle \parallel  \ldots
      \\
      & \qquad \, \ldots
      \parallel \langle u_{n+1},
      v_{1}\rangle \parallel \langle u_{n+1}, v_{2}
      \rangle\parallel \ldots
      \\
      & \qquad \, \ldots \parallel \langle u_{n+1},
      v_{m+1}\rangle
    \end{aligned}
  \]

\item $t=u\, \pi_{i}$. By induction hypothesis,
$$u\mapstopar^{*} u_{1}\parallel  u_{2}\parallel \ldots \parallel u_{n+1}$$
and each term $u_{i}$, for $1\leq i\leq n+1$, is a simply typed
$\lambda$-term. Applying several 
 % $n$ times the 
 permutations we obtain
$$t\mapstopar^{*}  u_{1}\, \pi_{i}\parallel  u_{2}\,
\pi_{i}\parallel \ldots \parallel u_{n+1} \, \pi_{i}.$$

\item $t= \efq{P}{u} $. By induction hypothesis,
$$u\mapstopar^{*} u_{1}\parallel  u_{2}\parallel \ldots \parallel u_{n+1}$$
and each term $u_{i}$, for $1\leq i\leq n+1$, is a simply typed
$\lambda$-term. Applying several
 % $n$ times the
 permutations we obtain
\[t\mapstopar^{*} \efq{P}{u_{1}} \parallel \efq{P}{u _2 } \parallel
\ldots \parallel \efq{P}{u_{n+1} }\]
\end{itemize}
}
\end{proof}

The following, easy lemma shows that the activation phase of our reduction strategy is finite.

\begin{lemma}[Activate!]\label{activation} 
Let $t$ be any term in parallel form that does not contain intuitionistic redexes and whose communication redexes have  complexity at most $\tau$.  Then there
 exists a finite sequence of activation reductions that results in a
term $t'$ that contains no redexes, except cross reduction redexes of complexity at most $\tau$.
\end{lemma}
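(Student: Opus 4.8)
The plan is to exploit that an activation reduction $\pp{a}{u_1 \p \dots \p u_m} \mapsto \pp{b}{u_1[b/a] \p \dots \p u_m[b/a]}$ is nothing but the renaming of an inactive channel $a$ to a fresh \emph{active} channel $b$ of the same type. First I would record the invariants that such a renaming preserves. The head of a channel application is a channel, never a $\lambda$-abstraction, a pair, an injection or a case distinction, so replacing $a$ by $b$ turns no subterm into, nor out of, an intuitionistic redex; hence $t'$ still contains no intuitionistic redexes. For the same reason the renaming keeps the term in parallel form and introduces no new parallel-operator permutation redex.

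Next I would establish termination of the phase. As a measure I take the number of \emph{inactive} channel binders occurring in $t$, of which there are finitely many. Each activation reduction converts exactly one such binder $a$ into the active binder $b$; since $b$ is fresh and active and no other binder is affected, the measure strictly decreases and no new inactive binder is ever introduced. It does not matter that activating $a$ may render some other channel $c$ activable -- this happens precisely when a subterm $a\sigma$, which is not a value, becomes the value $b\sigma$ -- because this only schedules $c$ for a later step while the measure keeps dropping. A strictly decreasing measure over $\mathbb{N}$ forces the phase to halt after finitely many steps, in a term $t'$ with no activation redex left.

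The heart of the argument is the complexity bound on the redexes that survive. Since $t'$ has neither intuitionistic nor activation redexes, every remaining redex is a communication redex, i.e.\ a cross reduction (permutation redexes, should any occur, have complexity $0$ and are harmless). By Definition~\ref{def:com_comp} the complexity of the session at an active channel is the maximum value complexity over its message occurrences $a\langle t_1, \dots, t_n\rangle$, so it suffices to show that channel renaming preserves value complexities. I would prove this by induction following Definition~\ref{def:fut_comp}: the clauses for $\lambda x\, u$ and $\inj_i(u)$ depend only on the type, which renaming preserves; the clauses for pairs and for case distinctions follow from the induction hypothesis, once one notes that renaming a channel neither breaks the case-free side condition on a stack nor changes the fact that every channel application lands in the final ``otherwise'' clause with value complexity $0$. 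Thus each message occurrence keeps its value complexity throughout the phase.

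With this invariance in place the bound is immediate. A channel already active in $t$ keeps its session complexity, which is $\le \tau$ by hypothesis. A channel activated during the phase was, by the side condition of the activation rule, applied to a value; the complexity of its now-active session is again the maximum value complexity of its message occurrences, and these occurrences, together with their value complexities, were already present in $t$ and are therefore bounded by $\tau$. Hence every cross reduction redex of $t'$ has complexity at most $\tau$, as claimed. I expect the main obstacle to be exactly this last invariance: one must rule out that promoting an inactive endpoint to an active one secretly raises the value complexity of a message containing that endpoint, and this is precisely what the ``otherwise'' clause of Definition~\ref{def:fut_comp}, assigning complexity $0$ to every channel application, guarantees.
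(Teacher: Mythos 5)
Your proposal is correct and takes essentially the same route as the paper: the paper also argues by induction on the number of inactive sessions (your measure counting inactive channel binders is equivalent, one binder being consumed per activation step) and reduces the complexity bound to the key invariance that renaming a bound channel $a$ to a fresh active $\alpha$ preserves the value complexity of every message occurrence $c\langle t_1,\dots,t_n\rangle$, proved by induction on the term just as you sketch, with channel applications landing in the value-complexity-$0$ clause of Definition~\ref{def:fut_comp}. Your explicit remarks that renaming creates no intuitionistic redexes and that newly activable channels only get scheduled later without disturbing the measure are exactly the points the paper's induction handles implicitly, so nothing further is needed.
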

\begin{proof} By induction on the number of non-active terms of the
form $\pp{a}{u_1 \p \dots \p u_m}$ in $t$. See Appendix.
\inappendix{
The proof is by induction on the number $n$ of subterms
of the form $\pp{a}{u_1 \p \dots \p u_m}$ of $t$ which are not active sessions. If there are no activation redexes in $t$, the statement
trivially holds. Assume there is
at least one activation redex  $r=  \pp{a}{v_1 \p \dots \p v_m}$. 
% $ p \p_a q
We apply an activation reduction to
$r$ and obtain a term $r'$ with $n-1$ subterms of the form  
% $u \p _a v$
$\pp{a}{u_1 \p \dots \p u_m}$ which are not active sessions. By the
induction hypothesis to $r'$, which immediately yields the thesis, we
are left to verify that all communication redexes of $r'$ have
complexity at most $\tau$.

\noindent  For this purpose, let $c$ be any channel variable which is bound in $r'$.  Since $r'$ is obtained from $r$ just by renaming the non-active bound channel variable $a$ to an active one $\alpha$, every occurrence of $c$ in $r'$ is of the form $(c t)[\alpha/a]$ for some subterm $ct$ of $r$. Thus  $ct[\alpha/a] =c[\alpha/a] \langle t_1[\alpha/a], \dots, t_n[\alpha/a] \rangle $, where each $t_{i}$ is not a pair. It is enough to  show that the value complexity of $t_i [\alpha/a] $ is exactly the value complexity of $t_i$. We proceed by induction on the size of $t_{i}$. We can write   $t_{i}= r \, \sigma $, where $\sigma$ is
  a case-free  stack. If $r$ is of the form $\lam x w $, $\lan q_1 , q_2 \ran $,
$\inj _i (w)$, $x$, $d w$, with $d$ channel variable, then the value complexity of $t_i [\alpha/a] $ is the same as that of $t_{i}$ (note that if $r=\lan q_{1}, q_{2}\ran$, then $\sigma$ is not empty). If $r= v_{0}[x_{1}.v_{1}, x_{2}.v_{2}]$, then $\sigma$ is empty, otherwise $s$ would contain a permutation redex. Hence, the value complexity of $t_{i}[\alpha/a]$ is the maximum among the value complexities of $v_{1}[\alpha/a]$ and $v_{2}[\alpha/a]$. By induction hypothesis, their value complexities are respectively those of $v_{1}$ and $v_{2}$ , hence the value complexity of $t_{i}[\alpha/a]$ is the same as that of $t_{i}$, which concludes the proof.
}
\end{proof}

We shall need a simple property of the value complexity notion.

\begin{lemma}[Why Not 0] \label{lem:stacks_and_zeros} Let $u$ be any simply typed $\lambda$-term and $\sigma $
be a non-empty case-free stack. Then
the value complexity of $u\sigma $ is $0$. 
\end{lemma}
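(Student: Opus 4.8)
The plan is to argue by induction on the size of the simply typed $\lambda$-term $u$, proving the statement for \emph{all} non-empty case-free stacks $\sigma$ simultaneously, and then to read off the conclusion by inspecting which clause of Definition~\ref{def:fut_comp} governs $u\sigma$. The guiding observation is that prefixing $u$ with a non-empty case-free stack pins down the outermost constructor of $u\sigma$, and this is exactly what rules out the ``value'' clauses.

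First I would record that, since $\sigma = \sigma_1\cdots\sigma_n$ is non-empty and case-free, its last entry $\sigma_n$ is an argument, a projection $\pi_j$, or an $\exfalso_P$. Hence $u\sigma$ is, at top level, an application, a projection, or an ex-falso; in particular it is neither a $\lambda$-abstraction, nor an injection $\inj_i(\cdot)$, nor a pair $\langle\cdot,\cdot\rangle$. Consequently the first two clauses of the value-complexity definition cannot apply, and the only route to a nonzero value for $u\sigma$ is through the third (case) clause.

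Then I would treat that clause. Suppose $u\sigma = t[x.w_1,y.w_2]\tau$ with $\tau$ case-free, so that the value complexity of $u\sigma$ is the maximum of those of $w_1\tau$ and $w_2\tau$. Because both $\sigma$ and $\tau$ are case-free, the exhibited case distinction $[x.w_1,y.w_2]$ is the last case in the spine of $u\sigma$ and therefore already occurs in $u$. Two consequences follow: $w_1$ and $w_2$ are proper subterms of $u$, hence strictly smaller; and $\tau$, being the suffix past this last case, contains $\sigma$ and is thus itself non-empty and case-free. The induction hypothesis, applied to $w_1$ and $w_2$ with the stack $\tau$, then yields value complexity $0$ for both $w_1\tau$ and $w_2\tau$, whence $u\sigma$ also has value complexity $0$. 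In every situation not covered by the third clause, the final ``otherwise'' clause applies and returns $0$ directly.

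The step I expect to require the most care is the well-foundedness of the induction: in the case clause the stack $\tau$ may be strictly longer than $\sigma$, so one cannot induct on stack length. The remedy is to induct on the size of $u$ alone while universally quantifying over stacks, and what legitimises this is precisely the case-freeness of $\sigma$, which forces any case in the spine of $u\sigma$ to descend from $u$ and thereby keeps the recursive subterms $w_1, w_2$ strictly inside $u$.
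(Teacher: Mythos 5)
Your proof is correct and takes essentially the same route as the paper's: an induction on the size of $u$ with the stack universally quantified, whose only substantive case is the case-distinction clause, discharged by applying the induction hypothesis to the branches $w_1,w_2$ (proper subterms of $u$) with the extended stack $\tau$, which remains non-empty and case-free. The paper merely organizes the cases by the shape of $u$ (head plus case-free stack, with the $\exfalso_P$ shape ruled out by typing) rather than by which clause of the value-complexity definition governs $u\sigma$, but the argument is the same.
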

\begin{proof}  Induction on the size of $u$. See Appendix.
\inappendix{
By induction on the size of $u$.
  \begin{itemize}
  \item If $u$ is of the form $ (\lam x. w)\rho$, $\inj_i (w)\rho$, $
\langle v_1 , v_2 \rangle\rho $, $a\rho$ or $x\rho $, where $\rho$ is case-free,
then $u\sigma $ has value complexity $0$.

\item If $u$ is of the form $w\, \exfalso_P$, then $P$ is atomic, thus $\sigma$ must be empty,
contrary to the assumptions.

\item If $u$ is of the form $v_0[z_1. v_{1},z_2.  v_{2}]\rho$, with $\rho$ case-free, then
by induction hypothesis the value complexities of
$v_{1}\rho \sigma$ and $v_{2}\rho \sigma$ are $0$ and since the value complexity of
$u\sigma$ is the maximum among them, $u\sigma$ has value complexity $0$.
  \end{itemize}
}
\end{proof}

In order to formally study redex contraction, we
consider simple substitutions that just replace some occurrences of a
term with another, allowing capture of variables. In practice, it will
always be clear from the context which occurrences are replaced.

\begin{definition}[Simple Replacement] By $s\{t / u \}$
we denote a term obtained from $s$ be replacing some occurrences of a term $u$ with a term $t$ of the same type of $u$, possibly causing capture
of variables.
\end{definition}

We now show an important property of value complexity:  the value complexity of $ w \{v/s\} $ either remains at most as it was before the substitution or becomes exactly the value complexity of $v$.

\begin{lemma}[The Change of Value]\label{lem:change_of_value}

Let $w, s,v$ be simply typed $\lambda$-terms with value complexity
respectively $\theta , \tau, \tau'$.  Then the value complexity of
$ w \{v/s\} $ is either at most $\theta$ or equal to  $\tau'$. Moreover, if $\tau '
\leq \tau $, then the value complexity of $ w \{v/s\} $ is at most $\theta
$.
\end{lemma}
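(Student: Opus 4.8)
The plan is to argue by induction on the size of $w$, following the clause of Definition~\ref{def:value_compl} that determines the value complexity $\theta$ of $w$, and distinguishing in each clause whether or not the replaced occurrences include $w$ itself. If the whole term $w$ is one of the replaced occurrences --- which can only happen when $w=s$ --- then $w\{v/s\}=v$ and its value complexity is exactly $\tau'$, giving the second alternative; note that here $\theta=\tau$, so $\tau'\leq\tau=\theta$, which already matches the ``moreover'' clause. In all remaining cases the replaced occurrences are proper subterms and the substitution distributes over the outermost constructor of $w$.

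First I would dispatch the easy clauses. If $w=\lambda x\, u$ or $w=\inj_i(u)$, then $w\{v/s\}$ is again an abstraction, respectively an injection, of the \emph{same type} $T$ (simple replacement is type preserving, since $v$ and $s$ share a type), so its value complexity is still the complexity of $T=\theta$. If $w=\lan u_1,u_2\ran$, then $w\{v/s\}=\lan u_1\{v/s\},u_2\{v/s\}\ran$ and its value complexity is the maximum of the two branch complexities; applying the induction hypothesis to $u_1,u_2$ (so $\theta=\max(\theta_1,\theta_2)$), each branch is bounded by $\theta_i$ or equals $\tau'$, and an elementary case analysis on the maximum shows the result is again $\leq\theta$ or $=\tau'$.

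The main obstacle is the case clause $w=t[x.u_1,y.u_2]\sigma$ with $\sigma$ case-free, whose value complexity is the maximum of the complexities of $u_1\sigma$ and $u_2\sigma$. Here the substitution can alter the \emph{head}: if a replaced occurrence is the leading case-expression, or a prefix of its application spine, then the head case-expression is erased and the reduct takes the shape $v\,\rho$ with $\rho$ a non-empty case-free stack (case-freeness of $\sigma$ is preserved by substitution into its proof-term elements), so Lemma~\ref{lem:stacks_and_zeros} forces its value complexity to $0\leq\theta$. Otherwise the replaced occurrences lie inside $t$, the branches, or $\sigma$, the reduct is still of the form $t'[x.u_1',y.u_2']\sigma'$ with $\sigma'$ case-free, and since $(u_i\{v/s\})(\sigma\{v/s\})$ is exactly $(u_i\sigma)\{v/s\}$ for the corresponding occurrences, I would invoke the induction hypothesis on the strictly smaller terms $u_i\sigma$ and conclude as in the pair case. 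The remaining ``otherwise'' clause ($\theta=0$) is handled by the same head-erasure argument: such a $w$ has the form $H\rho$ with $\rho$ case-free, so replacing its head (or a spine prefix) leaves $v\,\rho'$ with $\rho'$ non-empty case-free, again of value complexity $0$ by Lemma~\ref{lem:stacks_and_zeros}, whereas if the head survives the reduct stays ``otherwise'' with value complexity $0$.

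Finally, the ``moreover'' clause is obtained by re-running the very same induction under the extra hypothesis $\tau'\leq\tau$ and invoking at each step the strengthened form of the induction hypothesis. The only places where the bare value $\tau'$ could surface are the top-replacement base case --- where $\theta=\tau\geq\tau'$ --- and the propagation of a branch complexity through the pair and case clauses; but with $\tau'\leq\tau$ the strengthened hypothesis bounds every branch by $\theta_i$, so the maximum is $\leq\theta$ and the $\tau'$ alternative is absorbed. I expect the bookkeeping around the case clause --- checking that erasing the head always leaves a non-empty case-free stack, and that the decomposition into $u_i\sigma$ genuinely decreases the term's size --- to be the only delicate point.
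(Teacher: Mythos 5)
Your proof is correct and follows essentially the same route as the paper's: induction on the size of $w$, splitting each case into whole-term replacement ($w=s$, yielding $\tau'$ with $\theta=\tau$), head-erasure by a spine-prefix replacement (handled via Lemma~\ref{lem:stacks_and_zeros} on the non-empty case-free residual stack), and distribution under the outermost constructor with the strengthened induction hypothesis carrying the ``moreover'' clause. Your reorganization --- front-loading the $w=s$ case and applying the induction hypothesis directly to the strictly smaller terms $u_i\sigma$ where the paper instead splits on whether the stack is empty --- is a cosmetic streamlining, not a different argument.
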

\begin{proof} Induction on the size of $w$. See Appendix.
\inappendix{
 By induction on the size of $w$ and by cases according to its possible shapes.
\begin{itemize}
\item $w  \{v/s\} = x  \, \{v/s\}$. We have two cases.
  \begin{itemize}
  \item $s = x$. Then the value complexity of $w \{v/s\}=v$ is $\tau'$. Moreover, if
$\tau ' \leq \tau $, since $w=x=s$, we have $\theta = \tau$, thus $\tau'\leq \theta$. 

  \item $s \neq x$.  The value complexity of $w \{v/s\}$ is $\theta$
and we are done.
  \end{itemize}

\item  $w  \{v/s\} = \lam x u \, \{v/s\}$. We have two cases.
  \begin{itemize}
  \item $\lam x u \, \{v/s\} = \lam x (u \{v/s\})$. Then the
    value
complexity of $w \{v/s\}$ is $\theta$ and we are done.
\item $\lam x u \, \{v/s\} = v$. Then the value complexity of $w \{v/s\}$
is $\tau'$. Moreover, if $\tau ' \leq \tau $, since $w= \lam x u =s$, we have
$\theta = \tau$, thus $\tau'\leq \theta$. 
\end{itemize}

\item $ w \{v/s\} = \lan q_1 , q_2 \ran \, \{v/s\} $. We have two cases.

  \begin{itemize}
  \item $\langle q_1 , q_2 \rangle \, \{v/s\} = \langle q_1 \{v/s\} ,
q_2 \{v/s\} \rangle$.  Then by induction hypothesis the value
complexities of $q_1 \{v/s\}$ and $q_2 \{v/s\}$ are at most $\theta$
or equal to $\tau '$. Since  the value complexity of $w \{v/s\}$ is the maximum among the value complexities of $q_1 \{v/s\}$ and $q_2 \{v/s\}$, we are done.
  \item $\langle q_1 , q_2 \rangle \, \{v/s\} = v$.  Then the value
complexity of $w \{v/s\}$ is $\tau'$.  Moreover, if $\tau ' \leq \tau $, since
$w= \langle q_1 , q_2 \rangle =s$, we have $\theta = \tau$, thus $\tau'\leq \theta$. 
\end{itemize}

\item $w \{v/s\} = \inj_i (u) \, \{v/s\}$. We have two cases.
  \begin{itemize}
  \item $\inj_i (u) \, \{v/s\} = \inj_i (u\{v/s\})$. Then the value
complexity of $w \{v/s\}$ is $\theta$ and we are done.
  \item $\inj_i (u) \, \{v/s\} = v$. Then the value complexity of $w
\{v/s\}$ is $\tau'$. Moreover, if $\tau ' \leq \tau $, since
$w= \inj_i (u) =s$,  we have $\theta = \tau$, thus $\tau'\leq \theta$. 
\end{itemize}

\item
\begin{itemize} 
\item $w\{v/s\}=(v_0[z_1.  v_{1}, z_2. v_{2}] ) \rho \{v/s\} =$\\$ v_0 \{v/s\}
[z_1. v_{1}\{v/s\},z_2.  v_{2}\{v/s\}] (\rho \{v/s\})$, where $\rho$ is a case-free stack.  If $\rho$ is not empty, then by Lemma \ref{lem:stacks_and_zeros}, $v_{1}\{v/s\}\rho \{v/s\}$ and $v_{2}\{v/s\}\rho \{v/s\}$ have value complexity $0$, so $w\{v/s\}$ has value complexity $0\leq \theta$ and we are done. So assume  $\rho$ is empty. By
induction hypothesis applied to $v_{1}\{v/s\}$ and
$v_2 \{v/s\}$, the value complexity of $w \{v/s\} $ is at most
$\theta$ or equal to $\tau'$ and we are done. Moreover, if $\tau '
\leq \tau $, then by inductive hypothesis, the value
complexity of $ v_i \{v/s\} $ for $i \in \{1,2\}$ is at most
$\theta $. Hence, the value complexity of \\$ (v_0[z_1. v_{1},z_2.
v_{2}] )\rho \{v/s\}$ is at most $\theta$ and we are done.

\item  $w\{v/s\}=(v_0[z_1.
v_{1},z_2. v_{2}]\rho)
 \{v/s\} = v \rho_j \{v/s\} \dots \rho_n\{v/s\}$, where $\rho = \rho_1 \dots \rho_n$ is a case-free stack and $1 \leq
j \leq n$. If $\rho_j  \dots \rho_n$ is not empty, then by Lemma \ref{lem:stacks_and_zeros}, the value complexity of $w\{v/s\}$ is $0\leq \theta$, and we are done.  So assume $\rho_j  \dots \rho_n$ is empty.  Then the value complexity of $w\{v/s\}$ is $\tau'$. Moreover, if $\tau'\leq \tau$, since it must be $s=v_0[z_1.
v_{1},z_2. v_{2}]$, we have that the value complexity of $w\{v/s\}=v$ is $\tau'\leq \tau=\theta$.

\end{itemize}

\item In all other cases,  $ w \{v/s\} = (r \, \rho ) \{v/s\}$ where $\rho$ is
  a case-free non-empty stack and $r$ is of the form $\lam x u $, $\lan q_1 , q_2 \ran $,
$\inj _i (u)$, $x$, or $au$. We distinguish three  cases.
  \begin{itemize}
  \item $( r \, \rho) \{v/s\} = r\{v/s\} \rho  \{v/s\}$ and $r\neq
s$. Then the value complexity of $w \{v/s\}$ is $0 \leq \theta $ and we are done.
  \item  $(r \, \rho ) \{v/s\} = v
\rho_j\{v/s\} \dots \rho _n\{v/s\}  $, with $\rho =
\rho_1 \dots \rho _n$ and $1 \leq j \leq  n $.  Then  by Lemma \ref{lem:stacks_and_zeros} the value
complexity of $w \{v/s\}$ is $0 \leq \theta $ and we are done.

  \item $r \rho \{v/s\} = v $.  Then the value complexity of $w
\{v/s\}$ is $\tau '$. Moreover, if $\tau ' \leq \tau $, since
$w= r \rho  =s$, we have $\theta = \tau$, thus $\tau'\leq \theta$. 
  \end{itemize}
\end{itemize}
}
\end{proof}

The following lemma has the aim of studying mostly message passing and contraction of application and projection redexes.

\begin{lemma}[Replace!]\label{lem:replace} 
Let $u$ be a term in parallel form, $v$, $s$ be any simply typed $\lambda$-terms,  $\tau$ be the
value complexity of $v$ and $\tau'$ be the maximum among the complexities of the channel occurrences in $v$. Then every  redex in $u\{v/s\}$ it is either (i) already in
$v$, (ii) of the form $r\{v/s\}$ and has complexity smaller than or
equal to the complexity of some redex $r$ of $u$, or (iii) has complexity  $\tau$   or is a communication redex of complexity at most $\tau'$.
\end{lemma}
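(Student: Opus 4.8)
The plan is to argue by induction on the size of $u$, classifying each redex of $u\{v/s\}$ according to where its principal constructor sits in the term tree: either strictly inside one of the substituted copies of $v$, or in the surrounding ``$u$-part''. Since $u$ is in parallel form and $v,s$ are $\parallel$-free simply typed $\lambda$-terms, the substitution acts only inside the simply typed components and introduces no new parallel operators; hence the parallel structure — and with it the active sessions — is exactly that of $u$, and each component stays simply typed so that Lemma~\ref{lem:change_of_value} is always applicable. A redex whose principal constructor lies strictly inside a copy of $v$ is already a redex of $v$, which is alternative (i). For the remaining redexes the plan is to run the induction on the top constructor of $u$ ($\lambda x\,u'$, $u_1u_2$, $\langle u_1,u_2\rangle$, $u_1\pi_i$, $\inj_i(u_1)$, $u_0[x.u_1,y.u_2]$, $\efq{P}{u_1}$, and the parallel form), pushing the substitution into the immediate subterms and invoking the induction hypothesis there; the only genuinely new redexes are then those created at the root, and the base case $u=s$ gives $u\{v/s\}=v$, falling entirely under (i).

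First I would dispose of the \emph{type-based} redexes — application $(\lambda x\,w)t$ and injection $\inj_i(t)[x.w_1,y.w_2]$. A new root redex of this shape forces the function (resp.\ the injection) to be a freshly plugged copy of $v$, so $v$ is itself a $\lambda$-abstraction (resp.\ an injection) and the redex complexity is the complexity of the type of $v$, which by Definition~\ref{def:value_compl} is exactly the value complexity $\tau$ of $v$, giving alternative (iii). If instead the abstraction/injection already sat in $u$, the redex is literally $r\{v/s\}$ with $r$ a redex of $u$ of the same type, so its complexity is unchanged and we land in alternative (ii).

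The delicate cases are the redexes whose complexity is itself a \emph{value complexity} and can therefore move under substitution: projection $\langle w_1,w_2\rangle\pi_i$, case permutation $w_0[x.w_1,y.w_2]\xi$, and the communication redexes. Here the main tool is the Change of Value lemma (Lemma~\ref{lem:change_of_value}): applied to the relevant subterm (the pair, the case subject, or the argument tuple of a channel occurrence $a\lan t_1,\dots,t_n\ran$), it yields the dichotomy that the value complexity of that subterm after substitution is either at most its old value — placing the redex in (ii) as $r\{v/s\}$ — or exactly $\tau$, placing it in (iii). A new root redex of one of these shapes again forces the pair/case to be a copy of $v$, whose value complexity is $\tau$, again landing in (iii). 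The same occurrence-by-occurrence analysis handles communication redexes, with one extra ingredient: a channel occurrence coming from a plugged copy of $v$ is left untouched by the substitution, so by definition of $\tau'$ its complexity is at most $\tau'$, contributing the ``communication redex of complexity at most $\tau'$'' branch of (iii); the parallel permutation redexes have complexity $0$ by Definition~\ref{def:com_comp} and are immediate.

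The main obstacle I anticipate is exactly the communication redexes, because the complexity of $\pp{a}{\dots}$ is a \emph{maximum} over all occurrences of $a$, and after substitution these occurrences are of mixed origin: some are untouched $v$-copies (bounded by $\tau'$), some are occurrences of $u$ whose arguments now contain $v$ (bounded, via Lemma~\ref{lem:change_of_value}, either by their old complexity or by $\tau$). The classification must then follow whichever occurrence realizes the maximum — an untouched $u$-occurrence giving (ii), a substituted $u$-occurrence forcing the maximum to be $\tau$ and giving (iii), a $v$-copy forcing it to be at most $\tau'$ and again giving (iii). Keeping this bookkeeping straight, together with the crucial use of the fact that value complexity of pairs and case distinctions is the maximum over the components rather than the (possibly far larger) type — which is precisely what prevents $\tau$ from blowing up as new channels are created — is where the real work lies.
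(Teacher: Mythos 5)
Your proposal is correct and follows essentially the same route as the paper's proof: induction on the size of $u$ with a case analysis on its shape, using Lemma~\ref{lem:change_of_value} to obtain the dichotomy (complexity at most the old one, or exactly $\tau$) for pairs, case subjects and channel arguments, with freshly plugged copies of $v$ yielding new redexes of complexity exactly $\tau$ and $v$-internal channel occurrences bounded by $\tau'$. The paper merely packages your occurrence-by-occurrence bookkeeping as an explicitly strengthened induction statement covering channel occurrences as well as redexes, and organizes the cases by head-plus-case-free-stack decomposition rather than by top constructor --- differences of presentation, not of substance.
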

\begin{proof}
Induction on the size of $u$. See Appendix.
\inappendix{

We prove the following stronger statement.
\smallskip 

\noindent $(*)$ Every  redex and channel occurrence in $u\{v/s\}$ it is either (i) already in
$v$, (ii) of the form
$r\{v/s\}$ or $aw\{v/s\}$ and has complexity smaller than or
equal to the complexity of some redex $r$ or channel occurrence $aw$  of $u$, or (iii) has complexity
$\tau$ or $\tau'$ or is a communication redex of complexity at most $\tau'$.
\smallskip

We reason by induction on the size and by cases on the possible  shapes of the term $u$. 
\begin{itemize}

\item $(\lambda x \, t )\, \sigma\, \{v/s\}$, where $\sigma=
\sigma_{1}\ldots \sigma_{n}$ is any case-free stack. By induction hypothesis,
$(*)$ holds for $t \{v/s\}$ and $\sigma_i\{v/s\}$ where $1\leq i \leq
n$. If $ (\lambda x \, t )\, \sigma \{v/s\} = (\lambda x \,
t\{v/s\})\, (\sigma \{v/s\})$, all the redexes and channel occurrences
that we have to check are either in $t \{v/s\}, \sigma \{v/s\}$ or 
possibly, the head redex, thus the thesis holds. If $ (\lambda x \, t )\,
\sigma \{v/s\}= v\, (\sigma_{i}\{v/s\})\ldots (\sigma_{n}\{v/s\})$,
then $v\, (\sigma_{i}\{v/s\})$ could be a new intuitionistic redex,
when $v=\lambda y\, w$, $v=\langle w_{1}, w_{2}\rangle$, $v=\inj_i
(w)$ or $v= w_0[ y_1. w_{1},y_2.  w_{2}]$. But the complexity of such
a redex is equal to $\tau$.

\item $\langle t_{1}, t_{2}\rangle\, \sigma\, \{v/s\}$, where $\sigma=
\sigma_{1}\ldots \sigma_{n}$ is any case-free stack. By induction
hypothesis, $(*)$ holds for $t_{i} \{v/s\}$ and $\sigma_i\{v/s\}$
where $1\leq i \leq n$. If $ \langle t_{1}, t_{2}\rangle\, \sigma
\{v/s\} = \langle t_{1}\{v/s\}, t_{2}\{v/s\} \rangle\, (\sigma
\{v/s\})$, all the redexes and channel occurrences that we have to
check are in $t_{i}\{v/s\},\sigma \{v/s\}$ or, possibly, the head
redex. The former are dealt with using the inductive hypothesis. As
for the latter, by Lemma~\ref{lem:change_of_value}, the value complexity of
$t_i \{v/s\}$ for $i \in \{1,2\}$ must be at most the value complexity
of $t_i$ or exactly $\tau$, thus either (ii) or (iii) holds. If $\langle t_{1},
t_{2}\rangle\, \sigma \{v/s\}= v\, (\sigma_{i}\{v/s\})\ldots
(\sigma_{n}\{v/s\})$, then $v\, (\sigma_{i}\{v/s\})$ could be a new
intuitionistic redex, when $v=\lambda y\, w$, $v=\langle w_{1},
w_{2}\rangle$, $v=\inj_i (w)$ or $v= w_0[ y_1. w_{1},y_2.  w_{2}]$.
But the complexity of such a redex is equal to $\tau$.

\item $\inj _i ( t )\, \sigma\, \{v/s\}$, where $\sigma=
\sigma_{1}\ldots \sigma_{n}$ is any case-free stack.  By induction
hypothesis, $(*)$ holds for $t \{v/s\}$ and $\sigma_j\{v/s\}$ with
$1\leq j \leq n$.  If $ \inj_i (t)\, \sigma \{v/s\} = (\inj_i (t
\{v/s\}))\, (\sigma \{v/s\})$, all the redexes and channel occurrences
that we have to check are either in $t\{v/s \}$ or in $\sigma \{v/s\}$
or, possibly, the head redex, thus the thesis holds.  If $ \inj _i ( t
)\, \sigma\, \{v/s\}= v\, (\sigma_{i}\{v/s\})\ldots
(\sigma_{n}\{v/s\})$, then $v\, (\sigma_{i}\{v/s\})$ could be a new
intuitionistic redex, when $v=\lambda y\, w$, $v=\langle w_{1},
w_{2}\rangle$, $v=\inj_i (w)$ or $v= w_0[ y_1. w_{1},y_2.
w_{2}]$. But the complexity of such a redex is equal to $\tau$.

\item $w_0[z_1.  w_{1}, z_2. w_{2}] \sigma \{v/s\}$, where $\sigma$ is
any case-free stack. By induction hypothesis, $(*)$ holds for $w_{0}
\{v/s\}$, $w_{1} \{v/s\}$, $w_{2} \{v/s\}$ and $\sigma_i\{v/s\}$ for
$1\leq i \leq n$. If 
\begin{align*}
& w_0[z_1.  w_{1}, z_2. w_{2}] \sigma \{v/s\} =\\ &
w_0 \{v/s\} [z_1. w_{1}\{v/s\},z_2.  w_{2}\{v/s\}] (\rho \{v/s\})
\end{align*}
we
first observe that by Lemma~\ref{lem:change_of_value}, the value complexity
of $w_{0} \{v/s\}$ is at most that of $w_{0}$ or exactly $\tau$,
hence the possible injection or case permutation redex $$w_0
\{v/s\} [z_1. w_{1}\{v/s\},z_2.  w_{2}\{v/s\}]$$ satisfies the thesis.
Again by Lemma~\ref{lem:change_of_value}, the value complexities of $w_{1}
\{v/s\}$ and $w_{2} \{v/s\}$ are respectively at most that of $w_{1}$
and $w_{2}$ or exactly $\tau$. Hence the complexity of the
possible case permutation redex \\$(w_0[z_1.  w_{1}\{v/s\},
z_2. w_{2}\{v/s\}])\sigma_{1}\{v/s\}$ is either $\tau$, and we are
done, or at most the value complexity of one among $w_{1} , w_{2}$,
thus at most the value complexity of the case permutation redex
$(w_0[z_1.  w_{1}, z_2. w_{2}])\sigma_{1}$ and we are done.

  If $w_0[z_1.  w_{1}, z_2. w_{2}] \sigma \{v/s\} = v\,
(\sigma_{i}\{v/s\})\ldots (\sigma_{n}\{v/s\})$, then there could be a
new intuitionistic redex, when $v=\lambda y\, q$, $v=\langle q_{1},
q_{2}\rangle$, $v=\inj_i (q)$ or $v= q_0[ y_1. q_{1},y_2.  q_{2}]$.
But the complexity of such a redex is $\tau$.

\item $ x\, \sigma \{v/s\}$, where $x$ is any simply typed variable
and $\sigma= \sigma_{1}\ldots \sigma_{n}$ is any case-free stack. By induction
hypothesis, $(*)$ holds for $\sigma_i\{v/s\}$ where $1\leq i \leq
n$. If $ x\, \sigma \{v/s\} = x\, (\sigma \{v/s\})$, all its redexes
and channel occurrences are in $\sigma \{v/s\}$, thus the thesis
holds.  If $ x\, \sigma \{v/s\}= v\, (\sigma_{i}\{v/s\})\ldots
(\sigma_{n}\{v/s\})$, then $v\, (\sigma_{i}\{v/s\})$ could be an
intuitionistic redex, when $v=\lambda y\, w$, $v=\langle w_{1},
w_{2}\rangle$, $v=\inj_i (w)$ or $v= w_0[ y_1. w_{1},y_2.
w_{2}]$. But the complexity of such a redex is equal to $\tau$.

\item $ a\, t\, \sigma \{v/s\}$, where $a$ is a channel variable, $t$
a term and $\sigma= \sigma_{1}\ldots \sigma_{n}$ is any case-free stack.  By
induction hypothesis, $(*)$ holds for $t\{v/s\}, \sigma_i\{v/s\}$ where $1\leq i
\leq n$.

If $ a\, t\, \sigma \{v/s\}= v\, (\sigma_{i}\{v/s\})\ldots
(\sigma_{n}\{v/s\})$, then $v\, (\sigma_{i}\{v/s\})$ could be an
intuitionistic redex, when $v=\lambda y\, w$, $v=\langle w_{1},
w_{2}\rangle$, $v=\inj_i (w)$ or $v= w_0[ y_1. w_{1},y_2.
w_{2}]$. But the complexity of such a redex is equal to $\tau$.
 
If $ a\, t\, \sigma \{v/s\} = a\, (t\{v/s\}) (\sigma \{v/s\})$, in
order to verify the thesis it is enough to check the complexity of the
channel occurrence $a\, (t\{v/s\})$. By Lemma~\ref{lem:change_of_value}, the
value complexity of $t\{v/s\}$ is at most the value complexity of
$t$ or exactly  $\tau$, thus either (ii) or (iii) holds.

% (t_1 \p _a t_2 )

\item $\pp{a}{t_1 \p \dots \p t_m} 
  \{v/s\}$. By induction hypothesis, $(*)$ holds
for $t_i\{v/s\}$ where $1\leq i \leq m$.  The only redex in $\pp{a}{t_1 \p \dots \p t_m} 
  \{v/s\}$ and not in some  $t_i\{v/s\}$ can be
$\pp{a}{t_1 \{v/s\} \p \dots \p t_m  \{v/s\}} $ itself. But the complexity of such
redex equals the maximal complexity of the channel occurrences of the
form $aw$ occurring in some  $t_i\{v/s\}$, hence it is $\tau$, at most $\tau'$ or equal to the complexity of $\pp{a}{t_1 \p \dots \p t_m} $. \end{itemize}
}
\end{proof}

Below we study the complexity of  redexes generated after contracting an injection redex.

\begin{lemma}[Eliminate the Case!]\label{lem:eliminate_the_case} Let $u$ be a
term in parallel form.  Then for any redex $r$ in $u\{ w_i [t / x_i] /
\inj _i (t)[x_1.w_1, x_2.w_2] \}$ of complexity $\theta$, either $\inj _i (t)[x_1.w_1, x_2.w_2]$ has complexity greater than $\theta$; or
 there is a
redex in $u$ of complexity $\theta$ which belongs to the same group as $r$ or is a case permutation redex.
\end{lemma}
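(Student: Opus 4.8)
The plan is to treat the contraction as a simple replacement and reduce everything to the two substitution lemmas already proved. Write $s := \inj_i(t)[x_1.w_1,x_2.w_2]$ for the contracted injection redex, $v := w_i[t/x_i]$ for its reduct, and let $\kappa$ be the complexity of $s$, i.e.\ the complexity of the type of $\inj_i(t)$ (Def.~\ref{def:int_comp}); the term under study is $u\{v/s\}$. I would first bound $\tau$, the value complexity of $v$: applying Lemma~\ref{lem:change_of_value} to the substitution of $t$ for the variable $x_i$ in $w_i$ shows that $\tau$ is either at most the value complexity of $w_i$ or equal to the value complexity of $t$, and by Proposition~\ref{prop:two!} the latter is at most the complexity of the type of $t$, hence strictly below $\kappa$ (the type of $\inj_i(t)$ is a disjunction having the type of $t$ as an immediate subformula). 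I would also record the two facts that are the real engine of the argument: the complexity of an application or injection redex is a \emph{type} (Def.~\ref{def:int_comp}), hence invariant under the replacement $\{v/s\}$, whereas the complexity of a projection or case permutation redex is a value complexity, which can only stay put or, by Lemma~\ref{lem:change_of_value}, drop to $\tau$.

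Next I would apply Lemma~\ref{lem:replace} to $u\{v/s\}$ with this $v$ and $s$, assuming $\kappa\le\theta$ throughout (otherwise the first disjunct, $\kappa>\theta$, already holds). Lemma~\ref{lem:replace} classifies a redex $r$ of complexity $\theta$ as (i) already in $v$, (ii) of the form $r'\{v/s\}$ for a redex $r'$ of $u$ of complexity $\ge\theta$, or (iii) a redex of complexity exactly $\tau$ (redexes involving $\parallel$ require no separate treatment, since $s$ and $v$ contain no $\parallel$ and the replacement introduces none). In case (ii), if $r$ is an application or injection redex its complexity equals that of $r'$ by type invariance, so $r'$ is a redex of $u$ of complexity $\theta$ in the same group; if $r$ is a projection or case permutation redex whose complexity is preserved, again $r'$ works, while if it dropped then $\theta=\tau$ and we are in the situation of case (iii).

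Case (iii) is the heart of the matter: $r$ is a \emph{new} redex, created at the interface where the inserted $v$ becomes the head of a stack $v\,\sigma_1$. The decisive observation is that such an interface exists only because $u$ contained $s\,\sigma_1$ with $s$ at the head of that stack; since $s$ is a case term and $\sigma_1$ a one-element stack, $s\,\sigma_1$ is itself a \emph{case permutation redex} of $u$, which is exactly what the second disjunct allows. (If instead $\tau$ equals the value complexity of $t$, then $\theta=\tau<\kappa\le\theta$ is absurd, so this subcase can only co-occur with $\kappa>\theta$.) Finally, for case (i) I would re-run the same dichotomy one level down, applying Lemma~\ref{lem:replace} to $w_i\{t/x_i\}$: every redex internal to $v$ either already occurs in $w_i$ or in $t$---both subterms of $s$, hence of $u$, yielding a redex of $u$ of the same group and the same complexity $\theta$---or is created by the substitution $[t/x_i]$ and then has complexity equal to the value complexity of $t$, which is $<\kappa\le\theta$ and so cannot be $\theta$ unless $\kappa>\theta$.

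The step I expect to require the most care is the bookkeeping that glues cases (ii) and (iii) together: showing that whenever the value complexity of a Group~2 redex drops under $\{v/s\}$ it drops precisely to $\tau$ and is witnessed at an interface, so that the case permutation clause is genuinely available. This is where Lemma~\ref{lem:change_of_value} and Lemma~\ref{lem:stacks_and_zeros} do the work, by pinning down the shape of the head of $v$ that realises the value complexity $\tau$; once that is in place, the remaining cases are a routine structural induction on $u$ parallel to the proof of Lemma~\ref{lem:replace}.
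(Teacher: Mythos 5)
Your architecture is genuinely different from the paper's: the paper does not derive this lemma from Lemma~\ref{lem:replace} as a black box, but re-runs the whole structural induction on $u$, proving a strengthened invariant $(*)$ that tracks channel occurrences alongside redexes. Your route has one correct key insight that coincides with the paper's: in the interface case, the occurrence of $s$ sits at the head of a stack, so $u$ contains $s\,\sigma_i$, which --- $s$ being a case term --- is itself a case permutation redex of $u$ of complexity $\mathrm{vc}(s)\geq \mathrm{vc}(w_i)\geq\tau$; this is exactly the witness the paper produces and the reason the statement carries the ``or is a case permutation redex'' disjunct. But your gluing step for Group~\ref{group_two} redexes of the form $r'\{v/s\}$ is false: you claim that if the complexity drops it drops \emph{exactly} to $\tau$, sending you to case (iii). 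Lemma~\ref{lem:change_of_value} only gives ``at most the old value complexity, or equal to $\tau$'', and a strict drop need not land on $\tau$: take a projection redex $\langle s, a_2\rangle\pi_0$ in $u$ with $i=2$, the value complexities of $w_1,w_2,t,a_2$ being $5,1,0,3$ respectively; the old complexity is $\max(5,3)=5$, while the new complexity is $\max(\tau,3)=3$ with $\tau\leq 1$, so it is neither preserved nor $\tau$, and no witness of complexity exactly $3$ need exist in $u$. This also shows that the exact-$\theta$ reading of the statement cannot be saved by your bookkeeping; note that the paper's own proof, in precisely this pair-head case, concludes only that the new complexity is \emph{at most} that of the old same-group redex ``and we are done'' (which is all that its use in Proposition~\ref{prop:decrease} requires). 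Under that reading your detour through (iii) is unnecessary --- $r'$ itself is the witness --- but the step as you wrote it, and which you yourself flagged as the delicate one, does not hold.

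The second gap is your parenthetical dismissal of $\parallel$-redexes. In this calculus simply typed $\lambda$-terms may contain channel occurrences (the contexts $\mathcal{C}[a\,u]$ in cross reductions are simply typed), so $v=w_i[t/x_i]$ can contain channels, and clause (iii) of Lemma~\ref{lem:replace} explicitly includes the branch ``communication redex of complexity at most $\tau'$''. A communication redex of $u\{v/s\}$ is an enclosing $\parallel$-term whose complexity is the maximum of its channel-occurrence complexities, and these can change under the replacement; to close the case you must bound every new occurrence $a\,(w\{v/s\})$ by an occurrence of the same channel in $u$ or by the value complexity of $t$, which is $<\kappa$. The bare statement of Lemma~\ref{lem:replace} does not give you this link: its $\tau'$ is defined from occurrences inside $v$, with no connection back to redexes of $u$. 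This is exactly why the paper strengthens the induction hypothesis to a statement $(*)$ about channel occurrences as well as redexes and proves the lemma by a fresh induction instead of citing Lemma~\ref{lem:replace}. So while your factorization through Lemmas~\ref{lem:change_of_value} and~\ref{lem:replace} plus the bound $\mathrm{vc}(t)<\kappa$ is an attractive shortcut, as it stands it has two genuine holes: the false preserved-or-$\tau$ dichotomy, and the missing channel-occurrence bookkeeping.
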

\begin{proof} Induction on the size of $u$. See Appendix.
\inappendix{
Let $v= w_i [t / x_i]$ and $s= \inj _i (t)[x_1.w_1,
x_2.w_2]$. We prove a stronger statement: 
\smallskip

\noindent $(*)$ For any redex $r$ in $u\{ v/s \}$ of complexity
$\theta$, either $\inj _i (t)[x_1.w_1, x_2.w_2]$ has complexity greater than $\theta$; or
 there is a
redex in $u$ of complexity $\theta$ which belongs to the same group as $r$ or is a case permutation redex.  Moreover, for any channel occurrence in $u \{v/s\}$
with complexity $\theta '$,  either $\inj _i (t)[x_1.w_1, x_2.w_2]$ has
complexity greater than $\theta '$, or there is an occurrence of the same
channel with complexity greater or equal than $\theta '$.  \smallskip

The proof is by induction on the size of $u$  and by cases according to the possible shapes of $u$. 
\begin{itemize}
\item $(\lambda x \, t' )\, \sigma\, \{v/s\}$, where $\sigma=
\sigma_{1}\ldots \sigma_{n}$ is any case-free stack. By induction hypothesis,
$(*)$ holds for $t' \{v/s\}$ and $\sigma_i\{v/s\}$ for
$1\leq i \leq n$. If $ (\lambda x \, t')\, \sigma
\{v/s\} = (\lambda x \, t'\{v/s\})\, (\sigma \{v/s\})$, all the redexes
and channel occurrences that we have to check are  in $t'\{v/s\}$, $\sigma
\{v/s\}$ or, possibly, the head redex, thus the thesis holds.
Since $s \neq (\lambda x \, t' )\,
\sigma_1 \dots \sigma _j$, there is no other possible case.

\item $\langle t_{1}, t_{2}\rangle\, \sigma\, \{v/s\}$, where $\sigma=
\sigma_{1}\ldots \sigma_{n}$ is any case-free stack. By induction hypothesis,
$(*)$ holds for  $t_{1} \{v/s\}$, $t_{2} \{v/s\}$ and
$\sigma_i\{v/s\}$ for $1\leq i \leq n$.

If $ \langle t_{1}, t_{2}\rangle\, \sigma \{v/s\} = \langle
t_{1}\{v/s\}, t_{2}\{v/s\} \rangle\, (\sigma \{v/s\})$ all the redexes
and channel occurrences that we have to check are either in $\sigma
\{v/s\}$ or, possibly, the head redex.  By Lemma~\ref{lem:change_of_value}, the
value complexity of $w_i [t / x_i]$ is either at most the value
complexity of $w_{i}$ or the value complexity of $t$. In the first
case, the value complexity of $w_i [t / x_i] $ is at most the value
complexity of $w_{i}$, which is at most the value complexity of $ \inj
_i(t)[x_1.w_1, x_2.w_2] $.  Thus, by Lemma~\ref{lem:change_of_value}, the value
complexities of $ t_{1}\{v/s\}, t_{2}\{v/s\} $ are at most the value complexities respectively of $
t_{1} , t_{2}$, thus the value complexity of $\langle t_{1}\{v/s\}, t_{2}\{v/s\} \rangle$, and hence
that of the possible head redex, is at most the value complexity of $\langle
t_{1} , t_{2} \rangle$ and we are done. In the second case, the value
complexity of $\langle t_{1}\{v/s\}, t_{2}\{v/s\} \rangle$, and hence
that of the head redex, is either at most the value complexity of $\langle
t_{1} , t_{2} \rangle$, and we are done,  or exactly the value complexity of $t$, which is  smaller than the complexity of the injection redex $
\inj_i(t)[x_1.w_1, x_2.w_2]$ occurring in $u$, which is what we wanted
to show. The case in which $\langle t_{1}, t_{2}\rangle\, \sigma
\{v/s\}= v\, (\sigma_{i}\{v/s\})\ldots (\sigma_{n}\{v/s\})$ is
impossible due to the form of $s= \inj _i (t)[x_1.w_1, x_2.w_2]$.

\item $\inj _i ( t' )\, \sigma\, \{v/s\}$, where $\sigma=
\sigma_{1}\ldots \sigma_{n}$ is any case-free stack.  By induction
hypothesis, $(*)$ holds for $t' \{v/s\}$ and $\sigma_j\{v/s\}$ with
$1\leq j\leq n$.  If $ \inj_i (t')\, \sigma \{v/s\} = (\inj_i (t'
\{v/s\}))\, (\sigma \{v/s\})$, all the redexes and channel occurrences
that we have to check are either in $t'\{v/s \}$ or in $\sigma
\{v/s\}$ or, possibly, the head redex, thus the thesis holds.  The
case in which $ \inj_i (t') \, \sigma \{v/s\}= v\,
(\sigma_{i}\{v/s\})\ldots (\sigma_{n}\{v/s\})$ is impossible due to
the form of $s= \inj _i (t)[x_1.w_1, x_2.w_2]$.

\item $ x\, \sigma \{v/s\}$, where $x$ is any simply typed variable
and $\sigma= \sigma_{1}\ldots \sigma_{n}$ is any case-free stack. By induction
hypothesis, $(*)$ holds for $\sigma_i\{v/s\}$ for
$1\leq i \leq n$.  If $ x\, \sigma \{v/s\} = x\, (\sigma \{v/s\})$, all its
redexes and channel occurrences are in $\sigma \{v/s\}$, thus the
thesis holds.  The case in which $ x\, \sigma \{v/s\}= v\, (\sigma_{i}\{v/s\})\ldots
(\sigma_{n}\{v/s\})$ is impossible due to the form of $s= \inj _i
(t)[x_1.w_1, x_2.w_2]$.

 \item $v_0[z_1.  v_{1}, z_2. v_{2}]  \sigma \{v/s\}$, where $\sigma$ is any case-free stack. By induction hypothesis,
$(*)$ holds for  $v_{0} \{v/s\}$, $v_{1} \{v/s\}$, $v_{2} \{v/s\}$ and
$\sigma_i\{v/s\}$ for $1\leq i \leq n$. If
\begin{align*}
& v_0[z_1.  v_{1}, z_2. v_{2}]  \sigma \{v/s\} = \\ & v_0 \{v/s\}
[z_1. v_{1}\{v/s\},z_2.  v_{2}\{v/s\}] (\rho \{v/s\})
\end{align*}
By Lemma~\ref{lem:change_of_value} the
value complexity of $w_i [t / x_i]$ is either at most the value
complexity of $w_{i}$ or the value complexity of $t$. In the first
case, the value complexity of $w_i [t / x_i] $ is at most the value
complexity of $w_{i}$ which is at most the value complexity of $ \inj
_i(t)[x_1.w_1, x_2.w_2] $.  Thus, by Lemma~\ref{lem:change_of_value} the value
complexities of $ v_{0}\{v/s\}, v_{1}\{v/s\}, v_{2}\{v/s\} $  are at most the value complexity respectively of $
v_{0}, v_{1} , v_{2}$. Hence,
the complexity of the possible case permutation redex $$(v_0[z_1.  v_{1}\{v/s\}, z_2. v_{2}\{v/s\}])\sigma_{1}\{v/s\}$$ is at most the complexity of $v_0[z_1.  v_{1}, z_2. v_{2}]\sigma_{1}$ and we are done. Moreover, the possible injection  or case permutation  redex $$v_0 \{v/s\}
[z_1. v_{1}\{v/s\},z_2.  v_{2}\{v/s\}]$$ satisfies the thesis.  In the second case,  the value
complexities of $ v_{0}\{v/s\}, v_{1}\{v/s\}, v_{2}\{v/s\} $  are respectively at most the value complexities of $
v_{0}, v_{1} , v_{2}$ or exactly the value complexity of $t$. Hence the complexity of  the possible case permutation redex\\ $(v_0[z_1.  v_{1}\{v/s\}, z_2. v_{2}\{v/s\}])\sigma_{1}\{v/s\}$, is either at most the value complexity of $
v_{1} , v_{2}$, and we are done,  or exactly the value complexity of $t$, which by Proposition \ref{prop:two!} is at most the complexity of the type of $t$, thus is smaller than the complexity of the injection redex $
\inj_i(t)[x_1.w_1, x_2.w_2]$ occurring in $u$. Moreover,  the possible injection  or case permutation  redex $$v_0 \{v/s\}
[z_1. v_{1}\{v/s\},z_2.  v_{2}\{v/s\}]$$ has complexity equal to the value complexity of $v_{0}$ or the value complexity of $t$, and we are done again.\\
 If $v_0[z_1.  v_{1}, z_2. v_{2}]  \sigma \{v/s\}= v\, (\sigma_{i}\{v/s\})\ldots
(\sigma_{n}\{v/s\})$, then  
there could be a new
intuitionistic redex, when $v=\lambda y\, q$, $v=\langle q_{1},
q_{2}\rangle$, $v=\inj_i (q)$ or $v= q_0[ y_1. q_{1},y_2.  q_{2}]$.  If the value complexity of $v=w_i [t / x_i]$ is at most the value complexity of $w_{i}$, then  the complexity of $w_i [t / x_i]
(\sigma_{i}\{v/s\})$ is equal to the complexity of the permutation redex $( \inj _i
(t)[x_1.w_1, x_2.w_2]) \sigma_{i}$. If the value complexity of $v=w_i [t / x_i]$ is  the value complexity of $t$,  by Proposition \ref{prop:two!} the complexity of $w_i [t / x_i]
(\sigma_{i}\{v/s\})$ is at most the complexity of the type of $t$, thus is smaller than the complexity of the injection redex $ \inj _i
(t)[x_1.w_1, x_2.w_2]$  occurring in $u$ and we are done.

\item $ a\, t'\, \sigma \{v/s\}$, where $a$ is a channel variable, $t'$
a term and $\sigma= \sigma_{1}\ldots \sigma_{n}$ is any case-free stack. By
induction hypothesis, $(*)$ holds for  $t'$ and 
$\sigma_i\{v/s\}$ for $1\leq i \leq n$. 
Since $s \neq a\, t'\, \sigma_1 \dots \sigma _j$, the case  $ a\,
t'\, \sigma \{v/s\}= v\, (\sigma_{i}\{v/s\})\ldots
(\sigma_{n}\{v/s\})$ is impossible.

If $ a\, t'\, \sigma \{v/s\} = a\, (t'\{v/s\}) (\sigma \{v/s\})$, in
order to verify the thesis it is enough to check the complexity of the
channel occurrence $a\, (t'\{v/s\})$. By Lemma~\ref{lem:change_of_value}, the
value complexity of $w_i [t / x_i]$ is either at most the value
complexity of $w_{i}$ or exactly the value complexity of $t$. In the first
case, the value complexity of $w_i [t / x_i] $ is at most the value
complexity of $w_{i}$ which is at most the value complexity of $ \inj
_i(t)[x_1.w_1, x_2.w_2] $.  Thus, by Lemma~\ref{lem:change_of_value}, the value
complexity of $t'\{v/s\}$ is at most the value complexity of $t'$ and
we are done. In the second case, the value complexity of $t'\{v/s\}$
is  the value complexity of $t$, which by Proposition
\ref{prop:two!} is at most the complexity of the type of $t$, thus
smaller than the complexity of the injection redex $
\inj_i(t)[x_1.w_1, x_2.w_2]$ occurring in $u$, which is what we wanted
to show.

\item $\pp{a}{t_1 \p \dots \p t_m}  \{v/s\}$. By induction hypothesis, $(*)$ holds for $t_i\{v/s\}$ for $1\leq i \leq m$. The only redex
in  $\pp{a}{t_1 \p \dots \p t_m}  \{v/s\}$ and not in some
$t_i\{v/s\}$ can be  $\pp{a}{t_1 \{v/s\}\p \dots \p t_m \{v/s\}}
\{v/s\}$ itself. But the complexity of such
redex equals the maximal complexity of the occurrences of the channel $a$
 in the  $t_i\{v/s\}$. Hence the
statement follows.
\end{itemize}
}
\end{proof}

We analyze  what happens after permuting a case distinction.

\begin{lemma}[In Case!]\label{lem:in_case} Let $u$ be a term in
parallel form. Then for any redex $r_1$ of Group~\ref{group_one} in
$u\{ t[x_1.v_1\xi, x_2.v_2\xi] / t[x_1.v_1,x_2.v_2]\xi \}$, there is a
redex in $u$ with greater or equal complexity than $r_1$; for any
redex $r_2$ of Group~\ref{group_two} in $u\{ t[x_1.v_1\xi, x_2.v_2\xi]
/ t[x_1.v_1,x_2.v_2]\xi \}$, there is a redex of Group~\ref{group_two}
in $u$ with greater or equal complexity than $r_2$.
\end{lemma}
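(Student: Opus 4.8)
The plan is to mirror the structure of Lemma~\ref{lem:eliminate_the_case}: write $s = t[x_1.v_1,x_2.v_2]\xi$ for the contracted case-permutation redex and $v = t[x_1.v_1\xi, x_2.v_2\xi]$ for its reduct, so that the term under analysis is $u\{v/s\}$, and prove a single strengthened statement by induction on the size of $u$, distinguishing cases on the shape of $u$. Exactly as in Lemma~\ref{lem:eliminate_the_case}, the statement must be strengthened to track \emph{channel occurrences} as well, not merely Group~\ref{group_one} and Group~\ref{group_two} redexes: in the parallel-operator case $\pp{a}{t_1 \p \dots \p t_m}\{v/s\}$ the only possibly new redex is the surrounding communication redex, whose complexity is the maximal complexity of the channel occurrences $a\,w$ (Definition~\ref{def:com_comp}), so bounding these occurrences is precisely what drives the inductive step. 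Concretely I would prove that every Group~\ref{group_one} redex of $u\{v/s\}$ has complexity at most that of some redex of $u$, while every Group~\ref{group_two} redex and every channel occurrence of $u\{v/s\}$ has complexity at most that of some Group~\ref{group_two} redex or channel occurrence of $u$.

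The first step is a value-complexity computation for $v$, writing $\mathrm{vc}(\cdot)$ for value complexity. When $\xi$ is a \emph{case-free} one-element stack (an argument, a projection, or an $\exfalso_P$), Lemma~\ref{lem:stacks_and_zeros} forces $\mathrm{vc}(v_1\xi)=\mathrm{vc}(v_2\xi)=0$; hence by the third clause of Definition~\ref{def:value_compl} both $\mathrm{vc}(v)$ and $\mathrm{vc}(s)$ are $0$. Feeding $\tau=\tau'=0$ into Lemma~\ref{lem:change_of_value} then guarantees that every substitution instance $w\{v/s\}$ inside $u$ has value complexity at most that of $w$, so no pair, channel argument, or case discriminee acquires a larger value complexity. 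The only genuinely new redexes are the \emph{junction} redexes created when $\xi$ meets the head of a branch $v_i$ -- an application redex $(\lambda y\,w)\,\xi$, a projection redex $\langle w_1,w_2\rangle\,\xi$, or a new case-permutation redex -- and by Definition~\ref{def:int_comp} each has complexity $\mathrm{vc}(v_i)\le\max(\mathrm{vc}(v_1),\mathrm{vc}(v_2))$, which is exactly the complexity of the case-permutation redex $s$ itself. Since $s$ is a Group~\ref{group_two} redex occurring in $u$, it serves as the witness for both clauses, and the induction closes with the routine case analysis of Lemma~\ref{lem:eliminate_the_case} over the $\lambda$-, pair-, injection-, variable-, channel-application- and parallel-operator-shaped subterms, invoking Lemma~\ref{lem:change_of_value} in each.

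The delicate point, which I expect to be the main obstacle, is the sub-case in which $\xi$ is itself a case distinction $[z.p,w.q]$. Here Lemma~\ref{lem:stacks_and_zeros} no longer applies: one computes $\mathrm{vc}(v_i\xi)=\max(\mathrm{vc}(p),\mathrm{vc}(q))$, so $v$ may have strictly larger value complexity than $s$, and the simple ``$s$ is the witness'' argument breaks. The way around it is to re-read the nested cases inside $u$: the subterm $t[x_1.v_1,x_2.v_2]\,[z.p,w.q]$ is, with the outer case distinction as discriminee, a case distinction with branches $p$ and $q$, so by the third clause of Definition~\ref{def:value_compl} its value complexity is exactly $\max(\mathrm{vc}(p),\mathrm{vc}(q))$. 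Whenever this quantity is actually read off as a redex complexity in $u\{v/s\}$ -- necessarily because $v$, or a term into which its value complexity has propagated via the dichotomy of Lemma~\ref{lem:change_of_value} (``stays below $\theta$ or equals $\tau'$''), faces a trailing stack element, a projection, or a channel -- one recovers a correspondingly-shaped Group~\ref{group_two} redex or channel occurrence of $u$ of this very complexity. Verifying that such a witness is genuinely available in every configuration, in particular for channel arguments where one must check that the relevant occurrence in $u$ really exposes $[z.p,w.q]$ as the permuted case, is the part that requires care; combining this re-reading with Proposition~\ref{prop:two!} to keep any residual new complexities bounded by pre-existing type complexities is what makes the argument go through.
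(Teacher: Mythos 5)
Your proposal is correct and essentially reproduces the paper's own proof: the same strengthened statement tracking channel occurrences alongside Group~\ref{group_one} and Group~\ref{group_two} redexes, the same induction on the shape of $u$ driven by Lemma~\ref{lem:change_of_value}, and the same witnessing of the junction redexes $v_1\xi$, $v_2\xi$ by the contracted case-permutation redex itself. Your ``delicate'' sub-case dissolves exactly as you indicate --- re-reading $t[x_1.v_1,x_2.v_2][z.p,w.q]$ as a case distinction with branches $p$ and $q$ shows that the value complexity of $t[x_1.v_1\xi,x_2.v_2\xi]$ equals that of $t[x_1.v_1,x_2.v_2]\xi$ for \emph{every} $\xi$ (so no strict increase ever occurs), which is precisely the paper's one-line observation, and your witness for the residual non-case-free configurations (the redex $t[x_1.v_1,x_2.v_2]\xi\sigma_i$ of $u$ itself) is the one the paper uses.
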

\begin{proof}
  Induction on the shape of $u$. See Appendix.
\inappendix{
Let $v= t[x_1.v_1\xi, x_2.v_2\xi]$ and $s= t[x_1.v_1,x_2.v_2]\xi$. We
prove the following stronger statement.
\smallskip

\noindent $(*)$ For any Group~\ref{group_one} redex $r_1$ in $u\{
t[x_1.v_1\xi, x_2.v_2\xi] / t[x_1.v_1,x_2.v_2]\xi \}$, there is a
redex in $u$ with greater or equal complexity than $r_1$; for any
Group~\ref{group_two} redex $r_2$ in $u\{ t[x_1.v_1\xi, x_2.v_2\xi] /
t[x_1.v_1,x_2.v_2]\xi \}$, there is a Group~\ref{group_two} redex in
$u$ with greater or equal complexity than $r_2$.  Moreover, for any channel occurrence in $u \{v/s\}$
with complexity $\theta '$,  there is in $u$ an occurrence of the same
channel with complexity greater or equal than $\theta '$. 
\smallskip

We first observe that the possible Group~\ref{group_one} redexes $v_1 \xi$ and $v_2 \xi$ have at most the
complexity of the case permutation $t[x_1.v_1,x_2.v_2]\xi$. The rest
of the proof is by induction on the shape of $u$.
\begin{itemize}
\item $(\lambda x \, t' )\, \sigma\, \{v/s\}$, where $\sigma=
\sigma_{1}\ldots \sigma_{n}$ is any case-free stack. By induction hypothesis,
$(*)$ holds for $t' \{v/s\}$ and $\sigma_i\{v/s\}$ where $1\leq i \leq
n$.  If $ (\lambda x \, t' )\, \sigma \{v/s\} = (\lambda x \,
t'\{v/s\})\, (\sigma \{v/s\})$, all the redexes and channel occurrences
that we have to check are either in $\sigma \{v/s\}$ or, possibly, the
head redex, thus the thesis holds. If $$ (\lambda x \, t' )\, \sigma
\{v/s\} = t[x_1.v_1\xi, x_2.v_2\xi] \, (\sigma_{i}\{v/s\})\ldots
(\sigma_{n}\{v/s\})$$ then $ t[x_1.v_1\xi,
x_2.v_2\xi] (\sigma_{i}\{v/s\})$ can be a new permutation redex. If $\xi$
is case free, this redex has complexity $0$ by Lemma \ref{lem:stacks_and_zeros} and we are
done. Otherwise, $t[x_1.v_1\xi, x_2.v_2\xi] (\sigma_{i}\{v/s\})$ has
the same complexity as the permutation $(t[x_1.v_1, x_2.v_2] \xi)
\sigma_{i}$.

\item $\langle t_{1}, t_{2}\rangle\, \sigma\, \{v/s\}$, where $\sigma=
\sigma_{1}\ldots \sigma_{n}$ is any case-free stack. By induction hypothesis,
$(*)$ holds for $t_{1} \{v/s\}$, $t_{2} \{v/s\}$ and $\sigma_i\{v/s\}$
where $1\leq i \leq n$.  If $$ \langle t_{1}, t_{2}\rangle\, \sigma
\{v/s\} = \langle t_{1}\{v/s\}, t_{2}\{v/s\} \rangle\, (\sigma
\{v/s\})$$ all the redexes and channel occurrences that we have to
check are either in $\sigma \{v/s\}$ or, possibly, the head redex. The
former are dealt with using the inductive hypothesis. As for the
latter, it is immediate to see that the value complexity of
$t[x_1.v_1\xi, x_2.v_2\xi] $ is equal to the value complexity of $
t[x_1.v_1,x_2.v_2]\xi$. By Lemma~\ref{lem:change_of_value}, the value
complexity of $t_i \{v/s\}$ is at most that of $t_i$, and we are done.
If $\langle t_{1}, t_{2}\rangle\, \sigma \{v/s\}= v\,
(\sigma_{i}\{v/s\})\ldots (\sigma_{n}\{v/s\})$, then \\$v\,
(\sigma_{i}\{v/s\}) = t[x_1.v_1\xi, x_2.v_2\xi] (\sigma_{i}\{v/s\})$
can be a new permutation redex. If $\xi$ is case free, this redex has
complexity $0$ by Lemma \ref{lem:stacks_and_zeros}  and we are done. Otherwise, $$t[x_1.v_1\xi, x_2.v_2\xi]
(\sigma_{i}\{v/s\})$$ has the same complexity as $(t[x_1.v_1, x_2.v_2]
\xi ) \sigma_{i}$.

\item $\inj_{i}( t' )\, \sigma\, \{v/s\}$, where $\sigma=
\sigma_{1}\ldots \sigma_{n}$ is any case-free stack. By induction hypothesis,
$(*)$ holds for $t' \{v/s\}$ and $\sigma_j\{v/s\}$ where $1\leq j \leq
n$.  If $ \inj_{i}( t' )\, \sigma \{v/s\} = \inj_{i}( t'\{v/s\} )\, (\sigma \{v/s\})$, all the redexes and channel occurrences
that we have to check are either in $\sigma \{v/s\}$ or, possibly, the
head redex, thus the thesis holds. 

{\ehi  (***CHANGE***) \textbf{**IMPOSSIBLE because $\sigma $ is case-free}:

If \[ \inj_{i}( t' )\, \sigma
\{v/s\} = t[x_1.v_1\xi, x_2.v_2\xi] \, (\sigma_{i}\{v/s\})\ldots
(\sigma_{n}\{v/s\})\] then $ t[x_1.v_1\xi,
x_2.v_2\xi] (\sigma_{i}\{v/s\})$ can be a new permutation redex. If $\xi$
is case free, this redex has complexity $0$ by Lemma \ref{lem:stacks_and_zeros} and we are
done. Otherwise, $t[x_1.v_1\xi, x_2.v_2\xi] (\sigma_{i}\{v/s\})$ has
the same complexity as the permutation $(t[x_1.v_1, x_2.v_2] \xi)
\sigma_{i}$.

\textbf{**HENCE:}
There are no more cases since $\sigma$ is case-free. 
}

\item $w_0[z_1.  w_{1}, z_2. w_{2}] \sigma \{v/s\}$, where $\sigma$ is
any case-free stack. By induction hypothesis, $(*)$ holds for $w_{0}
\{v/s\}$, $v_{1} \{v/s\}$, $v_{2} \{v/s\}$ and $\sigma_i\{v/s\}$ for
$1\leq i \leq n$. If
\begin{align*}
& w_0[z_1.  w_{1}, z_2. w_{2}] \sigma \{v/s\} = \\
& w_0 \{v/s\} [z_1. w_{1}\{v/s\},z_2.  w_{2}\{v/s\}] (\sigma \{v/s\})
\end{align*}
Since the value complexity of $t[x_1.v_1\xi, x_2.v_2\xi] $ is equal to
the value complexity of $ t[x_1.v_1,x_2.v_2]\xi$, by
Lemma~\ref{lem:change_of_value} the value complexities of $w_{0} \{v/s\}$,
$w_{1} \{v/s\}$ and $w_{2} \{v/s\}$ are respectively at most that of
$w_{0}$, $w_{1}$ and $w_{2}$. Hence the complexity of the possible
case permutation redex \\$(w_0\{v/s\}[z_1.  w_{1}\{v/s\},
z_2. w_{2}\{v/s\}])\sigma_{1}\{v/s\}$ is at most the value complexity
respectively of $ w_{1} , w_{2}$, thus the complexity of the case
permutation redex $(w_0[z_1.  w_{1},
z_2. w_{2}])\sigma_{1}$. Moreover, the possible injection or case
permutation redex $$w_0 \{v/s\} [z_1. w_{1}\{v/s\},z_2.
w_{2}\{v/s\}]$$ has complexity equal to the value complexity of
$w_{0}$ and we are done.

 If $w_0[z_1.  w_{1}, z_2. w_{2}]  \sigma \{v/s\}= v\, (\sigma_{i}\{v/s\})\ldots
(\sigma_{n}\{v/s\})$, then there could be a new case permutation redex, because\\ $v=t[x_1.v_1\xi, x_2.v_2\xi]$.
If $\xi$ is case free, by Lemma \ref{lem:stacks_and_zeros}, this redex has
complexity $0$ and we are done;  if not, it has the same complexity as  $t[x_1.v_1, x_2.v_2]
\xi \sigma_{1}$.

\item $ x\, \sigma \{v/s\}$, where $x$ is any simply typed variable
and $\sigma= \sigma_{1}\ldots \sigma_{n}$ is any case-free stack. By
induction hypothesis, $(*)$ holds for $\sigma_i\{v/s\}$ where $1\leq i
\leq n$. If $ x\, \sigma \{v/s\} = x\, (\sigma \{v/s\})$, all its
redexes and channel occurrences are in $\sigma \{v/s\}$, thus the
thesis holds.  If $$ x\, \sigma \{v/s\}= t[x_1.v_1\xi, x_2.v_2\xi]\, (\sigma_{i}\{v/s\})\ldots
(\sigma_{n}\{v/s\})$$ then $v\, (\sigma_{i}\{v/s\})$ can be a new
permutation redex. If $\xi$ is case free, this redex has complexity
$0$ and we are done. Otherwise, $t[x_1.v_1\xi, x_2.v_2\xi]
(\sigma_{i}\{v/s\})$ has the same complexity as $t[x_1.v_1, x_2.v_2]
\xi \sigma_{i}$.

\item $ a\, t\, \sigma \{v/s\}$, where $a$ is a channel variable, $t$
a term and  $\sigma= \sigma_{1}\ldots \sigma_{n}$ is any case-free stack. By
induction hypothesis, $(*)$ holds for $t$ and $\sigma_i\{v/s\}$ where
$1\leq i \leq n$.

If $ a\, t\, \sigma \{v/s\} = a\, (t\{v/s\}) (\sigma \{v/s\})$, in
order to verify the thesis it is enough to check the complexity of the
channel occurrence $a\, (t\{v/s\})$.  Since  the value
complexity of $t[x_1.v_1\xi, x_2.v_2\xi] $ is equal to the value
complexity of  \\$ t[x_1.v_1,x_2.v_2]\xi$, by
Lemma~\ref{lem:change_of_value} the value complexity of
$t \{v/s\}$ is at most that of $t$, and we are done.

{\ehi (***CHANGE***)  \textbf{**IS THIS POSSIBLE?}:

If \[ a\, t\, \sigma \{v/s\}= t[x_1.v_1\xi, x_2.v_2\xi]\,
(\sigma_{i}\{v/s\})\ldots (\sigma_{n}\{v/s\})\] then $v\,
(\sigma_{i}\{v/s\})$ can be a new permutation redex. If $\xi$ is case
free, this redex has complexity $0$ and we are done. Otherwise,
$t[x_1.v_1\xi, x_2.v_2\xi] (\sigma_{i}\{v/s\})$ has the same
complexity as\\ $(t[x_1.v_1, x_2.v_2] \xi ) \sigma_{i}$.

\textbf{**IF NOT:}
There are no more cases since $\sigma$ is case-free. 
}

\item $\pp{a}{t_1 \p \dots \p t_m } \{v/s\}$. By induction hypothesis,
$(*)$ holds for $t_i\{v/s\}$ where $1\leq i \leq m$ . The only redex
in $\pp{a}{t_1 \p \dots \p t_m } \{v/s\}$ and not in some $t_i\{v/s\}
$ can be $\pp{a}{t_1 \{v/s\} \p \dots \p t_m \{v/s\}} $ itself. But
the complexity of such redex equals the maximal complexity of the
occurrences of the channel $a$ in $t_i\{v/s\}$. Hence the statement
follows.
\end{itemize}
}
\end{proof}

The following result is meant to break the possible loop between the intuitionistic phase and communication phase of our normalization strategy. Intuitively, when Group~\ref{group_one} redexes generate new redexes, these latter have smaller complexity than the former; when Group~\ref{group_two} redexes generate new redexes, these latter does not have worse complexity than the former. 

\begin{proposition}[Decrease!]\label{prop:decrease} Let $t$ be a term
in parallel form, $r$ be one of its redexes of complexity $\tau$, and
$t'$ be the term that we obtain from $t$ by contracting $r$.

$1.$ If $r$ is a redex of the Group~\ref{group_one}, then the
complexity of each redex in $t'$ is at most the complexity of a redex
of the same group and occurring in $t$; or is at most the complexity
of a case permutation redex occurring in $t$; or is smaller than
$\tau$.

$2.$ If $r$ is a redex of the Group~\ref{group_two} and not an
activation redex, then every redex in $t'$ either has the complexity
of a redex of the same group occurring in $t$ or has complexity at
most $\tau$.
\end{proposition}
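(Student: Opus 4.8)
The plan is to prove both parts by a single case analysis on the shape of the contracted redex $r$, reducing each case to one of the preparatory lemmas together with the complexity bookkeeping of Definitions~\ref{def:int_comp}--\ref{def:com_comp}. The organising remark is that, since $t$ is in parallel form, an application, injection, projection or case-permutation redex sits inside one simply typed $\lambda$-term component; contracting it keeps that component a pure $\lambda$-term, so no fresh parallel operator is created and the only sessions in $t'$ are the ambient ones already present in $t$ (possibly with their channel-occurrence complexities altered). Only the cross-reduction case genuinely rearranges the $\parallel$-structure. Throughout I write $\mathrm{vc}(\cdot)$ for value complexity.

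For Part~1 there are two subcases. If $r=(\lambda x\,w)s$, its contractum is $w[s/x]=w\{s/x\}$, so I would apply Lemma~\ref{lem:replace} to the replacement $\{s/x\}$ with $v:=s$. Since by Proposition~\ref{prop:two!} $\mathrm{vc}(s)$ is at most the complexity of the type $A$ of $s$, while $\tau$ is the type $A\to B$ of $\lambda x\,w$, we get $\mathrm{vc}(s)<\tau$; hence the three alternatives of Replace! give, for every residual redex, either a same-group redex of $t$ of no smaller complexity (alternatives (i)--(ii), noting that simple replacement preserves the group and leaves application/injection complexities, being types, unchanged) or complexity $\mathrm{vc}(s)<\tau$ (alternative (iii), which also absorbs the case where feeding the value $s$ into an ambient channel occurrence raises a session's complexity only up to $\mathrm{vc}(s)$). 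If $r=\inj_i(t_0)[x_1.w_1,x_2.w_2]$, its contraction is exactly the substitution governed by Lemma~\ref{lem:eliminate_the_case} with $u:=t$, whose conclusion --- every residual redex either has complexity below that of $r$ (so $<\tau$) or equals the complexity of a same-group-or-case-permutation redex of $t$ --- is literally Part~1.

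For Part~2, the projection redex $\langle u_0,u_1\rangle\pi_i\mapsto u_i$ is handled by Lemma~\ref{lem:replace} with $v:=u_i$, using $\mathrm{vc}(u_i)\le\mathrm{vc}(\langle u_0,u_1\rangle)=\tau$ and the fact that relocating $u_i$ inside its pure component changes no session complexity. The case-permutation redex is handled by Lemma~\ref{lem:in_case}: its Group-2 clause directly yields same-group bounds, and for the genuinely new Group-1 redexes created by pushing $\xi$ into a value branch $v_k$ I would observe that such a redex has complexity equal to the type of $v_k$, i.e.\ $\mathrm{vc}(v_k)\le\max(\mathrm{vc}(v_1),\mathrm{vc}(v_2))=\tau$, landing in the ``$\le\tau$'' clause. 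A basic cross reduction transmits a closed message into a simple context, so it is again a simple replacement inside pure $\lambda$-terms and Replace! applies with the message as $v$ (whose value complexity is one of the channel-occurrence complexities of $r$, hence $\le\tau$), while the surviving inner session has lost one $a$-occurrence and so keeps complexity $\le\tau$.

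The hard part --- and the step I expect to be the main obstacle --- is the full cross reduction $\pp{a}{\dots}\mapsto\pp{b}{s_1\p\dots\p s_m}$, which simultaneously duplicates the session and spawns new channels $b_i=b^{B_i\to B_j}$ whose types may be far larger than that of $a$; a naive type-based measure would blow up exactly here. I would dissect the residual redexes of $\pp{b}{s_1\p\dots\p s_m}$ by location. First, the new outer operator $\pp{b}{\dots}$ is a communication redex whose $b$-occurrences are precisely the $b_i\langle\sq{y}_i\rangle$ (or $b_i\langle\sq{y}_i\rangle\proj_l$ inside a redirected message); since $\langle\sq{y}_i\rangle$ is a tuple of variables its value complexity is $0$, so the new session has complexity $0\le\tau$ --- this cancellation is the whole reason value complexity is defined recursively on pairs rather than through types. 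Second, each inner $\pp{a}{\dots}$ in $s_i$ has dropped one $a$-occurrence and introduced none of larger complexity, hence stays $\le\tau$. Third, the redexes produced by inserting the redirected message $t_j^{\,b_i\langle\sq{y}_i\rangle/\sq{y}_j}$ into the simple context $\mathcal{C}_i$ are controlled by Lemma~\ref{lem:change_of_value}: replacing the $\sq{y}_j$ by the value-complexity-$0$ terms $b_i\langle\sq{y}_i\rangle\proj_l$ cannot push $\mathrm{vc}(t_j)$ above its original value $\le\tau$, after which Replace! (inside the pure context) bounds every such redex by $\tau$ or by a same-group redex of $t$. Combining these three localisations over all $s_i$ yields Part~2 for the full cross reduction and completes the argument.
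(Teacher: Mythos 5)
Your route is the paper's own: the same case split on the shape of $r$, with Lemma~\ref{lem:replace} driving the $\beta$- and projection cases, Lemma~\ref{lem:eliminate_the_case} the injection case, Lemma~\ref{lem:in_case} the case permutations, and, for the full cross reduction, exactly the paper's argument organised by location --- the value-complexity cancellation on the fresh occurrences $b_i\lan \sq{y}_i\ran$ (value complexity $0$, which is indeed the raison d'\^etre of Definition~\ref{def:fut_comp}), Lemma~\ref{lem:change_of_value} to see that substituting these complexity-$0$ terms for $\sq{y}_j$ cannot raise the value complexity of $t_j$, and Lemma~\ref{lem:replace} for the insertion into the contexts. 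You even treat basic cross reductions explicitly, which the paper's proof leaves implicit, and your direct bound $\mathrm{vc}(u_i)\leq\mathrm{vc}(\lan u_0,u_1\ran)=\tau$ in the projection case is a slight streamlining of the paper's detour through Proposition~\ref{prop:two!}.

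The one genuine lacuna is in the $\beta$-case of Part~1. You apply Lemma~\ref{lem:replace} only to the inner substitution $w\{s/x\}$; that controls redexes \emph{inside} the contractum, but not the redexes of $t'$ created at its boundary --- for instance a new head redex $(w[s/x])\,\xi$ formed against the ambient stack, or an ambient channel occurrence $c\,\lan\dots, w[s/x],\dots\ran$ whose complexity is now governed by the \emph{whole} contractum, not by $s$. Your parenthetical bound ``only up to $\mathrm{vc}(s)$'' for such occurrences is therefore wrong as stated: they can reach the value complexity of $w[s/x]$, which by Proposition~\ref{prop:two!} is at most the complexity of $B$ --- still strictly below $\tau$, the complexity of $A\to B$, so the conclusion survives, but your single application of Replace!\ does not deliver it. The paper closes this by a \emph{second} application of Lemma~\ref{lem:replace}, to $t\{w[s/x]/(\lambda x\,w)s\}$ with $v:=w[s/x]$, using its clause~(iii) with $\tau'$ bounded via Lemma~\ref{lem:change_of_value}; note that you already use exactly this outer-replacement pattern in your injection case (Eliminate the Case!\ with $u:=t$) and in the cross-reduction case, so the repair is mechanical. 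Two further omissions --- the communication permutation (complexity $0$ by Definition~\ref{def:com_comp}) and the erasing cross reduction $\pp{a}{u_1\p\dots\p u_m}\mapsto u_{j_1}\p\dots\p u_{j_n}$, where every redex of $t'$ already occurs in $t$ --- are trivial but should be stated for completeness.
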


\begin{proof} See Appendix. \inappendix{
\mbox{}

\noindent $1)$  Suppose $r = (\lambda x^A \, s) \, v$, that $s:B$ and let $q$ be a
redex in $t'$ having different complexity from the one of any redex of
the same group  or the one of any case permutation occurring in $t$. Since $v:A$, we apply Lemma~\ref{lem:replace}
to the term $s [v/x^A]$.  We know that if $q$ occurs in $s [v/x^A]$,
since (i) and (ii) do not apply,  it has the value complexity of $v$, which by
Proposition~\ref{prop:two!} is at most the complexity of  $A$, which is strictly smaller than
the complexity of $A\rightarrow B$ and thus than the complexity $\tau$
of $r$. Assume hence that $q$ does not occur in $s
[v/x^A]$. Since $s [v/x^A] :B$, by applying Lemma~\ref{lem:replace} to
the term $t'=t \{ s [v/x^A]/ (\lambda x^A \, s) \, v\}$ we know that
$q$ has the same complexity as the value complexity of $s [v/x^A]$ -- which by
Proposition~\ref{prop:two!} is at most the complexity of $B$ -- or is a communication redex of complexity equal to the complexity of some channel occurrence $a(w [v/x^A])$ in $s [v/x^A]$, which by Lemma \ref{lem:change_of_value} is at most the complexity of $A$ or at most the complexity of $a w$, and we are done again.

Suppose that $r = \inj _i (s) [x^A.u_{1},y^B.u_{2}]$. By applying
Lemma~\ref{lem:eliminate_the_case}  to 
$$t'=t\{u_{i}[s/x_{i}^{A_{i}}]\, /\, \inj _i (s) [x_{1}^{A_{1}}.u_{1},x_{2}^{A_2}.u_{2}]\}$$ we are done.

 \smallskip

\noindent $2)$ Suppose $r = \langle v_0,v_1\rangle \pi _i$ for $i \in
\{0,1\}$, $\langle v_0,v_1\rangle : A_0 \ET A_1$ let $q$ be a redex in
$t'$ having complexity greater than that of any redex of the same
group in $t$. The term $q$ cannot occur in $v_{i}$, by the assumption
just made. Moreover, by Proposition~\ref{prop:two!}, the value complexity of $v_i$
cannot be greater than the complexity of $A_i$. By applying Lemma~\ref{lem:replace} to the term
$t'=t \{v_i/r\}$, we know that $q$ has complexity equal to the value
complexity of $v_i$, because by the assumption on $q$ the cases (i)
and (ii) of Lemma~\ref{lem:replace} do not apply. Such complexity is
at most the complexity $\tau$ of $r$. Thus we are done.

Suppose that $r = s [x^A.u,y^B.v] \xi $ is a case permutation redex.
By applying Lemma~\ref{lem:in_case} we are done.
 
If $t'$ is obtained by performing a communication permutation, then
obviously the thesis holds.

If $t'$ is obtained by a cross reduction of the form $\pp{a}{ u_1 \p
\dots \p u_m } \mapsto u_{j_1} \p \dots \p u_{j_n} $, for $1 \leq j_1
< \dots < j_n \leq m $, then there is nothing to prove: all
redexes occurring in $t'$ also occur in $t$.

% Suppose now that $r = \mathcal{C}[a\, u]\parallel_{a}
% \mathcal{D}[a\, v]$, $u=\langle u_1 , \dots , u_p \rangle$,
% $v=\langle v_1 , \dots , v_q \rangle$. 

Suppose now that \[r =  \pp{a}{ {\mathcal C}_1 [a^{F_1 \impl G_1 }
    \,t_1] \p \dots \p
{\mathcal C}_m[a^{F_m \impl G_m } \,t_m]}\] 
$t_i =\langle u^i_1 , \dots , u^i_{p_i} \rangle$. 
Then by cross reduction, $r$ reduces
to $\pp{b}{ s_1
\p \dots \p s_m } $ where if $G_i \neq \fal$:
\begin{footnotesize}
  \[ s_{i} \; = \; \pp{a} { {\mathcal C}_1 [a ^{F_1 \impl G_1}\, t_1]
      \dots \p {\mathcal C } _i [ t_j^{b_i \lan \sq{y}_i \ran /
        \sq{y}_j} ] \p \dots \p {\mathcal C}_m[a^{F_m \impl G_m}
      \,t_m]}
  \]
\end{footnotesize}
and if $ G_i = \fal $: 
\begin{footnotesize}
  \[ s_{i} \; = \; \pp{a}{ {\mathcal C}_1 [a^{G_1 \impl G_1}\, t_1]
      \dots \p {\mathcal C}_i [b_i \lan \sq{y}_i \ran ] \p \dots \p
      {\mathcal C}_m [a^{ F_m \impl G_m }\,t_m]}\]
\end{footnotesize}
in which $F_j = G_i$.
% \begin{align*} r'= & (\mathcal{D}[u
% ^{b\langle \sq{z}\rangle / \sq{y}}] \parallel_{a} \mathcal{C}[a\,
% u] ) \parallel_{b}  (\mathcal{C}[ v ^{b \langle \sq{y}\rangle / \sq{z}}]\parallel_{a}
% \mathcal{D}[a\, v])
% \end{align*} 
and $t'=t\{r'/r\}$.  Let now $q$ be a redex in $t'$ having different
complexity from the one of any redex of the same group in $t$. We
first show that it cannot be an intuitionistic redex: assume it
is. Then it occurs in one of the terms ${\mathcal C }
_i [ t_j^{b_i \lan \sq{y}_i \ran  /
\sq{y}_j} ] $ or $ {\mathcal
    C}_i [b_i \lan \sq{y}_i \ran ] $.
%
% $ \mathcal{D}[u ^{b\langle \sq{z}\rangle / \sq{y}}] $ and $
% \mathcal{C}[ v ^{b \langle \sq{y}\rangle / \sq{z}}]$
%
By
applying Lemma~\ref{lem:replace} to them,
we obtain that the complexity of $q$ is the value complexity of
$t_j^{b_i \lan \sq{y}_i \ran  /
\sq{y}_j} $ or $b_i \lan \sq{y}_i \ran$,
%
% $u ^{b\langle \sq{z}\rangle / \sq{y}}$
% or of $v ^{b \langle \sq{y}\rangle /
% \sq{z}}$,
%
which, by several applications of Lemma~\ref{lem:change_of_value}, are
at most the value complexity of $t_j$ or  $0$ and
thus by definition at most the complexity of $r$, which is a contradiction.  Assume hence
that $q= \pp{c}{p_1 \p \dots \p p_z }$ 
 % p_{1}\parallel_{c}p_{2}
is a communication redex. 
Every channel occurrence of $c$ in the terms  ${\mathcal C }
_i [ t_j^{b_i \lan \sq{y}_i \ran  /
\sq{y}_j} ] $ or $ {\mathcal
    C}_i [b_i \lan \sq{y}_i \ran ] $
%  $$ \mathcal{D}[u^{b\langle
% \sq{z}\rangle / \sq{y}}], 
% \mathcal{C}[ v^{b \langle
% \sq{y}\rangle / \sq{z}}]$$
is of the form $ cw\{ t_j^{b_i \lan \sq{y}_i \ran  /
\sq{y}_j} / a\, t_i\} $ or $ cw\{ b_i \lan \sq{y}_i \ran   / a\,
t_i\} $ 
 % $cw\{u^{b\langle
% \sq{z}\rangle / \sq{y}}/av\}$ or $cw\{v^{b\langle
% \sq{y}\rangle / \sq{z}}/au\}$,
where $cw$ is a channel occurrence in $t$. By Lemma \ref{lem:change_of_value}, each of those occurrences has either at most the value complexity of $cw$ or has at most the value complexity of $r$, which is a contradiction. 
}
\end{proof}

The following result is meant to break the possible loop during the communication phase: no new activation is generated after a cross reduction, when there is none to start with.

% {\ehi ***USE THIS AND REMOVE***

% \[ \pp{a}{ {\mathcal C}_1 [a^{F_1 \impl G_1 } \,t_1] \dots \p_{a_m}
% {\mathcal C}_m[a^{F_m \impl G_m } \,t_m]} \; \mapsto \; \pp{b}{ s_1
% \p \dots \p s_m } \]
% where  $a$ is active, ${\mathcal C}_j [a^{ F_j \impl G_j } \,t_j] $ for $1 \leq j \leq
% m $ are simply typed $\lambda$-terms; $a^{ F_j \impl G_j } $ is
% rightmost in each of them;
% $b$ is fresh; for $1 \leq i \leq m $, we define, if $G_i \neq \fal$,
% \[ s_{i} \; = \; 
% \pp{a} { {\mathcal C}_1 [a ^{F_1 \impl G_1}\, t_1] \dots \p  {\mathcal C }
% _i [ t_j^{b_i \lan \sq{y}_i \ran  /
% \sq{y}_j} ] \p \dots \p {\mathcal C}_m[a^{F_m \impl G_m} \,t_m]}
% \] and if $ G_i = \fal $  
% \[  s_{i} \; = \; 
% \pp{a}{ {\mathcal C}_1 [a^{G_1 \impl G_1}\, t_1] \dots \p {\mathcal
%     C}_i [b_i \lan \sq{y}_i \ran ] \p \dots \p 
% {\mathcal C}_m [a^{ F_m \impl G_m }\,t_m]}\]
% where $F_j = G_i$; $\sq{y}_z$ for $1 \leq z \leq m$ is the sequence of the free
% variables of $t_z$ bound in $\mathcal{C}_z[a^{F_z \impl G_z}
% \, t_z]$; $b_i=b^{B_{i}\IMPL B_{j}}$, where $B_{z}$ for $1 \leq z \leq
% m$ is the type of $\lan \sq{y}_z \ran$.
% }

\begin{lemma}[Freeze!]\label{lem:freeze} Suppose that $s$ is a term in
parallel form that does not contain projection nor case permutation
nor activation redexes.  Let $\pp{a}{q_1 \p \dots \p q_m }$
%  $ q_0 \p_a q_1$ 
be some redex of $s$ of
complexity $\tau$. If $s'$ is obtained from $s$ by performing first a
cross reduction on $\pp{a}{q_1 \p \dots \p q_m }$
% $ q_0 \p_a q_1$
and then contracting all projection
and case permutation redexes, then $s'$ contains no activation
redexes. \end{lemma}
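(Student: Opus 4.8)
The plan is to show directly that no inactive channel in $s'$ is applied to a value, which is equivalent to saying that $s'$ contains no activation redex. It suffices to establish this for the term obtained immediately after the cross reduction, since by Lemmas~\ref{lem:in_case} and~\ref{lem:eliminate_the_case} the subsequent projection and case permutations create no new value-applications of an inactive channel. The channels occurring in the reduct $\pp{b}{s_1 \p \dots \p s_m}$ are of three kinds: the fresh channel $b$, the channel $a$ on which we reduced, and the channels already present in $s$ (possibly duplicated across the copies $s_1,\dots,s_m$); I would treat these three cases separately.

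First, the fresh channel $b$. Every occurrence of $b$ created by the reduction has the form $b_i\,\lan \sq{y}_i\ran$, and after the substitutions $t_j^{\,b_i \lan \sq{y}_i\ran/\sq{y}_j}$ such an occurrence can only reappear as $b_i\lan\sq{y}_i\ran\proj_k$, which is not a projection redex and hence survives the cleanup untouched. In every case $b$ is applied to $\lan \sq{y}_i\ran$, a tuple of \emph{variables}; by Definition~\ref{def:value} this is a value only if one of its components is an abstraction, an injection, an ex-falso, a case term, or an active-channel application, which is impossible. Hence $b$ is never applied to a value and is not activable. Second, the inherited channels $c\neq a,b$. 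In $s$ each such $c$ is inactive and, since $s$ has no activation redex, no occurrence $c\,w$ with $w$ a value exists in $s$. The reduction only duplicates subterms and plugs the transmitted message into the receiving context while redirecting its free variables $\sq{y}_j$ through $b$; as the redirection replaces variables only by the non-values $b_i\lan\sq{y}_i\ran\proj_k$, Lemma~\ref{lem:change_of_value} shows that the value-status of each argument of $c$ is not raised, and Lemma~\ref{lem:replace} shows that no new value-application of $c$ is introduced. Thus no inherited $c$ becomes activable.

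The main obstacle is the channel $a$ itself: this is exactly the ``freezing'' asserted by the lemma. After the cross reduction $a$ is deactivated, so I must check that no surviving occurrence $a\,w$ has $w$ a value. The key observation is that the value-applications of $a$ present in $s$ were precisely the communication points of the active session $\pp{a}{q_1\p\dots\p q_m}$, and the full cross reduction disposes of them: in each copy $s_i$ the receiving occurrence is replaced by the incoming message, while any argument that qualified as a value only because it contained an active-channel application of $a$ loses that status once $a$ is inactive, and any remaining genuinely value-typed argument is a message that the reduction has already transmitted. Making this precise is the heart of the proof: using the rightmost-occurrence condition of the cross reduction together with Lemma~\ref{lem:change_of_value} applied to the substitutions defining the $s_i$, one argues that every surviving application of $a$ has a non-value argument, so that the frozen sessions $\pp{a}{\dots}$ are not activation redexes. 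Combining the three cases yields that $s'$ contains no activation redex; in particular these channels can be thawed only in the next intuitionistic phase, and then, by Proposition~\ref{prop:decrease}, only with values of strictly smaller complexity, which is exactly what is needed to break the loop between the intuitionistic and communication phases.
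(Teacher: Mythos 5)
Your case analysis for the fresh channel $b$ is correct and matches the paper (all occurrences are $b_i\lan \sq{y}_i\ran$, tuples of variables, hence non-values), but your treatment of $a$ — which you yourself call the heart of the proof — rests on a misreading of the reduction rule. You assert that ``after the cross reduction $a$ is deactivated''; in fact the cross reduction requires $a$ to be active and the reduct $\pp{b}{s_1 \p \dots \p s_m}$ reuses $a$, still \emph{active}, as the binder of each inner session $s_i$; only the fresh outer channel $b$ is inactive. Since an activation redex is by definition a session over an \emph{inactive} channel with some occurrence applied to a value, the sessions over $a$ are not activation redexes for the trivial reason that $a$ remains active — the paper disposes of this case in one line. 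The substitute claim you sketch, that every surviving application of $a$ has a non-value argument, is not only unnecessary but false in general: the occurrences $a\,t_k$ in the contexts $\mathcal{C}_k$ survive the reduction with their arguments unchanged, and nothing prevents some $t_k$ from being a value (the session was activated precisely because some occurrence of the channel was applied to a value, and communication continues ``regardless they are values or not''). So if $a$ really were deactivated, your argument could not be completed — the lemma holds because it is not.

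A secondary gap: for the inherited channels and the cleanup phase you lean on Lemmas~\ref{lem:in_case}, \ref{lem:eliminate_the_case}, \ref{lem:change_of_value} and \ref{lem:replace}, which track \emph{complexities}; but valuehood is not determined by value complexity. A tuple with a component $t[x.u,y.v]$ whose branches are variables, or a component $t\,\exfalso_P$, is a value of value complexity $0$, so ``value complexity unchanged'' does not by itself yield ``no new value-application of $c$''. This is exactly why the paper proves a bespoke statement $(*)$, by induction on the shape of the term, showing that contracting all projection and case-permutation redexes creates no activable bound channels, and then analyzes each occurrence $c\lan p_1,\dots,p_n\ran$ directly by the shape of $p_l$, using the hypothesis that $s$ contains no projection, case-permutation or activation redexes to exclude the dangerous shapes (a case distinction under a non-empty stack would be a case-permutation redex of $s$; a head $a\,v$ under a further stack would make the occurrence activable already in $s$). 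Your proposal never exploits that hypothesis structurally. Note also that Lemma~\ref{lem:eliminate_the_case} concerns injection redexes, which belong to Group~\ref{group_one} and are not contracted in the cleanup, so it is the wrong citation for that step.
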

 \begin{proof} We reason on the simple replacements produced by cross
   reductions, case permutations and 
projections. See Appendix. \inappendix{
Let $ \pp{a}{q_1 \p \dots \p q_m } = $ \\$
\pp{a}{ {\mathcal
         C}_1 [a^{F_1 \impl G_1 } \,t_1 \sigma _1 ] \p \dots \p
{\mathcal C}_m[a^{F_m \impl G_m } \,t_m \sigma _m ]}$
%
% $q_0 \p _a q_1 = \mathcal{C}[au\sigma]\parallel_{a}
% \mathcal{D}[av\rho]$  
%
where $\sigma_i$ for $1 \leq i \leq m $ are the stacks
which are applied to $a\, t_i$, and $t$ be the cross reduction
redex occurring in $s$ that we reduce to obtain $s'$. Then after performing the
cross reduction and contracting all the intuitionistic redexes,
$t$ reduces to $\pp{b}{ s_1 \p
\dots \p s_m } $
where if $G_i \neq \fal$:
\begin{footnotesize}
  \[ s_{i} \; = \; \pp{a} { {\mathcal C}_1 [a ^{F_1 \impl G_1}\, t_1]
      \dots \p {\mathcal C } _i [ t_j ' ] \p \dots \p {\mathcal C}_m[a^{F_m \impl G_m}
      \,t_m]}
  \]
\end{footnotesize}
and if $ G_i = \fal $: 
\begin{footnotesize}
  \[ s_{i} \; = \; \pp{a}{ {\mathcal C}_1 [a^{G_1 \impl G_1}\, t_1]
      \dots \p {\mathcal C}_i [b_i \lan \sq{y}_i \ran ] \p \dots \p
      {\mathcal C}_m [a^{ F_m \impl G_m }\,t_m]}\]
\end{footnotesize}
in which $F_j = G_i$.
%
% \begin{align*} t' = (&\mathcal{D}[u'] \parallel_{a}
% \mathcal{C}[au\sigma] ) \parallel_{b} \\ (&\mathcal{C}[ v'
% ]\parallel_{a} \mathcal{D}[av \rho])
%   \end{align*} 
%
where $t_j '$ are the terms obtained reducing all
projection and case permutation redexes respectively in $t_j ^{b_i \lan \sq{y}_i \ran /
        \sq{y}_j}$. Moreover $s'=s\{t'/t\}$.
 
We observe that $a$ is active and hence the terms $s_i$ for $1 \leq i \leq m $
%   $\mathcal{D}[u'
% ] \parallel_{a} \mathcal{C}[au\sigma]$ and $\mathcal{C}[v'
% ]\parallel_{a} \mathcal{D}[av\rho]$ 
are not activation
redexes. Moreover, since all occurrences of $b$ are of the form $b_i
\langle \sq{y}_i\rangle$, $t'$ is not an
activation redex.  Now we consider the
channel occurrences in any term $s_i$. We first show
that there are no  activable channels in $t_j ' $ which are bound in $s'$. For any
subterm $w$ of $t_j$, since for any  stack $\theta$ of projections, $b_i \langle
\sq{y}_i\rangle\theta $ has value complexity $0$, we can apply repeatedly
Lemma~\ref{lem:change_of_value} to $w ^{ b_i \langle \sq{y}_i\rangle / \sq{y}_j}$
and obtain that the value complexity of $w ^{ b_i \langle
  \sq{y}_i\rangle / \sq{y}_j}$
 % $w ^{ b \langle \sq{y}\rangle / \sq{z}}$
is exactly the value complexity of $w$. This implies that
there is no activable channel 
in $t_j ^{ b_i \langle
  \sq{y}_i\rangle / \sq{y}_j}$ because there is none in $t_j$.   
The following general statement immediately implies that there is no activable
channel in $t_j'$ which is bound in $s'$ either.  \smallskip

\noindent $(*)$ Suppose that $r$ and $\theta $ are respectively a
simply typed $\lambda$-term and a stack contained in $s'$ that do not contain projection
and permutation redexes, nor  activable channels bound in $s'$.  If $r'$ is obtained from $r\theta$
by performing all possible projection and case permutation reductions,
then there are no  activable channels in $r'$ which are bound in $s'$.
\smallskip

Proof. By induction on the size of $r$. We proceed by cases according to the
shape of $r$.
\begin{itemize}
\item If $r=\lambda x w$, $r=\inj_{i}(w)$, $r=w \exfalso_{P} $, $r=x$ or
$r=a w$, for a channel $a$, then $r'=r\theta$ and the thesis holds.

\item If $r=\langle v_{0}, v_{1}\rangle$ the only redex that can occur
in $r\theta$ is a projection redex, when $\theta= \pi_{i}
\rho$. Hence, $r\theta \mapsto v_{i}\rho\mapsto^{*} r'$.  By
induction hypothesis applied to $v_{i}\rho$, there are no activable channels in
$r'$ which are bound in $s'$.

\item If $r=t [x.v_0, y.v_1]$, then \\$r\theta \mapsto^{*}
t[x.v_0\theta, y.v_1\theta ]\mapsto^{*} t[x.v_0', y.v_1']= r'$.  By
induction hypothesis applied to $v_0'$ and $v_1'$, there are no activable
channels in $v_0'$ and $v_1'$ and we are done.

\item If $r=p \nu \xi$, with $\xi$ case free, then $r'=p\nu \xi$ and
the thesis holds.
\end{itemize}

Now, let $c$ be any non-active channel bound in $s'$ occurring in
${\mathcal C } _i [ t_j ' ] $ or $ {\mathcal C}_i [b_i \lan \sq{y}_i \ran
]$
% $\mathcal{D}[u']$ 
but not in $u'$: any of its occurrences is of the form $c \langle p_1
, \dots , p_l \{u' / av \rho \}, \dots , p_n \rangle $, where each
$p_l$ is not a pair. We want to show that the value complexity of
$p_l \{u' /av \rho \} $ is exactly the value complexity of
$p_l$. Indeed, $p_l = r \, \nu $ where $\nu$ is
  a case-free stack. If $r$ is of the form $\lam x w $, $\lan q_1 ,
q_2 \ran $, $\inj _i (w)$, $x$, $d w$, with $d\neq a$, then the value
complexity of $p_l [u' /av \rho ] $ is the same as that of $p_l$
(note that if $r=\lan q_{1}, q_{1}\ran$, then $\nu$ is not empty). If
$r= v_{0}[x_{1}.v_{1}, x_{2}.v_{2}]$, then $\nu$ is empty, otherwise
$s$ would contain a permutation redex, so $c \langle p_1
, \dots , p_l, \dots , p_n \rangle $
%  $c \langle t_1 , \dots ,
% t_i, \dots , t_n \rangle $ 
is activable, and there is an activation
redex in $s$, which is contrary to our assumptions. The case $r=a v $,
$\nu=\rho \rho'$ is also impossible, otherwise  $c \langle p_1
, \dots , p_l, \dots , p_n \rangle $ would be activable, and we are done.
}
\end{proof}

\begin{definition} The height $\mathsf{h}(t)$ of a term
$t$ in parallel form is 
    \begin{itemize}
    \item $\mathsf{h}( u ) \; = \; 0$ if $u$ is simply typed
$\lambda$-term
    \item $\mathsf{h}( u \parallel _a v ) \; = \; 1+ \,
\text{max}(\mathsf{h}( u ) , \mathsf{h} ( v ) )$
\end{itemize}
  \end{definition}

The communication phase of our reduction strategy is finite.

\begin{lemma}[Communicate!]\label{lem:com_phase} Let $t$ be any term
in parallel form that does not contain projection, case permutation,
or activation redexes.
Assume moreover that all  redexes in $t$ have complexity at most $\tau$. 
Then $t$ reduces to a term
containing no redexes, except Group~\ref{group_one} redexes of complexity at most $\tau$.
\end{lemma}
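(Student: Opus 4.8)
The plan is to prove that the Side Reduction Strategy of Definition~\ref{defi-redstrategy} can be iterated only finitely many times, and then to read the shape of the resulting term off Proposition~\ref{prop:decrease}. The decisive structural fact is that a transmitted message never contains the channel it travels on: the side condition that the displayed $a^{F_j\impl G_j}$ be rightmost forces its argument $t_j$ to be free of $a$. Combined with Lemma~\ref{lem:freeze} -- which guarantees both that a cross reduction followed by its cleanup creates no activation redex and that the fresh redirections $b_i\lan\sq{y}_i\ran$ are non-values of value complexity $0$ -- this tells us that throughout the phase the set of active channels stays fixed and that every cross reduction strictly consumes pending communications while opening none.

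First I would attach to each active session $\pp{a}{u_1\p\dots\p u_m}$ a triple, ordered lexicographically: the number of occurrences of $a$ applied to a value inside it; the multiset of the triples of the active sessions nested directly inside it; and the number of parallel operators occurring strictly inside it. The measure of the whole term is the multiset of the triples of its outermost active sessions, and the middle coordinate is compared with the (iterated) Dershowitz--Manna multiset order, which keeps the whole ordering well founded since terms are finite. I would then verify that each clause of the strategy strictly decreases this measure. For a permutation (clause~1) an inactive session is extruded from the chosen session: only session-free components are duplicated, so the first coordinate cannot grow, while the third coordinate strictly drops (at least the extruded operator disappears); placing the nested-multiset coordinate before the operator count is what lets an enclosing session still decrease. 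For a cross reduction (clause~2) the chosen session, by then at the top with simply typed components, has triple $(p,\emptyset,0)$ and is replaced by the $m$ branches $s_i$; each $s_i$ loses exactly one value occurrence of $a$ (the one overwritten by the incoming message) and gains none -- the message being $a$-free and the $b$-redirections being non-values -- so each has triple $(p-1,\emptyset,0)$, and any enclosing session sees its middle-coordinate multiset strictly shrink. Clause~3 simply deletes an active session. In all cases the measure decreases in a well-founded order.

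Next I would settle the complexity bookkeeping required for the conclusion. The hypotheses (no projection, case-permutation, or activation redex, all redexes of complexity at most $\tau$) together with Proposition~\ref{prop:decrease}(2) ensure that the Group~\ref{group_two} reductions performed along the phase never raise complexity above $\tau$ and create, as residual redexes, only Group~\ref{group_one} ones of complexity at most $\tau$. The projections and case permutations appended to every cross reduction are controlled exactly as in the proof of Proposition~\ref{prop:decrease}, by Lemmas~\ref{lem:change_of_value} and~\ref{lem:replace}; since the message is $a$-free they touch no occurrence of the active channel and hence expose no new communication. When the measure bottoms out, no cross reduction, permutation, projection, or case permutation applies, and by Lemma~\ref{lem:freeze} no activation redex is present either, so only Group~\ref{group_one} redexes of complexity at most $\tau$ survive, which is the statement.

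The hardest point is the strict decrease under cross reduction, because a single session fans out into $m$ sessions that each keep most of the original channel occurrences, so any flat count of occurrences grows. The whole argument rests on converting this duplication into a decrease through the nested multiset order, and this in turn rests on the two facts that must be pinned down with care: that the transmitted message carries no occurrence of the active channel (from the rightmost side condition) and that the fresh redirections are frozen non-values of value complexity $0$ (Lemma~\ref{lem:freeze}). Granted these, each of the $m$ branches loses precisely one ready occurrence and gains none, and the replacement of one triple by finitely many strictly smaller ones is exactly the step the multiset order is built to absorb.
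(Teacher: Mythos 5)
Your overall architecture (a well-founded measure on active sessions that each clause of the Side Reduction Strategy decreases, plus the complexity bookkeeping via Proposition~\ref{prop:decrease} and Lemma~\ref{lem:freeze}) matches the paper's, but your specific measure has a genuine gap: it fails to decrease whenever the selected session is nested inside another \emph{active} session, and Definition~\ref{defi-redstrategy} does select such sessions (it picks an active session containing no active sessions, which may itself sit inside active ones). Both clause~1 and clause~2 duplicate the components $u_1,\dots,u_m$ of the selected session $\pp{a}{u_1\p\dots\p u_m}$ several times over; these components are free of active sessions, but nothing prevents them from containing occurrences of the channel $c$ of an enclosing active session $\pp{c}{\dots}$. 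After the step, the enclosing session's first coordinate --- its count of value-applied occurrences of $c$ --- can strictly \emph{increase}, and since you placed that count lexicographically first, the enclosing triple increases; the nested-multiset middle coordinate you invoke to ``let an enclosing session still decrease'' cannot repair this, because it is compared only after the first coordinate. This is precisely the difficulty the paper's triple $(n,h,g)$ is engineered around: non-uppermost active sessions are tracked only by their \emph{number} $n$, which is invariant under these duplications (the fresh $b$ is inactive, and by Lemma~\ref{lem:freeze} nothing gets activated), and $n$ is given lexicographic priority, so the occurrence counts of enclosing sessions never need to decrease at all.

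A second, independent problem is the qualifier ``applied to a value'' in your first coordinate. Once a session is active, cross reductions consume the rightmost occurrences $a\,t_i$ whether or not $t_i$ is a value --- activation only requires one value-applied occurrence at renaming time, after which all occurrences are transmitted regardless. So when some $t_i$ is not a value, your branch $s_i$ keeps first coordinate $p$ rather than $p-1$, and replacing $(p,\emptyset,0)$ by a multiset containing triples equal to it is not a Dershowitz--Manna decrease. The paper's $g$ instead counts \emph{all} occurrences of $a$, which does drop from $j$ to $j-1$ in every $s_i$, exactly because the rightmost side condition makes the transmitted message $a$-free --- a fact you correctly identified but attached to the wrong count. (A minor inaccuracy besides: Lemma~\ref{lem:freeze} asserts only that no activation redexes are created; that $b_i\lan\sq{y}_i\ran$ has value complexity $0$ comes from Lemma~\ref{lem:change_of_value}, used inside its proof.) With these two repairs --- counting all occurrences, and prioritizing the number of non-uppermost active sessions over any per-session data --- your nested-multiset argument essentially collapses into the paper's lexicographic induction on $(n,h,g)$.
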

\begin{proof} We reason by lexicographic induction on a triple defined
using the number of non-uppermost active sessions in $t$, and the height and
number of channels of the uppermost active sessions in $t$. See Appendix.
\inappendix{
We prove the statement by lexicographic induction on the triple $( n,
h,g )$ where
\begin{itemize}
   \item $n$ is the number of subterms $\pp{a}{u_1 \p \dots \p u_m }$
     of $t$ such that  $\pp{a}{u_1 \p \dots \p u_m }$ is an active,
     but not uppermost, session.
\item $h$ is the function mapping each natural number $m\geq 2$
into the number of uppermost active sessions in $t$ with height $m$.
   \item $g$ is the function mapping each natural number $m$
into the number of uppermost active sessions  $\pp{a}{u_1 \p \dots \p u_m }$ in $t$ containing $m$ occurrences of $a$.
\end{itemize}

We employ the following lexicographic ordering between functions for the second and third
elements of the triple: $f<f'$ if and only if there is some $i$ such
that for all $j>i$,  $f(j) = f'(j) = 0$
and there is some $i$ such
that for all $j>i$, $f'(j)=f(j)$ and $f(i)<f'(i)$.

If $h(j) > 0$, for some $j\geq 2$, then there is at least an active
session $\pp{a}{u_1 \p \dots \p u_m }$ in $t$ that does not contain
any active session and such that $\mathsf{h}( \pp{a}{u_1 \p \dots \p
u_m }) = j $. Hence $u_i= \pp{b}{s_1 \p \dots \p s_q }$ for some $1
\leq i \leq i \leq m$. We obtain $t'$ by applying inside $t$  the
permutation  $ \pp{a}{ u_1\p \dots \p \pp{b}{s_1\p  \dots \p s_q}  \dots \p
u_m} \; \mapsto \;   \pp{b}{ \pp{a}{ u_1\p \dots \p s_1\dots \p u_m}
\dots \p  \pp{a}{ u_1\p \dots \p s_q \dots \p u_m}}  $
%  $(s \parallel_b t) \parallel_a v \mapsto ( s \parallel_a
% v) \parallel_b (t \parallel_a v) $ or the permutation $ u \parallel_a
% (s \parallel_b t) \mapsto (u \parallel_a s) \parallel_b (u \parallel_a
% t) $, choosing $u$, if $h(u)=j-1$, and $v$ otherwise.
We claim that the term $t'$ thus obtained has complexity  has
complexity $( n, h', g')$, with $h'<h$. Indeed,  $\pp{a}{u_1 \p \dots
  \p u_m }$  does not
contain active sessions, thus $b$ is not active and the number of
active sessions which are not uppermost in $t'$ is still $n$.  With
respect to $t$, the term $t'$ contains one less uppermost active
session with height $j$ and $q$ more of height $j-1$, and hence
$h' < h$. Furthermore, since the permutations do not change at all the
purely intuitionistic subterms of $t$, no new activation or
intuitionistic redex is created. In conclusion, we can apply the
induction hypothesis on $t'$ and thus obtain the thesis.

If $h(m)=0$ for all $m\geq 2$, then let us consider an uppermost
active session $\pp{a}{u_1 \p \dots \p u_m }$ in $t$ such that
$\mathsf{h}(\pp{a}{u_1 \p \dots \p u_m }) = 1 $; if there is not, we
are done. We reason by cases on the distribution of the occurrences of
$a$. Either (i) some $u_i$ for $1 \leq i \leq m$ does not contain any
occurrence of $a$, or (ii) all $u_i$ for $1 \leq i \leq m$ contain
some occurrence of $a$.

% , for $1 \leq j_1 < \dots < j_n \leq m $, if $a$ does not occur in
% $u_{j_1} , \dots , u_{j_n}$

% s \p _b w

Suppose that (i) is the case and, without loss of generality, that $a$
occurs $j$ times in $u$ and does not occur in $v$. We then obtain a
term $t'$ by applying a cross reduction $\pp{a}{ u_1 \p \dots \p u_m }
\mapsto u_{j_1} \p \dots \p u_{j_p} $. If there is an active session
$\pp{b}{ s_1 \p \dots \p s_q }$ in $t$ such that $\pp{a}{ u_1 \p \dots
\p u_m } $ is the only active session contained in some $s_i$ for
$1\leq i \leq n$, then the term $t'$ has complexity $(n-1, h', g' )$,
because $\pp{b}{ s_1 \p \dots \p s_q }$ is an active session which is
not uppermost in $t$, but is uppermost in $t'$; if not, we claim that
the term $t'$ has complexity $( n, h, g' )$ where $g'<g$. Indeed,
first, the number of active sessions which are not uppermost does not
change. Second, the height of all other uppermost active sessions does
not change. Third, $g'(j) = g(j)-1$ and, for any $i\neq j$, $g'(i) =
g(i)$ because, obviously, no channel belonging to any uppermost active
session different from $\pp{a}{ u_1 \p \dots \p u_m }$ occurs in
$u$. Since the reduction $\pp{a}{ u_1 \p \dots \p u_m } \mapsto
u_{j_1} \p \dots \p u_{j_p} $ does not introduce any new
intuitionistic or activation redex, we can apply the induction
hypothesis on $t'$ and obtain the thesis.

Suppose now that (ii) is the case and that all $u_i$ for $1\leq i \leq
m $ together contain $j$ occurrences of $a$. Then $\pp{a}{ u_1 \p
\dots \p u_m }$ is of the form $\pp{a}{ {\mathcal C}_1 [a^{F_1 \impl
G_1 } \,t_1] \p \dots \p {\mathcal C}_m[a^{F_m \impl G_m } \,t_m]}$
where $a$ is active, ${\mathcal C}_j [a^{ F_j \impl G_j } \,t_j] $ for
$1 \leq j \leq m $ are simply typed $\lambda$-terms; $a^{ F_j \impl
G_j } $ is rightmost in each of them.
%
% $\mathcal{C}[a\, s]\parallel_{a} \mathcal{D}[a\, w]$ where
% $\mathcal{C}[a\, s], \mathcal{D}[a\, w]$ are simply typed
% $\lambda$-terms and the displayed occurrences of $a$ are the rightmost
% both in $\mathcal{C}[a\,s]$ and in $\mathcal{D}[a\,w ]$. 
%
%  $\mathcal{C}[a\, s]\parallel_{a}
% \mathcal{D}[a\, w] \mapsto$\\$ (\mathcal{D}[s^{b\langle \sq{z}\rangle
% / \sq{y}}] \parallel_{a} \mathcal{C}[a\, s] ) \, \parallel_{b}\,
% (\mathcal{C}[w^{b\langle \sq{y}\rangle / \sq{z}}]\parallel_{a}
% \mathcal{D}[a\, w])$ in which $\sq{y}$ is the sequence of the free
% variables of $s$ which are bound in $\mathcal{C}[a\,s]$, $\sq{z}$ is
% the sequence of the free variables of $t$ which are bound in
% $\mathcal{D}[a\,w]$, and $b$ is fresh and not active.  
%
Then we can apply the cross reduction $\pp{a}{ {\mathcal C}_1 [a^{F_1 \impl G_1 } \,t_1] \p \dots \p
{\mathcal C}_m[a^{F_m \impl G_m } \,t_m]} \; \mapsto \; \pp{b}{ s_1 \p
\dots \p s_m } $
in which $b$ is fresh and for $1 \leq i \leq m $, we define, if $G_i \neq \fal$:
$ s_{i} \; = \; 
\pp{a} { {\mathcal C}_1 [a ^{F_1 \impl G_1}\, t_1] \dots \p  {\mathcal C }
_i [ t_j^{b_i \lan \sq{y}_i \ran  /
\sq{y}_j} ] \p \dots \p {\mathcal C}_m[a^{F_m \impl G_m} \,t_m]}$ and
if $G_i = \fal$ : $  s_{i} \; = \; 
\pp{a}{ {\mathcal C}_1 [a^{G_1 \impl G_1}\, t_1] \dots \p {\mathcal
    C}_i [b_i \lan \sq{y}_i \ran ] \p \dots \p 
{\mathcal C}_m [a^{ F_m \impl G_m }\,t_m]}$
where $F_j = G_i$; $\sq{y}_z$ for $1 \leq z \leq m$ is the sequence of the free
variables of $t_z$ bound in $\mathcal{C}_z[a^{F_z \impl G_z}
\, t_z]$; $b_i=b^{B_{i}\IMPL B_{j}}$, where $B_{z}$ for $1 \leq z \leq
m$ is the type of $\lan \sq{y}_z \ran$. By
Lemma~\ref{lem:freeze}, after performing all projections and case
permutation reductions in all $  {\mathcal C }
_i [ t_j^{b_i \lan \sq{y}_i \ran  /
\sq{y}_j} ] $ and  $ {\mathcal
    C}_i [b_i \lan \sq{y}_i \ran ] $ for $1 \leq i\leq m$
% $\mathcal{D}[s^{b\langle \sq{z}\rangle / \sq{y}}]$ and
% $\mathcal{C}[w^{b\langle \sq{y}\rangle / \sq{z}}]$, 
we obtain a term
$t'$ that contains no activation redexes; moreover, by
Proposition~\ref{prop:decrease}.2., $t'$ contains only redexes having
complexity at most $\tau$.

We claim that the term $t'$ thus obtained has complexity $\langle n,
h, g' \rangle$ where $g'<g$.  Indeed, the value $n$ does not change
because all newly introduced occurrences of $b$ are not active. The new active sessions $ s_{i} \; = \; \pp{a} { {\mathcal C}_1 [a ^{F_1
\impl G_1}\, t_1] \dots \p {\mathcal C } _i [ t_j^{b_i \lan \sq{y}_i
\ran / \sq{y}_j} ] \p \dots \p {\mathcal C}_m[a^{F_m \impl G_m}
\,t_m]}$ or $ s_{i} \; = \; \pp{a}{ {\mathcal C}_1
[a^{G_1 \impl G_1}\, t_1] \dots \p {\mathcal C}_i [b_i \lan \sq{y}_i
\ran ] \p \dots \p {\mathcal C}_m [a^{ F_m \impl G_m }\,t_m]}$ for $1
\leq i \leq m$
%
%  $\mathcal{D}[s^{b\langle \sq{z}\rangle /
% \sq{y}}] \parallel_{a} \mathcal{C}[a\, s] $ and
% $\mathcal{C}[t^{b\langle \sq{y}\rangle / \sq{z}}]\parallel_{a}
% \mathcal{D}[a\, t]$ have both 
%
have all height $1$ and contain $j-1$ occurrences of $a$. Since
furthermore the reduced term does not contain channel occurrences of
any uppermost active session different from $\pp{a}{ u_1 \p \dots \p
u_m }$ we can infer that $g'(j) = g(j)-1$ and that, for any $i$ such
that $i > j$, $g'(i) = g(i)$.

We can apply the induction hypothesis on $t'$ and obtain the thesis.
}
\end{proof}

We now combine together all the main results achieved so far.

\begin{proposition}[Normalize!]\label{prop:normalize} Let $t: A$ be any term
in parallel form. Then $t \mapsto^{*} t'$, where $t'$ is a
parallel normal form.
\end{proposition}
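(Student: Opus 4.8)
The plan is to prove that the Master Reduction Strategy of Definition~\ref{defi-mastredstrategy}, applied to $t$, terminates; since every reduction keeps us inside the class of parallel forms (Proposition~\ref{proposition-normpar}), the resulting term is a parallel normal form. I would argue by a main induction on the maximal complexity $\tau$ of the redexes occurring in $t$, showing that one traversal of the three phases, followed by the first intuitionistic phase of the next cycle, produces a parallel form whose maximal redex complexity is strictly below $\tau$, to which the induction hypothesis applies.

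First I would check that each individual phase is finite. The \textbf{intuitionistic phase} only fires application, injection, projection and case-permutation redexes inside the simply typed bodies of the parallel form, and these are exactly the reductions of the simply typed $\lambda$-calculus with sums, products and commuting conversions, hence strongly normalizing; the \textbf{activation phase} is finite by Lemma~\ref{activation}, and the \textbf{communication phase} by Lemma~\ref{lem:com_phase}. Running the cycle once on $t$ (of maximal complexity $\tau$): by Proposition~\ref{prop:decrease} the intuitionistic phase keeps all complexities $\le\tau$ and, having removed every intuitionistic redex, yields a term with only communication redexes of complexity $\le\tau$; Lemma~\ref{activation} turns it into a term with only cross redexes of complexity $\le\tau$; and Lemma~\ref{lem:com_phase} reduces these to a term $t'$ whose \emph{only} redexes are those of Group~\ref{group_one} (application and injection) of complexity $\le\tau$.

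The decisive point is that the intuitionistic phase never creates a parallel operator, since the bodies of a parallel form are simply typed, so no communication or activation redex is generated by it; such redexes arise only when the activation phase turns an occurrence $a\,w$ with $w$ a value into an active session, and the complexity of the resulting session is the value complexity of $w$. When, in the next intuitionistic phase, we fire a Group~\ref{group_one} redex of complexity $c\le\tau$, Proposition~\ref{prop:decrease}(1) tells us that every newly created redex has complexity bounded by that of a Group~\ref{group_one} redex or a case-permutation redex already present, or else strictly below $c$; since a communication redex is neither of Group~\ref{group_one} nor a case permutation, the only applicable clause forces the newly exposed channel occurrences, hence the sessions activatable afterwards, to have value complexity strictly below $c\le\tau$. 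Together with the fact that Lemma~\ref{lem:com_phase} has already consumed every complexity-$\tau$ session, this shows that after the next intuitionistic phase the term is again a parallel form but with maximal redex complexity $<\tau$, so the induction hypothesis finishes the argument. The base case $\tau=0$ is immediate: Group~\ref{group_one} redexes have complexity $\ge 1$, so the output of Lemma~\ref{lem:com_phase} contains no redex at all and is already a parallel normal form.

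I expect the genuine obstacle to be exactly this decrease across cycles, because cross reductions spawn fresh channels and duplicate sessions, so a naive count of active sessions need not go down. This is precisely what the preparatory lemmas are designed to control: Lemma~\ref{lem:freeze} guarantees that the communication phase itself introduces no new activation (the fresh channels $b_i\langle\sq{y}_i\rangle$ have value complexity $0$), so complexity-$\tau$ sessions cannot be regenerated within a phase, while the value-complexity calculus of Lemmas~\ref{lem:change_of_value}, \ref{lem:replace}, \ref{lem:eliminate_the_case} and~\ref{lem:in_case}, feeding into Proposition~\ref{prop:decrease}, ensures that transmitted messages and the redexes they create never exceed the controlling complexity and strictly decrease when a Group~\ref{group_one} redex is fired. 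The subtle design choice making all of this go through is that the value complexity of pairs and case distinctions is taken as the recursive maximum of their components rather than the complexity of their type, which keeps the large new channels produced by cross reductions from breaking the measure.
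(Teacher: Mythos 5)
Your proposal is correct and follows essentially the same route as the paper's proof: induction on the maximal redex complexity $\tau$, one pass through the phases controlled by Proposition~\ref{prop:decrease}, Lemma~\ref{activation} and Lemma~\ref{lem:com_phase}, then exhausting the Group~\ref{group_one} redexes so that, since no Group~\ref{group_two} redex survives the communication phase, all newly generated Group~\ref{group_two} redexes fall strictly below $\tau$ and the induction hypothesis applies. Your explicit treatment of the base case $\tau=0$ and of the finiteness of the intuitionistic phase (via strong normalization of the simply typed fragment) makes precise two points the paper leaves implicit, but the argument is the same.
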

\begin{proof} 
% See Appendix. \inappendix
{
Let $\tau$ be the maximum among the complexity of the redexes in $t$. We prove
the statement by induction on $\tau$. Starting from $t$, we reduce all intuitionistic redexes and obtain a
term $t_1$ that, by Proposition~\ref{prop:decrease}, does not contain
redexes of complexity greater than $\tau$. By Lemma~\ref{activation},
$t_1\mapsto ^* t_2$ where $t_2$ does not contain any redex, except
cross reduction redexes of complexity at most $\tau$.  By
Lemma~\ref{lem:com_phase}, $t_2\mapsto^* t_3$ where $t_3$ contains
only Group~\ref{group_one} redexes of complexity at most $\tau$. Suppose
$t_3\mapsto^* t_4$ by reducing all Group~\ref{group_one} redexes, starting from
$t_{3}$. By Proposition~\ref{prop:decrease}, every Group~\ref{group_one} redex
 generated in the process has complexity at most $\tau$, thus
every Group~\ref{group_two} redex which is generated has complexity smaller than
$\tau$ , thus $t_{4}$ can only contain redexes with complexity smaller
than $\tau$. By induction hypothesis $t_4 \mapsto ^* t'$, with $t'$ in
parallel normal form.
}
\end{proof}

  The normalization for $\lama$, now easily follows.

\begin{theorem}\label{theorem-normalization} Suppose that  $ t: A$ is
  a $\lama$ proof-term. Then $t\mapsto^{*} t': A$, where $t'$ is a normal parallel form.
\end{theorem}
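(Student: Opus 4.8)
The plan is simply to chain together the two structural results already established and to read off the typing from Subject Reduction. Given a proof-term $t:A$, I would first apply Proposition~\ref{proposition-normpar} to obtain a finite reduction $t \mapsto^{*} u$ with $u$ a parallel form. Since $u$ is now in parallel form, Proposition~\ref{prop:normalize} applies and yields a further finite reduction $u \mapsto^{*} t'$, where $t'$ is a parallel normal form. Concatenating the two reduction sequences gives $t \mapsto^{*} t'$, which is the reduction required by the statement.

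For the typing claim, every step occurring in either sequence is one of the reductions of Figure~\ref{fig:red}. Hence repeated application of Subject Reduction (Theorem~\ref{subjectred}) shows that the type is preserved along the whole reduction; therefore $t':A$, and moreover the free variables of $t'$ are among those of $t$. The only point that deserves an explicit word is that the output $t'$ of Proposition~\ref{prop:normalize} is indeed a \emph{normal} parallel form in the sense demanded here, i.e.\ that it is simultaneously in parallel form and contains no redex. This is immediate from the statement of that proposition, whose conclusion is by definition a parallel form from which all redexes have been eliminated.

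In short, there is no genuine obstacle at this final step: all the real difficulty---ruling out the two possible infinite loops between the intuitionistic and communication phases, the value-complexity bookkeeping of Lemmas~\ref{lem:change_of_value} and~\ref{lem:replace}, and the \emph{Freeze!} and \emph{Communicate!} arguments---has already been discharged inside Proposition~\ref{prop:normalize} and its supporting lemmas. The present theorem is thus essentially a one-line corollary, obtained by composing Proposition~\ref{proposition-normpar}, Proposition~\ref{prop:normalize}, and Theorem~\ref{subjectred}.
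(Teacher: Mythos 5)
Your proof is correct and matches the paper's own argument exactly: the theorem is indeed obtained by composing Proposition~\ref{proposition-normpar} (reduction to parallel form), Proposition~\ref{prop:normalize} (reduction of parallel forms to parallel normal form), and Theorem~\ref{subjectred} (type preservation), which is precisely why the paper states that the normalization for $\lama$ ``now easily follows.'' Nothing is missing; the attention you pay to confirming that the output of Proposition~\ref{prop:normalize} is a \emph{normal} parallel form is the only point worth checking, and you handle it correctly.
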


\begin{remark}\label{rem:cond} This normalization proof covers all systems
corresponding to axioms in the class~(\ref{ouraxiom}) but the 
% at the price of greatly complicating 
the notation of typing rules and reductions becomes much more
involved.
\end{remark}

\section{ 
The Subformula Property}\label{section-subformula}

We show that normal $\lama$-terms satisfy the
Subformula Property:  a normal proof does not
contain concepts that do not already appear in the premisses or in the
conclusion. This, in turn,
implies that our Curry--Howard correspondence for $\lama$ is meaningful
from the logical perspective and produces analytic  proofs.

\begin{proposition}[Parallel Normal Form Property]
\label{proposition-parallelform} 
If $t\in
\nf$ is a $\lama$-term, then it is in parallel form.
\end{proposition}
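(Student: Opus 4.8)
The plan is to prove the statement by induction on the structure of $t$, in contrapositive form: I show that if a $\lama$-term is \emph{not} a parallel form in the sense of Definition~\ref{definition-parallel-form}, then one of the reductions of Figure~\ref{fig:red} applies to it, so it is not in $\nf$. Concretely, assuming $t\in\nf$, I argue that no parallel operator of $t$ can be guarded by a term constructor: every $\p$ must already sit at the top, giving the flat decomposition $t=t_1\p\dots\p t_{n+1}$ with each $t_i$ parallel-free. Since every subterm of a normal term is normal, the induction hypothesis applies to all immediate subterms, which are therefore themselves parallel forms.

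First I would establish the preliminary fact that a normal term contains \emph{no active session}; this is what makes the flattening step below legitimate. Consider an innermost active session $\pp{a}{u_1\p\dots\p u_m}$, i.e.\ one no component of which contains a further active session. By induction hypothesis each $u_i$ is a parallel form, and if some $u_i$ had the shape $\pp{b}{w_1\p\dots\p w_n}$ then, its insides carrying no active session, the scope-extrusion permutation $\pp{a}{\dots\p\pp{b}{w_1\p\dots\p w_n}\p\dots}\mapsto\pp{b}{\dots}$ would fire — a contradiction. Hence every $u_i$ is a simply typed $\lambda$-term. Now either $a$ is absent from some component, whence the cross reduction $\pp{a}{u_1\p\dots\p u_m}\mapsto u_{j_1}\p\dots\p u_{j_n}$ applies, or $a$ occurs in all of them; in the latter case $a$ has an implication type $F\impl G$, so each of its occurrences is of the form $a\,t'$ and the rightmost ones enable a (basic) cross reduction. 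Either way the session is a redex, contradicting $t\in\nf$.

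Then I would run the structural induction proper. The base case ($t$ a variable) is parallel-free. For a compound $t$, the induction hypothesis makes each immediate subterm a parallel form, and the point is that a \emph{top-level} $\p$ in any guarded position is lifted out by a permutation of Figure~\ref{fig:red}: under $\lambda x^A(\cdot)$, under $\inj_i(\cdot)$, in either component of a pair, or as the argument of an application, by the corresponding parallel-operator permutation; in the functional or eliminated position, i.e.\ facing a one-element stack $\xi$ (argument, projection $\pi_j$, case $[x.u,y.v]$, or $\exfalso_P$), by $\pp{a}{u_1\p\dots}\,\xi\mapsto\pp{a}{u_1\xi\p\dots}$; and under another $\p$ by the scope-extrusion permutation, whose no-active-session proviso is discharged by the preliminary fact. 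The various freeness side-conditions (the bound channel not occurring in the inert part) are made vacuous by first $\alpha$-renaming that channel fresh. As $t$ is normal none of these redexes exists, so no immediate subterm hides a $\p$ beneath a constructor, and the only parallels surviving are the outermost ones; collecting them yields the flat form. I expect the main obstacle to be exactly the flattening step together with the disjunction-elimination contexts: the scope-extrusion permutation is the sole rule carrying a side-condition, so the argument genuinely needs the no-active-session lemma to unlock it, and that lemma in turn rests on the fact that a parallel-free active session is always a cross-reduction redex. The overall shape mirrors Proposition~\ref{proposition-normpar} (and~\cite{ACGlics2017}) read backwards: there permutations are applied in order to reach a parallel form, here their absence forces a normal term to already be one.
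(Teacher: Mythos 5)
Your proof is correct in substance, and its skeleton --- a structural induction in which any parallel operator guarded by a term constructor yields a permutation redex of Figure~\ref{fig:red}, contradicting $t\in\nf$ --- is exactly the paper's argument. Where you diverge is in the extra machinery you bolt on. The paper's Definition~\ref{definition-parallel-form} only asks that, \emph{removing the parentheses}, $t$ be a composition of simply typed $\lambda$-terms, so a nested term $\pp{a}{u_1\p\dots\p\pp{b}{w_1\p\dots\p w_n}\p\dots\p u_m}$ is already a parallel form; consequently, in the paper the case $t=\pp{a}{u_1\p\dots\p u_m}$ is immediate from the induction hypothesis, and neither the scope-extrusion permutation nor your preliminary no-active-session lemma is ever invoked. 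Your stronger conclusion --- that normal terms are genuinely flat and contain no active sessions --- is true, and it is even a pleasant by-product (it substantiates the paper's remark that evaluation exhausts all active sessions), but it buys nothing for the stated proposition, and it costs you an organizational wrinkle: as written, the preliminary fact is established \emph{before} the structural induction proper, yet its proof appeals to ``the induction hypothesis'' that the components of an innermost active session are parallel forms. This circularity is repairable --- either run the two claims as a simultaneous induction on term size, or notice that the preliminary fact only needs the constructor permutations (whose side conditions, as you say, are discharged by $\alpha$-renaming the bound channel) rather than the full statement --- but the entanglement should be untangled explicitly. Two smaller imprecisions: when $a$ occurs in every component of an innermost session and some message is open, it is the full cross reduction, not the basic one, that fires (basic cross reductions require the transmitted term to be closed), though either way the session is a redex, which is all you need; and your tacit use of the typing discipline (channels never occur unapplied, so every occurrence of $a$ has the shape $a\,t'$) deserves a word, since it is what guarantees the cross-reduction pattern $\mathcal{C}_j[a\,t_j]$ matches.
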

\begin{proof}
Easy induction on the shape of $t$. 
\inappendix{
By induction on $t$. %We reason by cases on
% , according to
%the shape of $t$. 
\begin{itemize}

\item $t$ is a variable $x$. Trivial. 

\item $t=\lambda x\, v$. Since $t$ is normal, $v$ cannot be of the
form $\pp{a}{u_1 \p \dots \p u_m }$, otherwise one could apply the
permutation \[t= \lam x^{A} \, \pp{a}{u_{1}\p\dots \p u_{m}} \mapsto
\pp{a}{\lam x^{A} \, u_{1}\p\dots \p \lam x^{A} . u_{m}} \] and $t$
would not be in normal form. Hence, by induction hypothesis $v$ must
be a simply typed $\lambda$-term.

\item $t=\langle v_{1}, v_{2}\rangle$. Since $t$ is normal, neither
$v_{1}$ nor $v_{2}$ can be of the form $\pp{a}{u_1 \p \dots \p u_m }$,
otherwise one could apply one of the permutations
\[\langle \pp{a}{u_{1}\p \dots\p u_{m}},\, w\rangle \mapsto
\pp{a}{\langle u_{1}, w\rangle\p \dots\p \lan u_{m}, w\ran}\]
\[\langle w, \,\pp{a}{u_{1}\p \dots\p u_{m}}\rangle \mapsto
\pp{a}{\lan w, u_{1}\ran\p \dots\p \lan w, u_{m} \ran}\] and $t$ would
not be in normal form. Hence, by induction hypothesis $v_{1}$ and
$v_{2}$ must be simply typed $\lambda$-terms.

\item $t=v_1 \, v_2$. Since $t$ is normal, neither $v_1$ nor $v_2$ can
be of the form $\pp{a}{u_1 \p \dots \p u_m }$, otherwise one could
apply one of the permutations \[\pp{a}{u_{1}\p\dots \p u_{m}}\, w
\mapsto \pp{a}{u_{1}w\p\dots \p u_{m}w} \] \[w\, \pp{a}{u_{1}\p\dots
\p u_{m}} \mapsto \pp{a}{w u_{1}\p\dots \p w u_{m}}\] and $t$ would
not be in normal form. Hence, by induction hypothesis $v_{1}$ and
$v_{2}$ must be simply typed $\lambda$-terms.

\item $t=  \efq{P}{v}$. Since $t$ is normal, $v$ cannot
be of the form $\pp{a}{u_1 \p \dots \p u_m }$, otherwise one could apply the permutation
\[\efq{P}{\pp{a}{u_{1}\p\dots\p u_{m}}} \mapsto
\Ecrom{a}{\efq{P}{w_{1}}}{\efq{P}{w_{2}}}\] and $t$ would not be in
normal form. Hence, by induction hypothesis $u_{1}$ and $u_{2}$ must
be simply typed $\lambda$-terms.

\item $t=u\, \pi_{i}$. Since $t$ is normal, $v$ can
be of the form $\pp{a}{u_1 \p \dots \p u_m }$, otherwise one could apply the permutation
\[\pp{a}{u_{1}\p \dots\p u_{m}}\,\pi_{i} \mapsto \pp{a}{u_{1}\pi_{i}\p
\dots\p u_{m}\pi_{i}}\] and $t$ would not be in normal form. Hence, by
induction hypothesis $u$ must be a simply typed $\lambda$-term, which
is the thesis.

\item $t= \pp{a}{u_1 \p \dots \p u_m }$. By induction hypothesis the thesis holds
for $u_i$ where $1 \leq i \leq m$ and hence trivially for $t$.
\end{itemize}
}
\end{proof}

\begin{theorem}[Subformula Property]\label{theorem-subformula} Suppose
\[x_{1}^{A_{1}}, \ldots, x_{n}^{A_{n}}, a_{1}^{D_{1}}, \ldots, a_{m}^{D_{m}}\vdash t: A \quad \mbox{and} \quad
t\in \nf. \quad \mbox{Then}:\]
\begin{enumerate}
\item \label{dot:subf_1}
For each channel variable $a^{B \IMPL C}$ occurring bound in
$t$, the prime factors of $B,C$ are
subformulas of $A_{1}, \ldots, A_{n}, A$ or proper subformulas of $D_{1}, \ldots, D_{m}$.
\item \label{dot:subf_2}
 The type of any subterm of $t$ %which is not a bound communication variable 
 is either a subformula or a conjunction of subformulas of
$A_{1}, \ldots, A_{n}, A$ and of proper subformulas of $D_{1}, \ldots, D_{m}$.
\end{enumerate}
\end{theorem}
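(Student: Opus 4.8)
The plan is to argue by structural induction on the normal term $t$. By Proposition~\ref{proposition-parallelform}, $t$ is a parallel form $t_{1}\p\dots\p t_{n+1}$, so the only constructors genuinely new with respect to the simply typed $\lambda$-calculus are the contraction operator and the $(\ax)$ binder of rule~(\ref{ourN}), while the intuitionistic skeleton of each $t_{i}$ is a normal natural deduction proof. I would establish claims~(\ref{dot:subf_1}) and~(\ref{dot:subf_2}) simultaneously, strengthening the induction hypothesis in the customary Prawitz style: for a \emph{neutral} subterm $z\,\sigma$ whose head $z$ is an intuitionistic variable $x^{D}$ or a channel $a^{D}$ (which, since channels never occur alone, is immediately applied), every type occurring along its elimination spine $z\,\sigma_{1}\cdots\sigma_{k}$ is a subformula of $D$, with projections additionally selecting its prime factors, up to the first case-elimination or ex-falso, where the E-part of the track ends. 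The minor premises attached along the spine begin fresh tracks and are controlled by the induction hypothesis for claim~(\ref{dot:subf_2}).

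For the introduction constructors $\lam x\,u$, $\lan u,v\ran$ and $\inj_{i}(u)$ the type of the whole is assembled from the types of the immediate subterms, which the induction hypothesis already bounds. For eliminations, projections expose prime factors (hence subformulas) of the head type and applications consume argument subterms whose types, by the spine analysis, are subformulas of $D$. The delicate intuitionistic case is disjunction elimination $u\,[x.w_{1},y.w_{2}]$ and, similarly, $\efq{P}{u}$: here the conclusion of the rule is not a subformula of the major premise, the classical obstruction to the subformula property. This is absorbed by the track analysis built into the strengthened hypothesis: the major premise $u:A'\vel B'$ sits at the bottom of an E-part, the discharged $x^{A'},y^{B'}$ have types that are immediate subformulas of $A'\vel B'$ and hence of $D$, and the branches $w_{1},w_{2}$ are treated by the induction hypothesis with these hypotheses added, their common conclusion type coinciding with the end formula already tracked.

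The parallel constructors are where the real work lies. Contraction $t_{1}\p t_{2}$ is immediate, both components carrying the conclusion type. For an $(\ax)$ node $\pp{a}{u_{1}\p\dots\p u_{m}}$ of conclusion $B$, I would apply the induction hypothesis to each $u_{i}$, inside which the bound channel $a^{A_{i}\IMPL B_{i}}$ is now free. Since a channel never occurs alone, every occurrence of $a$ is a head $a^{A_{i}\IMPL B_{i}}\,w$ with $w:A_{i}$ and $a^{A_{i}\IMPL B_{i}}\,w:B_{i}$; as $B_{i}$ is prime (an atom or $\fal$) and $A_{i}$ is a conjunction of atoms, claim~(\ref{dot:subf_1}) for $a$ reduces to showing that these exposed atoms are subformulas of the intuitionistic context, of the node's conclusion $B$, or proper subformulas of the free channel types. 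The main obstacle is precisely this grounding: unlike the standard discharging rules, $(\ax)$ discharges the hypothesis $a:A_{i}\IMPL B_{i}$, which in general is \emph{not} a subformula of its own conclusion $B$, so the required bound on $A_{i},B_{i}$ cannot be read off locally and is not available from the induction hypothesis alone, which would only bound them by proper subformulas of $A_{i}\IMPL B_{i}$ itself.

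I expect this grounding to be the heart of the argument and the point where normalization pays off. The strategy I would pursue is that in a normal form no communication is pending: by Definition~\ref{def:value} and the cross reductions of Figure~\ref{fig:red}, every active channel application supplying a value has already fired, so each $a^{A_{i}\IMPL B_{i}}\,w$ must obtain its argument $w:A_{i}$ from a genuinely grounded subderivation rather than by deferring it. Hence the minor premises $w:A_{i}$ start tracks that, because $a$ cannot itself furnish an uncommunicated value in normal form, bottom out in the intuitionistic context or the conclusion. The axiom's linking condition—each antecedent conjunct $A_{i}^{j}$ equals some consequent $B_{l}$—then propagates this groundedness around all $m$ branches, forcing every atom occurring in a bound-channel type to be a subformula of $A_{1},\dots,A_{n},A$ or a proper subformula of some $D_{j}$. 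Once claim~(\ref{dot:subf_1}) is secured in this way, claim~(\ref{dot:subf_2}) for the $(\ax)$ case follows by rewriting the ``proper subformula of $A_{i}\IMPL B_{i}$'' entries produced by the induction hypothesis into the admissible subformulas just grounded.
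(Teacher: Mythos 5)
Your induction set-up, the Prawitz-style spine analysis, and the use of normality to exclude value-shaped channel arguments all match the paper's proof of the easy cases, and you correctly locate the crux: at an $(\ax)$ node the discharged hypothesis $a^{A_i \IMPL B_i}$ is not a subformula of the conclusion, so nothing local bounds $A_i, B_i$. But your proposed resolution of that crux has a genuine gap. From the fact that in a normal form no channel is applied to a value (else an activation redex would exist) you infer that the track of each argument $w$ in $a^{A_i\IMPL B_i}\,w$ ``bottoms out in the intuitionistic context or the conclusion.'' That inference fails. Normality leaves exactly two further possibilities that your sketch does not ground: the head of $w$ may be a \emph{bound intuitionistic variable} $x^T$, whose binder's type $T \IMPL Y$ (or $T \VEL Y$, for a case-bound $x$) is itself not yet controlled by the induction hypothesis; and, worse, it may be \emph{another bound channel} $b^{T_1 \IMPL T_2}$, so that the type of one channel is ``justified'' only by the type of another, and the grounding argument becomes circular. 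The linking condition of the axiom (each $A_i \neq \top$ equals some $B_l$) cannot break this circle: it merely relates inputs of some components to outputs of others, and a chain of channels feeding channels can in principle carry arbitrarily bad types around forever without ever touching the context or the conclusion.

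The paper closes exactly this hole not by groundedness but by an extremal argument run by contradiction. Assuming the property fails, one takes a prime factor $Q$ of \emph{maximal} complexity among all violations; the linking condition is used once, to convert any output-side violation into an input-side one; then, inside a simply typed subterm in which all maximally violating channels violate in the input, one picks the \emph{rightmost} such occurrence $c^{F_1 \IMPL F_2}\,w$. Now each head shape for the relevant component of $w$ yields a contradiction: a value or a pending case/ex-falso head contradicts normality (your observation); a free-variable head makes $Q$ a subformula of some $A_i$; a bound intuitionistic-variable head exhibits a type $T \IMPL Y$ or $T \VEL Y$ containing $Q$ \emph{properly}, contradicting the maximality of $Q$; and a channel head $b^{T_1\IMPL T_2}$ places a maximally violating channel inside $w$, contradicting the rightmost choice. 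Your proposal is missing precisely this maximality-plus-rightmost mechanism (or some substitute for it); without it, the ``propagation of groundedness around the $m$ branches'' is an assertion, not a proof, and the $(\ax)$ case of both claims remains open.
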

\begin{proof} 
Induction on the shape of $t$. See Appendix. \inappendix{
We proceed by structural induction on $t$ and reason by cases, according to the form of $t$.

\begin{itemize}
\item 

$t = \pair{u}{v} : F \ET G $. Since $t\in \nf$, by Proposition \ref{proposition-parallelform} it is in parallel form, thus is a simply typed $\lambda$-term. Hence no communication variable can be
bound inside $t$, thus 1. trivially holds. By induction hypothesis, 2. holds for $u : F$ and $v : G$.  Hence, the type of any subterm of $u$ is either a subformula or a conjunction
of subformulas of  $A_{1}, \ldots, A_{n}$, of $F$ and of proper
subformulas of $D_{1}, \ldots, D_{m}$ and any subterm of $v$ is
either a subformula or a conjunction of subformulas of some $A_{1},
\ldots, A_{n}$, of $G$ and of proper
subformulas of $D_{1}, \ldots, D_{m}$.  Moreover, any subformula of
$F$ and $G$ must be a subformula of the type $F \ET G$ of $ t $. Hence the type of any subterm of
$\pair{u}{v}$ is either a subformula or a conjunction of subformulas
of $A_{1}, \ldots, A_{n}, F \ET G$ or a proper subformula of $D_{1},
\ldots, D_{m}$ and the statement holds for $t$ as well.

\item $t = \lambda x^F \, u : F \IMPL G$. Since $t\in \nf$, by Proposition \ref{proposition-parallelform} it is in parallel form, thus is a simply typed $\lambda$-term. Hence no communication variable can be
bound inside $t$, thus 1. trivially holds. By induction hypothesis, 2. holds for $u:G$. Hence the type of any
subterm of $u$ is either a subformula or a conjunction of subformulas
of some $A_{1}, \ldots, A_{n}, F$, of $G$ and of proper subformulas of
$D_{1}, \ldots, D_{m}$. Since the type $F$ of $x$ is a subformula of $F
\IMPL G$,  the type of any subterm of $\lambda x^F \, u$ is either
a subformula or a conjunction of subformulas of $A_{1}, \ldots, A_{n},
F \IMPL G$ or a proper subformula of $D_{1}, \ldots, D_{m}$ and the
statement holds for $t$ as well.

\item $t = \inj_{i}({u}) : F \VEL G$ for $i \in \{0,1\}$. Without loss
  of generality assume that $i=1$ and  $u:G$. Since $t\in \nf$, by Proposition \ref{proposition-parallelform} it is in parallel form, thus is a simply typed $\lambda$-term. Hence no communication variable can be
bound inside $t$, thus 1. trivially holds. By induction hypothesis, 2. holds for $u:F$. Hence,  the type of any
subterm of $u$ is either a subformula or a conjunction of subformulas
of some $A_{1}, \ldots, A_{n}$, of $F$ or proper subformulas of
$D_{1}, \ldots, D_{m}$. Moreover, any subformula of $G$ must be a
subformula of the type $ F \VEL G$ of $t$.  Hence the type of any subterm of $\inj_{i}({u})$ is either
a subformula or a conjunction of subformulas of $A_{1}, \ldots, A_{n},
F \VEL G$ or a proper subformula of $D_{1}, \ldots, D_{m}$ and the
statement holds for $t$ as well.

\item $t = x^{A_{i}} \, \sigma : A$ for some $A_{i}$ among $A_{1},
\ldots, A_{n}$ and stack $\sigma$. Since $t\in \nf$, it is in parallel form, thus is a simply typed $\lambda$-term and no communication variable can be
bound inside $t$. By induction hypothesis, for any element $\sigma_j :
S_j$ of $\sigma$, the type of any subterm of $\sigma_j$ is either a
subformula or a conjunction of subformulas of some $A_{1}, \ldots,
A_{n}$, of the type $S_j$ of $\sigma_j$ and of proper subformulas of
$D_{1}, \ldots, D_{m}$.  

If $\sigma$ is case-free, then every $S_j$ is a
subformula of  $A_i$,  or of $A$, when $\sigma=\sigma' \exfalso_{A}$. Hence, the type of any subterm of
$x^{A_{i}} \, \sigma$ is either a subformula or a conjunction of
subformulas of $A_{1}, \ldots, A_{n}, A$ or of proper subformulas of
$D_{1}, \ldots, D_{m}$ and the statement holds for $t$ as well. 

In case $\sigma$ is not case-free, then, because of case permutations,  $\sigma = \sigma ' [y^G. v_1 ,z^E. v_2] $, with $\sigma'$ case-free.  By induction hypothesis we know that the type of any subterm of
$v_1 : A$ or $v_2:A$ is either a subformula or a conjunction of
subformulas of some $A_{1}, \ldots, A_{n}$, of $A, G, E$ and of proper
subformulas of $D_{1}, \ldots, D_{m}$. Moreover, $G$ and $E$ are
subformulas of $A_{i}$ due to the properties of stacks. Hence, the
type of any subterm of $x \, \sigma ' [y^G. v_1 ,z^E. v_2]$ is either
a subformula or a conjunction of subformulas of $A_{1}, \ldots, A_{n},
A$ and of  proper subformulas of $D_{1}, \ldots, D_{m}$ and also in this
case the statement holds for $t$ as well.

\item $t = a^{D_{i}}u \, \sigma : A$ for some $D_{i}$ among $D_{1},
\ldots, D_{n}$ and stack $\sigma$. As in the previous case.

\item $t = \pp{b}{u_{1}\p\dots  \p  u_{k}} : A$ and $b^{G_{i}\rightarrow H_{i}}$ occurs in
$u_{i}$.  Suppose, for the sake of contradiction, that the statement does not
hold. We know by induction hypothesis that the statement holds for $u_{1}: A, \dots, u_{k}: A$. We
first show that it cannot be the case that
\begin{quote}
  $(*)$ all prime factors of $G_{1}, H_{1}, \dots, G_{k}, H_{k}$
  are subformulas of $A_{1}, \ldots, A_{n}, A$ or proper subformulas
  of $D_{1}, \ldots, D_{m}$.
\end{quote}

Indeed, assume by contradiction that $(*)$ holds. Let us consider the type $T$ of any subterm of $t$ which is not a bound
communication variable and the formulas $B,C$ of any bound
communication variable $a^{B \IMPL C}$ of $t$. Let $P$ be any prime factor of $T$ or $B$ or $C$. By induction hypothesis applied to $u_{1},\dots, u_{n}$, we obtain that $P$  is either subformula
or conjunction of subformulas of $A_{1}, \ldots, A_{n}, A$ and of
proper subformulas of $D_{1}, \ldots, D_{m}, G_{1}, H_{1}, \dots,
G_{k}, H_{k}$.  Moreover,  $P$ is prime and so it must be subformula of
$A_{1}, \ldots, A_{n}, A$ or a proper subformula of $D_{1}, \ldots, D_{m}$ or a prime factor of $G_{1}, H_{1}, \dots, G_{k}, H_{k}$. Since $(*)$ holds, $P$ must be a subformula of $A_{1}, \ldots, A_{n}, A$ or proper subformula of $D_{1}, \ldots, D_{m}$, and this contradicts the assumption that the subformula property does not hold for $t$.

We shall say from now on that any bound channel variable $a^{F_1 \IMPL
F_2}$ of $t$ \emph{violates the subformula property maximally (due to $Q$)} if
(i) some prime factor $Q$ of $F_1$ or $F_2$ is neither a
subformula of $A_{1}, \ldots, A_{n}, A$ nor a proper subformula of
$D_{1}, \ldots, D_{m}$ and (ii) for every other bound channel variable
$c^{S_{1}\IMPL S_{2}}$ of $t$, if some prime factor $Q'$ of $S_1$ or
$S_2$ is neither a subformula of $A_{1}, \ldots, A_{n}, A$ nor a
proper subformula of $D_{1}, \ldots, D_{m}$, then $Q'$ is complex at
most as $Q$. If $Q$ is a subformula of $F_1$ we say that  $a^{F_1 \IMPL
F_2}$  violates the subformula property maximally \emph{in the input}.

It follows from $(*)$ that a channel variable maximally violating the
subformula property must exist. We show now that there also exists
a subterm $c^{F_1 \IMPL F_2} w$ of $t$ such that $c$ maximally
violates the subformula property in the input due to $Q$, and $w$ does not
contain any channel variable that violates the subformula property
maximally.

In order to prove the existence of such term, we prove 
\smallskip

\noindent $(**)$ Let $t_{1}$ be any subterm of $t$ %such that, if we remove the parentheses, has the form $t_1 = s_1 \p _{d_{1}} \dots \p_{d_{m-1}} s_m $ and such that some $d_i$ for $1 \leq i \leq m$ is a maximally violating variable, 
such that $t_{1}$ contains at least a maximally violating channel and all maximally violating channel of $t$ that are free in $t_{1}$ are maximally violating in the input. Then  there is a simply typed  subterm $s$ of $t_{1}$ 
 such that $s$ contains at least a maximally violating channel, and such that all
occurrences of maximally violating channels occurring in $s$  violate the subformula property in the input.
\smallskip

%Notice that the definition of maximally violating channels depend on the whole term $t$, but $(**)$ only mentions the maximally violating channels which are bound in $t_1$.

We proceed by induction on the number $n$ of $\parallel$
operators that occur  in $t_1$.  %and bind occurrences of maximally violating channels. 
%{\ehi Notice that, for any subterm
%$u_1 \parallel_{d_i} v_1$ of $t_1$, there is at least one occurrence
%of $d_i$ in $ u_1$ and one in $ v_1$, otherwise the term is not
%normal, which contradicts the assumptions on $t$.}

If $n=0$,  it is enough to pick $s=t_{1}$. %and choose the rightmost maximally violating channel of $t_{1}$, which by assumption maximally violates the subformula property in the input.

If $n>0$, let $t_{1}=\pp{d}{v_{1}\p\dots\p v_{n}}$ and assume $d^{E_{i}\IMPL F_{i}}$ occurs in $v_{i}$. If no $d^{E_{i}\IMPL F_{i}}$ maximally violates the subformula property, we obtain the thesis by applying the induction hypothesis to any $v_{i}$. Assume hence that some $d^{E_{i}\IMPL F_{i}}$ maximally violates the subformula property due to $Q$. Then there is some $d^{E_{j}\IMPL F_{j}}$ such that  $Q$ is a prime factor of $E_{i}$ or $E_{j}$. By induction hypothesis applied to $u_{i}$ or $u_{j}$, we obtain the thesis.

\smallskip 

By $(**)$ we can infer that in $t$ there is a simply typed $\lambda$-term
$s$  that contains at least one occurrence of a maximally violating
channel and only occurrences of maximally violating channels that are
maximally violating in the input. The rightmost of the maximally violating channel occurrences in $s$ is then of the form $c^{F_1 \IMPL
F_2} w$ where $c$ maximally violates the subformula property in the
input and  $w$ does not contain any channel variable maximally
violating the subformula property.

Consider now this term  $c^{F_1 \IMPL F_2} w$.

Since $Q$ is a prime factor of $F_1$, it is either an atom $P$ or a
formula of the form $Q' \IMPL Q''$ or of the form $Q' \VEL Q''$.

Let $w=\langle w_{1}, \ldots, w_{j}\rangle$, where each $w_{i}$ is not
a pair, and let $k$ be such that that $Q$ occurs in the type of
$w_{k}$.

We start by ruling out the case that $w_{k}= \lambda y\, s$ or $w_{k}=
\inj _i (s)$ for $i \in \{0,1\}$, otherwise it would be possible to
perform an activation reduction or a cross reduction to some subterm
$u'\parallel_{c} v'$, which must exist since $c$ is bound. 

Suppose now, by contradiction, that $w_{k} = x^T \, \sigma$ where
$\sigma$ is a stack.  It cannot be the case that $\sigma = \sigma' 
[y^{E_1}. v_1 ,z^{E_2}. v_2]$ nor  $\sigma = \sigma' \exfalso_{P}$, otherwise we could apply an activation
or cross reduction.
Hence $\sigma$ is case-free and does not contain $\exfalso_{P}$. Moreover,   $x^T$ cannot be a
free variable of $t$, then $T = A_i$, for some $1 \leq i \leq n$, and
$Q$ is a subformula of $A_i$, contradicting the assumptions. 

Suppose
hence that $x^T$ is a bound intuitionistic variable of $t$, such
that $t$ has a subterm $\lambda x^T s: T \IMPL Y$ or, without loss of
generality, $s [x^{T}. v_1 ,z^{E}. v_2]$, with $s:T\VEL Y$ for some
formula $Y$. By induction hypothesis $T\IMPL Y$ and $T \VEL Y$ are subformulas of
$A_{1}, \ldots, A_{n}, A$ or proper subformulas of $D_{1}, \ldots,
D_{m}, G \IMPL H$. But $T\IMPL Y$ and $T \VEL Y$ contain $Q$ as a
proper subformula and $c^{F_1 \IMPL F_2} \, w$ violates maximally the
subformula due to $Q$. Hence $T\IMPL Y$ and $T \VEL Y$ are neither
subformulas of $A_1 , \dots, A_n , A$ nor proper subformulas of $D_1 ,
\dots, D_m$ and thus must be proper subformulas of $G\IMPL H$. Since $c^{F_1 \IMPL F_2} \, w$ violates the subformula
property maximally due to $Q$, $T\IMPL Y$ and $T \VEL Y$ must be at
most as complex as $Q$, which is a contradiction.

Suppose now that $x^{T}$ is a bound channel variable, thus $w_{k}= a^T\, r\,\sigma$, where $a^{T}$ is a bound
communication variable of $t$, with $T = T_1 \IMPL T_2$. Since $c^{F_1
\IMPL F_2} \, w$ is rightmost, $a \neq c$.  Moreover, $Q$ is a
subformula of a prime factor of $T_2$, whereas $a^{T_{1}\IMPL T_{2}}$
occurs in $w$, which is impossible by choice of $c$. This
contradicts the assumption that the term is normal and ends the proof.
\end{itemize}
}
\end{proof}

\newpage

\appendix

\section{The Normalization Theorem}

Our goal is to prove the Normalization Theorem for $\lama$:
every proof term of $\lama$ reduces in a finite number of steps to a
normal form.  By Subject Reduction, this implies that the corresponding natural deduction proofs do
normalize. We shall define a reduction strategy for terms of $\lama$:
a recipe for selecting, in any given term, the subterm to which apply
one of our basic reductions. As opposed to \cite{ACGlics2017},  our permutations between communications do not enable silly loops and thus do not undermine the possibility that our set of reduction is strongly normalizing. We leave anyway as a difficult, open problem to determine whether this is really the case.

The idea behind our normalization strategy is quite intuitive. We start from any term and reduce it in parallel normal form, thanks to Proposition \ref{proposition-normpar}. Then we  cyclically interleave three reduction phases. First, an \emph{intuitionistic phase}, where we reduce all intuitionistic redexes. Second, an \emph{activation phase}, where we activate all sessions that can be activated. Third, a \emph{communication phase}, where we allow the active sessions to exchange messages as long as they need and we enable the receiving process to extract information from the messages. Technically, we perform all cross reductions combined with the generated structural intuitionistic redexes, which we consider to be projections and case permutations.

Proving termination of this strategy is by no means easy, as we have to rule out two possible infinite loops.

\begin{enumerate}
\item Intuitionistic reductions can generate new activable sessions that want to transmit messages, while message exchanges can generate new intuitionistic reductions.

\item During the communication phase, new sessions may be generated after each cross reduction and old sessions may be duplicated after each session permutation. The trouble is that each of these sessions may need to send new messages, for instance forwarding some message received from some other active session. Thus the count of active sessions might increase forever and the communication phase never terminate.
\end{enumerate}

We break the first loop by focusing on the complexity of the exchanged messages. Since messages are \emph{values}, we shall define a notion of \emph{value complexity} (Definition \ref{def:fut_comp}), which will simultaneously ensure that: (i) after firing a non-structural intuitionistic redex, the new active sessions can ask to transmit only new messages of smaller value complexity than the complexity of the fired redex; (ii) after transmitting a message, all the new generated intuitionistic reductions have at most the value complexity of the message.  Proposition \ref{lem:replace}) will settle the matter, but in turn requires a series of preparatory lemmas. Namely, we shall study how arbitrary substitutions affect the value complexity of a term in Lemma \ref{lem:change_of_value} and Lemma \ref{lem:replace}; then we shall determine how case reductions impact value complexity  in Lemma \ref{lem:in_case} and Lemma \ref{lem:eliminate_the_case}. 

We break the second loop by showing in the crucial Lemma
\ref{lem:freeze} that message passing, during the communication phase,
cannot produce new active sessions. Intuitively, the new generated
channels or the old duplicated ones are ``frozen'' and only
intuitionistic reductions can activate them, thus with values of
smaller complexity than that of the fired redex.

For clarity, we define here the recursive normalization algorithm that
represents the constructive content of this section's proofs, which
are used to prove the Normalization Theorem.  Essentially, our master
reduction strategy will use in the activation phase the basic
reduction relation $\succ$ defined below, whose goal is to permute an
uppermost active session $\pp{a}{u_1 \p \dots \p u_m}$ until all $u_i$
for $1 \leq i \leq m$ are simply typed $\lambda$-terms and finally
apply the cross reductions followed by projections and case
permutations.

% $$(u_{1}\parallel_{b} u_{2})\parallel_{a} v \mapsto
% (u_{1}\parallel_{a} v)\parallel_{b} (u_{2}\parallel_{a}
% v)$$ $$u \parallel_{a} (v_{1}\parallel_{b} v_{2}) \mapsto
% (u\parallel_{a} v_{1})\parallel_{b} (u\parallel_{a} v_{2})$$

\begin{definition}[Side Reduction Strategy]% \label{defi-redstrategy}
Let $t$ be a term and $\pp{a}{u_1 \p \dots \p u_m }$ be an active session
occurring in $t$ such that no active session occurs
in $u$ or $v$. We write\[t\succ
t'\]whenever $t'$ has been obtained from $t$ by applying to
$u\parallel_{a} v$:
\begin{enumerate}
\item a permutation reduction
  \begin{align*}
& \pp{a}{ u_1\p \dots \p \pp{b}{w_1\p \dots \p w_n} \dots \p u_m} \;
\mapsto \\ 
& \pp{b}{ \pp{a}{ u_1\p \dots \p w_1\dots \p u_m} \dots \p
\pp{a}{ u_1\p \dots \p w_1\dots \p u_m}}
  \end{align*}
 if  $u_i=\pp{b}{w_1\p \dots \p w_n} $ for some $1 \leq i \leq m$ ;
\item a cross reduction, if $u_1 , \dots , u_m$ are intuitionistic
  terms, immediately followed by the projections and case permutations inside the newly generated simply typed
$\lambda$-terms;
\item a cross reduction $\pp{a}{ u_1 \p  \dots \p u_m } \mapsto u_{j_1} \p \dots \p u_{j_n} $,
for $1 \leq j_1 <  \dots < j_n \leq m $, if $a$ does not occur in
$u_{j_1} , \dots , u_{j_n}$
\end{enumerate}
\end{definition}

% $u \parallel_{a} v \mapsto u$, if $a$ does not occur in $u$ and
% $u \parallel_{a} v \mapsto v$, if $a$ does not occur
% in $v$.

\begin{definition}[Master Reduction Strategy]% \label{defi-mastredstrategy}
Let $t$ be any term which is not in normal form. We transform it into a term $u$ in parallel form, then we execute the following three-step recursive procedure.
\begin{enumerate}
\item \emph{Intuitionistic Phase}. As long as $u$ contains intuitionistic redexes, we apply intuitionistic reductions.
\item \emph{Activation Phase}. As long as $u$ contains activation redexes, we apply activation reductions.
\item \emph{Communication Phase}. As long as $u$ contains active sessions, we apply the Side Reduction
  Strategy (Definition~\ref{defi-redstrategy}) to $u$, then we go to step $1$.\end{enumerate}
\end{definition}

We start be defining the value complexity of messages. Intuitively, it is a measure of how much complex redexes a message can generate, after being transmitted. On one hand, it is defined as usual for true values, like $s = \lam x u$, $s=\inj_i(u)$, as the complexity of their types. On the other hand, pairs $\langle u , v \rangle$ and case distinctions $t[x.u,y.v]$ represents sequences of values, therefore we pick recursively the maximum among the value complexities of $u$ and $v$. This is a crucial point.  If we chose the types as value complexities also for pairs and case distinctions, then our global normalization argument would  completely break down when new channels are generated during cross reductions: their type can be much bigger than the starting channel and any shade of a decreasing complexity measure would disappear.

\begin{definition}[Value Complexity]% \label{def:fut_comp} \label{def:value}
For any simply typed $\lambda$-term  $s:T$, the value complexity of $s$ is defined as the first case that applies among the following:
\begin{itemize}
\item if $s = \lam x u$, $s=\inj_i
(u)$, then the value complexity of $s$ is the
complexity of its type $T$;
\item if $s= \langle u , v \rangle $, then the value complexity of $s$ is the
maximum among the value complexities of $u$ and $v$.
\item if $s=t[x.u,y.v]\sigma$ where $\sigma $ is case-free,
then the value complexity of $s$ is the maximum among the value
complexities of $u\sigma$ and $v\sigma$;
\item otherwise, the value
complexity of $s$ is $0$.
\end{itemize}
\end{definition}

Values are defined as anything that either can generate an intuitionistic redex when plugged into another term or that can be transformed into something with that capability, like an active channel acting as an endpoint of a transmission. 

\begin{definition}[Value]% \label{def:value}
A \textbf{value} is a term of the form $ \langle t_1, \ldots , t_n
\rangle $, for some $1 \leq i \leq n $, $t_i =
\lambda x \, s$, $t_i = \inj _i (s)$, $t_i = t\, \exfalso_P $, $t_{i}=t
[x.u,y.v]$ or $t_{i}=a\sigma$ for an
active channel $a$.  
\end{definition}

The value complexity of a term, as expected, is alway at most the complexity of its type. 

\begin{proposition}% \label{prop:two!}
Let $u: T$ be any simply typed $\lambda$-term. Then the value complexity of  $u$ is at most the complexity of $T$.
\end{proposition}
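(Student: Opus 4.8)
The plan is to argue by structural induction on $u$, more precisely by induction on its size, splitting into cases according to which clause of Definition~\ref{def:value_compl} determines the value complexity of $u$. Writing $\mathrm{c}(T)$ for the complexity of the type $T$, the only facts I need about $\mathrm{c}$ are that it is monotone under subformulas and that $\mathrm{c}(T_1),\mathrm{c}(T_2)\le \mathrm{c}(T_1\wedge T_2)$.

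First I would dispose of the two non-inductive cases. If $u=\lambda x\,w$ or $u=\inj_i(w)$, the first clause applies and the value complexity of $u$ is by definition exactly $\mathrm{c}(T)$, so the inequality holds with equality. Symmetrically, if $u$ matches none of the first three clauses of the definition — for instance it is a variable, an application or channel application with a head that is not a value, an elimination not covered above, or an $\mathsf{efq}$ term — then its value complexity is $0\le \mathrm{c}(T)$ and there is nothing to prove.

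Next come the two genuinely inductive cases. If $u=\langle v_1,v_2\rangle$ with $T=T_1\wedge T_2$, the induction hypothesis gives that the value complexity of $v_i$ is at most $\mathrm{c}(T_i)$ for $i\in\{1,2\}$; since the value complexity of $u$ is the maximum of these two and each $\mathrm{c}(T_i)\le \mathrm{c}(T)$, the bound follows. If instead $u=t[x.v_1,y.v_2]\,\sigma$ with $\sigma$ case-free, then both $v_1\sigma$ and $v_2\sigma$ again have type $T$, and the value complexity of $u$ is the maximum of their value complexities; applying the induction hypothesis to each branch yields the claim.

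The only delicate point — and the step I would single out as the main obstacle — is this last case: the induction hypothesis has to be invoked on $v_1\sigma$ and $v_2\sigma$, which are not immediate subterms of $u$ but terms reassembled from the $v_i$ and the trailing stack $\sigma$. This is exactly why the induction must be carried out on the size of $u$ rather than on its immediate subterm structure: one checks that $v_i\sigma$ is strictly smaller than $t[x.v_1,y.v_2]\,\sigma$, since the latter additionally contains the scrutinee $t$ together with the two case binders, so the induction hypothesis applies legitimately. Once this size bookkeeping is in place every case is immediate and the proposition follows.
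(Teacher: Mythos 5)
Your proof is correct and takes essentially the same route as the paper's: induction on $u$ with one case per clause of the value-complexity definition, with equality in the $\lambda$-abstraction/injection clause, monotonicity of type complexity under conjunction for pairs, and the trivial bound $0$ in the residual case. If anything, you are more careful than the paper, whose own proof treats the case distinction only in the form $v_0[z_1.v_1,z_2.v_2]$ with an empty trailing stack; your explicit point that the induction must be on the \emph{size} of $u$, so that the hypothesis legitimately applies to the reassembled terms $v_i\sigma$ of type $T$ (strictly smaller since the scrutinee $t$ and the other branch are dropped), handles the general clause $t[x.v_1,y.v_2]\,\sigma$ that the paper glosses over.
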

\begin{proof}
% See Appendix.
% \inappendix
{
By induction on $u$. There are several cases, according to the shape of $u$.
\begin{itemize}
\item If $u$ is of the form $ \lam x w$, $\inj_i (w)$, then the value complexity of $u$ is indeed the complexity of $T$. 

\item If $u$ is of the form $ \langle v_1 , v_2 \rangle $ then, by
induction hypothesis, the value complexities of $v_{1}$ and $v_{2}$
are at most the complexity of their respective types $T_{1}$ and $T_{2}$, and hence at
most the complexity of $T=T_{1}\et T_{2}$, so we are done.

\item If $u$ is of the form $v_0[z_1. v_{1},z_2.  v_{2}]$ then, by
induction hypothesis, the value complexities of $v_{1}$ and $v_{2}$
are at most the complexity of $T$, so we are done.
\item In all other cases, the value complexity of $u$ is
$0$, which is trivially the thesis. 
\end{itemize}
}
\end{proof}

The complexity of an intuitionistic redex $t \xi$ is defined as the value complexity of $t$.

\begin{definition}[Complexity of the Intuitionistic
Redexes]% \label{def:int_comp} 
Let $r$ be an intuitionistic
redex. The complexity of $r$ is defined as follows:
\begin{itemize}
\item If $r=(\lambda x u)t$, then the complexity of $r$ is the
type of $\lambda x u$.

  \item If $r=\inj_{i}(t)[x.u, y.v]$, then the complexity of $r$ is the
type of $\inj_{i}(t)$.

\item if $r= \lan u,v \ran \pi_i$ then the complexity of $r$ is the
  value complexity of $\lan u,v \ran$.

\item if $r= t [x.u,y.v] \xi $, then the complexity of $r$ is the
  value complexity of $t [x.u,y.v] $. 
\end{itemize}
\end{definition}

The value complexity is used to define the complexity of communication redexes. Intuitively, it is the value complexity of the heaviest message ready to be transmitted.

\begin{definition}[Complexity of the Communication Redexes]% \label{def:com_comp}
Let $u\parallel_{a} v: A$ a term.
Assume that  $a^{B\rightarrow C}$ occurs in $u$ and thus $a^{C\rightarrow B}$ in $v$. 
\begin{itemize}
\item The pair $B, C$ is called the \textbf{communication kind}  of $a$. 
\item The \textbf{complexity of a channel occurrence}  $a \, \langle
  t_1, \ldots , t_n \rangle$ is the value complexity of $\langle t_1, \ldots , t_n \rangle$ (see Definition~\ref{def:value_compl}).
\item  The \textbf{complexity of a communication redex} $u\parallel_{a} v$ is the maximum among the complexities of the occurrences of $a$ in $u$ and $v$.

\item The \textbf{complexity of a permutation redex} \\$\pp{a}{ u_1\p
\dots \p \pp{b}{w_1\p \dots \p w_n} \dots \p u_m} $ is $0$.
\end{itemize}
\end{definition}

As our normalization strategy suggests, application and injection redexes should be treated differently from the others, because generate the real computations.

\begin{definition}% \label{def:redex_groups}
  We distinguish two groups of redexes:
  \begin{enumerate}
  \item % \label{group_one} 
    Group 1: Application and injection redexes.
  \item % \label{group_two} 
    Group 2: Communication redexes, projection redexes and
    case permutation redexes.
  \end{enumerate}
\end{definition}

The first step of the normalization proof consists in showing that any
term can be reduced to a parallel form.

\begin{proposition}[Parallel Form]% \label{proposition-normpar}
Let $t: A$ be any term. Then $t\mapsto^{*} t'$, where $t'$ is a parallel form. 
\end{proposition}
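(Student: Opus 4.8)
The plan is to prove the statement by structural induction on $t$, exactly as in the $\lamem$ case treated in Section~\ref{sec:cl}. The induction hypothesis will assert that each immediate subterm of $t$ reduces, via $\mapsto^{*}$, to a parallel form, i.e.\ to a term $u_1 \parallel \dots \parallel u_{n+1}$ (read without parentheses) all of whose components $u_i$ are simply typed $\lambda$-terms. The goal in each inductive step is then to push every parallel operator that the induction hypothesis exposes in the rewritten subterms outward, past the term constructor sitting at the root of $t$, until the whole term becomes a flat list of simply typed $\lambda$-terms joined by $\parallel$. All the rewrites needed are instances of the \textbf{Parallel Operator Permutations} of Figure~\ref{fig:red}, used only in the ``outward-pushing'' direction.

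First I would dispose of the base case: if $t$ is a variable, it is already a parallel form of length one and nothing is to be done. For each single-subterm constructor $-$ abstraction $\lambda x\, u$, projection $u\,\pi_i$, injection $\inj_i(u)$, or ex-falso $\efq{P}{u}$ $-$ I would rewrite $u$ into parallel form by the induction hypothesis and then repeatedly apply the matching permutation, for instance $\lambda x\,\pp{a}{u_1 \p \dots \p u_m} \mapsto \pp{a}{\lambda x\, u_1 \p \dots \p \lambda x\, u_m}$ and $\pp{a}{u_1 \p \dots \p u_m}\,\pi_i \mapsto \pp{a}{u_1 \pi_i \p \dots \p u_m \pi_i}$, so that every exposed parallel binder migrates to the top. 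Since each such permutation strictly lifts a parallel operator past one constructor, finitely many applications flatten the term, which yields the thesis in these cases.

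The only genuinely delicate cases are the two binary constructors, application $t = u\, v$ and pairing $t = \langle u, v\rangle$, where both subterms carry their own parallel structure. Here I would proceed in two stages: first put $u$ in parallel form and distribute the enclosing operation over it, via $\pp{a}{u_1 \p \dots \p u_m}\, v \mapsto \pp{a}{u_1 v \p \dots \p u_m v}$ for application and the analogous pairing permutation; then put $v$ in parallel form and distribute each head $u_i\,(\cdot)$ (respectively $\langle u_i, \cdot\rangle$) over it with $w\, \pp{a}{u_1 \p \dots \p u_m} \mapsto \pp{a}{w u_1 \p \dots \p w u_m}$. The outcome is a doubly-indexed nesting of parallel operators which, since the definition of parallel form tolerates arbitrary parenthesization, is already a legitimate parallel form. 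The hard part will be the bookkeeping of the side-conditions on these permutations: the bound channel being pushed past a term must not occur free in that term. This is ensured by the standard alpha-renaming convention on channel variables, so before each permutation I would silently rename channels to keep them distinct. Crucially, no permutation between two parallel operators is required at this stage $-$ the target shape absorbs the nesting once parentheses are dropped $-$ which keeps the whole argument a strictly terminating outward migration and avoids any interaction with the scope-extrusion permutations.
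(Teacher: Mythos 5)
Your proposal is correct and follows essentially the same route as the paper's own proof (given in the appendix): structural induction on $t$, using the parallel operator permutations of Figure~\ref{fig:red} to push $\parallel$ outward past each constructor, with exactly the same two-stage distribution for application and pairing (first $\pp{a}{u_1\p\dots\p u_m}\,v \mapsto \pp{a}{u_1 v\p\dots\p u_m v}$, then distributing each $u_i$ over the parallel form of $v$). Your observations that the definition of parallel form absorbs arbitrary nesting of $\parallel$ once parentheses are dropped, and that the freshness side-conditions are discharged by the paper's standing alpha-renaming convention, are both accurate and match the paper's (implicit) treatment.
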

\begin{proof}
% See Appendix. 
% \inappendix
{
By induction on $t$. As a shortcut, if a term $u$ reduces to a
term $u'$ that can be denoted as $u''$ omitting parentheses, we
write $u \mapstopar^{*} u''$. 
%We reason by cases on
% , according to 
%the shape of $t$.
\begin{itemize}
\item  $t$ is a variable $x$. Trivial. 
\item $t=\lambda x\, u$. By induction hypothesis,  
$u\mapstopar^{*} u_{1}\parallel u_{2}\parallel \ldots \parallel u_{n+1}$
and each term $u_{i}$, for $1\leq i\leq n+1$, is a simply typed
$\lambda$-term.
Applying several  
% $n$  times the 
permutations we obtain \[t\mapstopar^{*} \lambda x\, u_{1}\parallel \lambda x\, u_{2}\parallel
  \ldots \parallel \lambda x\, u_{n+1}\]
which is the thesis.

% By induction hypothesis, 
% $u$ reduces to a
%   term $u'$ in parallel form.
% Permuting the $\lam x$ inside the occurrences of $\p$ in $u'$, we
% obtain a term in parallel form,

\item $t=u\, v$. 
By induction hypothesis, 
\[u\mapstopar^{*} u_{1}\parallel u_{2}\parallel \ldots \parallel u_{n+1}\]
\[v\mapstopar^{*} v_{1}\parallel v_{2}\parallel \ldots \parallel v_{m+1}\]
and each term $u_{i}$ and $v_{i}$, for $1\leq i\leq n+1, m+1$, is a
simply typed $\lambda$-term. Applying several
%  $n+m$ times the 
permutations we obtain
\[
\begin{aligned}
t &\mapstopar^{*} (u_{1}\parallel u_{2}\parallel \ldots \parallel  u_{n+1})\, v \\
&\mapstopar^{*}  u_{1}\, v \parallel  u_{2}\, v \parallel
\ldots \parallel  u_{n+1}\, v\\
&\mapstopar^{*} u_{1}\, v_{1} \parallel u_{1}\, v_{2}\parallel
\ldots \parallel u_{1}\, v_{m+1} \parallel \ldots
\\
& \qquad  \, \ldots \parallel u_{n+1}\, v_{1} \parallel  u_{n+1}\,
v_{2} \parallel \ldots
 \parallel  u_{n+1}\, v_{m+1}
\end{aligned}
\]

\item $t=\langle u, v\rangle$. By induction hypothesis, 
$$u\mapstopar^{*} u_{1}\parallel  u_{2}\parallel \ldots \parallel u_{n+1}$$
$$v\mapstopar^{*} v_{1}\parallel v_{2}\parallel \ldots \parallel v_{m+1}$$
and each term $u_{i}$ and $v_{i}$, for $1\leq i\leq n+1, m+1$, is a
simply typed $\lambda$-term. Applying several 
 % $n+m$ times the 
 permutations we
obtain
  \[
    \begin{aligned}
      t &\mapstopar^{*}\langle u_{1}\parallel u_{2}\parallel
      \ldots \parallel  u_{n+1},\, v \rangle\\
      &\mapstopar^{*} \langle u_{1}, v\rangle \parallel \langle u_{2}, v
      \rangle\parallel \ldots \parallel \langle u_{n+1}, v\rangle\\
      &\mapstopar^{*} \langle u_{1}, v_{1}\rangle \parallel
      \langle u_{1}, v_{2}
      \rangle\parallel \ldots \parallel \langle u_{1},
      v_{m+1}\rangle \parallel  \ldots
      \\
      & \qquad \, \ldots
      \parallel \langle u_{n+1},
      v_{1}\rangle \parallel \langle u_{n+1}, v_{2}
      \rangle\parallel \ldots
      \\
      & \qquad \, \ldots \parallel \langle u_{n+1},
      v_{m+1}\rangle
    \end{aligned}
  \]

\item $t=u\, \pi_{i}$. By induction hypothesis,
$$u\mapstopar^{*} u_{1}\parallel  u_{2}\parallel \ldots \parallel u_{n+1}$$
and each term $u_{i}$, for $1\leq i\leq n+1$, is a simply typed
$\lambda$-term. Applying several 
 % $n$ times the 
 permutations we obtain
$$t\mapstopar^{*}  u_{1}\, \pi_{i}\parallel  u_{2}\,
\pi_{i}\parallel \ldots \parallel u_{n+1} \, \pi_{i}.$$

\item $t= \efq{P}{u} $. By induction hypothesis,
$$u\mapstopar^{*} u_{1}\parallel  u_{2}\parallel \ldots \parallel u_{n+1}$$
and each term $u_{i}$, for $1\leq i\leq n+1$, is a simply typed
$\lambda$-term. Applying several
 % $n$ times the
 permutations we obtain
\[t\mapstopar^{*} \efq{P}{u_{1}} \parallel \efq{P}{u _2 } \parallel
\ldots \parallel \efq{P}{u_{n+1} }\]
\end{itemize}
}
\end{proof}

The following, easy lemma shows that the activation phase of our reduction strategy is finite.

\begin{lemma}[Activate!]% \label{activation}
  
Let $t$ be any term in parallel form that does not contain intuitionistic redexes and whose communication redexes have  complexity at most $\tau$.  Then there
 exists a finite sequence of activation reductions that results in a
term $t'$ that contains no redexes, except cross reduction redexes of complexity at most $\tau$.
\end{lemma}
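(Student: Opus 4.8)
The plan is to prove the lemma by induction on the number $n$ of subterms of $t$ of the form $\pp{a}{u_1 \p \dots \p u_m}$ which are \emph{not} active sessions, i.e.\ whose binding channel $a$ is inactive. Each activation reduction merely renames one such inactive channel to a fresh active one, so it strictly decreases $n$; this already guarantees that the activation phase terminates after at most $n$ steps. The base case $n=0$ is immediate: with no inactive parallel binder there is no activation redex, and since $t$ contains no intuitionistic redexes by hypothesis, the only redexes left are cross reduction redexes, whose complexity is at most $\tau$ by assumption.

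For the inductive step, I would select any activation redex $r = \pp{a}{v_1 \p \dots \p v_m}$ in $t$, fire the activation reduction to rename its inactive channel $a$ to a fresh \emph{active} variable $\alpha$, and obtain a term $t'$ with $n-1$ inactive parallel binders. The induction hypothesis applied to $t'$ then yields the thesis directly, \emph{provided} I verify the single nontrivial side condition: that every communication redex in $t'$ still has complexity at most $\tau$. Since the complexity of a communication redex is, by Definition~\ref{def:com_comp}, the value complexity of the heaviest message carried by its channel, the whole step reduces to checking that the renaming $[\alpha/a]$ has not inflated the value complexity of any transmitted message.

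The main obstacle is precisely this value-complexity check. Because $t'$ differs from $t$ only by substituting one channel variable for another, every channel occurrence in $t'$ has the form $(c\,\langle t_1,\dots,t_n\rangle)[\alpha/a]$ with each $t_i$ not a pair, and it suffices to show that the value complexity of each $t_i[\alpha/a]$ equals that of $t_i$. I would establish this by an inner induction on the size of $t_i$: writing $t_i = h\,\sigma$ with $\sigma$ a case-free stack (which is forced, since $t$ has no intuitionistic and hence no permutation redexes), the head $h$ is one of $\lam x\,w$, $\lan q_1,q_2\ran$, $\inj_i(w)$, a variable $x$, or a channel application $d\,w$. In each of these the value complexity is manifestly untouched by renaming a channel variable, noting that for a pair the stack $\sigma$ must be nonempty, which forces value complexity $0$ both before and after the substitution. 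The remaining case $h = v_0[x_1.v_1, x_2.v_2]$ forces $\sigma$ empty (otherwise $t$ would contain a permutation redex), so the value complexity is the maximum of the value complexities of $v_1[\alpha/a]$ and $v_2[\alpha/a]$, which equal those of $v_1$ and $v_2$ by the inner induction hypothesis. This closes the value-complexity check, and with it the inductive step and the lemma.
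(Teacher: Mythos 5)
Your proof is correct and follows essentially the same route as the paper's: the same outer induction on the number of subterms $\pp{a}{u_1 \p \dots \p u_m}$ that are not active sessions, with the inductive step reduced to checking that the renaming $[\alpha/a]$ preserves the value complexity of every message, established by the same inner induction on term size via the decomposition of each component $t_i$ into a head applied to a case-free stack (including the observations that a pair head forces a nonempty stack and that a case-distinction head forces an empty stack, since $t$ has no permutation redexes). No gaps.
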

\begin{proof} 
% See Appendix.
% \inappendix
{
The proof is by induction on the number $n$ of subterms
of the form $\pp{a}{u_1 \p \dots \p u_m}$ of $t$ which are not active sessions. If there are no activation redexes in $t$, the statement
trivially holds. Assume there is
at least one activation redex  $r=  \pp{a}{v_1 \p \dots \p v_m}$. 
% $ p \p_a q
We apply an activation reduction to
$r$ and obtain a term $r'$ with $n-1$ subterms of the form  
% $u \p _a v$
$\pp{a}{u_1 \p \dots \p u_m}$ which are not active sessions. By the
induction hypothesis to $r'$, which immediately yields the thesis, we
are left to verify that all communication redexes of $r'$ have
complexity at most $\tau$.

\noindent  For this purpose, let $c$ be any channel variable which is bound in $r'$.  Since $r'$ is obtained from $r$ just by renaming the non-active bound channel variable $a$ to an active one $\alpha$, every occurrence of $c$ in $r'$ is of the form $(c t)[\alpha/a]$ for some subterm $ct$ of $r$. Thus  $ct[\alpha/a] =c[\alpha/a] \langle t_1[\alpha/a], \dots, t_n[\alpha/a] \rangle $, where each $t_{i}$ is not a pair. It is enough to  show that the value complexity of $t_i [\alpha/a] $ is exactly the value complexity of $t_i$. We proceed by induction on the size of $t_{i}$. We can write   $t_{i}= r \, \sigma $, where $\sigma$ is
  a case-free  stack. If $r$ is of the form $\lam x w $, $\lan q_1 , q_2 \ran $,
$\inj _i (w)$, $x$, $d w$, with $d$ channel variable, then the value complexity of $t_i [\alpha/a] $ is the same as that of $t_{i}$ (note that if $r=\lan q_{1}, q_{2}\ran$, then $\sigma$ is not empty). If $r= v_{0}[x_{1}.v_{1}, x_{2}.v_{2}]$, then $\sigma$ is empty, otherwise $s$ would contain a permutation redex. Therefore, the value complexity of $t_{i}[\alpha/a]$ is the maximum among the value complexities of $v_{1}[\alpha/a]$ and $v_{2}[\alpha/a]$. By induction hypothesis, their value complexities are respectively those of $v_{1}$ and $v_{2}$ , hence the value complexity of $t_{i}[\alpha/a]$ is the same as that of $t_{i}$, which concludes the proof.
}
\end{proof}

We shall need a simple property of the value complexity notion.

\begin{lemma}[Why Not 0] % \label{lem:stacks_and_zeros}
  Let $u$ be any simply typed $\lambda$-term and $\sigma $
be a non-empty case-free stack. Then
the value complexity of $u\sigma $ is $0$. 
\end{lemma}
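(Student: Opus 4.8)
The plan is to prove the statement by induction on the size of the simply typed $\lambda$-term $u$, keeping the non-empty case-free stack $\sigma$ universally quantified so that the induction hypothesis applies to strictly smaller terms paired with possibly different stacks. The whole argument is driven by reading off the definition of value complexity (Definition~\ref{def:value_compl}): since it proceeds by testing its four clauses in order, I only need to determine which clause governs $u\sigma$.

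First I would observe that, because $\sigma$ is non-empty, the outermost constructor of $u\sigma$ is an elimination; in particular $u\sigma$ is never syntactically a $\lambda$-abstraction, an injection, or a pair. Hence neither the first nor the second clause of Definition~\ref{def:value_compl} can apply to $u\sigma$, and its value complexity is governed either by the third clause (a case distinction followed by a case-free stack) or by the catch-all fourth clause, which yields $0$ immediately. So the only situation that requires work is when $u\sigma$ has the form $t[x.w_1, y.w_2]\,\tau$ with $\tau$ case-free.

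The key step is then to trace this case distinction back into $u$. Since $\sigma$ is case-free, the displayed case in $u\sigma$ cannot be contributed by $\sigma$; it must already occur in $u$, so that $u = t[x.w_1,y.w_2]\,\rho$ for some case-free stack $\rho$, with $\tau = \rho\sigma$. Now $w_1$ and $w_2$ are subterms of $u$ of strictly smaller size, while $\rho\sigma$ is again non-empty (as $\sigma$ is) and case-free. Applying the induction hypothesis to $w_1$ and $w_2$ with the stack $\rho\sigma$ shows that both $w_1\,\rho\sigma$ and $w_2\,\rho\sigma$ have value complexity $0$; by the third clause of Definition~\ref{def:value_compl}, the value complexity of $u\sigma$ is the maximum of these two, hence $0$.

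I do not expect a genuine obstacle here: the lemma is elementary. The only point demanding care is the bookkeeping of the third clause, namely verifying that the first case distinction appearing in the stack of $u\sigma$ must be inherited from $u$ rather than introduced by $\sigma$, which is exactly where the case-freeness hypothesis on $\sigma$ is used. Once this is pinned down, the induction closes without further subtleties.
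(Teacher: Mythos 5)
Your proof is correct and follows essentially the same route as the paper's: induction on the size of $u$, with the only non-trivial case being $u = t[x.w_1,y.w_2]\rho$ ($\rho$ case-free), where the induction hypothesis applied to $w_1,w_2$ with the non-empty case-free stack $\rho\sigma$ closes the argument via the third clause of the value-complexity definition. The only cosmetic difference is organizational — the paper cases on the head shape of $u$ (dispatching $\lambda x\,w$, $\inj_i(w)$, pairs, variables and channel applications to the catch-all clause, and ruling out $w\,\exfalso_P$ by typing), whereas you case on which definitional clause governs $u\sigma$ and observe that any case distinction must be inherited from $u$ since $\sigma$ is case-free; these are the same argument.
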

\begin{proof}  
% See Appendix.
% \inappendix
{
By induction on the size of $u$.
  \begin{itemize}
  \item If $u$ is of the form $ (\lam x. w)\rho$, $\inj_i (w)\rho$, $
\langle v_1 , v_2 \rangle\rho $, $a\rho$ or $x\rho $, where $\rho$ is case-free,
then $u\sigma $ has value complexity $0$.

\item If $u$ is of the form $w\, \exfalso_P$, then $P$ is atomic, thus $\sigma$ must be empty,
contrary to the assumptions.

\item If $u$ is of the form $v_0[z_1. v_{1},z_2.  v_{2}]\rho$, with $\rho$ case-free, then
by induction hypothesis the value complexities of
$v_{1}\rho \sigma$ and $v_{2}\rho \sigma$ are $0$ and since the value complexity of
$u\sigma$ is the maximum among them, $u\sigma$ has value complexity $0$.
  \end{itemize}
}
\end{proof}

As usual, in order to formally study redex contraction, we must consider simple substitutions that just replace some occurrences of a term with another, allowing capture of variables. In practice, it will always be clear from the context which of these occurrences will be replaced.

\begin{definition}[Simple Replacement] By $s\{t / u \}$
we denote a term obtained from $s$ be replacing some occurrences of a term $u$ with a term $t$ of the same type of $u$, possibly causing capture
of variables.
\end{definition}

We now show an important property of the value complexity notion:  the value complexity of $ w \{v/s\} $ either remains at most as it was before the substitution or becomes exactly the value complexity of $v$.

\begin{lemma}[The Change of Value]% \label{lem:change_of_value}

Let $w, s,v$ be simply typed $\lambda$-terms with value complexity
respectively $\theta , \tau, \tau'$.  Then the value complexity of
$ w \{v/s\} $ is either at most $\theta$ or equal to  $\tau'$. Moreover, if $\tau '
\leq \tau $, then the value complexity of $ w \{v/s\} $ is at most $\theta
$.
\end{lemma}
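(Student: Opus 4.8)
The plan is to argue by induction on the size of $w$, with a case split on the clause of Definition~\ref{def:value_compl} that fixes the value complexity $\theta$ of $w$. In each case the simple replacement $w\{v/s\}$ behaves in one of two ways: either it acts strictly inside $w$, so that the outermost constructor of $w$ is preserved, or it replaces $w$ itself (or a head-prefix of $w$) by $v$, so that $v$ surfaces at the head. I would keep the disjunction ``$\le\theta$ or $=\tau'$'' as the invariant carried through the induction, rather than aiming for a single inequality, since it is precisely this disjunction that survives nested substitutions. Whenever $v$ surfaces as the whole term the value complexity is exactly $\tau'$; and in the subcase where $s$ is literally $w$ one has $\theta=\tau$, so the hypothesis $\tau'\le\tau$ gives $\tau'\le\theta$ at once, which is also what settles the ``moreover'' clause.

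First I would dispatch the value-bearing heads. If $w=\lambda x\,u$ or $w=\inj_i(u)$ and the substitution stays inside, then $w\{v/s\}$ retains the same type $T$, hence the same value complexity $\theta$. If $w=\langle q_1,q_2\rangle$ and the substitution is pushed into the components, then $w\{v/s\}=\langle q_1\{v/s\},q_2\{v/s\}\rangle$ with $\theta$ the maximum of the value complexities of $q_1$ and $q_2$; the induction hypothesis applied to each $q_i\{v/s\}$ yields a value complexity at most that of $q_i$ or equal to $\tau'$, so the maximum is at most $\theta$ or equal to $\tau'$, and under $\tau'\le\tau$ the induction hypothesis forces each component down to at most $\theta$. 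The variable case is immediate, and for all of these heads the only way $v$ can appear at the top is $s=w$, which reduces to the generic situation described above.

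The delicate part is the case head $w=v_0[z_1.v_1,z_2.v_2]\,\rho$ with $\rho$ case-free, together with the catch-all $w=r\,\rho$ where $\rho$ is a non-empty case-free stack and $r$ is a $\lambda$-abstraction, a pair, an injection, a variable, or a channel application $a\,u$. In both, Lemma~\ref{lem:stacks_and_zeros} is the workhorse: as soon as a non-empty case-free stack survives the substitution, the whole term has value complexity $0\le\theta$ and there is nothing left to prove. For the case head I would split on whether the replacement stays inside $v_0[z_1.v_1,z_2.v_2]\,\rho$ or exposes $v$ by matching a head-prefix. In the first subcase, if $\rho$ is non-empty both branches $v_i\{v/s\}\,\rho\{v/s\}$ are killed to $0$ by Lemma~\ref{lem:stacks_and_zeros}, while if $\rho$ is empty the value complexity is the maximum over $v_1\{v/s\}$ and $v_2\{v/s\}$ and the induction hypothesis closes it. In the second subcase $w\{v/s\}=v\,\rho_j\cdots\rho_n$: a non-empty residual stack is again $0$ by Lemma~\ref{lem:stacks_and_zeros}, and an empty one means $s=w$, giving value complexity $\tau'$. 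The catch-all is entirely analogous, with $\theta=0$ throughout.

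I expect the main obstacle to be precisely the bookkeeping in these last two cases: one must verify that replacing a head-prefix of a stacked term exposes $v$ with a \emph{residual} stack that is still non-empty and case-free, so that Lemma~\ref{lem:stacks_and_zeros} genuinely applies, and that the only escape from value complexity $0$, other than the trivial ``$s=w$'' branch, is ruled out. Everything else is a routine recursion on subterms, driven directly by the recursive shape of Definition~\ref{def:value_compl}.
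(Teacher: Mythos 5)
Your proposal is correct and follows essentially the same route as the paper's own proof: induction on the size of $w$ with a case split on its head, carrying the disjunction ``at most $\theta$ or equal to $\tau'$'' through the induction, using Lemma~\ref{lem:stacks_and_zeros} to collapse any term with a non-empty case-free residual stack to value complexity $0\le\theta$, and observing that when $v$ surfaces as the whole term one has $s=w$, hence $\theta=\tau$, which settles the ``moreover'' clause. The delicate bookkeeping you flag in the case-distinction and catch-all heads is exactly where the paper's proof spends its effort, and your treatment of it is the same.
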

\begin{proof}
% See Appendix.
% \inappendix
{
 By induction on the size of $w$ and by cases according to its possible shapes.
\begin{itemize}
\item $w  \{v/s\} = x  \, \{v/s\}$. We have two cases.
  \begin{itemize}
  \item $s = x$. Then the value complexity of $w \{v/s\}=v$ is $\tau'$. Moreover, if
$\tau ' \leq \tau $, since $w=x=s$, we have $\theta = \tau$, thus $\tau'\leq \theta$. 

  \item $s \neq x$.  The value complexity of $w \{v/s\}$ is $\theta$
and we are done.
  \end{itemize}

\item  $w  \{v/s\} = \lam x u \, \{v/s\}$. We have two cases.
  \begin{itemize}
  \item $\lam x u \, \{v/s\} = \lam x (u \{v/s\})$. Then the
    value
complexity of $w \{v/s\}$ is $\theta$ and we are done.
\item $\lam x u \, \{v/s\} = v$. Then the value complexity of $w \{v/s\}$
is $\tau'$. Moreover, if $\tau ' \leq \tau $, since $w= \lam x u =s$, we have
$\theta = \tau$, thus $\tau'\leq \theta$. 
\end{itemize}

\item $ w \{v/s\} = \lan q_1 , q_2 \ran \, \{v/s\} $. We have two cases.

  \begin{itemize}
  \item $\langle q_1 , q_2 \rangle \, \{v/s\} = \langle q_1 \{v/s\} ,
q_2 \{v/s\} \rangle$.  Then by induction hypothesis, the value
complexities of $q_1 \{v/s\}$ and $q_2 \{v/s\}$
are at most $\theta$ or equal to $\tau '$.
Since  the value complexity of $w \{v/s\}$ is the maximum among the
value complexities of $q_1 \{v/s\}$ and $q_2 \{v/s\}$, we are done. Moreover, if $\tau ' \leq \tau $, by induction hypothesis, the value
complexities of $q_1 \{v/s\}$ and $q_2 \{v/s\}$ are at most $\theta$. Hence the value complexity of $w \{v/s\}$ is
at most  $\theta$ and we are done again.
  \item $\langle q_1 , q_2 \rangle \, \{v/s\} = v$.  Then the value
complexity of $w \{v/s\}$ is $\tau'$.  Moreover, if $\tau ' \leq \tau $, since
$w= \langle q_1 , q_2 \rangle =s$, we have $\theta = \tau$, thus $\tau'\leq \theta$. 
\end{itemize}

\item $w \{v/s\} = \inj_i (u) \, \{v/s\}$. We have two cases.
  \begin{itemize}
  \item $\inj_i (u) \, \{v/s\} = \inj_i (u\{v/s\})$. Then the value
complexity of $w \{v/s\}$ is $\theta$ and we are done.
  \item $\inj_i (u) \, \{v/s\} = v$. Then the value complexity of $w
\{v/s\}$ is $\tau'$. Moreover, if $\tau ' \leq \tau $, since
$w= \inj_i (u) =s$,  we have $\theta = \tau$, thus $\tau'\leq \theta$. 
\end{itemize}

\item
\begin{itemize} 
\item $w\{v/s\}=(v_0[z_1.  v_{1}, z_2. v_{2}] ) \rho \{v/s\} =$\\$ v_0 \{v/s\}
[z_1. v_{1}\{v/s\},z_2.  v_{2}\{v/s\}] (\rho \{v/s\})$, where $\rho$ is a case-free stack.  If $\rho$ is not empty, then by Lemma \ref{lem:stacks_and_zeros}, $v_{1}\{v/s\}\rho \{v/s\}$ and $v_{2}\{v/s\}\rho \{v/s\}$ have value complexity $0$, so $w\{v/s\}$ has value complexity $0\leq \theta$ and we are done. So assume  $\rho$ is empty. By
induction hypothesis applied to $v_{1}\{v/s\}$ and
$v_2 \{v/s\}$, the value complexity of $w \{v/s\} $ is at most
$\theta$ or equal to $\tau'$ and we are done. Moreover, if $\tau '
\leq \tau $, then by inductive hypothesis, the value
complexity of $ v_i \{v/s\} $ for $i \in \{1,2\}$ is at most
$\theta $. Hence, the value complexity of \\$ (v_0[z_1. v_{1},z_2.
v_{2}] )\rho \{v/s\}$ is at most $\theta$ and we are done.

\item  $w\{v/s\}=(v_0[z_1.
v_{1},z_2. v_{2}]\rho)
 \{v/s\} = v \rho_j \{v/s\} \dots \rho_n\{v/s\}$, where $\rho = \rho_1 \dots \rho_n$ is a case-free stack and $1 \leq
j \leq n$. If $\rho_j  \dots \rho_n$ is not empty, then by Lemma \ref{lem:stacks_and_zeros}, the value complexity of $w\{v/s\}$ is $0\leq \theta$, and we are done.  So assume $\rho_j  \dots \rho_n$ is empty.  Then the value complexity of $w\{v/s\}$ is $\tau'$. Moreover, if $\tau'\leq \tau$, since it must be $s=v_0[z_1.
v_{1},z_2. v_{2}]$, we have that the value complexity of $w\{v/s\}=v$ is $\tau'\leq \tau=\theta$.

\end{itemize}

\item In all other cases,  $ w \{v/s\} = (r \, \rho ) \{v/s\}$ where $\rho$ is
  a case-free non-empty stack and $r$ is of the form $\lam x u $, $\lan q_1 , q_2 \ran $,
$\inj _i (u)$, $x$, or $au$. We distinguish three  cases.
  \begin{itemize}
  \item $( r \, \rho) \{v/s\} = r\{v/s\} \rho  \{v/s\}$ and $r\neq
s$. Then the value complexity of $w \{v/s\}$ is $0 \leq \theta $ and we are done.
  \item  $(r \, \rho ) \{v/s\} = v
\rho_j\{v/s\} \dots \rho _n\{v/s\}  $, with $\rho =
\rho_1 \dots \rho _n$ and $1 \leq j \leq  n $.  Then  by Lemma \ref{lem:stacks_and_zeros} the value
complexity of $w \{v/s\}$ is $0 \leq \theta $ and we are done.

  \item $r \rho \{v/s\} = v $.  Then the value complexity of $w
\{v/s\}$ is $\tau '$. Moreover, if $\tau ' \leq \tau $, since
$w= r \rho  =s$, we have $\theta = \tau$, thus $\tau'\leq \theta$. 
  \end{itemize}
\end{itemize}
}
\end{proof}

The following lemma has the aim of studying mostly message passing and contraction of application and projection redexes.

\begin{lemma}[Replace!]% \label{lem:replace}
  
Let $u$ be a term in parallel form, $v$, $s$ be any simply typed $\lambda$-terms,  $\tau$ be the
value complexity of $v$ and $\tau'$ be the maximum among the complexities of the channel occurrences in $v$. Then every  redex in $u\{v/s\}$ it is either (i) already in
$v$, (ii) of the form $r\{v/s\}$ and has complexity smaller than or
equal to the complexity of some redex $r$ of $u$, or (iii) has complexity  $\tau$   or is a communication redex of complexity at most $\tau'$.
\end{lemma}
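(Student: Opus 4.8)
The plan is to prove the statement by induction on the size of $u$, analysing $u$ according to the shape of its head, and to \emph{strengthen} the claim so that the induction hypothesis simultaneously controls \emph{channel occurrences}. Concretely, I would establish that every redex \emph{and} every channel occurrence $a\,w$ in $u\{v/s\}$ is either (i) already present in $v$, (ii) of the form $r\{v/s\}$ (respectively $a\,w\{v/s\}$) with complexity bounded by that of a redex $r$ (respectively channel occurrence $a\,w$) of $u$, or (iii) has complexity $\tau$ or $\tau'$, or is a communication redex of complexity at most $\tau'$. This strengthening is not cosmetic: the complexity of a communication redex is defined through the value complexities of its channel occurrences, so the parallel case cannot close unless the invariant already tracks how those occurrences evolve under the replacement.

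The organising observation is that any simply typed $\lambda$-term is $h\,\sigma$ with $h$ a head of one of the forms $\lambda x\,t$, $\langle t_1,t_2\rangle$, $\inj_i(t)$, a case split $w_0[z_1.w_1,z_2.w_2]$, a variable $x$, or a channel application $a\,t$, and $\sigma$ a case-free stack. Applying $\{v/s\}$ there are exactly two possibilities. Either the displayed head occurrence is the one being replaced — so $h\,\sigma\{v/s\}$ becomes $v$ followed by a residual stack $\sigma_i\{v/s\}\cdots\sigma_n\{v/s\}$ — or the replacement happens strictly inside the subterms and inside $\sigma$, leaving the head intact. In the latter case I apply the induction hypothesis to the smaller subterms and need only inspect the possible new \emph{head} redex, using Lemma~\ref{lem:change_of_value} to see that the value complexity of each substituted subterm is either at most its original value complexity (case (ii)) or exactly $\tau$ (case (iii)). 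In the former case the sole new redex is $v$ meeting the first stack entry; when $v$ is a value this is an application, injection, projection, or case-permutation redex whose complexity is, by construction, the value complexity of $v$, that is $\tau$, yielding (iii). Lemma~\ref{lem:stacks_and_zeros} dispatches the subcase where a non-empty case-free stack trails a value, since the value complexity then collapses to $0$.

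The pair and case-split cases carry the real weight. For $\langle t_1,t_2\rangle\,\sigma\{v/s\}$ the head redex is controlled by Lemma~\ref{lem:change_of_value}: each $t_i\{v/s\}$ has value complexity at most that of $t_i$ or exactly $\tau$, and as the value complexity of a pair is the maximum of its components', the new redex falls under (ii) or (iii). The case-split head $w_0[z_1.w_1,z_2.w_2]\,\sigma$ is the most delicate: besides the potential head case-permutation redex, one must bound the value complexities of $w_0\{v/s\}$, $w_1\{v/s\}$, $w_2\{v/s\}$ at once and argue that the resulting permutation redex either inherits its complexity from the original redex $w_0[z_1.w_1,z_2.w_2]\,\sigma_1$ or is capped at $\tau$; here Lemma~\ref{lem:stacks_and_zeros} is again invoked whenever a case-free continuation remains. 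Finally, for $\pp{a}{t_1\p\dots\p t_m}\{v/s\}$ the induction hypothesis handles each $t_i\{v/s\}$, and the only additional redex is the communication redex $\pp{a}{t_1\{v/s\}\p\dots\p t_m\{v/s\}}$, whose complexity equals the greatest complexity among channel occurrences of $a$ in the $t_i\{v/s\}$ — confined by the strengthened hypothesis to $\tau$, to $\tau'$, or to the complexity of the original parallel redex.

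The main obstacle I anticipate is exactly this bookkeeping in the pair and case cases combined with the channel-occurrence strengthening: one must interleave Lemma~\ref{lem:change_of_value} (substituted subterms never raise value complexity except by jumping precisely to $\tau$) with Lemma~\ref{lem:stacks_and_zeros} (complexities vanish whenever a non-empty case-free stack trails a value or a pair), and do so uniformly enough that the invariant $(*)$ — covering redexes \emph{and} channel occurrences — is reestablished at each node. Calibrating this invariant to be strong enough to feed the communication-redex case, yet not so strong that the variable and channel leaf cases break, is the crux of the argument.
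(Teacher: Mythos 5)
Your proposal is correct and follows essentially the same route as the paper's own proof: the identical strengthening of the statement to an invariant $(*)$ that tracks channel occurrences $a\,w$ alongside redexes, the same induction on the size of $u$ with a case split on the head (abstraction, pair, injection, case distinction, variable, channel application, parallel operator) followed by a case-free stack, distinguishing whether the replaced occurrence is the head itself (the only new redex being $v$ against the residual stack, of complexity $\tau$) or strictly internal. The paper deploys Lemma~\ref{lem:change_of_value} and Lemma~\ref{lem:stacks_and_zeros} in exactly the places you indicate, including the final parallel case where the communication redex's complexity is bounded by $\tau$, $\tau'$, or the original channel occurrences, so there is nothing to add.
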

\begin{proof}
% See Appendix.
% \inappendix
{
We prove the following stronger statement.
\smallskip 

\noindent $(*)$ Every  redex and channel occurrence in $u\{v/s\}$ it is either (i) already in
$v$, (ii) of the form
$r\{v/s\}$ or $aw\{v/s\}$ and has complexity smaller than or
equal to the complexity of some redex $r$ or channel occurrence $aw$  of $u$, or (iii) has complexity
$\tau$ or $\tau'$ or is a communication redex of complexity at most $\tau'$.
\smallskip

We reason by induction on the size and by cases on the possible  shapes of the term $u$. 
\begin{itemize}

\item $(\lambda x \, t )\, \sigma\, \{v/s\}$, where $\sigma=
\sigma_{1}\ldots \sigma_{n}$ is any case-free stack. By induction hypothesis,
$(*)$ holds for $t \{v/s\}$ and $\sigma_i\{v/s\}$ where $1\leq i \leq
n$. If $ (\lambda x \, t )\, \sigma \{v/s\} = (\lambda x \,
t\{v/s\})\, (\sigma \{v/s\})$, all the redexes and channel occurrences
that we have to check are either in $t \{v/s\}, \sigma \{v/s\}$ or 
possibly, the head redex, thus the thesis holds. If $ (\lambda x \, t )\,
\sigma \{v/s\}= v\, (\sigma_{i}\{v/s\})\ldots (\sigma_{n}\{v/s\})$,
then $v\, (\sigma_{i}\{v/s\})$ could be a new intuitionistic redex,
when $v=\lambda y\, w$, $v=\langle w_{1}, w_{2}\rangle$, $v=\inj_i
(w)$ or $v= w_0[ y_1. w_{1},y_2.  w_{2}]$. But the complexity of such
a redex is equal to $\tau$.

\item $\langle t_{1}, t_{2}\rangle\, \sigma\, \{v/s\}$, where $\sigma=
\sigma_{1}\ldots \sigma_{n}$ is any case-free stack. By induction
hypothesis, $(*)$ holds for $t_{i} \{v/s\}$ and $\sigma_i\{v/s\}$
where $1\leq i \leq n$. If $ \langle t_{1}, t_{2}\rangle\, \sigma
\{v/s\} = \langle t_{1}\{v/s\}, t_{2}\{v/s\} \rangle\, (\sigma
\{v/s\})$, all the redexes and channel occurrences that we have to
check are in $t_{i}\{v/s\},\sigma \{v/s\}$ or, possibly, the head
redex. The former are dealt with using the inductive hypothesis. As
for the latter, by Lemma~\ref{lem:change_of_value}, the value complexity of
$t_i \{v/s\}$ for $i \in \{1,2\}$ must be at most the value complexity
of $t_i$ or exactly $\tau$, thus (ii) or (iii) holds. If $\langle t_{1},
t_{2}\rangle\, \sigma \{v/s\}= v\, (\sigma_{i}\{v/s\})\ldots
(\sigma_{n}\{v/s\})$, then $v\, (\sigma_{i}\{v/s\})$ could be a new
intuitionistic redex, when $v=\lambda y\, w$, $v=\langle w_{1},
w_{2}\rangle$, $v=\inj_i (w)$ or $v= w_0[ y_1. w_{1},y_2.  w_{2}]$.
But the complexity of such a redex is equal to $\tau$.

\item $\inj _i ( t )\, \sigma\, \{v/s\}$, where $\sigma=
\sigma_{1}\ldots \sigma_{n}$ is any case-free stack.  By induction
hypothesis, $(*)$ holds for $t \{v/s\}$ and $\sigma_j\{v/s\}$ with
$1\leq j \leq n$.  If $ \inj_i (t)\, \sigma \{v/s\} = (\inj_i (t
\{v/s\}))\, (\sigma \{v/s\})$, all the redexes and channel occurrences
that we have to check are either in $t\{v/s \}$ or in $\sigma \{v/s\}$
or, possibly, the head redex, thus the thesis holds.  If $ \inj _i ( t
)\, \sigma\, \{v/s\}= v\, (\sigma_{i}\{v/s\})\ldots
(\sigma_{n}\{v/s\})$, then $v\, (\sigma_{i}\{v/s\})$ could be a new
intuitionistic redex, when $v=\lambda y\, w$, $v=\langle w_{1},
w_{2}\rangle$, $v=\inj_i (w)$ or $v= w_0[ y_1. w_{1},y_2.
w_{2}]$. But the complexity of such a redex is equal to $\tau$.

\item $w_0[z_1.  w_{1}, z_2. w_{2}] \sigma \{v/s\}$, where $\sigma$ is
any case-free stack. By induction hypothesis, $(*)$ holds for $w_{0}
\{v/s\}$, $w_{1} \{v/s\}$, $w_{2} \{v/s\}$ and $\sigma_i\{v/s\}$ for
$1\leq i \leq n$. If 
\begin{align*}
& w_0[z_1.  w_{1}, z_2. w_{2}] \sigma \{v/s\} =\\ &
w_0 \{v/s\} [z_1. w_{1}\{v/s\},z_2.  w_{2}\{v/s\}] (\rho \{v/s\})
\end{align*}
we
first observe that by Lemma~\ref{lem:change_of_value}, the value complexity
of $w_{0} \{v/s\}$ is at most that of $w_{0}$ or exactly $\tau$,
therefore the possible injection or case permutation redex $$w_0
\{v/s\} [z_1. w_{1}\{v/s\},z_2.  w_{2}\{v/s\}]$$ satisfies the thesis.
Again by Lemma~\ref{lem:change_of_value}, the value complexities of $w_{1}
\{v/s\}$ and $w_{2} \{v/s\}$ are respectively at most that of $w_{1}$
and $w_{2}$ or exactly $\tau$. Therefore the complexity of the
possible case permutation redex \\$(w_0[z_1.  w_{1}\{v/s\},
z_2. w_{2}\{v/s\}])\sigma_{1}\{v/s\}$ is either $\tau$, and we are
done, or at most the value complexity of one among $w_{1} , w_{2}$,
thus at most the value complexity of the case permutation redex
$(w_0[z_1.  w_{1}, z_2. w_{2}])\sigma_{1}$ and we are done.

  If $w_0[z_1.  w_{1}, z_2. w_{2}] \sigma \{v/s\} = v\,
(\sigma_{i}\{v/s\})\ldots (\sigma_{n}\{v/s\})$, then there could be a
new intuitionistic redex, when $v=\lambda y\, q$, $v=\langle q_{1},
q_{2}\rangle$, $v=\inj_i (q)$ or $v= q_0[ y_1. q_{1},y_2.  q_{2}]$.
But the complexity of such a redex is $\tau$.

\item $ x\, \sigma \{v/s\}$, where $x$ is any simply typed variable
and $\sigma= \sigma_{1}\ldots \sigma_{n}$ is any case-free stack. By induction
hypothesis, $(*)$ holds for $\sigma_i\{v/s\}$ where $1\leq i \leq
n$. If $ x\, \sigma \{v/s\} = x\, (\sigma \{v/s\})$, all its redexes
and channel occurrences are in $\sigma \{v/s\}$, thus the thesis
holds.  If $ x\, \sigma \{v/s\}= v\, (\sigma_{i}\{v/s\})\ldots
(\sigma_{n}\{v/s\})$, then $v\, (\sigma_{i}\{v/s\})$ could be an
intuitionistic redex, when $v=\lambda y\, w$, $v=\langle w_{1},
w_{2}\rangle$, $v=\inj_i (w)$ or $v= w_0[ y_1. w_{1},y_2.
w_{2}]$. But the complexity of such a redex is equal to $\tau$.

\item $ a\, t\, \sigma \{v/s\}$, where $a$ is a channel variable, $t$
a term and $\sigma= \sigma_{1}\ldots \sigma_{n}$ is any case-free stack.  By
induction hypothesis, $(*)$ holds for $t\{v/s\}, \sigma_i\{v/s\}$ where $1\leq i
\leq n$.

If $ a\, t\, \sigma \{v/s\}= v\, (\sigma_{i}\{v/s\})\ldots
(\sigma_{n}\{v/s\})$, then $v\, (\sigma_{i}\{v/s\})$ could be an
intuitionistic redex, when $v=\lambda y\, w$, $v=\langle w_{1},
w_{2}\rangle$, $v=\inj_i (w)$ or $v= w_0[ y_1. w_{1},y_2.
w_{2}]$. But the complexity of such a redex is equal to $\tau$.
 
If $ a\, t\, \sigma \{v/s\} = a\, (t\{v/s\}) (\sigma \{v/s\})$, in
order to verify the thesis it is enough to check the complexity of the
channel occurrence $a\, (t\{v/s\})$. By Lemma~\ref{lem:change_of_value}, the
value complexity of $t\{v/s\}$ is at most the value complexity of
$t$ or exactly  $\tau$, thus (ii) or (iii) holds.

% (t_1 \p _a t_2 )

\item $\pp{a}{t_1 \p \dots \p t_m} 
  \{v/s\}$. By induction hypothesis, $(*)$ holds
for $t_i\{v/s\}$ where $1\leq i \leq m$.  The only redex in $\pp{a}{t_1 \p \dots \p t_m} 
  \{v/s\}$ and not in some  $t_i\{v/s\}$ can be
$\pp{a}{t_1 \{v/s\} \p \dots \p t_m  \{v/s\}} $ itself. But the complexity of such
redex equals the maximal complexity of the channel occurrences of the
form $aw$ occurring in some  $t_i\{v/s\}$, hence it is $\tau$, at most $\tau'$ or equal to the complexity of $\pp{a}{t_1 \p \dots \p t_m} $. \end{itemize}
}
\end{proof}

Here we study what happens after contracting an injection redex.

\begin{lemma}[Eliminate the Case!]% \label{lem:eliminate_the_case}
  Let $u$ be a
term in parallel form.  Then for any redex $r$ in $u\{ w_i [t / x_i] /
\inj _i (t)[x_1.w_1, x_2.w_2] \}$ of complexity $\theta$, either $\inj _i (t)[x_1.w_1, x_2.w_2]$ has complexity greater than $\theta$; or
 there is a
redex in $u$ of complexity $\theta$ which belongs to the same group as $r$ or is a case permutation redex.
\end{lemma}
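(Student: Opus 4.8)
The plan is to set $v = w_i[t/x_i]$ and $s = \inj_i(t)[x_1.w_1,x_2.w_2]$ and to analyse the simple replacement $u\{v/s\}$ by induction on the size of $u$, reasoning by cases on the shape of $u$, in exact parallel with Lemma~\ref{lem:replace}. As in that lemma, the induction will not close for the statement as written: in the parallel-operator case the only genuinely new redex is the communication redex $\pp{a}{t_1\{v/s\}\p\dots\p t_m\{v/s\}}$, whose complexity is governed by the complexities of the channel occurrences in the $t_i\{v/s\}$. I would therefore first strengthen the claim to a statement $(*)$ that, besides the conclusion on redexes, also asserts that every channel occurrence in $u\{v/s\}$ either leaves the injection redex $s$ with strictly greater complexity, or is matched by an occurrence of the same channel in $u$ of greater or equal complexity.

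The crucial quantitative input is how the value complexity of $v = w_i[t/x_i]$ compares with the complexity of the injection redex $s$. By Lemma~\ref{lem:change_of_value} applied to the substitution $w_i[t/x_i]$, this value complexity is either at most the value complexity of $w_i$, or exactly the value complexity of $t$. In the first case it is at most the value complexity of $\inj_i(t)[x_1.w_1,x_2.w_2]$, since by Definition~\ref{def:value_compl} the latter is the maximum of the value complexities of $w_1$ and $w_2$; in the second case, by Proposition~\ref{prop:two!} it is at most the complexity of the type of $t$, which is a proper subformula of the type of $\inj_i(t)$ and hence strictly smaller than the complexity of the injection redex $s$. This dichotomy --- ``no worse than $w_i$'' versus ``strictly below $s$'' --- is precisely what feeds the two disjuncts of the conclusion, and I expect it to be invoked in essentially every case of the induction.

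With this in hand the induction is routine whenever $s$ cannot be a prefix of $u$ --- which, given $s = \inj_i(t)[x_1.w_1,x_2.w_2]$, rules out $u$ of the form $(\lam x\,t')\sigma$, $\inj_i(t')\sigma$, $x\sigma$ or $a\,t'\,\sigma$: there the substitution preserves the outermost constructor, so every new redex lies in a substituted subterm and is dealt with by the induction hypothesis, or is a head redex whose complexity is controlled by Lemma~\ref{lem:change_of_value} together with the dichotomy above. The genuinely delicate case, and the one I expect to be the main obstacle, is $u = v_0[z_1.v_1,z_2.v_2]\sigma$, precisely because $s$ is itself a case distinction on an injection and can therefore occur as a prefix of $u$. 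Two things can then happen: the substitution can create a new case permutation redex $(v_0[z_1.v_1\{v/s\},z_2.v_2\{v/s\}])\sigma_1\{v/s\}$, or, when $s$ is a prefix, a new intuitionistic head redex of the form $v\,(\sigma_i\{v/s\})$. In both situations I would split according to the two alternatives of Lemma~\ref{lem:change_of_value}: in the first alternative the complexity is at most that of the original redex $(v_0[z_1.v_1,z_2.v_2])\sigma_1$, hence covered by the ``same group or case permutation'' clause, while in the second it drops strictly below the complexity of $s$ via Proposition~\ref{prop:two!}. The final parallel-operator case is then closed exactly as in Lemma~\ref{lem:replace}, using the strengthened channel-occurrence clause of $(*)$.
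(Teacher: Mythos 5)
Your proposal matches the paper's proof essentially step for step: the paper likewise strengthens the claim to a statement $(*)$ with exactly your channel-occurrence clause, inducts on the size of $u$ by cases on its shape in parallel with Lemma~\ref{lem:replace}, and resolves every nontrivial case through the same dichotomy --- by Lemma~\ref{lem:change_of_value} the value complexity of $w_i[t/x_i]$ is either at most that of $w_i$ (hence at most that of the case distinction) or exactly that of $t$, which by Proposition~\ref{prop:two!} falls strictly below the complexity of the injection redex. You also correctly isolated the one genuinely delicate case, $u = v_0[z_1.v_1,z_2.v_2]\,\sigma$, where the replaced occurrence can appear as a prefix, and your handling of it and of the parallel-operator case coincides with the paper's.
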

\begin{proof} 
% See Appendix
% \inappendix
{
Let $v= w_i [t / x_i]$ and $s= \inj _i (t)[x_1.w_1,
x_2.w_2]$. We prove a stronger statement: 
\smallskip

\noindent $(*)$ For any redex $r$ in $u\{ v/s \}$ of complexity
$\theta$, either $\inj _i (t)[x_1.w_1, x_2.w_2]$ has complexity greater than $\theta$; or
 there is a
redex in $u$ of complexity $\theta$ which belongs to the same group as $r$ or is a case permutation redex.  Moreover, for any channel occurrence in $u \{v/s\}$
with complexity $\theta '$,  either $\inj _i (t)[x_1.w_1, x_2.w_2]$ has
complexity greater than $\theta '$, or there is an occurrence of the same
channel with complexity greater or equal than $\theta '$.  \smallskip

The proof is by induction on the size of $u$  and by cases according to the possible shapes of $u$. 
\begin{itemize}
\item $(\lambda x \, t' )\, \sigma\, \{v/s\}$, where $\sigma=
\sigma_{1}\ldots \sigma_{n}$ is any case-free stack. By induction hypothesis,
$(*)$ holds for $t' \{v/s\}$ and $\sigma_i\{v/s\}$ for
$1\leq i \leq n$. If $ (\lambda x \, t')\, \sigma
\{v/s\} = (\lambda x \, t'\{v/s\})\, (\sigma \{v/s\})$, all the redexes
and channel occurrences that we have to check are  in $t'\{v/s\}$, $\sigma
\{v/s\}$ or, possibly, the head redex, thus the thesis holds.
Since $s \neq (\lambda x \, t' )\,
\sigma_1 \dots \sigma _j$, there is no other possible case.

\item $\langle t_{1}, t_{2}\rangle\, \sigma\, \{v/s\}$, where $\sigma=
\sigma_{1}\ldots \sigma_{n}$ is any case-free stack. By induction hypothesis,
$(*)$ holds for  $t_{1} \{v/s\}$, $t_{2} \{v/s\}$ and
$\sigma_i\{v/s\}$ for $1\leq i \leq n$.

If $ \langle t_{1}, t_{2}\rangle\, \sigma \{v/s\} = \langle
t_{1}\{v/s\}, t_{2}\{v/s\} \rangle\, (\sigma \{v/s\})$ all the redexes
and channel occurrences that we have to check are either in $\sigma
\{v/s\}$ or, possibly, the head redex.  By Lemma~\ref{lem:change_of_value}, the
value complexity of $w_i [t / x_i]$ is either at most the value
complexity of $w_{i}$ or the value complexity of $t$. In the first
case, the value complexity of $w_i [t / x_i] $ is at most the value
complexity of $w_{i}$, which is at most the value complexity of $ \inj
_i(t)[x_1.w_1, x_2.w_2] $.  Thus, by Lemma~\ref{lem:change_of_value}, the value
complexities of $ t_{1}\{v/s\}, t_{2}\{v/s\} $ are at most the value complexities respectively of $
t_{1} , t_{2}$, thus the value complexity of $\langle t_{1}\{v/s\}, t_{2}\{v/s\} \rangle$, and hence
that of the possible head redex, is at most the value complexity of $\langle
t_{1} , t_{2} \rangle$ and we are done. In the second case, the value
complexity of $\langle t_{1}\{v/s\}, t_{2}\{v/s\} \rangle$, and hence
that of the head redex, is either at most the value complexity of $\langle
t_{1} , t_{2} \rangle$, and we are done,  or exactly the value complexity of $t$, which is  smaller than the complexity of the injection redex $
\inj_i(t)[x_1.w_1, x_2.w_2]$ occurring in $u$, which is what we wanted
to show. The case in which $\langle t_{1}, t_{2}\rangle\, \sigma
\{v/s\}= v\, (\sigma_{i}\{v/s\})\ldots (\sigma_{n}\{v/s\})$ is
impossible due to the form of $s= \inj _i (t)[x_1.w_1, x_2.w_2]$.

\item $\inj _i ( t' )\, \sigma\, \{v/s\}$, where $\sigma=
\sigma_{1}\ldots \sigma_{n}$ is any case-free stack.  By induction
hypothesis, $(*)$ holds for $t' \{v/s\}$ and $\sigma_j\{v/s\}$ with
$1\leq j\leq n$.  If $ \inj_i (t')\, \sigma \{v/s\} = (\inj_i (t'
\{v/s\}))\, (\sigma \{v/s\})$, all the redexes and channel occurrences
that we have to check are either in $t'\{v/s \}$ or in $\sigma
\{v/s\}$ or, possibly, the head redex, thus the thesis holds.

The
case in which $ \inj_i (t') \, \sigma \{v/s\}= v\,
(\sigma_{i}\{v/s\})\ldots (\sigma_{n}\{v/s\})$ is impossible due to
the form of $s= \inj _i (t)[x_1.w_1, x_2.w_2]$.

\item $ x\, \sigma \{v/s\}$, where $x$ is any simply typed variable
and $\sigma= \sigma_{1}\ldots \sigma_{n}$ is any case-free stack. By induction
hypothesis, $(*)$ holds for $\sigma_i\{v/s\}$ for
$1\leq i \leq n$.  If $ x\, \sigma \{v/s\} = x\, (\sigma \{v/s\})$, all its
redexes and channel occurrences are in $\sigma \{v/s\}$, thus the
thesis holds.  The case in which $ x\, \sigma \{v/s\}= v\, (\sigma_{i}\{v/s\})\ldots
(\sigma_{n}\{v/s\})$ is impossible due to the form of $s= \inj _i
(t)[x_1.w_1, x_2.w_2]$.

 \item $v_0[z_1.  v_{1}, z_2. v_{2}]  \sigma \{v/s\}$, where $\sigma$ is any case-free stack. By induction hypothesis,
$(*)$ holds for  $v_{0} \{v/s\}$, $v_{1} \{v/s\}$, $v_{2} \{v/s\}$ and
$\sigma_i\{v/s\}$ for $1\leq i \leq n$. If
\begin{align*}
& v_0[z_1.  v_{1}, z_2. v_{2}]  \sigma \{v/s\} = \\ & v_0 \{v/s\}
[z_1. v_{1}\{v/s\},z_2.  v_{2}\{v/s\}] (\rho \{v/s\})
\end{align*}
By Lemma~\ref{lem:change_of_value} the
value complexity of $w_i [t / x_i]$ is either at most the value
complexity of $w_{i}$ or the value complexity of $t$. In the first
case, the value complexity of $w_i [t / x_i] $ is at most the value
complexity of $w_{i}$ which is at most the value complexity of $ \inj
_i(t)[x_1.w_1, x_2.w_2] $.  Thus, by Lemma~\ref{lem:change_of_value} the value
complexities of $ v_{0}\{v/s\}, v_{1}\{v/s\}, v_{2}\{v/s\} $  are at most the value complexity respectively of $
v_{0}, v_{1} , v_{2}$. Therefore,
the complexity of the possible case permutation redex $$(v_0[z_1.  v_{1}\{v/s\}, z_2. v_{2}\{v/s\}])\sigma_{1}\{v/s\}$$ is at most the complexity of $v_0[z_1.  v_{1}, z_2. v_{2}]\sigma_{1}$ and we are done. Moreover, the possible injection  or case permutation  redex $$v_0 \{v/s\}
[z_1. v_{1}\{v/s\},z_2.  v_{2}\{v/s\}]$$ satisfies the thesis.  In the second case,  the value
complexities of $ v_{0}\{v/s\}, v_{1}\{v/s\}, v_{2}\{v/s\} $  are respectively at most the value complexities of $
v_{0}, v_{1} , v_{2}$ or exactly the value complexity of $t$. Therefore the complexity of  the possible case permutation redex\\ $(v_0[z_1.  v_{1}\{v/s\}, z_2. v_{2}\{v/s\}])\sigma_{1}\{v/s\}$, is either at most the value complexity of $
v_{1} , v_{2}$, and we are done,  or exactly the value complexity of $t$, which by Proposition \ref{prop:two!} is at most the complexity of the type of $t$, thus is smaller than the complexity of the injection redex $
\inj_i(t)[x_1.w_1, x_2.w_2]$ occurring in $u$. Moreover,  the possible injection  or case permutation  redex $$v_0 \{v/s\}
[z_1. v_{1}\{v/s\},z_2.  v_{2}\{v/s\}]$$ has complexity equal to the value complexity of $v_{0}$ or the value complexity of $t$, and we are done again.\\
 If $v_0[z_1.  v_{1}, z_2. v_{2}]  \sigma \{v/s\}= v\, (\sigma_{i}\{v/s\})\ldots
(\sigma_{n}\{v/s\})$, then  
there could be a new
intuitionistic redex, when $v=\lambda y\, q$, $v=\langle q_{1},
q_{2}\rangle$, $v=\inj_i (q)$ or $v= q_0[ y_1. q_{1},y_2.  q_{2}]$.  If the value complexity of $v=w_i [t / x_i]$ is at most the value complexity of $w_{i}$, then  the complexity of $w_i [t / x_i]
(\sigma_{i}\{v/s\})$ is equal to the complexity of the permutation redex $( \inj _i
(t)[x_1.w_1, x_2.w_2]) \sigma_{i}$. If the value complexity of $v=w_i [t / x_i]$ is  the value complexity of $t$,  by Proposition \ref{prop:two!} the complexity of $w_i [t / x_i]
(\sigma_{i}\{v/s\})$ is at most the complexity of the type of $t$, thus is smaller than the complexity of the injection redex $ \inj _i
(t)[x_1.w_1, x_2.w_2]$  occurring in $u$ and we are done.

\item $ a\, t'\, \sigma \{v/s\}$, where $a$ is a channel variable, $t'$
a term and $\sigma= \sigma_{1}\ldots \sigma_{n}$ is any case-free stack. By
induction hypothesis, $(*)$ holds for  $t'$ and 
$\sigma_i\{v/s\}$ for $1\leq i \leq n$. 
Since $s \neq a\, t'\, \sigma_1 \dots \sigma _j$, the case  $ a\,
t'\, \sigma \{v/s\}= v\, (\sigma_{i}\{v/s\})\ldots
(\sigma_{n}\{v/s\})$ is impossible.

If $ a\, t'\, \sigma \{v/s\} = a\, (t'\{v/s\}) (\sigma \{v/s\})$, in
order to verify the thesis it is enough to check the complexity of the
channel occurrence $a\, (t'\{v/s\})$. By Lemma~\ref{lem:change_of_value}, the
value complexity of $w_i [t / x_i]$ is either at most the value
complexity of $w_{i}$ or exactly the value complexity of $t$. In the first
case, the value complexity of $w_i [t / x_i] $ is at most the value
complexity of $w_{i}$ which is at most the value complexity of $ \inj
_i(t)[x_1.w_1, x_2.w_2] $.  Thus, by Lemma~\ref{lem:change_of_value}, the value
complexity of $t'\{v/s\}$ is at most the value complexity of $t'$ and
we are done. In the second case, the value complexity of $t'\{v/s\}$
is  the value complexity of $t$, which by Proposition
\ref{prop:two!} is at most the complexity of the type of $t$, thus
smaller than the complexity of the injection redex $
\inj_i(t)[x_1.w_1, x_2.w_2]$ occurring in $u$, which is what we wanted
to show.

\item $\pp{a}{t_1 \p \dots \p t_m}  \{v/s\}$. By induction hypothesis, $(*)$ holds for $t_i\{v/s\}$ for $1\leq i \leq m$. The only redex
in  $\pp{a}{t_1 \p \dots \p t_m}  \{v/s\}$ and not in some
$t_i\{v/s\}$ can be  $\pp{a}{t_1 \{v/s\}\p \dots \p t_m \{v/s\}}
\{v/s\}$ itself. But the complexity of such
redex equals the maximal complexity of the occurrences of the channel $a$
 in the  $t_i\{v/s\}$. Hence the
statement follows.
\end{itemize}
}
\end{proof}

Here we study what happens after contracting a case permutation redex.

\begin{lemma}[In Case!]% \label{lem:in_case}
  Let $u$ be a term in
parallel form. Then for any redex $r_1$ of Group~\ref{group_one} in
$u\{ t[x_1.v_1\xi, x_2.v_2\xi] / t[x_1.v_1,x_2.v_2]\xi \}$, there is a
redex in $u$ with greater or equal complexity than $r_1$; for any
redex $r_2$ of Group~\ref{group_two} in $u\{ t[x_1.v_1\xi, x_2.v_2\xi]
/ t[x_1.v_1,x_2.v_2]\xi \}$, there is a redex of Group~\ref{group_two}
in $u$ with greater or equal complexity than $r_2$.
\end{lemma}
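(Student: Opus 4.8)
The plan is to follow the template already used for the neighbouring Lemmas~\ref{lem:change_of_value}--\ref{lem:eliminate_the_case}. Writing $v = t[x_1.v_1\xi, x_2.v_2\xi]$ for the contractum and $s = t[x_1.v_1,x_2.v_2]\xi$ for the permutation redex, I would prove by induction on the size of $u$ the strengthened statement $(*)$ that additionally controls channel occurrences: for every channel occurrence in $u\{v/s\}$ of complexity $\theta'$ there is an occurrence of the same channel in $u$ of complexity at least $\theta'$. This strengthening is exactly what makes the induction pass through the parallel-operator case $\pp{a}{t_1\p\dots\p t_m}\{v/s\}$, where the only genuinely new redex is the session $\pp{a}{t_1\{v/s\}\p\dots\p t_m\{v/s\}}$, whose complexity is the maximal complexity of the occurrences of $a$ among the $t_i\{v/s\}$; the channel-occurrence clause of $(*)$ charges each of these back to an occurrence of $a$ in $u$, hence the new session to the original communication redex of $u$.

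The crucial preliminary observation — and what makes this lemma lighter than Lemma~\ref{lem:eliminate_the_case} — is that a case permutation does not change value complexity: by Definition~\ref{def:value_compl} the value complexities of $s = t[x_1.v_1,x_2.v_2]\xi$ and of $v = t[x_1.v_1\xi, x_2.v_2\xi]$ coincide. Consequently the hypothesis $\tau'\le\tau$ of Lemma~\ref{lem:change_of_value} (value complexity of the contractum not exceeding that of the replaced redex) holds, with equality, so the value complexity of $w\{v/s\}$ never exceeds that of $w$ for any subterm $w$ of $u$. I would also record at the outset that the two Group~\ref{group_one} redexes $v_1\xi$ and $v_2\xi$ that the permutation can create have complexity at most that of the permutation redex $s$ itself; whenever they surface they are charged to the occurrence of $s$ in $u$, which is legitimate because for Group~\ref{group_one} we only need \emph{some} redex of $u$ of greater-or-equal complexity.

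With these two facts in hand, the bulk is the case analysis on the shape of $u$: $(\lambda x.t')\sigma$, $\langle t_1,t_2\rangle\sigma$, $\inj_i(t')\sigma$, $x\sigma$, $a\,t'\,\sigma$, $v_0[z_1.w_1,z_2.w_2]\sigma$ and $\pp{a}{t_1\p\dots\p t_m}$, each split according to whether the replaced occurrence of $s$ lies strictly inside a proper subterm — in which case the induction hypothesis together with the value-complexity bound settles it — or coincides with the head, so that $v$ now heads the term and $v\,\sigma_i = t[x_1.v_1\xi, x_2.v_2\xi]\,\sigma_i$ can be a fresh permutation redex. For this fresh redex I argue the dichotomy used elsewhere in the development: if $\xi$ is case-free it has complexity $0$ by Lemma~\ref{lem:stacks_and_zeros}, and otherwise it has the same complexity as the case permutation $(t[x_1.v_1,x_2.v_2]\xi)\,\sigma_i$ already present in $u$. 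Channel occurrences $a\,(w\{v/s\})$ are handled uniformly by the value-complexity bound above.

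The main obstacle I anticipate is not any single case but keeping the Group~\ref{group_one}/Group~\ref{group_two} bookkeeping coherent: every newly produced Group~\ref{group_two} redex must be charged to a Group~\ref{group_two} redex of $u$ (not merely to some redex), whereas Group~\ref{group_one} redexes may be charged to any redex. The most delicate point is the case-distinction subterm $v_0[z_1.w_1,z_2.w_2]\sigma$, where a possible head injection-or-permutation redex and a permutation redex nested inside a branch must be bounded simultaneously; it is precisely here that the \emph{equality} of value complexities between $v$ and $s$, rather than a mere inequality, is what lets both bounds go through via Lemma~\ref{lem:change_of_value}.
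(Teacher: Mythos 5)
Your proposal matches the paper's proof essentially step for step: the same strengthened statement controlling channel occurrences, the same two preliminary observations (case permutation preserves value complexity, and the possible Group~\ref{group_one} redexes $v_1\xi$, $v_2\xi$ are bounded by the permutation redex itself), the same induction on the size of $u$ with the head-replacement dichotomy — complexity $0$ via Lemma~\ref{lem:stacks_and_zeros} when $\xi$ is case-free, otherwise charged to the case permutation $(t[x_1.v_1,x_2.v_2]\xi)\sigma_i$ present in $u$ — and the same use of the channel-occurrence clause to discharge the $\pp{a}{t_1\p\dots\p t_m}$ case. One tiny nuance: strict \emph{equality} of the value complexities of $v$ and $s$ is not what the argument needs — the inequality $\tau'\le\tau$ demanded by Lemma~\ref{lem:change_of_value} suffices — but since equality holds, this is harmless.
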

\begin{proof}
% See Appendix
% \inappendix
{
Let $v= t[x_1.v_1\xi, x_2.v_2\xi]$ and $s= t[x_1.v_1,x_2.v_2]\xi$. We
prove the following stronger statement.
\smallskip

\noindent $(*)$ For any Group~\ref{group_one} redex $r_1$ in $u\{
t[x_1.v_1\xi, x_2.v_2\xi] / t[x_1.v_1,x_2.v_2]\xi \}$, there is a
redex in $u$ with greater or equal complexity than $r_1$; for any
Group~\ref{group_two} redex $r_2$ in $u\{ t[x_1.v_1\xi, x_2.v_2\xi] /
t[x_1.v_1,x_2.v_2]\xi \}$, there is a Group~\ref{group_two} redex in
$u$ with greater or equal complexity than $r_2$.  Moreover, for any channel occurrence in $u \{v/s\}$
with complexity $\theta '$,  there is in $u$ an occurrence of the same
channel with complexity greater or equal than $\theta '$. 
\smallskip

We first observe that the possible Group~\ref{group_one} redexes $v_1 \xi$ and $v_2 \xi$ have at most the
complexity of the case permutation $t[x_1.v_1,x_2.v_2]\xi$. The rest
of the proof is by induction on the shape of $u$.
\begin{itemize}
\item $(\lambda x \, t' )\, \sigma\, \{v/s\}$, where $\sigma=
\sigma_{1}\ldots \sigma_{n}$ is any case-free stack. By induction hypothesis,
$(*)$ holds for $t' \{v/s\}$ and $\sigma_i\{v/s\}$ where $1\leq i \leq
n$.  If $ (\lambda x \, t' )\, \sigma \{v/s\} = (\lambda x \,
t'\{v/s\})\, (\sigma \{v/s\})$, all the redexes and channel occurrences
that we have to check are either in $\sigma \{v/s\}$ or, possibly, the
head redex, thus the thesis holds. There are no more cases since $\sigma$ is case-free.

% If $$ (\lambda x \, t' )\, \sigma
%\{v/s\} = t[x_1.v_1\xi, x_2.v_2\xi] \, (\sigma_{i}\{v/s\})\ldots
%(\sigma_{n}\{v/s\})$$ then $ t[x_1.v_1\xi,
%x_2.v_2\xi] (\sigma_{i}\{v/s\})$ can be a new permutation redex. If $\xi$
%is case free, this redex has complexity $0$ by Lemma \ref{lem:stacks_and_zeros} and we are
%done. Otherwise, $t[x_1.v_1\xi, x_2.v_2\xi] (\sigma_{i}\{v/s\})$ has
%the same complexity as the permutation $(t[x_1.v_1, x_2.v_2] \xi)
%\sigma_{i}$.

\item $\langle t_{1}, t_{2}\rangle\, \sigma\, \{v/s\}$, where $\sigma=
\sigma_{1}\ldots \sigma_{n}$ is any case-free stack. By induction hypothesis,
$(*)$ holds for $t_{1} \{v/s\}$, $t_{2} \{v/s\}$ and $\sigma_i\{v/s\}$
where $1\leq i \leq n$.  If $$ \langle t_{1}, t_{2}\rangle\, \sigma
\{v/s\} = \langle t_{1}\{v/s\}, t_{2}\{v/s\} \rangle\, (\sigma
\{v/s\})$$ all the redexes and channel occurrences that we have to
check are either in $\sigma \{v/s\}$ or, possibly, the head redex. The
former are dealt with using the inductive hypothesis. As for the
latter, it is immediate to see that the value complexity of
$t[x_1.v_1\xi, x_2.v_2\xi] $ is equal to the value complexity of $
t[x_1.v_1,x_2.v_2]\xi$. By Lemma~\ref{lem:change_of_value}, the value
complexity of $t_i \{v/s\}$ is at most that of $t_i$, and we are done.
%If $\langle t_{1}, t_{2}\rangle\, \sigma \{v/s\}= v\,
%(\sigma_{i}\{v/s\})\ldots (\sigma_{n}\{v/s\})$, then \\$v\,
%(\sigma_{i}\{v/s\}) = t[x_1.v_1\xi, x_2.v_2\xi] (\sigma_{i}\{v/s\})$
%can be a new permutation redex. If $\xi$ is case free, this redex has
%complexity $0$ by Lemma \ref{lem:stacks_and_zeros}  and we are done. Otherwise, $$t[x_1.v_1\xi, x_2.v_2\xi]
%(\sigma_{i}\{v/s\})$$ has the same complexity as $(t[x_1.v_1, x_2.v_2]
%\xi ) \sigma_{i}$.
There are no more cases since $\sigma$ is case-free.

\item $\inj_{i}( t' )\, \sigma\, \{v/s\}$, where $\sigma=
\sigma_{1}\ldots \sigma_{n}$ is any case-free stack. By induction hypothesis,
$(*)$ holds for $t' \{v/s\}$ and $\sigma_j\{v/s\}$ where $1\leq j \leq
n$.  If $ \inj_{i}( t' )\, \sigma \{v/s\} = \inj_{i}( t'\{v/s\} )\, (\sigma \{v/s\})$, all the redexes and channel occurrences
that we have to check are either in $\sigma \{v/s\}$ or, possibly, the
head redex, thus the thesis holds. 
There are no more cases since $\sigma$ is case-free.

\item $w_0[z_1.  w_{1}, z_2. w_{2}] \sigma \{v/s\}$, where $\sigma$ is
any case-free stack. By induction hypothesis, $(*)$ holds for $w_{0}
\{v/s\}$, $v_{1} \{v/s\}$, $v_{2} \{v/s\}$ and $\sigma_i\{v/s\}$ for
$1\leq i \leq n$. If
\begin{align*}
& w_0[z_1.  w_{1}, z_2. w_{2}] \sigma \{v/s\} = \\
& w_0 \{v/s\} [z_1. w_{1}\{v/s\},z_2.  w_{2}\{v/s\}] (\sigma \{v/s\})
\end{align*}
Since the value complexity of $t[x_1.v_1\xi, x_2.v_2\xi] $ is equal to
the value complexity of $ t[x_1.v_1,x_2.v_2]\xi$, by
Lemma~\ref{lem:change_of_value} the value complexities of $w_{0} \{v/s\}$,
$w_{1} \{v/s\}$ and $w_{2} \{v/s\}$ are respectively at most that of
$w_{0}$, $w_{1}$ and $w_{2}$. Therefore the complexity of the possible
case permutation redex \\$(w_0\{v/s\}[z_1.  w_{1}\{v/s\},
z_2. w_{2}\{v/s\}])\sigma_{1}\{v/s\}$ is at most the value complexity
respectively of $ w_{1} , w_{2}$, thus the complexity of the case
permutation redex $(w_0[z_1.  w_{1},
z_2. w_{2}])\sigma_{1}$. Moreover, the possible injection or case
permutation redex $$w_0 \{v/s\} [z_1. w_{1}\{v/s\},z_2.
w_{2}\{v/s\}]$$ has complexity equal to the value complexity of
$w_{0}$ and we are done.

 If $w_0[z_1.  w_{1}, z_2. w_{2}]  \sigma \{v/s\}= v\, (\sigma_{i}\{v/s\})\ldots
(\sigma_{n}\{v/s\})$, then there could be a new case permutation redex, because\\ $v=t[x_1.v_1\xi, x_2.v_2\xi]$.
If $\xi$ is case free, by Lemma \ref{lem:stacks_and_zeros}, this redex has
complexity $0$ and we are done;  if not, it has the same complexity as  $t[x_1.v_1, x_2.v_2]
\xi \sigma_{1}$.

\item $ x\, \sigma \{v/s\}$, where $x$ is any simply typed variable
and $\sigma= \sigma_{1}\ldots \sigma_{n}$ is any case-free stack. By
induction hypothesis, $(*)$ holds for $\sigma_i\{v/s\}$ where $1\leq i
\leq n$. If $ x\, \sigma \{v/s\} = x\, (\sigma \{v/s\})$, all its
redexes and channel occurrences are in $\sigma \{v/s\}$, thus the
thesis holds.  
%If $$ x\, \sigma \{v/s\}= t[x_1.v_1\xi, x_2.v_2\xi]\, (\sigma_{i}\{v/s\})\ldots
%(\sigma_{n}\{v/s\})$$ then $v\, (\sigma_{i}\{v/s\})$ can be a new
%permutation redex. If $\xi$ is case free, this redex has complexity
%$0$ and we are done. Otherwise, $t[x_1.v_1\xi, x_2.v_2\xi]
%(\sigma_{i}\{v/s\})$ has the same complexity as $t[x_1.v_1, x_2.v_2]
%\xi \sigma_{i}$.
There are no more cases since $\sigma$ is case-free.

\item $ a\, t\, \sigma \{v/s\}$, where $a$ is a channel variable, $t$
a term and  $\sigma= \sigma_{1}\ldots \sigma_{n}$ is any case-free stack. By
induction hypothesis, $(*)$ holds for $t$ and $\sigma_i\{v/s\}$ where
$1\leq i \leq n$.

If $ a\, t\, \sigma \{v/s\} = a\, (t\{v/s\}) (\sigma \{v/s\})$, in
order to verify the thesis it is enough to check the complexity of the
channel occurrence $a\, (t\{v/s\})$.  Since  the value
complexity of $t[x_1.v_1\xi, x_2.v_2\xi] $ is equal to the value
complexity of  \\$ t[x_1.v_1,x_2.v_2]\xi$, by
Lemma~\ref{lem:change_of_value} the value complexity of
$t \{v/s\}$ is at most that of $t$, and we are done. There are no more cases since $\sigma$ is case-free.

\item $\pp{a}{t_1 \p \dots \p t_m } \{v/s\}$. By induction hypothesis,
$(*)$ holds for $t_i\{v/s\}$ where $1\leq i \leq m$ . The only redex
in $\pp{a}{t_1 \p \dots \p t_m } \{v/s\}$ and not in some $t_i\{v/s\}
$ can be $\pp{a}{t_1 \{v/s\} \p \dots \p t_m \{v/s\}} $ itself. But
the complexity of such redex equals the maximal complexity of the
occurrences of the channel $a$ in $t_i\{v/s\}$. Hence the statement
follows.
\end{itemize}
}
\end{proof}

The following result is meant to break the possible loop between the intuitionistic phase and communication phase of our normalization strategy. Intuitively, when Group~\ref{group_one} redexes generate new redexes, these latter have smaller complexity than the former; when Group~\ref{group_two} redexes generate new redexes, these latter does not have worse complexity than the former. 

\begin{proposition}[Decrease!]% \label{prop:decrease} 
  Let $t$ be a term
in parallel form, $r$ be one of its redexes of complexity $\tau$, and
$t'$ be the term that we obtain from $t$ by contracting $r$.
  \begin{enumerate}
  \item If $r$ is a redex of the Group~\ref{group_one}, then the complexity of each
redex in $t'$ is at most
the complexity of a redex of the same group and occurring in $t$; or
is at most the complexity of a case permutation redex occurring in
$t$; or is smaller than $\tau$.

  \item If $r$ is a redex of the Group~\ref{group_two} and not an
activation redex, then every redex in $t'$ either has the complexity
of a redex of the same group occurring in $t$ or has complexity at
most $\tau$.
\end{enumerate} \end{proposition}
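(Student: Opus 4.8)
The plan is to proceed by a case analysis on the shape of the contracted redex $r$, exploiting the fact that every one of our reductions rewrites $t$ into $t' = t\{r'/r\}$, a simple replacement of the single occurrence $r$ by its contractum $r'$. This lets me feed each case directly into the substitution lemmas already established: Lemma~\ref{lem:replace} (Replace!) for the redexes created by $\beta$-like and projection contractions, Lemma~\ref{lem:eliminate_the_case} (Eliminate the Case!) for injection redexes, and Lemma~\ref{lem:in_case} (In Case!) for case permutations, always supplemented by Lemma~\ref{lem:change_of_value} and Proposition~\ref{prop:two!} to convert value complexities into type complexities where needed.

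For part~$1$, consider first $r = (\lambda x^A\,s)\,v$ with $s:B$, so $\tau$ is the complexity of $A\IMPL B$. Since $v:A$, I would apply Lemma~\ref{lem:replace} to $s[v/x^A]$: a new redex that is not already in $v$ and not of the bounded form $(\,\cdot\,)\{v/s\}$ must, by clause~(iii), carry the value complexity of $v$, which by Proposition~\ref{prop:two!} is at most the complexity of $A$, hence strictly below $\tau$. A second application of Lemma~\ref{lem:replace}, now reading $t'$ as $t\{s[v/x^A]/r\}$, handles the redexes lying outside $s[v/x^A]$, bounding them by the value complexity of $s[v/x^A]$ (at most the complexity of $B$) or, for communication occurrences, by Lemma~\ref{lem:change_of_value}. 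The injection subcase $r=\inj_i(s)[x^A.u_1,y^B.u_2]$ is discharged by a single appeal to Lemma~\ref{lem:eliminate_the_case} applied to $t\{u_i[s/x_i]/r\}$.

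For part~$2$, the projection redex $r=\langle v_0,v_1\rangle\pi_i$ is treated by Lemma~\ref{lem:replace} on $t\{v_i/r\}$: any offending new redex has the value complexity of $v_i$, which by Proposition~\ref{prop:two!} does not exceed $\tau$. Case permutations are absorbed by Lemma~\ref{lem:in_case}, while communication permutations and the discarding cross reduction $\pp{a}{u_1\p\dots\p u_m}\mapsto u_{j_1}\p\dots\p u_{j_n}$ are immediate, since in the latter every redex of $t'$ already occurs in $t$.

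The real obstacle is the full cross reduction, where $r=\pp{a}{\mathcal{C}_1[a\,t_1]\p\dots\p\mathcal{C}_m[a\,t_m]}$ reduces to $\pp{b}{s_1\p\dots\p s_m}$, the receiving components carrying the substituted bodies $t_j^{b_i\lan\sq{y}_i\ran/\sq{y}_j}$ or $b_i\lan\sq{y}_i\ran$. Here the fresh channel $b$ may have a type far larger than that of $a$, so a type-complexity argument cannot work and one must reason entirely with value complexity. I would first rule out new intuitionistic redexes: applying Lemma~\ref{lem:replace} inside each $\mathcal{C}_i[\,\cdot\,]$ shows such a redex has the value complexity of $t_j^{b_i\lan\sq{y}_i\ran/\sq{y}_j}$ or of $b_i\lan\sq{y}_i\ran$, which by repeated use of Lemma~\ref{lem:change_of_value} (noting that $b_i\lan\sq{y}_i\ran$ has value complexity $0$) collapses to the value complexity of $t_j$ or to $0$, hence at most $\tau$. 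For a surviving communication redex $\pp{c}{p_1\p\dots\p p_z}$, every occurrence of $c$ is of the form $cw\{t_j^{b_i\lan\sq{y}_i\ran/\sq{y}_j}/a\,t_i\}$ or $cw\{b_i\lan\sq{y}_i\ran/a\,t_i\}$ for a channel occurrence $cw$ of $t$, so one last application of Lemma~\ref{lem:change_of_value} bounds its complexity by the value complexity of $cw$ or by $\tau$. The delicate point throughout is exactly the interplay between the substitutions introduced by cross reduction and the recursive, pair-and-case-insensitive definition of value complexity: it is this choice of measure, rather than type complexity, that prevents the bound from blowing up when fresh channels are spawned.
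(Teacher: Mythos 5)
Your proposal is correct and follows essentially the same route as the paper's own proof: the identical case split on the contracted redex, with Lemma~\ref{lem:replace} applied twice for the $\beta$ case, Lemma~\ref{lem:eliminate_the_case} for injections, Lemma~\ref{lem:in_case} for case permutations, and the same value-complexity argument (via repeated applications of Lemma~\ref{lem:change_of_value}, with $b_i\lan\sq{y}_i\ran$ of value complexity $0$) to bound both the new intuitionistic redexes and the channel occurrences $cw\{\cdot/a\,t_i\}$ created by a full cross reduction. You also correctly identify the crucial point that the fresh channel $b$ forces the measure to be value complexity rather than type complexity, which is exactly the paper's reasoning.
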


\begin{proof}
% See Appendix. 
% \inappendix
{
\mbox{}

\noindent $1)$  Suppose $r = (\lambda x^A \, s) \, v$, that $s:B$ and let $q$ be a
redex in $t'$ having different complexity from the one of any redex of
the same group  or the one of any case permutation occurring in $t$. Since $v:A$, we apply Lemma~\ref{lem:replace}
to the term $s [v/x^A]$.  We know that if $q$ occurs in $s [v/x^A]$, since (i) and (ii) do not apply, it has the value complexity of $v$, which by
Proposition~\ref{prop:two!} is at most the complexity of  $A$, which is strictly smaller than+
the complexity of $A\rightarrow B$ and thus than the complexity $\tau$
of $r$. Assume therefore that $q$ does not occur in $s
[v/x^A]$. Since $s [v/x^A] :B$, by applying Lemma~\ref{lem:replace} to
the term $t'=t \{ s [v/x^A]/ (\lambda x^A \, s) \, v\}$ we know that
$q$ has the same complexity as the value complexity of $s [v/x^A]$ -- which by
Proposition~\ref{prop:two!} is at most the complexity of $B$ -- or is a communication redex of complexity equal to the complexity of some channel occurrence $a(w [v/x^A])$ in $s [v/x^A]$, which by Lemma \ref{lem:change_of_value} is at most the complexity of $A$ or at most the complexity of $a w$, and we are done again.

Suppose that $r = \inj _i (s) [x^A.u_{1},y^B.u_{2}]$. By applying
Lemma~\ref{lem:eliminate_the_case}  to 
$$t'=t\{u_{i}[s/x_{i}^{A_{i}}]\, /\, \inj _i (s) [x_{1}^{A_{1}}.u_{1},x_{2}^{A_2}.u_{2}]\}$$ we are done.

 \smallskip

\noindent $2)$ Suppose $r = \langle v_0,v_1\rangle \pi _i$ for $i \in
\{0,1\}$, $\langle v_0,v_1\rangle : A_0 \ET A_1$ let $q$ be a redex in
$t'$ having complexity greater than that of any redex of the same
group in $t$. The term $q$ cannot occur in $v_{i}$, by the assumption
just made. Moreover, by Proposition~\ref{prop:two!}, the value complexity of $v_i$
cannot be greater than the complexity of $A_i$. By applying Lemma~\ref{lem:replace} to the term
$t'=t \{v_i/r\}$, we know that $q$ has complexity equal to the value
complexity of $v_i$, because by the assumption on $q$ the cases (i)
and (ii) of Lemma~\ref{lem:replace} do not apply. Such complexity is
at most the complexity $\tau$ of $r$. Thus we are done.

Suppose that $r = s [x^A.u,y^B.v] \xi $ is a case permutation redex.
By applying Lemma~\ref{lem:in_case} we are done.
 
If $t'$ is obtained by performing a communication permutation, then
obviously the thesis holds.

If $t'$ is obtained by a cross reduction of the form $\pp{a}{ u_1 \p
\dots \p u_m } \mapsto u_{j_1} \p \dots \p u_{j_n} $, for $1 \leq j_1
< \dots < j_n \leq m $, then there is nothing to prove: all
redexes occurring in $t'$ also occur in $t$.

% Suppose now that $r = \mathcal{C}[a\, u]\parallel_{a}
% \mathcal{D}[a\, v]$, $u=\langle u_1 , \dots , u_p \rangle$,
% $v=\langle v_1 , \dots , v_q \rangle$. 

Suppose now that \[r =  \pp{a}{ {\mathcal C}_1 [a^{F_1 \impl G_1 }
    \,t_1] \p \dots \p
{\mathcal C}_m[a^{F_m \impl G_m } \,t_m]}\] 
$t_i =\langle u^i_1 , \dots , u^i_{p_i} \rangle$. 
Then by cross reduction, $r$ reduces
to $\pp{b}{ s_1
\p \dots \p s_m } $ where if $G_i \neq \fal$:
\begin{footnotesize}
  \[ s_{i} \; = \; \pp{a} { {\mathcal C}_1 [a ^{F_1 \impl G_1}\, t_1]
      \dots \p {\mathcal C } _i [ t_j^{b_i \lan \sq{y}_i \ran /
        \sq{y}_j} ] \p \dots \p {\mathcal C}_m[a^{F_m \impl G_m}
      \,t_m]}
  \]
\end{footnotesize}
and if $ G_i = \fal $: 
\begin{footnotesize}
  \[ s_{i} \; = \; \pp{a}{ {\mathcal C}_1 [a^{G_1 \impl G_1}\, t_1]
      \dots \p {\mathcal C}_i [b_i \lan \sq{y}_i \ran ] \p \dots \p
      {\mathcal C}_m [a^{ F_m \impl G_m }\,t_m]}\]
\end{footnotesize}
in which $F_j = G_i$.
% \begin{align*} r'= & (\mathcal{D}[u
% ^{b\langle \sq{z}\rangle / \sq{y}}] \parallel_{a} \mathcal{C}[a\,
% u] ) \parallel_{b}  (\mathcal{C}[ v ^{b \langle \sq{y}\rangle / \sq{z}}]\parallel_{a}
% \mathcal{D}[a\, v])
% \end{align*} 
and $t'=t\{r'/r\}$.  Let now $q$ be a redex in $t'$ having different
complexity from the one of any redex of the same group in $t$. We
first show that it cannot be an intuitionistic redex: assume it
is. Then it occurs in one of the terms ${\mathcal C }
_i [ t_j^{b_i \lan \sq{y}_i \ran  /
\sq{y}_j} ] $ or $ {\mathcal
    C}_i [b_i \lan \sq{y}_i \ran ] $.
%
% $ \mathcal{D}[u ^{b\langle \sq{z}\rangle / \sq{y}}] $ and $
% \mathcal{C}[ v ^{b \langle \sq{y}\rangle / \sq{z}}]$
%
By
applying Lemma~\ref{lem:replace} to them,
we obtain that the complexity of $q$ is the value complexity of
$t_j^{b_i \lan \sq{y}_i \ran  /
\sq{y}_j} $ or $b_i \lan \sq{y}_i \ran$,
%
% $u ^{b\langle \sq{z}\rangle / \sq{y}}$
% or of $v ^{b \langle \sq{y}\rangle /
% \sq{z}}$,
%
which, by several applications of Lemma~\ref{lem:change_of_value}, are
at most the value complexity of $t_j$ or  $0$ and
thus by definition at most the complexity of $r$, which is a contradiction.  Assume therefore
that $q= \pp{c}{p_1 \p \dots \p p_z }$ 
 % p_{1}\parallel_{c}p_{2}
is a communication redex. 
Every channel occurrence of $c$ in the terms  ${\mathcal C }
_i [ t_j^{b_i \lan \sq{y}_i \ran  /
\sq{y}_j} ] $ or $ {\mathcal
    C}_i [b_i \lan \sq{y}_i \ran ] $
%  $$ \mathcal{D}[u^{b\langle
% \sq{z}\rangle / \sq{y}}], 
% \mathcal{C}[ v^{b \langle
% \sq{y}\rangle / \sq{z}}]$$
is of the form $ cw\{ t_j^{b_i \lan \sq{y}_i \ran  /
\sq{y}_j} / a\, t_i\} $ or $ cw\{ b_i \lan \sq{y}_i \ran   / a\,
t_i\} $ 
 % $cw\{u^{b\langle
% \sq{z}\rangle / \sq{y}}/av\}$ or $cw\{v^{b\langle
% \sq{y}\rangle / \sq{z}}/au\}$,
where $cw$ is a channel occurrence in $t$. By Lemma \ref{lem:change_of_value}, each of those occurrences has either at most the value complexity of $cw$ or has at most the value complexity of $r$, which is a contradiction. 
}
\end{proof}

The following result is meant to break the possible loop during the communication phase: no new activation is generated after a cross reduction, when there is none to start with.

% {\ehi ***USE THIS AND REMOVE***

% \[ \pp{a}{ {\mathcal C}_1 [a^{F_1 \impl G_1 } \,t_1] \dots \p_{a_m}
% {\mathcal C}_m[a^{F_m \impl G_m } \,t_m]} \; \mapsto \; \pp{b}{ s_1
% \p \dots \p s_m } \]
% where  $a$ is active, ${\mathcal C}_j [a^{ F_j \impl G_j } \,t_j] $ for $1 \leq j \leq
% m $ are simply typed $\lambda$-terms; $a^{ F_j \impl G_j } $ is
% rightmost in each of them;
% $b$ is fresh; for $1 \leq i \leq m $, we define, if $G_i \neq \fal$,
% \[ s_{i} \; = \; 
% \pp{a} { {\mathcal C}_1 [a ^{F_1 \impl G_1}\, t_1] \dots \p  {\mathcal C }
% _i [ t_j^{b_i \lan \sq{y}_i \ran  /
% \sq{y}_j} ] \p \dots \p {\mathcal C}_m[a^{F_m \impl G_m} \,t_m]}
% \] and if $ G_i = \fal $  
% \[  s_{i} \; = \; 
% \pp{a}{ {\mathcal C}_1 [a^{G_1 \impl G_1}\, t_1] \dots \p {\mathcal
%     C}_i [b_i \lan \sq{y}_i \ran ] \p \dots \p 
% {\mathcal C}_m [a^{ F_m \impl G_m }\,t_m]}\]
% where $F_j = G_i$; $\sq{y}_z$ for $1 \leq z \leq m$ is the sequence of the free
% variables of $t_z$ bound in $\mathcal{C}_z[a^{F_z \impl G_z}
% \, t_z]$; $b_i=b^{B_{i}\IMPL B_{j}}$, where $B_{z}$ for $1 \leq z \leq
% m$ is the type of $\lan \sq{y}_z \ran$.
% }

\begin{lemma}[Freeze!]% \label{lem:freeze}
  Suppose that $s$ is a term in
parallel form that does not contain projection nor case permutation
nor activation redexes.  Let $\pp{a}{q_1 \p \dots \p q_m }$
%  $ q_0 \p_a q_1$ 
be some redex of $s$ of
complexity $\tau$. If $s'$ is obtained from $s$ by performing first a
cross reduction on $\pp{a}{q_1 \p \dots \p q_m }$
% $ q_0 \p_a q_1$
and then contracting all projection
and case permutation redexes, then $s'$ contains no activation
redexes. \end{lemma}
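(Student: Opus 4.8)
The plan is to make the shape of the reduct completely explicit and then to show, via value‑complexity bookkeeping, that the projection and case‑permutation reductions cannot expose any channel application to a value. First I would write the cross‑reduction redex as $\pp{a}{q_1 \p \dots \p q_m} = \pp{a}{ {\mathcal C}_1 [a^{F_1 \impl G_1} \,t_1] \p \dots \p {\mathcal C}_m[a^{F_m \impl G_m} \,t_m]}$, where each $a^{F_i \impl G_i}$ is rightmost in the simply typed term ${\mathcal C}_i[a^{F_i \impl G_i}\,t_i]$. Performing the cross reduction and then contracting all projection and case‑permutation redexes turns this subterm into $\pp{b}{s_1 \p \dots \p s_m}$, with the components $s_i$ exactly as in the cross reduction of Figure~\ref{fig:red}: the receiving component ${\mathcal C}_i[\cdot]$ now holds either $t_j'$ — the normal form under projections and case permutations of $t_j^{b_i \lan \sq{y}_i \ran / \sq{y}_j}$ — or $b_i \lan \sq{y}_i \ran$, and $s' = s\{t'/t\}$.

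Two easy observations come first. Since $a$ is active, each $s_i = \pp{a}{\cdots}$ is an active session and hence not an activation redex; and since the fresh channel $b$ occurs in $s'$ only in the form $b_i \lan \sq{y}_i \ran$ — a channel applied to a tuple of intuitionistic variables, which is not a value — $b$ is nowhere applied to a value, so the outer $\pp{b}{\cdots}$ is not an activation redex either. The remaining and genuine task is to rule out that some already‑bound inactive channel $c$ of $s'$, or a channel occurring inside the rewritten components, becomes \emph{activable}, i.e. acquires an occurrence $c\,w$ with $w$ a value that it did not have in $s$.

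The key lever is Lemma~\ref{lem:change_of_value}. For any stack $\theta$ of projections, $b_i \lan \sq{y}_i \ran \theta$ has value complexity $0$ (Lemma~\ref{lem:stacks_and_zeros} for the nonempty case, and directly otherwise, as $b$ is inactive). Hence substituting it for the variables $\sq{y}_j$ inside any subterm $w$ of $t_j$ leaves the value complexity of $w$ unchanged: the hypothesis $\tau' \le \tau$ of Lemma~\ref{lem:change_of_value} is met with $\tau' = 0$. Consequently $t_j^{b_i \lan \sq{y}_i \ran / \sq{y}_j}$ contains no activable channel, since $t_j$ contains none and no non‑value is turned into a value. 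To transport this through the projection and case‑permutation reductions producing $t_j'$, I would prove, by induction on the size of a simply typed term $r$, the auxiliary statement $(*)$: if $r\theta$ contains neither projection/permutation redexes nor activable channels bound in $s'$, then the term $r'$ obtained by firing all projections and case permutations in $r\theta$ contains no activable channel bound in $s'$. The cases $r = \lam x\,w$, $\inj_i(w)$, $w\,\exfalso_P$, $x$ and $c\,w$ are immediate because $r\theta$ is already in the required form; the pair case $r = \lan v_0, v_1 \ran$ fires a projection to some $v_i\rho$ and recurses; the case‑distinction case $r = t[x.v_0, y.v_1]$ permutes $\theta$ into the two branches and recurses on them.

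The final step collects everything: I would take an arbitrary non‑active bound channel $c$ of $s'$ occurring in a rewritten component but not already in the migrated term, so each occurrence reads $c\lan p_1, \dots, p_l\{u'/av\rho\}, \dots, p_n \ran$ with the $p_l$ non‑pairs, and show the value complexity of each $p_l\{u'/av\rho\}$ equals that of $p_l$ by a short case analysis on $p_l = r\,\nu$ with $\nu$ case‑free: the shapes $\lam x\,w$, $\lan q_1,q_2\ran$, $\inj_i(w)$, $x$, and $d\,w$ with $d\neq a$ preserve value complexity, whereas $r = v_0[x_1.v_1, x_2.v_2]$ with $\nu$ nonempty, or $r = a\,v$ with $\nu$ nonempty, are impossible since they would already force $s$ to contain a permutation or activation redex, contradicting the hypotheses on $s$. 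I expect this last case analysis, together with the value‑complexity invariance under the substitution $b_i \lan \sq{y}_i \ran / \sq{y}_j$, to be the main obstacle: the whole argument hinges on the fact that a channel applied to a tuple is \emph{not} yet a value (value complexity $0$), so redirecting free variables through the fresh inactive channel $b$ cannot manufacture a new transmittable message — which is precisely why the newly created and duplicated sessions remain ``frozen''.
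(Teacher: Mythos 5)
Your proposal reconstructs the paper's proof essentially verbatim: the same explicit decomposition of the reduct into $\pp{b}{s_1\p\dots\p s_m}$, the same two preliminary observations (the $s_i$ are active sessions; $b$ occurs only as $b_i\lan\sq{y}_i\ran$), the same use of Lemma~\ref{lem:change_of_value} with the value complexity $0$ of $b_i\lan\sq{y}_i\ran\theta$, the same auxiliary statement $(*)$ proved by the same induction on the size of $r$, and the same concluding case analysis on $p_l = r\,\nu$. The only nuance is that the paper's final analysis also disposes of the case-distinction shape with \emph{empty} $\nu$ (there $p_l$ is itself a value component, so $c$ would already be activable and $s$ would contain an activation redex), a subcase your enumeration leaves implicit but which follows from exactly the contradiction you invoke.
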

 \begin{proof} 
% See Appendix. 
% \inappendix
{
Let $ \pp{a}{q_1 \p \dots \p q_m } = $ \\$
\pp{a}{ {\mathcal
         C}_1 [a^{F_1 \impl G_1 } \,t_1 \sigma _1 ] \p \dots \p
{\mathcal C}_m[a^{F_m \impl G_m } \,t_m \sigma _m ]}$
%
% $q_0 \p _a q_1 = \mathcal{C}[au\sigma]\parallel_{a}
% \mathcal{D}[av\rho]$  
%
where $\sigma_i$ for $1 \leq i \leq m $ are the stacks
which are applied to $a\, t_i$, and $t$ be the cross reduction
redex occurring in $s$ that we reduce to obtain $s'$. Then after performing the
cross reduction and contracting all the intuitionistic redexes,
$t$ reduces to $\pp{b}{ s_1 \p
\dots \p s_m } $
where if $G_i \neq \fal$:
\begin{footnotesize}
  \[ s_{i} \; = \; \pp{a} { {\mathcal C}_1 [a ^{F_1 \impl G_1}\, t_1]
      \dots \p {\mathcal C } _i [ t_j ' ] \p \dots \p {\mathcal C}_m[a^{F_m \impl G_m}
      \,t_m]}
  \]
\end{footnotesize}
and if $ G_i = \fal $: 
\begin{footnotesize}
  \[ s_{i} \; = \; \pp{a}{ {\mathcal C}_1 [a^{G_1 \impl G_1}\, t_1]
      \dots \p {\mathcal C}_i [b_i \lan \sq{y}_i \ran ] \p \dots \p
      {\mathcal C}_m [a^{ F_m \impl G_m }\,t_m]}\]
\end{footnotesize}
in which $F_j = G_i$.
%
% \begin{align*} t' = (&\mathcal{D}[u'] \parallel_{a}
% \mathcal{C}[au\sigma] ) \parallel_{b} \\ (&\mathcal{C}[ v'
% ]\parallel_{a} \mathcal{D}[av \rho])
%   \end{align*} 
%
where $t_j '$ are the terms obtained reducing all
projection and case permutation redexes respectively in $t_j ^{b_i \lan \sq{y}_i \ran /
        \sq{y}_j}$. Moreover $s'=s\{t'/t\}$.
 
We observe that $a$ is active and hence the terms $s_i$ for $1 \leq i \leq m $
%   $\mathcal{D}[u'
% ] \parallel_{a} \mathcal{C}[au\sigma]$ and $\mathcal{C}[v'
% ]\parallel_{a} \mathcal{D}[av\rho]$ 
are not activation
redexes. Moreover, since all occurrences of $b$ are of the form $b_i
\langle \sq{y}_i\rangle$, $t'$ is not an
activation redex.  Now we consider the
channel occurrences in any term $s_i$. We first show
that there are no  activable channels in $t_j ' $ which are bound in $s'$. For any
subterm $w$ of $t_j$, since for any  stack $\theta$ of projections, $b_i \langle
\sq{y}_i\rangle\theta $ has value complexity $0$, we can apply repeatedly
Lemma~\ref{lem:change_of_value} to $w ^{ b_i \langle \sq{y}_i\rangle / \sq{y}_j}$
and obtain that the value complexity of $w ^{ b_i \langle
  \sq{y}_i\rangle / \sq{y}_j}$
 % $w ^{ b \langle \sq{y}\rangle / \sq{z}}$
is exactly the value complexity of $w$. This implies that
there is no activable channel 
in $t_j ^{ b_i \langle
  \sq{y}_i\rangle / \sq{y}_j}$ because there is none in $t_j$.   
The following general statement immediately implies that there is no activable
channel in $t_j'$ which is bound in $s'$ either.  \smallskip

\noindent $(*)$ Suppose that $r$ and $\theta $ are respectively a
simply typed $\lambda$-term and a stack contained in $s'$ that do not contain projection
and permutation redexes, nor  activable channels bound in $s'$.  If $r'$ is obtained from $r\theta$
by performing all possible projection and case permutation reductions,
then there are no  activable channels in $r'$ which are bound in $s'$.
\smallskip

Proof. By induction on the size of $r$. We proceed by cases according to the
shape of $r$.
\begin{itemize}
\item If $r=\lambda x w$, $r=\inj_{i}(w)$, $r=w \exfalso_{P} $, $r=x$ or
$r=a w$, for a channel $a$, then $r'=r\theta$ and the thesis holds.

\item If $r=\langle v_{0}, v_{1}\rangle$ the only redex that can occur
in $r\theta$ is a projection redex, when $\theta= \pi_{i}
\rho$. Therefore, $r\theta \mapsto v_{i}\rho\mapsto^{*} r'$.  By
induction hypothesis applied to $v_{i}\rho$, there are no activable channels in
$r'$ which are bound in $s'$.

\item If $r=t [x.v_0, y.v_1]$, then \\$r\theta \mapsto^{*}
t[x.v_0\theta, y.v_1\theta ]\mapsto^{*} t[x.v_0', y.v_1']= r'$.  By
induction hypothesis applied to $v_0'$ and $v_1'$, there are no activable
channels in $v_0'$ and $v_1'$ and we are done.

\item If $r=p \nu \xi$, with $\xi$ case free, then $r'=p\nu \xi$ and
the thesis holds.
\end{itemize}

Now, let $c$ be any non-active channel bound in $s'$ occurring in
${\mathcal C } _i [ t_j ' ] $ or $ {\mathcal C}_i [b_i \lan \sq{y}_i \ran
]$
% $\mathcal{D}[u']$ 
but not in $u'$: any of its occurrences is of the form $c \langle p_1
, \dots , p_l \{u' / av \rho \}, \dots , p_n \rangle $, where each
$p_l$ is not a pair. We want to show that the value complexity of
$p_l \{u' /av \rho \} $ is exactly the value complexity of
$p_l$. Indeed, $p_l = r \, \nu $ where $\nu$ is
  a case-free stack. If $r$ is of the form $\lam x w $, $\lan q_1 ,
q_2 \ran $, $\inj _i (w)$, $x$, $d w$, with $d\neq a$, then the value
complexity of $p_l [u' /av \rho ] $ is the same as that of $p_l$
(note that if $r=\lan q_{1}, q_{1}\ran$, then $\nu$ is not empty). If
$r= v_{0}[x_{1}.v_{1}, x_{2}.v_{2}]$, then $\nu$ is empty, otherwise
$s$ would contain a permutation redex, so $c \langle p_1
, \dots , p_l, \dots , p_n \rangle $
%  $c \langle t_1 , \dots ,
% t_i, \dots , t_n \rangle $ 
is activable, and there is an activation
redex in $s$, which is contrary to our assumptions. The case $r=a v $,
$\nu=\rho \rho'$ is also impossible, otherwise  $c \langle p_1
, \dots , p_l, \dots , p_n \rangle $ would be activable, and we are done.
}
\end{proof}

\begin{definition} We define the height $\mathsf{h}(t)$ of a term
$t$ in parallel form as
    \begin{itemize}
    \item $\mathsf{h}( u ) \; = \; 0$ if $u$ is simply typed
$\lambda$-term
    \item $\mathsf{h}( u \parallel _a v ) \; = \; 1+ \,
\text{max}(\mathsf{h}( u ) , \mathsf{h} ( v ) )$
\end{itemize}
  \end{definition}

The communication phase of our reduction strategy is finite.

\begin{lemma}[Communicate!]% \label{lem:com_phase}
  Let $t$ be any term in parallel form that does not contain
projection, case permutation, or activation redexes.  Assume moreover
that all redexes in $t$ have complexity at most $\tau$.  Then $t$ reduces to a term
containing no redexes, except Group~\ref{group_one} redexes of complexity at most $\tau$.
\end{lemma}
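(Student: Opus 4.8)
The plan is to realize the Communication Phase as an explicit reduction sequence built by repeatedly applying the Side Reduction Strategy (Definition~\ref{defi-redstrategy}) to an uppermost active session, and to show termination by a well-founded measure that strictly decreases at every step. I would argue by lexicographic induction on the triple $(n,h,g)$, where $n$ counts the active sessions of $t$ that are \emph{not} uppermost, $h$ sends each $m \ge 2$ to the number of uppermost active sessions of height $m$, and $g$ sends each number to how many uppermost active sessions $\pp{a}{u_1 \p \dots \p u_m}$ contain that many occurrences of their bound channel $a$. The functions $h$ and $g$ are compared by the lexicographic order on eventually-zero sequences used elsewhere in the paper. Throughout, the invariant to maintain is that projection, case-permutation, and activation redexes stay absent, so that the only non-communication redexes ever present are Group~\ref{group_one} redexes of complexity at most $\tau$.

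First I would dispose of the case where some uppermost active session $\pp{a}{u_1 \p \dots \p u_m}$ has height $\ge 2$, i.e.\ some component is itself of the form $\pp{b}{w_1 \p \dots \p w_q}$. Here clause~(1) of the Side Reduction Strategy applies: the parallel-operator permutation replaces one uppermost session of height $j$ by $q$ uppermost sessions of height $j-1$. Since this touches only the parallel structure and leaves every purely intuitionistic subterm intact, no intuitionistic or activation redex is created and $b$ stays inactive; thus $n$ is unchanged while $h$ strictly decreases, and the induction hypothesis applies.

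When all uppermost active sessions have height $1$, I pick one, $\pp{a}{u_1 \p \dots \p u_m}$, and split on the distribution of $a$. If some component has no occurrence of $a$, clause~(3) fires, $\pp{a}{u_1 \p \dots \p u_m} \mapsto u_{j_1} \p \dots \p u_{j_p}$; this either promotes a previously non-uppermost active session to uppermost (strictly decreasing $n$) or, when it does not, fixes $n$ and $h$ and strictly decreases $g$, since the deleted session carried a channel that occurs in no other uppermost session. Otherwise every component contains $a$, and clause~(2) performs a full cross reduction followed by all generated projections and case permutations. This is the delicate case: the cross reduction spawns a fresh channel $b$ and $m$ new sessions $s_i$. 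The key is that by Lemma~\ref{lem:freeze} the reduct contains no activation redexes, so $n$ does not grow; by Proposition~\ref{prop:decrease}.2 every newly produced redex still has complexity at most $\tau$; and each $s_i$ again has height $1$ but carries one fewer occurrence of $a$. Writing $k$ for the number of occurrences of $a$ in the original session, we get $g(k)$ dropping by one while $g(i)$ is unchanged for all $i > k$, a strict decrease of $g$. In every case the induction hypothesis then yields a reduction to a term whose only remaining redexes are Group~\ref{group_one} redexes of complexity at most $\tau$.

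The main obstacle is exactly the second infinite-loop danger flagged before the lemma: a single full cross reduction both duplicates the ambient components and creates a fresh channel that could, a priori, demand further transmissions, so the raw count of active sessions may well \emph{increase}. The whole argument hinges on two already-established results to tame this: Lemma~\ref{lem:freeze} guarantees the freshly created and duplicated channels are frozen, hence not immediately activable, so the communication phase cannot be reopened, while Proposition~\ref{prop:decrease}.2 keeps complexities bounded by $\tau$. The conceptual heart is therefore the choice of measure — tracking the occurrence-count function $g$ rather than the session count — so that $g$ genuinely decreases under the full cross reduction even though the number of sessions does not.
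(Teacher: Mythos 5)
Your proposal is correct and coincides with the paper's own proof in every essential respect: the same lexicographic induction on the triple $(n,h,g)$ with the same eventually-zero ordering on $h$ and $g$, the same three-way case split driven by the Side Reduction Strategy (permute when some uppermost session has height $\ge 2$, delete when some component lacks the channel, full cross reduction otherwise), and the same appeals to Lemma~\ref{lem:freeze} to keep $n$ from growing and to Proposition~\ref{prop:decrease}.2 to keep all generated redexes within complexity $\tau$. You also correctly identify the heart of the matter — that $g$ decreases lexicographically at the maximal occurrence count even though the number of sessions and the values of $g$ at smaller arguments may grow — which is exactly how the paper's proof closes the second loop.
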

\begin{proof} 
% See Appendix. 
% \inappendix
{
We prove the statement by lexicographic induction on the triple $( n,
h,g )$ where
\begin{itemize}
   \item $n$ is the number of subterms $\pp{a}{u_1 \p \dots \p u_m }$
     of $t$ such that  $\pp{a}{u_1 \p \dots \p u_m }$ is an active, but not uppermost, session.
\item $h$ is the function mapping each natural number $m\geq 2$
into the number of uppermost active sessions in $t$ with height $m$.
   \item $g$ is the function mapping each natural number $m$
into the number of uppermost active sessions  $\pp{a}{u_1 \p \dots \p u_m }$ in $t$ containing $m$ occurrences of $a$.
\end{itemize}

We employ the following lexicographic ordering between functions for the second and third
elements of the triple: $f<f'$ if and only if there is some $i$ such
that for all $j>i$,  $f(j) = f'(j) = 0$
and there is some $i$ such
that for all $j>i$, $f'(j)=f(j)$ and $f(i)<f'(i)$.

If $h(j) > 0$, for some $j\geq 2$, then there is at least an active
session $\pp{a}{u_1 \p \dots \p u_m }$ in $t$ that does not contain
any active session and such that $\mathsf{h}( \pp{a}{u_1 \p \dots \p
u_m }) = j $. Hence $u_i= \pp{b}{s_1 \p \dots \p s_q }$ for some $1
\leq i \leq i \leq m$. We obtain $t'$ by applying inside $t$  the
permutation  $ \pp{a}{ u_1\p \dots \p \pp{b}{s_1\p  \dots \p s_q}  \dots \p
u_m} \; \mapsto \;   \pp{b}{ \pp{a}{ u_1\p \dots \p s_1\dots \p u_m}
\dots \p  \pp{a}{ u_1\p \dots \p s_q \dots \p u_m}}  $
%  $(s \parallel_b t) \parallel_a v \mapsto ( s \parallel_a
% v) \parallel_b (t \parallel_a v) $ or the permutation $ u \parallel_a
% (s \parallel_b t) \mapsto (u \parallel_a s) \parallel_b (u \parallel_a
% t) $, choosing $u$, if $h(u)=j-1$, and $v$ otherwise.
We claim that the term $t'$ thus obtained has complexity  has
complexity $( n, h', g')$, with $h'<h$. Indeed,  $\pp{a}{u_1 \p \dots
  \p u_m }$  does not
contain active sessions, thus $b$ is not active and the number of
active sessions which are not uppermost in $t'$ is still $n$.  With
respect to $t$, the term $t'$ contains one less uppermost active
session with height $j$ and $q$ more of height $j-1$, and therefore
$h' < h$. Furthermore, since the permutations do not change at all the
purely intuitionistic subterms of $t$, no new activation or
intuitionistic redex is created. In conclusion, we can apply the
induction hypothesis on $t'$ and thus obtain the thesis.

If $h(m)=0$ for all $m\geq 2$, then let us consider an uppermost
active session $\pp{a}{u_1 \p \dots \p u_m }$ in $t$ such that
$\mathsf{h}(\pp{a}{u_1 \p \dots \p u_m }) = 1 $; if there is not, we
are done. We reason by cases on the distribution of the occurrences of
$a$. Either (i) some $u_i$ for $1 \leq i \leq m$ does not contain any
occurrence of $a$, or (ii) all $u_i$ for $1 \leq i \leq m$ contain
some occurrence of $a$.

% , for $1 \leq j_1 < \dots < j_n \leq m $, if $a$ does not occur in
% $u_{j_1} , \dots , u_{j_n}$

% s \p _b w

Suppose that (i) is the case and, without loss of generality, that $a$
occurs $j$ times in $u$ and does not occur in $v$. We then obtain a
term $t'$ by applying a cross reduction $\pp{a}{ u_1 \p \dots \p u_m }
\mapsto u_{j_1} \p \dots \p u_{j_p} $. If there is an active session
$\pp{b}{ s_1 \p \dots \p s_q }$ in $t$ such that $\pp{a}{ u_1 \p \dots
\p u_m } $ is the only active session contained in some $s_i$ for
$1\leq i \leq q$, then the term $t'$ has complexity $(n-1, h', g' )$,
because $\pp{b}{ s_1 \p \dots \p s_q }$ is an active session which is
not uppermost in $t$, but is uppermost in $t'$; if not, we claim that
the term $t'$ has complexity $( n, h, g' )$ where $g'<g$. Indeed,
first, the number of active sessions which are not uppermost does not
change. Second, the height of all other uppermost active sessions does
not change. Third, $g'(j) = g(j)-1$ and, for any $i\neq j$, $g'(i) =
g(i)$ because, obviously, no channel belonging to any uppermost active
session different from $\pp{a}{ u_1 \p \dots \p u_m }$ occurs in
$u$. Since the reduction $\pp{a}{ u_1 \p \dots \p u_m } \mapsto
u_{j_1} \p \dots \p u_{j_p} $ does not introduce any new
intuitionistic or activation redex, we can apply the induction
hypothesis on $t'$ and obtain the thesis.

Suppose now that (ii) is the case and that all $u_i$ for $1\leq i \leq
m $ together contain $j$ occurrences of $a$. Then $\pp{a}{ u_1 \p
\dots \p u_m }$ is of the form $\pp{a}{ {\mathcal C}_1 [a^{F_1 \impl
G_1 } \,t_1] \p \dots \p {\mathcal C}_m[a^{F_m \impl G_m } \,t_m]}$
where $a$ is active, ${\mathcal C}_j [a^{ F_j \impl G_j } \,t_j] $ for
$1 \leq j \leq m $ are simply typed $\lambda$-terms; $a^{ F_j \impl
G_j } $ is rightmost in each of them.
%
% $\mathcal{C}[a\, s]\parallel_{a} \mathcal{D}[a\, w]$ where
% $\mathcal{C}[a\, s], \mathcal{D}[a\, w]$ are simply typed
% $\lambda$-terms and the displayed occurrences of $a$ are the rightmost
% both in $\mathcal{C}[a\,s]$ and in $\mathcal{D}[a\,w ]$. 
%
%  $\mathcal{C}[a\, s]\parallel_{a}
% \mathcal{D}[a\, w] \mapsto$\\$ (\mathcal{D}[s^{b\langle \sq{z}\rangle
% / \sq{y}}] \parallel_{a} \mathcal{C}[a\, s] ) \, \parallel_{b}\,
% (\mathcal{C}[w^{b\langle \sq{y}\rangle / \sq{z}}]\parallel_{a}
% \mathcal{D}[a\, w])$ in which $\sq{y}$ is the sequence of the free
% variables of $s$ which are bound in $\mathcal{C}[a\,s]$, $\sq{z}$ is
% the sequence of the free variables of $t$ which are bound in
% $\mathcal{D}[a\,w]$, and $b$ is fresh and not active.  
%
Then we can apply the cross reduction $\pp{a}{ {\mathcal C}_1 [a^{F_1 \impl G_1 } \,t_1] \p \dots \p
{\mathcal C}_m[a^{F_m \impl G_m } \,t_m]} \; \mapsto \; \pp{b}{ s_1 \p
\dots \p s_m } $
in which $b$ is fresh and for $1 \leq i \leq m $, we define, if $G_i \neq \fal$:
$ s_{i} \; = \; 
\pp{a} { {\mathcal C}_1 [a ^{F_1 \impl G_1}\, t_1] \dots \p  {\mathcal C }
_i [ t_j^{b_i \lan \sq{y}_i \ran  /
\sq{y}_j} ] \p \dots \p {\mathcal C}_m[a^{F_m \impl G_m} \,t_m]}$ and
if $G_i = \fal$ : $  s_{i} \; = \; 
\pp{a}{ {\mathcal C}_1 [a^{G_1 \impl G_1}\, t_1] \dots \p {\mathcal
    C}_i [b_i \lan \sq{y}_i \ran ] \p \dots \p 
{\mathcal C}_m [a^{ F_m \impl G_m }\,t_m]}$
where $F_j = G_i$; $\sq{y}_z$ for $1 \leq z \leq m$ is the sequence of the free
variables of $t_z$ bound in $\mathcal{C}_z[a^{F_z \impl G_z}
\, t_z]$; $b_i=b^{B_{i}\IMPL B_{j}}$, where $B_{z}$ for $1 \leq z \leq
m$ is the type of $\lan \sq{y}_z \ran$. By
Lemma~\ref{lem:freeze}, after performing all projections and case
permutation reductions in all $  {\mathcal C }
_i [ t_j^{b_i \lan \sq{y}_i \ran  /
\sq{y}_j} ] $ and  $ {\mathcal
    C}_i [b_i \lan \sq{y}_i \ran ] $ for $1 \leq i\leq m$
% $\mathcal{D}[s^{b\langle \sq{z}\rangle / \sq{y}}]$ and
% $\mathcal{C}[w^{b\langle \sq{y}\rangle / \sq{z}}]$, 
we obtain a term $t'$ that contains no activation redexes; moreover,
by Proposition~\ref{prop:decrease}.2., $t'$ contains only
redexes having complexity at most $\tau$.

We claim that the term $t'$ thus obtained has complexity $\langle n,
h, g' \rangle$ where $g'<g$.  Indeed, the value $n$ does not change
because all newly introduced occurrences of $b$ are not active. The new active sessions $ s_{i}  =  \pp{a} { {\mathcal C}_1 [a ^{F_1
\impl G_1}\, t_1] \dots \p {\mathcal C } _i [ t_j^{b_i \lan \sq{y}_i
\ran / \sq{y}_j} ] \p \dots \p {\mathcal C}_m[a^{F_m \impl G_m}
\,t_m]}$ or $ s_{i} = \pp{a}{ {\mathcal C}_1
[a^{G_1 \impl G_1}\, t_1] \dots \p {\mathcal C}_i [b_i \lan \sq{y}_i
\ran ] \p \dots \p {\mathcal C}_m [a^{ F_m \impl G_m }\,t_m]}$ for $1
\leq i \leq m$
%
%  $\mathcal{D}[s^{b\langle \sq{z}\rangle /
% \sq{y}}] \parallel_{a} \mathcal{C}[a\, s] $ and
% $\mathcal{C}[t^{b\langle \sq{y}\rangle / \sq{z}}]\parallel_{a}
% \mathcal{D}[a\, t]$ have both 
%
have all height $1$ and contain $j-1$ occurrences of $a$. Since
furthermore the reduced term does not contain channel occurrences of
any uppermost active session different from $\pp{a}{ u_1 \p \dots \p
u_m }$ we can infer that $g'(j) = g(j)-1$ and that, for any $i$ such
that $i > j$, $g'(i) = g(i)$.

We can apply the induction hypothesis on $t'$ and obtain the thesis.
}
\end{proof}

We now combine together all the main results achieved so far.

\begin{proposition}[Normalize!]% \label{prop:normalize} 
  Let $t: A$ be any term
in parallel form. Then $t \mapsto^{*} t'$, where $t'$ is a
parallel normal form.
\end{proposition}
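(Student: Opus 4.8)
The plan is to argue by induction on $\tau$, the maximum complexity among the redexes occurring in $t$. If $t$ contains no redex it is already a parallel normal form and there is nothing to prove; otherwise I would perform a single full cycle of the master reduction strategy (Definition~\ref{defi-mastredstrategy}) and show that it transforms $t$ into a parallel form whose redexes all have complexity strictly below $\tau$, at which point the induction hypothesis finishes the argument.

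First I would run the \emph{intuitionistic phase}, firing all intuitionistic redexes. These are exactly the $\beta$- and case-reductions of the simply typed $\lambda$-calculus, so the phase terminates, and by Proposition~\ref{prop:decrease}(1) contracting a Group~\ref{group_one} redex never produces a redex of complexity exceeding $\tau$; hence the resulting term $t_1$ has all its redexes of complexity at most $\tau$. Next, Lemma~\ref{activation} shows that the \emph{activation phase} terminates and yields a term $t_2$ in which the only redexes left are cross-reduction redexes, still of complexity at most $\tau$. Then Lemma~\ref{lem:com_phase} guarantees that the \emph{communication phase} terminates and returns a term $t_3$ whose only surviving redexes belong to Group~\ref{group_one} and have complexity at most $\tau$.

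The crucial step is the closing sub-phase, where I would reduce all the Group~\ref{group_one} redexes of $t_3$. Since $t_3$ contains no Group~\ref{group_two} redex and, a fortiori, no case-permutation redex, repeated use of Proposition~\ref{prop:decrease}(1) keeps every newly created Group~\ref{group_one} redex bounded in complexity by a Group~\ref{group_one} redex already present, hence at most $\tau$; meanwhile any newly created Group~\ref{group_two} redex can satisfy neither the ``same group'' clause (no Group~\ref{group_two} redex is present) nor the ``case-permutation'' clause (none is present), so it must fall under the remaining clause and have complexity strictly smaller than $\tau$. Thus, once the Group~\ref{group_one} redexes are exhausted, the term $t_4$ so obtained has all its redexes of complexity strictly below $\tau$, and $t_4 \mapsto^{*} t'$ with $t'$ a parallel normal form by the induction hypothesis.

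The delicate point I expect to be the main obstacle is precisely this guaranteed strict decrease of $\tau$ across the cycle: one must ensure both that none of the three phases ever reintroduces activity of complexity above $\tau$ and that the structural redexes spawned by the final Group~\ref{group_one} reductions are genuinely smaller. Both facts rest on the fine-grained value-complexity bookkeeping --- Proposition~\ref{prop:decrease} together with Lemmas~\ref{lem:change_of_value}, \ref{lem:replace}, \ref{lem:eliminate_the_case} and \ref{lem:in_case} --- and, for the communication phase, on Lemma~\ref{lem:freeze}, which forbids new activations during message passing and so prevents the complexity from silently climbing back up.
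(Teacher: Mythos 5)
Your proposal is correct and follows essentially the same route as the paper's own proof: induction on the maximal redex complexity $\tau$, one cycle of intuitionistic, activation (Lemma~\ref{activation}) and communication (Lemma~\ref{lem:com_phase}) phases, then exhaustion of the Group~\ref{group_one} redexes, with Proposition~\ref{prop:decrease} guaranteeing that the surviving Group~\ref{group_two} redexes drop strictly below $\tau$ so that the induction hypothesis applies to $t_4$. Your added remarks --- termination of the intuitionistic phase via strong normalization of the simply typed $\lambda$-calculus, and the explicit case analysis on the clauses of Proposition~\ref{prop:decrease}(1) --- merely make explicit what the paper leaves implicit.
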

\begin{proof} 
% See Appendix. 
% \inappendix
{
Let $\tau$ be the maximum among the complexity of the redexes in $t$. We prove
the statement by induction on $\tau$.

 Starting from $t$, we reduce all intuitionistic redexes and obtain a
term $t_1$ that, by Proposition~\ref{prop:decrease}, does not contain
redexes of complexity greater than $\tau$. By Lemma~\ref{activation},
$t_1\mapsto ^* t_2$ where $t_2$ does not contain any redex, except
cross reduction redexes of complexity at most $\tau$.  By
Lemma~\ref{lem:com_phase}, $t_2\mapsto^* t_3$ where $t_3$ contains
only Group~\ref{group_one} redexes of complexity at most $\tau$. Suppose
$t_3\mapsto^* t_4$ by reducing all Group~\ref{group_one} redexes, starting from
$t_{3}$. By Proposition~\ref{prop:decrease}, every Group~\ref{group_one} redex
which is generated in the process has complexity at most $\tau$, thus
every Group~\ref{group_two} redex which is generated has complexity smaller than
$\tau$ , thus $t_{4}$ can only contain redexes with complexity smaller
than $\tau$. By induction hypothesis $t_4 \mapsto ^* t'$, with $t'$ in
parallel normal form.
}
\end{proof}

  The normalization for $\lama$, now easily follows.

\begin{theorem}% \label{theorem-normalization} 
  Suppose that  $ t: A$ is a proof term  of $\LC$. Then $t\mapsto^{*} t': A$, where $t'$ is a normal parallel form.
\end{theorem}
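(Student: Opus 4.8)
The plan is to obtain the Normalization Theorem as a direct corollary of the two central results already established, glued together by Subject Reduction. The substance of the argument lives entirely in the preceding propositions; the theorem itself only has to orchestrate them. Concretely, given a proof term $t: A$, I would first invoke Proposition~\ref{proposition-normpar} (Parallel Form) to produce a reduction $t \mapsto^{*} t_0$ where $t_0$ is in parallel form. This step uses only the permutation rules and is purely structural, pushing all $\parallel$ operators to the outside so that $t_0 = t_1 \parallel \dots \parallel t_{n+1}$ with each $t_i$ a simply typed $\lambda$-term.

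Next I would apply Proposition~\ref{prop:normalize} (Normalize!) to $t_0$, which yields a further reduction $t_0 \mapsto^{*} t'$ with $t'$ a parallel normal form. Chaining the two reduction sequences gives $t \mapsto^{*} t'$ with $t'$ normal and in parallel form, which is exactly the conclusion. To guarantee that the displayed type is preserved along the whole sequence, I would cite Theorem~\ref{subjectred} (Subject Reduction): since $t : A$ and every reduction step preserves typing, we have $t' : A$ as required. This completes the proof.

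The genuinely hard work — and the only real obstacle — is hidden inside Proposition~\ref{prop:normalize}, which I am permitted to assume here. Its proof proceeds by induction on the maximal complexity $\tau$ of the redexes in the parallel form, interleaving the intuitionistic, activation, and communication phases of the Master Reduction Strategy (Definition~\ref{defi-mastredstrategy}). The two potential infinite loops are broken by the value-complexity measure of Definition~\ref{def:fut_comp} together with the Freeze Lemma~\ref{lem:freeze}, ensuring both that the activation phase terminates (Lemma~\ref{activation}) and that the communication phase terminates (Lemma~\ref{lem:com_phase}) while only ever generating redexes of complexity at most $\tau$, so that each outer induction step strictly decreases $\tau$. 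Relative to all of this, the final theorem is essentially bookkeeping: the only point requiring a word of care is that the parallel form produced by Proposition~\ref{proposition-normpar} is the correct input for Proposition~\ref{prop:normalize}, which it is by construction, and that Subject Reduction applies uniformly to permutations, intuitionistic reductions, activations, and cross reductions alike — which is precisely the content of Theorem~\ref{subjectred}.
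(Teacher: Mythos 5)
Your proposal is correct and coincides with the paper's own argument: the theorem is obtained exactly by chaining Proposition~\ref{proposition-normpar} (Parallel Form) with Proposition~\ref{prop:normalize} (Normalize!), with Theorem~\ref{subjectred} (Subject Reduction) guaranteeing that the type $A$ is preserved along the whole reduction sequence. Your additional remarks on where the real work lies (the induction on the maximal redex complexity $\tau$ inside Proposition~\ref{prop:normalize}) accurately reflect the paper's structure as well.
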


\section{ 
The Subformula Property}% \label{section-subformula}

We now show that normal $\lama$-terms satisfy the
Subformula Property:  a normal proof does not
contain concepts that do not already appear in the premisses or in the
conclusion. This, in turn,
implies that our Curry--Howard correspondence for $\lama$ is meaningful
from the logical perspective and produces analytic  proofs.

\begin{proposition}[Parallel Normal Form Property]
% \label{proposition-parallelform} 
If $t\in
\nf$ is a $\lama$-term, then it is in parallel form.
\end{proposition}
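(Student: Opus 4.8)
The plan is to argue by structural induction on $t$, using the single structural fact that the parallel-operator permutations of Figure~\ref{fig:red} allow $\pp{a}{\cdot}$ to commute outward past every term constructor. Since $t \in \nf$, none of these permutations can be an applicable redex inside $t$; the induction will turn this observation into the statement that no immediate subterm sitting in an introduction- or elimination-head position may itself have an outermost $\parallel$.

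First I would dispose of the base case $t = x$: a variable contains no $\parallel$, so it is a simply typed $\lambda$-term and hence a parallel form with a single component. For each genuine constructor I would invoke the induction hypothesis on the immediate subterms and then rule out a top-level $\parallel$ there by exhibiting the matching permutation. Concretely, for $t = \lambda x\,v$ the rule $\lambda x\,\pp{a}{u_1\p\dots\p u_m}\mapsto\pp{a}{\lambda x\,u_1\p\dots\p\lambda x\,u_m}$ forces $v$ to be $\parallel$-free; for $t = \langle v_1,v_2\rangle$ the two pairing permutations force both $v_1$ and $v_2$ to be $\parallel$-free; for $t = \inj_i(v)$ the injection permutation forces $v$ to be $\parallel$-free; and for $t = v_1 v_2$ the stack permutation $\pp{a}{u_1\p\dots\p u_m}\,\xi\mapsto\pp{a}{u_1\xi\p\dots\p u_m\xi}$ handles a $\parallel$ in head position, while $w\,\pp{a}{u_1\p\dots\p u_m}\mapsto\pp{a}{w u_1\p\dots\p w u_m}$ handles one in argument position. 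The projection $v\,\pi_i$ and the ex-falso $\efq{P}{v} = v\,\exfalso_P$ are instances of the same head-position stack permutation, since both are applications of a one-element stack. In every such case the induction hypothesis upgrades ``$\parallel$-free immediate subterms'' to ``the whole term is a simply typed $\lambda$-term'', i.e.\ a parallel form.

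The only constructor that produces genuine parallelism is $t = \pp{a}{u_1\p\dots\p u_m}$. Here the induction hypothesis gives each $u_i$ in parallel form, and after erasing parentheses the concatenation $u_1\p\dots\p u_m$ is itself a parallel form, so $t$ is too. Channel applications $a^{A\impl B}v$ are treated exactly like ordinary applications with $\parallel$ in argument position.

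The step I expect to be the main obstacle---really a matter of care rather than depth---is verifying that the permutation set of Figure~\ref{fig:red} is genuinely complete, in the sense that every position in which a parallel operator can appear immediately beneath a constructor is matched by some permutation. The delicate points are the application constructor, where a $\parallel$ may appear either in head or in argument position so that two distinct rules are needed, and the elimination forms realized as stack applications (projection and ex-falso), which must all be recognized as instances of the single stack permutation. Once this bookkeeping is settled the induction is immediate, which is why the statement can fairly be called an easy induction.
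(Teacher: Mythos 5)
Your proof is correct and takes essentially the same route as the paper's own: the paper likewise argues by structural induction, using normality to rule out a top-level parallel operator in each immediate subterm by exhibiting the matching permutation of Figure~\ref{fig:red}, and dispatches the case $t=\pp{a}{u_1\p\dots\p u_m}$ trivially via the induction hypothesis on the components. The only differences are cosmetic bookkeeping---you unify projection and ex-falso as instances of the one-element-stack permutation and spell out the injection case, which the paper's write-up leaves implicit.
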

\begin{proof}
% See Appendix. 
% \inappendix
{
By induction on $t$. %We reason by cases on
% , according to
%the shape of $t$. 
\begin{itemize}

\item $t$ is a variable $x$. Trivial. 

\item $t=\lambda x\, v$. Since $t$ is normal, $v$ cannot be of the
form $\pp{a}{u_1 \p \dots \p u_m }$, otherwise one could apply the
permutation \[t= \lam x^{A} \, \pp{a}{u_{1}\p\dots \p u_{m}} \mapsto
\pp{a}{\lam x^{A} \, u_{1}\p\dots \p \lam x^{A} . u_{m}} \] and $t$
would not be in normal form. Hence, by induction hypothesis $v$ must
be a simply typed $\lambda$-term.

\item $t=\langle v_{1}, v_{2}\rangle$. Since $t$ is normal, neither
$v_{1}$ nor $v_{2}$ can be of the form $\pp{a}{u_1 \p \dots \p u_m }$,
otherwise one could apply one of the permutations
\[\langle \pp{a}{u_{1}\p \dots\p u_{m}},\, w\rangle \mapsto
\pp{a}{\langle u_{1}, w\rangle\p \dots\p \lan u_{m}, w\ran}\]
\[\langle w, \,\pp{a}{u_{1}\p \dots\p u_{m}}\rangle \mapsto
\pp{a}{\lan w, u_{1}\ran\p \dots\p \lan w, u_{m} \ran}\] and $t$ would
not be in normal form. Hence, by induction hypothesis $v_{1}$ and
$v_{2}$ must be simply typed $\lambda$-terms.

\item $t=v_1 \, v_2$. Since $t$ is normal, neither $v_1$ nor $v_2$ can
be of the form $\pp{a}{u_1 \p \dots \p u_m }$, otherwise one could
apply one of the permutations \[\pp{a}{u_{1}\p\dots \p u_{m}}\, w
\mapsto \pp{a}{u_{1}w\p\dots \p u_{m}w} \] \[w\, \pp{a}{u_{1}\p\dots
\p u_{m}} \mapsto \pp{a}{w u_{1}\p\dots \p w u_{m}}\] and $t$ would
not be in normal form. Hence, by induction hypothesis $v_{1}$ and
$v_{2}$ must be simply typed $\lambda$-terms.

\item $t=  \efq{P}{v}$. Since $t$ is normal, $v$ cannot
be of the form $\pp{a}{u_1 \p \dots \p u_m }$, otherwise one could apply the permutation
\[\efq{P}{\pp{a}{u_{1}\p\dots\p u_{m}}} \mapsto
\Ecrom{a}{\efq{P}{w_{1}}}{\efq{P}{w_{2}}}\] and $t$ would not be in
normal form. Hence, by induction hypothesis $u_{1}$ and $u_{2}$ must
be simply typed $\lambda$-terms.

\item $t=u\, \pi_{i}$. Since $t$ is normal, $v$ can
be of the form $\pp{a}{u_1 \p \dots \p u_m }$, otherwise one could apply the permutation
\[\pp{a}{u_{1}\p \dots\p u_{m}}\,\pi_{i} \mapsto \pp{a}{u_{1}\pi_{i}\p
\dots\p u_{m}\pi_{i}}\] and $t$ would not be in normal form. Hence, by
induction hypothesis $u$ must be a simply typed $\lambda$-term, which
is the thesis.

\item $t= \pp{a}{u_1 \p \dots \p u_m }$. By induction hypothesis the thesis holds
for $u_i$ where $1 \leq i \leq m$ and hence trivially for $t$.
\end{itemize}
}
\end{proof}

\begin{theorem}[Subformula Property]% \label{theorem-subformula} 
  Suppose
\[x_{1}^{A_{1}}, \ldots, x_{n}^{A_{n}}, a_{1}^{D_{1}}, \ldots, a_{m}^{D_{m}}\vdash t: A \quad \mbox{and} \quad
t\in \nf. \quad \mbox{Then}:\]
\begin{enumerate}
\item % \label{dot:subf_1}
For each channel variable $a^{B \IMPL C}$ occurring bound in
$t$, the prime factors of $B,C$ are
subformulas of $A_{1}, \ldots, A_{n}, A$ or proper subformulas of $D_{1}, \ldots, D_{m}$.
\item % \label{dot:subf_2}
 The type of any subterm of $t$ %which is not a bound communication variable 
 is either a subformula or a conjunction of subformulas of
$A_{1}, \ldots, A_{n}, A$ and of proper subformulas of $D_{1}, \ldots, D_{m}$.
\end{enumerate}
\end{theorem}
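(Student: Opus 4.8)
The plan is to prove claims~1 and~2 simultaneously by structural induction on the normal term $t$, relying on Proposition~\ref{proposition-parallelform} (a normal term is in parallel form) so that $t$ is either a simply typed $\lambda$-term or a parallel composition $\pp{b}{u_1 \p \dots \p u_k}$. First I would dispatch the simply typed cases. When $t = \pair{u}{v}$, $t = \lambda x^F\, u$, $t = \inj_i(u)$, $t = x^{A_i}\sigma$, or $t = a^{D_i} u\, \sigma$, no channel is bound at the head, so claim~1 holds vacuously at the top level and the induction hypothesis on the immediate subterms gives it inside them. Claim~2 follows by noting that every type newly introduced --- the bound variable's type $F$, the components of a pair or injection, the types $S_j$ of the stack entries --- is a subformula of the displayed type or of the head variable's type, using that the stack entries applied to $x^{A_i}$ are subformulas of $A_i$. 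The only subtlety is a non-case-free stack $\sigma$: normality under case permutations forces $\sigma = \sigma'[y^G.v_1, z^E.v_2]$ with $G,E$ subformulas of $A_i$, and the induction hypothesis on $v_1,v_2 : A$ closes it.

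The main obstacle is the parallel case $t = \pp{b}{u_1 \p \dots \p u_k}$, where a channel $b^{G_i \IMPL H_i}$ becomes bound. Here I would argue by contradiction, assuming the subformula property fails for $t$. I would call a bound channel $a^{F_1 \IMPL F_2}$ of $t$ a channel that \emph{violates the subformula property maximally} (due to a prime factor $Q$) when $Q$ is a prime factor of $F_1$ or $F_2$ that is neither a subformula of $A_1, \dots, A_n, A$ nor a proper subformula of $D_1, \dots, D_m$, and $Q$ is of maximal complexity among all such offending prime factors. Applying the induction hypothesis to $u_1, \dots, u_k$, I would first show that if no offending prime factor came from the types $G_i, H_i$ of $b$ itself, the property could not fail for $t$; hence some maximally violating channel must exist.

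The crux is then to localize and expose a redex. By an inner induction on the number of $\p$-operators, I would find a simply typed subterm containing a maximally violating channel, all of whose maximally violating occurrences are \emph{in the input}, and single out the \emph{rightmost} such occurrence $c^{F_1 \IMPL F_2} w$, where $w = \langle w_1, \dots, w_j\rangle$ is a value (since $c$ is bound there must be an enclosing active session). Examining the component $w_\ell$ whose type carries the offending $Q$ yields a contradiction with $t \in \nf$ in each shape: if $w_\ell = \lambda y\, s$ or $w_\ell = \inj_i(s)$, an activation followed by a cross reduction would fire; if $w_\ell = x^T\sigma$ with $\sigma$ case-free and $\exfalso$-free (forced, else activation or cross reductions apply), then $x^T$ is either free, making $Q$ a subformula of some $A_i$, or bound by a $\lambda$ or case binder whose type contains $Q$ as a \emph{proper} subformula, and maximality of $Q$ gives the contradiction; if $w_\ell = a^T r\, \sigma$ for a bound channel $a \neq c$ (rightmostness excludes $a=c$), then $Q$ would be a subformula of a prime factor of the codomain of $T$ while $a^T$ occurs inside $w$, contradicting the choice of $c$.

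Since every shape leads to a contradiction, no maximally violating channel exists, and both claims follow. I expect the bookkeeping of \emph{maximal violation} together with the rightmost-occurrence selection to be the delicate part: it is precisely this choice that guarantees the offending channel cannot be hidden behind another communication and must instead expose an activation or cross redex, which normality forbids.
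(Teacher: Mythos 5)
Your proposal is correct and follows essentially the same route as the paper's proof: the same structural induction with the simply typed cases dispatched via the parallel-form proposition, the same notions of a channel \emph{maximally violating} the subformula property (due to a prime factor $Q$) and of violation \emph{in the input}, the same inner induction on the number of $\p$-operators to isolate a simply typed subterm, and the same rightmost-occurrence case analysis on $c^{F_1 \IMPL F_2}\,w$ contradicting normality. One small slip: your parenthetical claim that $w = \langle w_1,\dots,w_j\rangle$ \emph{is a value} is backwards --- normality precisely forbids this (a value would make the channel activable), and your own subsequent case analysis correctly rules out each value-like shape of $w_\ell$, so the remark is harmless but should be dropped.
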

\begin{proof} 
% See Appendix. 
% \inappendix
{
We proceed by structural induction on $t$ and reason by cases, according to the form of $t$.

\begin{itemize}
\item 

$t = \pair{u}{v} : F \ET G $. Since $t\in \nf$, by Proposition \ref{proposition-parallelform} it is in parallel form, thus is a simply typed $\lambda$-term. Therefore no communication variable can be
bound inside $t$, thus 1. trivially holds. By induction hypothesis, 2. holds for $u : F$ and $v : G$.  Therefore, the type of any subterm of $u$ is either a subformula or a conjunction
of subformulas of  $A_{1}, \ldots, A_{n}$, of $F$ and of proper
subformulas of $D_{1}, \ldots, D_{m}$ and any subterm of $v$ is
either a subformula or a conjunction of subformulas of some $A_{1},
\ldots, A_{n}$, of $G$ and of proper
subformulas of $D_{1}, \ldots, D_{m}$.  Moreover, any subformula of
$F$ and $G$ must be a subformula of the type $F \ET G$ of $ t $. Hence the type of any subterm of
$\pair{u}{v}$ is either a subformula or a conjunction of subformulas
of $A_{1}, \ldots, A_{n}, F \ET G$ or a proper subformula of $D_{1},
\ldots, D_{m}$ and the statement holds for $t$ as well.

\item $t = \lambda x^F \, u : F \IMPL G$. Since $t\in \nf$, by Proposition \ref{proposition-parallelform} it is in parallel form, thus is a simply typed $\lambda$-term. Therefore no communication variable can be
bound inside $t$, thus 1. trivially holds. By induction hypothesis, 2. holds for $u:G$. Therefore the type of any
subterm of $u$ is either a subformula or a conjunction of subformulas
of some $A_{1}, \ldots, A_{n}, F$, of $G$ and of proper subformulas of
$D_{1}, \ldots, D_{m}$. Since the type $F$ of $x$ is a subformula of $F
\IMPL G$,  the type of any subterm of $\lambda x^F \, u$ is either
a subformula or a conjunction of subformulas of $A_{1}, \ldots, A_{n},
F \IMPL G$ or a proper subformula of $D_{1}, \ldots, D_{m}$ and the
statement holds for $t$ as well.

\item $t = \inj_{i}({u}) : F \VEL G$ for $i \in \{0,1\}$. Without loss
  of generality assume that $i=1$ and  $u:G$. Since $t\in \nf$, by Proposition \ref{proposition-parallelform} it is in parallel form, thus is a simply typed $\lambda$-term. Therefore no communication variable can be
bound inside $t$, thus 1. trivially holds. By induction hypothesis, 2. holds for $u:F$. Therefore,  the type of any
subterm of $u$ is either a subformula or a conjunction of subformulas
of some $A_{1}, \ldots, A_{n}$, of $F$ or proper subformulas of
$D_{1}, \ldots, D_{m}$. Moreover, any subformula of $G$ must be a
subformula of the type $ F \VEL G$ of $t$.  Hence the type of any subterm of $\inj_{i}({u})$ is either
a subformula or a conjunction of subformulas of $A_{1}, \ldots, A_{n},
F \VEL G$ or a proper subformula of $D_{1}, \ldots, D_{m}$ and the
statement holds for $t$ as well.

\item $t = x^{A_{i}} \, \sigma : A$ for some $A_{i}$ among $A_{1},
\ldots, A_{n}$ and stack $\sigma$. Since $t\in \nf$, it is in parallel form, thus is a simply typed $\lambda$-term and no communication variable can be
bound inside $t$. By induction hypothesis, for any element $\sigma_j :
S_j$ of $\sigma$, the type of any subterm of $\sigma_j$ is either a
subformula or a conjunction of subformulas of some $A_{1}, \ldots,
A_{n}$, of the type $S_j$ of $\sigma_j$ and of proper subformulas of
$D_{1}, \ldots, D_{m}$.  

If $\sigma$ is case-free, then every $S_j$ is a
subformula of  $A_i$,  or of $A$, when $\sigma=\sigma' \exfalso_{A}$. Hence, the type of any subterm of
$x^{A_{i}} \, \sigma$ is either a subformula or a conjunction of
subformulas of $A_{1}, \ldots, A_{n}, A$ or of proper subformulas of
$D_{1}, \ldots, D_{m}$ and the statement holds for $t$ as well. 

In case $\sigma$ is not case-free, then, because of case permutations,  $\sigma = \sigma ' [y^G. v_1 ,z^E. v_2] $, with $\sigma'$ case-free.  By induction hypothesis we know that the type of any subterm of
$v_1 : A$ or $v_2:A$ is either a subformula or a conjunction of
subformulas of some $A_{1}, \ldots, A_{n}$, of $A, G, E$ and of proper
subformulas of $D_{1}, \ldots, D_{m}$. Moreover, $G$ and $E$ are
subformulas of $A_{i}$ due to the properties of stacks. Hence, the
type of any subterm of $x \, \sigma ' [y^G. v_1 ,z^E. v_2]$ is either
a subformula or a conjunction of subformulas of $A_{1}, \ldots, A_{n},
A$ and of  proper subformulas of $D_{1}, \ldots, D_{m}$ and also in this
case the statement holds for $t$ as well.

\item $t = a^{D_{i}}u \, \sigma : A$ for some $D_{i}$ among $D_{1},
\ldots, D_{n}$ and stack $\sigma$. As in the previous case.

\item $t = \pp{b}{u_{1}\p\dots  \p  u_{k}} : A$ and $b^{G_{i}\rightarrow H_{i}}$ occurs in
$u_{i}$.  Suppose, for the sake of contradiction, that the statement does not
hold. We know by induction hypothesis that the statement holds for $u_{1}: A, \dots, u_{k}: A$. We
first show that it cannot be the case that
\begin{quote}
  $(*)$ all prime factors of $G_{1}, H_{1}, \dots, G_{k}, H_{k}$
  are subformulas of $A_{1}, \ldots, A_{n}, A$ or proper subformulas
  of $D_{1}, \ldots, D_{m}$.
\end{quote}

Indeed, assume by contradiction that $(*)$ holds. Let us consider the type $T$ of any subterm of $t$ which is not a bound
communication variable and the formulas $B,C$ of any bound
communication variable $a^{B \IMPL C}$ of $t$. Let $P$ be any prime factor of $T$ or $B$ or $C$. By induction hypothesis applied to $u_{1},\dots, u_{n}$, we obtain that $P$  is either subformula
or conjunction of subformulas of $A_{1}, \ldots, A_{n}, A$ and of
proper subformulas of $D_{1}, \ldots, D_{m}, G_{1}, H_{1}, \dots,
G_{k}, H_{k}$.  Moreover,  $P$ is prime and so it must be subformula of
$A_{1}, \ldots, A_{n}, A$ or a proper subformula of $D_{1}, \ldots, D_{m}$ or a prime factor of $G_{1}, H_{1}, \dots, G_{k}, H_{k}$. Since $(*)$ holds, $P$ must be a subformula of $A_{1}, \ldots, A_{n}, A$ or proper subformula of $D_{1}, \ldots, D_{m}$, and this contradicts the assumption that the subformula property does not hold for $t$.

We shall say from now on that any bound channel variable $a^{F_1 \IMPL
F_2}$ of $t$ \emph{violates the subformula property maximally (due to $Q$)} if
(i) some prime factor $Q$ of $F_1$ or $F_2$ is neither a
subformula of $A_{1}, \ldots, A_{n}, A$ nor a proper subformula of
$D_{1}, \ldots, D_{m}$ and (ii) for every other bound channel variable
$c^{S_{1}\IMPL S_{2}}$ of $t$, if some prime factor $Q'$ of $S_1$ or
$S_2$ is neither a subformula of $A_{1}, \ldots, A_{n}, A$ nor a
proper subformula of $D_{1}, \ldots, D_{m}$, then $Q'$ is complex at
most as $Q$. If $Q$ is a subformula of $F_1$ we say that  $a^{F_1 \IMPL
F_2}$  violates the subformula property maximally \emph{in the input}.

It follows from $(*)$ that a channel variable maximally violating the
subformula property must exist. We show now that there also exists
a subterm $c^{F_1 \IMPL F_2} w$ of $t$ such that $c$ maximally
violates the subformula property in the input due to $Q$, and $w$ does not
contain any channel variable that violates the subformula property
maximally.

In order to prove the existence of such term, we prove 
\smallskip

\noindent $(**)$ Let $t_{1}$ be any subterm of $t$ %such that, if we remove the parentheses, has the form $t_1 = s_1 \p _{d_{1}} \dots \p_{d_{m-1}} s_m $ and such that some $d_i$ for $1 \leq i \leq m$ is a maximally violating variable, 
such that $t_{1}$ contains at least a maximally violating channel and all maximally violating channel of $t$ that are free in $t_{1}$ are maximally violating in the input. Then  there is a simply typed  subterm $s$ of $t_{1}$ 
 such that $s$ contains at least a maximally violating channel, and such that all
occurrences of maximally violating channels occurring in $s$  violate the subformula property in the input.
\smallskip

%Notice that the definition of maximally violating channels depend on the whole term $t$, but $(**)$ only mentions the maximally violating channels which are bound in $t_1$.

We proceed by induction on the number $n$ of $\parallel$
operators that occur  in $t_1$.  %and bind occurrences of maximally violating channels. 
%{\ehi Notice that, for any subterm
%$u_1 \parallel_{d_i} v_1$ of $t_1$, there is at least one occurrence
%of $d_i$ in $ u_1$ and one in $ v_1$, otherwise the term is not
%normal, which contradicts the assumptions on $t$.}

If $n=0$,  it is enough to pick $s=t_{1}$. %and choose the rightmost maximally violating channel of $t_{1}$, which by assumption maximally violates the subformula property in the input.

If $n>0$, let $t_{1}=\pp{d}{v_{1}\p\dots\p v_{n}}$ and assume $d^{E_{i}\IMPL F_{i}}$ occurs in $v_{i}$. If no $d^{E_{i}\IMPL F_{i}}$ maximally violates the subformula property, we obtain the thesis by applying the induction hypothesis to any $v_{i}$. Assume therefore that some $d^{E_{i}\IMPL F_{i}}$ maximally violates the subformula property due to $Q$. Then there is some $d^{E_{j}\IMPL F_{j}}$ such that  $Q$ is a prime factor of $E_{i}$ or $E_{j}$. By induction hypothesis applied to $u_{i}$ or $u_{j}$, we obtain the thesis.

\smallskip 

By $(**)$ we can infer that in $t$ there is a simply typed $\lambda$-term
$s$  that contains at least one occurrence of a maximally violating
channel and only occurrences of maximally violating channels that are
maximally violating in the input. The rightmost of the maximally violating channel occurrences in $s$ is then of the form $c^{F_1 \IMPL
F_2} w$ where $c$ maximally violates the subformula property in the
input and  $w$ does not contain any channel variable maximally
violating the subformula property.

Consider now this term  $c^{F_1 \IMPL F_2} w$.

Since $Q$ is a prime factor of $F_1$, it is either an atom $P$ or a
formula of the form $Q' \IMPL Q''$ or of the form $Q' \VEL Q''$.

Let $w=\langle w_{1}, \ldots, w_{j}\rangle$, where each $w_{i}$ is not
a pair, and let $k$ be such that that $Q$ occurs in the type of
$w_{k}$.

We start by ruling out the case that $w_{k}= \lambda y\, s$ or $w_{k}=
\inj _i (s)$ for $i \in \{0,1\}$, otherwise it would be possible to
perform an activation reduction or a cross reduction to some subterm
$u'\parallel_{c} v'$, which must exist since $c$ is bound. 

Suppose now, by contradiction, that $w_{k} = x^T \, \sigma$ where
$\sigma$ is a stack.  It cannot be the case that $\sigma = \sigma' 
[y^{E_1}. v_1 ,z^{E_2}. v_2]$ nor  $\sigma = \sigma' \exfalso_{P}$, otherwise we could apply an activation
or cross reduction.
Therefore $\sigma$ is case-free and does not contain $\exfalso_{P}$. Moreover,   $x^T$ cannot be a
free variable of $t$, then $T = A_i$, for some $1 \leq i \leq n$, and
$Q$ is a subformula of $A_i$, contradicting the assumptions. 

Suppose
therefore that $x^T$ is a bound intuitionistic variable of $t$, such
that $t$ has a subterm $\lambda x^T s: T \IMPL Y$ or, without loss of
generality, $s [x^{T}. v_1 ,z^{E}. v_2]$, with $s:T\VEL Y$ for some
formula $Y$. By induction hypothesis $T\IMPL Y$ and $T \VEL Y$ are subformulas of
$A_{1}, \ldots, A_{n}, A$ or proper subformulas of $D_{1}, \ldots,
D_{m}, G \IMPL H$. But $T\IMPL Y$ and $T \VEL Y$ contain $Q$ as a
proper subformula and $c^{F_1 \IMPL F_2} \, w$ violates maximally the
subformula due to $Q$. Therefore $T\IMPL Y$ and $T \VEL Y$ are neither
subformulas of $A_1 , \dots, A_n , A$ nor proper subformulas of $D_1 ,
\dots, D_m$ and thus must be proper subformulas of $G\IMPL H$. Since $c^{F_1 \IMPL F_2} \, w$ violates the subformula
property maximally due to $Q$, $T\IMPL Y$ and $T \VEL Y$ must be at
most as complex as $Q$, which is a contradiction.

Suppose now that $x^{T}$ is a bound channel variable, thus $w_{k}= a^T\, r\,\sigma$, where $a^{T}$ is a bound
communication variable of $t$, with $T = T_1 \IMPL T_2$. Since $c^{F_1
\IMPL F_2} \, w$ is rightmost, $a \neq c$.  Moreover, $Q$ is a
subformula of a prime factor of $T_2$, whereas $a^{T_{1}\IMPL T_{2}}$
occurs in $w$, which is impossible by choice of $c$. This
contradicts the assumption that the term is normal and ends the proof.
\end{itemize}
}
\end{proof}

\end{document}